\DeclareTextFontCommand{\textcyr}{\fontencoding{OT2}
    \fontfamily{wncyr}\fontseries{m}\fontshape{n}\selectfont}
\newcommand{\Sha}{\textcyr{Sh}}
\newcommand{\loc}{{\rm loc}}
\numberwithin{equation}{section}
\newcommand{\rleft}{\mathopen{}\mathclose\bgroup\left}
\newcommand{\rright}{\aftergroup\egroup\right}
\theoremstyle{plain}
\newtheorem{theorem}[equation]{Theorem}
\newtheorem{proposition}[equation]{Proposition}
\newtheorem{prop}[equation]{Proposition}
\newtheorem{lemma}[equation]{Lemma}
\newtheorem{corollary}[equation]{Corollary}
\newtheorem{question}[equation]{Question}
\newtheorem{main-theorem}[equation]{Theorem}
\theoremstyle{definition}
\newtheorem{remark}[equation]{Remark}
\newtheorem{example}[equation]{Example}
\newtheorem{definition}[equation]{Definition}
\newtheorem{definitions}[equation]{Definitions}
\newtheorem{construction}[equation]{Construction}
\newtheorem{subsec}[equation]{}
\DeclareMathOperator{\cone}{cone}
\DeclareMathOperator{\Ind}{Ind}
\DeclareMathOperator{\Br}{Br}
\newcommand{\Ad}{\mathds{A}}
\newcommand{\Cd}{\mathds{C}}
\newcommand{\Hd}{\mathds{H}}
\newcommand{\Qd}{\mathds{Q}}
\newcommand{\Vd}{\mathds{V}}
\newcommand{\Zd}{\mathds{Z}}
\newcommand{\Rd}{\mathds{R}}
\newcommand{\Gd}{\mathds{G}}
\newcommand{\Pd}{\mathds{P}}
\newcommand{\Am}{\mathcal{A}}
\newcommand{\Vm}{\mathcal{V}}
\newcommand{\Cm}{\mathcal{C}}
\newcommand{\Fm}{\mathcal{F}}
\newcommand{\Dm}{\mathcal{D}}
\newcommand{\Pm}{\mathcal{P}}
\newcommand{\Mm}{\mathcal{M}}
\newcommand{\Um}{\mathcal{U}}
\newcommand{\Om}{\mathcal{O}}
\newcommand{\Sm}{\mathcal{S}}
\newcommand{\Xm}{\mathcal{X}}
\newcommand{\Mfunc}{\Mm_\Gamma}
\newcommand{\gal}{\mathcal{G}}
\newcommand{\C}{{\Cd}}
\newcommand{\R}{{\Rd}}
\newcommand{\Q}{{\Qd}}
\newcommand{\Z}{{\Zd}}
\newcommand{\G}{{\Gd}}
\newcommand{\A}{{\Ad}}
\newcommand{\sX}{{\mathcal X}}
\newcommand{\sV}{{\mathcal V}}
\newcommand{\sP}{{\mathcal P}}
\newcommand{\sD}{{\mathcal{D}}}
\newcommand{\sC}{{\mathcal C}}
\newcommand{\sF}{{\mathcal F}}
\newcommand{\sA}{{\mathcal A}}
\newcommand{\sN}{{\mathcal{N}}}
\newcommand{\sU}{{\mathcal{U}}}
\newcommand{\sG}{{\mathcal G}}
\newcommand{\sQ}{{\mathcal Q}}
\newcommand{\sH}{{\mathcal H}}
\newcommand{\vs}{\varsigma}
\newcommand{\ve}{{\varepsilon}}
\newcommand{\vk}{\varkappa}
\newcommand{\into}{\hookrightarrow}
\newcommand{\isoto}{\xrightarrow{\sim}}
\newcommand{\onto}{\twoheadrightarrow}
\newcommand{\labelto}[1]{\xrightarrow{\makebox[1.5em]{\scriptsize ${#1}$}}}
\newcommand{\longisoto}{\xrightarrow{\makebox[1.5em]{\scriptsize ${\sim}$}}}
\newcommand{\Hom}{{\rm Hom}}
\newcommand{\Spec}{{\rm Spec\,}}
\newcommand{\SAut}{{\rm SAut}}
\newcommand{\Aut}{{\rm Aut}}
\newcommand{\Val}{{\rm Val}}
\newcommand{\val}{{\rm val}}
\newcommand{\Gal}{{\rm Gal}}
\newcommand{\Stab}{{\rm Stab}}
\newcommand{\siga}{{\sigma_\gamma}}
\newcommand{\ctil}{{\tilde c}}
\newcommand{\upgam}{{}^\gamma}
\newcommand{\upgamM}{\hs^{\gamma^{-1}}\hm}
\newcommand{\qs}{{q}}
\newcommand{\diag}{{\rm diag}}
\newcommand{\spl}{{\rm spl}}
\newcommand{\scc}{{\rm sc}}
\renewcommand{\gg}{{\mathfrak{g}}}
\newcommand{\Gbar}{{\overline{G}}}
\newcommand{\Ztil}{{\smash{\widetilde{Z}}}}
\newcommand{\emm}{\bfseries}
\newcommand{\X}{{{\sf X}}}
\newcommand{\im}{\mathrm{im}}
\newcommand{\inn}{\mathrm{inn}}
\newcommand{\Inn}{\mathrm{Inn}}
\newcommand{\BRD}{{\rm BRD}}
\newcommand{\Dyn}{{\rm Dyn}}
\newcommand{\id}{{\rm id}}
\newcommand{\NGH}{{\sN_G(H)}}
\newcommand{\veg}{{\ve_\gamma}}
\newcommand{\Gtil}{{\smash{\widetilde{G}}}}
\newcommand{\MGz}{(M_\Gamma)_0}
\newcommand{\CXz}{(C_X)_0}
\newcommand{\GL}{{\rm GL}}
\newcommand{\SO}{{\rm SO}}
\newcommand{\SL}{{\rm SL}}
\newcommand{\SU}{{\rm SU}}
\newcommand{\Lie}{{\rm Lie}}
\newcommand{\EE}{{\sf E}}
\newcommand{\FF}{{\sf F}}
\newcommand{\GG}{{\sf G}}
\newcommand{\Af}{{\sf A}}
\newcommand{\BB}{{\sf B}}
\newcommand{\CC}{{\sf C}}
\newcommand{\DD}{{\sf D}}
\newcommand{\vktil}{{\widetilde \vk}}
\newcommand{\Bbar}{{\overline{B}}}
\newcommand{\dmd}{{\scriptscriptstyle \diamondsuit}}
\newcommand{\cbar}{{c}}
\newcommand{\Tbar}{{\overline T}}
\newcommand{\Hbar}{{\overline H}}
\newcommand{\tr}{{\rm tr}}
\newcommand{\deltatil}{{\tilde \delta}}
\newcommand{\ste}[1]{{\scriptscriptstyle (#1)}}
\newcommand{\Sp}{{\rm Sp}}
\newcommand{\emphb}[1]{\textbf{#1}}
\newcommand{\Omt}{\Omega^{\ste{2}}}
\newcommand{\Omone}{\Omega^{\ste{1}}}
\newcommand{\mug}{{\mu_\gamma}}
\newcommand{\Gm}{{\mathcal{G}}}
\newcommand{\lra}{\longrightarrow }
\newcommand{\Out}{{\rm Out}}
\newcommand{\CF}{{\rm CF}}
\newcommand{\alphA}{a}
\newcommand{\ztil}{{\tilde z}}
\newcommand{\kk}{{\ker}}
\newcommand{\Dbar}{{\overline{D}}}
\newcommand{\half}{{\tfrac{1}{2}}}
\newcommand{\Sigmat}{{\Sigma^{\ste{2}}}}
\newcommand{\Sigmac}{{\Sigma^{\rm sc}}}
\newcommand{\Sigmattf}{{\Sigma_{\rm abc}}}
\newcommand{\Pf}{{\sf P}}
\newcommand{\Qf}{{\sf Q}}
\newcommand{\Pl}{{\rm Pl}}
\newcommand{\Qbar}{{\overline{\Q}}}
\newcommand{\Hm}{{\mathcal{H}}}
\newcommand{\Hnul}{\widehat{H}^0}
\newcommand{\cupdot}{\cup}
\newcommand{\hs}{\kern 0.8pt}
\newcommand{\hsss}{\kern 1.2pt}
\newcommand{\hm}{\kern -0.8pt}
\newcommand{\hmm}{\kern -1.2pt}
\newcommand{\stY}{\smash{\widetilde Y}}
\newcommand{\stH}{\smash{\widetilde H}}
\newcommand{\stsV}{\smash{\widetilde \sV}}
\newcommand{\stsX}{\smash{\widetilde \sX}}
\newcommand{\stSigma}{\smash{\widetilde \Sigma}}
\newcommand{\stsD}{\smash{\widetilde \sD}}
\m@th\displaystyle{##}$\hfil}{$\m@th\displaystyle{##}$\hfil}{\lbrace}{\rbrace}
\begin{document}

\title[Models of spherical varieties]
{Existence of equivariant models\\ of spherical varieties and other $G$-varieties}

\author{Mikhail Borovoi}
\address{Borovoi: Raymond and Beverly Sackler School of Mathematical Sciences,
  Tel Aviv University, 6997801 Tel Aviv, Israel}
\email{borovoi@tauex.tau.ac.il}

\author{Giuliano Gagliardi}
\address{Gagliardi: Raymond and Beverly Sackler School of Mathematical Sciences,
  Tel Aviv University, 6997801 Tel Aviv, Israel}
\curraddr{Institut f\"ur Algebra, Zahlentheorie und Diskrete Mathematik,
Leibniz Universit\"at Hannover,
Welfengarten 1, 30167 Hannover, Germany}
\email{gagliardi@math.uni-hannover.de}

\thanks{This research was partially supported by the Israel Science Foundation (grant No. 870/16)}

\keywords{Algebraic group, spherical variety, affine spherical variety, horospherical variety, equivariant model,
     inner form, pure inner form, Tits class. }

\subjclass{Primary%
: 14M27; 
  secondary: %
  14M17
, 14G20
, 14G25
, 14G27
, 20G15
, 12G05
}


\begin{abstract}
  Let $k_0$ be a field of characteristic $0$ with algebraic
  closure $k$. Let $G$ be a connected reductive  $k$-group, and let $Y$
  be a spherical variety over $k$ (a spherical homogeneous space or a spherical embedding).
  Let $G_0$ be a $k_0$-model ($k_0$-form) of~$G$.
  We give necessary and sufficient conditions for the existence
  of a $G_0$-equivariant $k_0$-model of $Y$.
\end{abstract}

\maketitle

\tableofcontents

\setcounter{section}{-1}

\section{Motivation}
\label{s:motivation}

\begin{subsec}\label{ss:mod-Y}
Throughout the article, $k_0$ is a field of characteristic $0$,
and  $k$ is  a fixed algebraic closure of $k_0$.
We write  $\sG=\Gal(k/k_0)$ for the Galois group of $k$ over $k_0$.
If $X_0$ is an algebraic variety over $k_0$,
we write $X_{0,k}$ or $(X_0)_k$ for $X_0\times_{k_0} k$.

  Let $Y$ be an algebraic variety over $k$.
  A  \emph{$k_0$-model} (or \emph{$k_0$-form}) of $Y$ is a pair $(Y_0, \nu_Y)$,
  where $Y_0$ is an algebraic variety over $k_0$ and
  \[
    \nu_Y\colon Y_{0,k}\coloneqq Y_0\times_{k_0} k\, \longisoto\, Y
  \]
  is an isomorphism of $k$-varieties.
  By abuse of language, we say just that $Y_0$ is a
  $k_0$-model of $Y$. If $(Y_0',\nu_Y')$ is another $k_0$-model, then by
  an {\em isomorphism} $(Y_0, \nu_Y)\isoto (Y_0', \nu_Y')$
  we mean an isomorphism of $k_0$-varieties
  $ Y_0\isoto Y_0'$.

  It is a classical problem of Algebraic Geometry and Galois
  Cohomology to describe the set of isomorphism classes of
  $k_0$-models of a $k$-variety $Y$. If there exists one such model
  $(Y_0,\nu_Y)$, then it defines an injective map from the set of all
  isomorphism classes of $k_0$-models into the first Galois cohomology
  set $H^1(k_0,\Aut(Y_0))$. Here we denote by $\Aut(Y)$ the group of
  automorphisms of $Y$ (regarded as an abstract group) and we write
  $\Aut(Y_0)$ for the group $\Aut(Y)$ equipped with the Galois action
  induced by the $k_0$-model $Y_0$ of $Y$. If $Y$ is a
  \emph{quasi-projective} variety, then this injective map is
  bijective; see Serre \cite[III.1.3, Proposition~5]{ser97}.
\end{subsec}

\begin{subsec}\label{ss:model-G}
  When $Y$ carries an additional structure, we wish $Y_0$ to carry
  this structure as well. For example, if $G$ is an algebraic group
  over $k$, we define a $k_0$-model of $G$ as a pair $(G_0,\nu_G)$
  where $G_0$ is an algebraic group over $k_0$ and
  \[
    \nu_G\colon\, G_{0,k}\coloneqq  G_0\times_{k_0} k\, \longisoto\,  G
  \]
  is an isomorphism of algebraic $k$-groups. We define isomorphisms of
  $k_0$-models of $G$
  in a natural way.
  If there exists one $k_0$-model
  $(G_0,\nu_G)$, then it defines a bijection between the set of
  isomorphism classes of $k_0$-models of $G$ and the first Galois
  cohomology set $H^1(k_0,\Aut(G_0))$; see Serre \cite[III.1.3,
  Corollary of Proposition~5]{ser97}.
  \end{subsec}

\begin{remark}
We use the term ``$k_0$-model'' rather than ``$k_0$-form''
because the term ``form'' is overloaded.
However, we do use  the classical term ``inner (twisted) form''.
\end{remark}

\begin{subsec}
  A $k_0$-torus $T_0$ is called {\em split} if it is isomorphic to $(\G_{m,k_0})^n$
  for some natural number $n$, where $\G_{m,k_0}$
  is the multiplicative group over $k_0$.
  A reductive $k_0$-group $G_0$ is called {\em split}
  if there is  a split maximal torus $T_0\subset G_0$.
  A reductive $k_0$-group $G_0$ is called {\em quasi-split}
  if there is a Borel subgroup $B_0\subset G_0$.
  Clearly, any split $k_0$-group is quasi-split.

  It is well known (see, for instance, Milne \cite[Corollary~23.57]{Milne})
  that any reductive $k$-group $G$
  admits a {\em split} $k_0$-model $G_\spl$ (also called the
  \emph{Chevalley form}). Then one can classify $k_0$-models of $G$ by
  the cohomology classes in $H^1(k_0,\Aut(G_\spl))$. One can also
  classify $k_0$-models of {\em classical} groups using algebras with
  involutions; see Tits \cite{Tits} and \enquote{The Book of Involutions}
  \cite{KMRT}.
\end{subsec}

\begin{subsec}\label{ss:G-k}
  Let $G$ be a (connected) reductive
  group over $k$ and $Y$ be a $k$-variety equipped with a $G$-action
  \[
    \theta\colon G\times_{k} Y\to Y\text{.}
  \]
  This means that $\theta$ is a morphism of $k$-varieties
  such that certain natural diagrams commute;
  see Milne \cite[Section 1.f]{Milne}.
  We say then that $Y$ is a $G$-$k$-variety. By a {\em $k_0$-model of
    $(G,Y,\theta)$} we mean a triple $(G_0,Y_0,\theta_0)$ as above, but
  over $k_0$, together with two isomorphisms
  \[
    \nu_G\colon G_{0,k}\isoto G\text{,} \quad \nu_Y\colon Y_{0,k}\isoto Y\text{,}
  \]
  where $\nu_G$ is an isomorphism of algebraic $k$-groups
  and $\nu_Y$ is an isomorphism of $k$-varieties, such that the following diagram commutes:
  \begin{equation}\label{e:G-Y}
    \begin{gathered}
    \xymatrix{
      G_{0,k}\times _k Y_{0,k}\ar[r]^-{\theta_{0,k}}\ar[d]_{\nu_G\times \nu_Y}
                                                 & Y_{0,k}\ar[d]^{\nu_Y} \\
      G\times _k Y\ar[r]^-\theta                      &Y\mathrlap{\text{.}}
    }
    \end{gathered}
  \end{equation}
  We define isomorphisms of $k_0$-models of $(G,Y,\theta)$ in a natural
  way. We wish to classify isomorphism classes of $k_0$-models
  $(G_0,Y_0,\theta_0)$ of triples $(G,Y,\theta)$.
\end{subsec}

\begin{subsec}
  One can classify $k_0$-models of triples
  $(G,Y,\theta)$ in two steps:
  \begin{enumerate}
  \item One classifies $k_0$-models $G_0$ of  $G$ (which is mostly known);
  \item For a given model $G_0$ of $G$, one classifies
  $k_0$-models $(G_0,Y_0,\theta_0)$  with this given $G_0$\hs.
  \end{enumerate}
  If $(G_0,Y_0,\theta_0)$ is a $k_0$-model of  $(G,Y,\theta)$ with given $G_0$, we say
  that $(Y_0,\theta_0)$ is a {\em $G_0$-equivariant $k_0$-model of $(Y,\theta)$.}
  Let $G$ be a (connected) reductive $k$-group and let $(Y,\theta)$ be a
  $G$-$k$-variety. For a given $k_0$-model $(G_0,\nu_G)$ of $G$, we
  wish to classify $G_0$-equivariant $k_0$-models of $Y$ up to
  isomorphism.

  Assume for simplicity that the variety $Y$ is quasi-projective. A
  $G_0$-equivariant $k_0$-model $(Y_0,\theta_0)$ of $(Y,\theta)$, {\em if
  it exists,} defines a bijection between the set of isomorphism
  classes of all such models and the first Galois cohomology set
  $H^1(k_0,\Aut^{G_0} (Y_0))$; see Serre \cite[III.1.3, Proposition
  5]{ser97}. Here we denote by $\Aut^G(Y)$ the group of
  $G$-equivariant automorphisms of $Y$ (regarded as an abstract
  group), and we write $\Aut^{G_0}(Y_0)$ for the group $\Aut^G(Y)$
  equipped with the Galois action induced by the $k_0$-model $Y_0$ of
  $Y$.

  We see that the main problem is to find {\em at least one} $G_0$-equivariant
  $k_0$-model of $Y_0$. We arrive at the following question:

\begin{question}\label{q:ACP}
  Let $G$ be a (connected) reductive $k$-group and let $Y$ be a
  $G$-$k$-variety. For a given $k_0$-model $G_0$ of $G$, does there
  exist a $G_0$-equivariant $k_0$-model of $Y$?
\end{question}

Our  Theorem \ref{t:sphfull'} and Theorem \ref{t:main2'}
together with Propositions \ref{p:BG-Hur} and \ref{p:nonqp}
completely answer Question \ref{q:ACP} in the case when $Y$ is a spherical
variety (a spherical homogeneous space or a spherical embedding).
\end{subsec}

\begin{remark}
Question \ref{q:ACP} was inspired by the articles of Akhiezer and Cupit-Foutou
\cite{acf14}, \cite{akh15}, and \cite{cf15}.
This section was inspired by the introduction of Moser-Jauslin and Terpereau \cite{MJTB}.
\end{remark}

\section{Introduction}
\label{s:intro}

\begin{subsec}\label{ss:intro-models}
  Let $k_0$, $k$, and $\sG$ be as in Subsection~\ref{ss:mod-Y}.
  In this article, by an algebraic variety over~$k$ we mean a separated
  and reduced scheme of finite type over $k$. By a (linear) algebraic group
  over $k$ we mean an affine group scheme of finite type over $k$.
\end{subsec}

\begin{subsec}\label{ss:intro-twist}
  Let $G$ be a connected linear algebraic group over $k$, and let $Y$
  be a $G$-$k$-variety. We consider the group $\Aut(G)$ of
  automorphisms of $G$. We regard $\Aut(G)$ as an abstract group. Any
  $g\in G(k)$ defines an \emph{inner automorphism}
  \[i_g\colon G\to G,\quad x\mapsto gxg^{-1}\text{ for }x\in G(k).\]
  We obtain a homomorphism
  \[i\colon G(k)\to \Aut(G).\] We denote by $\Inn(G)\subset \Aut(G)$
  the image of the homomorphism $i$ and we say that $\Inn(G)$ is the
  group of inner automorphisms of $G$. We identify $\Inn(G)$ with
  $\Gbar(k)$, where
  \[\Gbar = G/Z(G)\]
  and $Z(G)$ is the center of $G$.

  Let $G_\dmd$ be a $k_0$-model of $G$. We write $Z_\dmd$ for the
  center of $G_\dmd$; then $\Gbar_\dmd\coloneqq G_\dmd/Z_\dmd$ is a
  $k_0$-model of $\Gbar$.
   Let $\cbar\colon \sG\to \Gbar_\dmd(k)$ be a
  \emph{1-cocycle}, that is, a locally constant map
  for which  the following cocycle condition is satisfied:
\begin{equation}\label{e:cocycle-cond}
\cbar_{\gamma \beta  }=\cbar_\gamma\cdot\upgam\hm\cbar_{\beta  }
                  \quad \text{for all }\gamma,\beta  \in\sG,
\end{equation}
where we denote by $(\gamma,g)\mapsto\upgam \hm g$
the action of $\gamma\in\Gm$ on $g\in \Gbar_\dmd(k)$.
We denote the set of such 1-cocycles by $Z^1(\sG,\Gbar_\dmd(k))$ or by $Z^1(k_0,\Gbar_\dmd)$.
For $c\in Z^1(k_0,\Gbar)$ one can define the \emph{$\cbar$-twisted inner form}
$_\cbar(G_\dmd)$ of $G_\dmd$; see Subsection~\ref{ss:tw} below.
For simplicity we write $_\cbar G_\dmd$ for $_\cbar(G_\dmd)$.
\end{subsec}

\begin{subsec}\label{ss:intro-qs}
  Let $Y$ be a $G$-$k$-variety; see Subsection~\ref{ss:G-k}.
  It is well known that if $G$ is a  reductive $k$-group,
  then any $k_0$-model $G_0$ of $G$ is an inner form of a quasi-split
  model; see, for instance, \enquote{The Book of Involutions} \cite[Proposition (31.5)]{KMRT}.
  In other words, there exist a quasi-split model $G_\qs$ of $G$ and a
  1-cocycle $\cbar\in Z^1(k_0, \Gbar_\qs)$ such that
  $G_0= {}_\cbar G_\qs$. In some cases it is clear that the $G$-$k$-variety $Y$ admits a
  $G_\qs$-equivariant $k_0$-model. For example, assume that $Y=G/U$,
  where $U=R_u(B)$ is the unipotent radical of a Borel subgroup $B$ of
  $G$. Since $G_\qs$ is a \emph{quasi-split} model, there exists a
  Borel subgroup $B_\qs\subset G_\qs$ (defined over $k_0$). Set
  $U_\qs=R_u(B_\qs)$ (the unipotent radical of $B_\qs$); then $G_\qs/U_\qs$ is a $G_\qs$-equivariant
  $k_0$-model of $Y=G/U$.
\end{subsec}

\begin{subsec}\label{ss:intro-we-ask}
In the setting of Subsections~\ref{ss:intro-models} and \ref{ss:intro-twist},
let $G_\dmd$ be a $k_0$-model of $G$,
and let $G_0= {}_\cbar G_\dmd$, where $\cbar\in Z^1(k_0, \Gbar_\dmd)$.
Motivated by Subsection~\ref{ss:intro-qs}, \emph{we assume}
that $Y$ admits a $G_\dmd$-equivariant $k_0$-model $Y_\dmd$,
and we ask whether $Y$ admits a $G_0$-equivariant $k_0$-model $Y_0$.

We consider the short exact sequence
\[1\to Z_\dmd\to G_\dmd\to \Gbar_\dmd\to 1\]
and the connecting map
\[\delta\colon H^1(k_0,\Gbar_\dmd)\to H^2(k_0,Z_\dmd);\]
see Serre \cite[I.5.7, Proposition 43]{ser97}. If
$\cbar\in Z^1(k_0,\Gbar_\dmd)$, we write $[\cbar]$ for the
corresponding cohomology class in $H^1(k_0,\Gbar_\dmd)$. By abuse of
notation, we write $\delta[\cbar]$ for~$\delta([\cbar])$.

We consider the group $\sA\coloneqq\Aut^G(Y)$ of $G$-equivariant
automorphisms of $Y$, regarded as an abstract group. The
$G_\dmd$-equivariant $k_0$-model $Y_\dmd$ of $Y$ defines a
$\sG$-action on $\sA$, see \eqref{e:ggg-action} below, and we
denote the obtained $\sG$-group by $\sA_\dmd$. One can define the
second Galois cohomology set $H^2(\sG,\sA_\dmd)$. See Springer
\cite[Section 1.14]{spr66} for a definition of $H^2(\sG,\sA_\dmd)$ in
the case when the $\sG$-group $\sA_\dmd$ is nonabelian.

For all $z\in Z_\dmd(k)$ we consider the $G$-equivariant automorphism
 $\vk(z)$
of $Y$ defined by $y\mapsto z\cdot y$.
We obtain a $\sG$-equivariant homomorphism
\[\vk\colon Z_\dmd(k)\to\sA_\dmd\hs ,\]
which induces a map
\[\vk_*\colon H^2(k_0,Z_\dmd)\to H^2(\sG,\sA_\dmd).\]
\end{subsec}

\begin{theorem}[Theorem \ref{t:twist}]
\label{t:twist'}
Let $k_0$, $k$, and $\sG$ be as in Subsection \ref{ss:mod-Y}.
Let $G$ be a connected linear algebraic $k$-group,
and let $Y$ be a $G$-$k$-variety. Let $G_\dmd$ be
a $k_0$-model of $G$, and assume that $Y$ admits a
$G_\dmd$-equivariant $k_0$-model $Y_\dmd$\hs.
We also assume that $Y$ is quasi-projective.
With the above notation, let $\cbar\in Z^1(k_0,\Gbar_\dmd)$ be a 1-cocycle,
and consider its class $[\cbar]\in H^1(k_0,\Gbar_\dmd)$. Set
$G_0=\hs _c G_\dmd$ (the inner twisted form of $G_\dmd$ defined by the
1-cocycle $c$). Then the $G$-variety $Y$ admits a $G_0$-equivariant
$k_0$-model if and only if the cohomology class
\[\vk_*(\delta[c])\in H^2(\sG, \sA_\dmd)\]
is neutral.
\end{theorem}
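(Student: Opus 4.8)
The plan is to translate the existence question for a $G_0$-equivariant $k_0$-model into a question about the nonemptiness of a certain fiber, and then to identify that fiber with the set of neutral elements over $\vk_*(\delta[c])$. First I would set up the basic dictionary: giving a $G_0$-equivariant $k_0$-model of $Y$ (with $G_0 = {}_c G_\dmd$) is the same as giving a semilinear $\sG$-action on the $k$-variety $Y$ that is compatible, via the cocycle $c$, with the $G$-action and with the fixed $G_\dmd$-model $Y_\dmd$. Concretely, the $G_\dmd$-model $Y_\dmd$ already equips $Y$ with a semilinear $\sG$-action $\mu = (\mu_\gamma)$; any other $G$-compatible semilinear action differs from a suitable ``$c$-twisted'' reference by an $\sA = \Aut^G(Y)$-valued cochain $(a_\gamma)$. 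The obstruction to such a cochain being a cocycle — i.e.\ to the twisted semilinear action being a genuine $\sG$-action — is precisely a $2$-cochain with values in $\sA_\dmd$, and its class in $H^2(\sG,\sA_\dmd)$ is what must be neutral. Making this bookkeeping precise is the technical core of the argument.

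Next I would carry out the computation of that $2$-cocycle. Lift the $1$-cocycle $\cbar\colon\sG\to\Gbar_\dmd(k)$ to a (locally constant, but not necessarily multiplicative) map $g\colon\sG\to G_\dmd(k)$, so that $g_\gamma\cdot{}^\gamma g_\beta = z_{\gamma,\beta}\,g_{\gamma\beta}$ defines a $2$-cocycle $z_{\gamma,\beta}\in Z_\dmd(k)$ representing $\delta[\cbar]\in H^2(k_0,Z_\dmd)$. The candidate semilinear action for the model of $Y$ over $G_0$ is $\gamma\mapsto (y\mapsto g_\gamma\cdot \mu_\gamma(y))$; one checks directly, using that $\mu$ is an action and $\theta$ is $\sG$-equivariant for $Y_\dmd$, that composing $\gamma$ then $\beta$ differs from $\gamma\beta$ exactly by the automorphism $y\mapsto z_{\gamma,\beta}\cdot y$, which is $\vk(z_{\gamma,\beta})\in\sA_\dmd$. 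Hence the obstruction $2$-cocycle is $\vk_*$ applied to a cocycle representing $\delta[\cbar]$; its class is $\vk_*(\delta[c])$, and it is independent of the choices (lift $g$, representative $\cbar$) up to the equivalence used in $H^2(\sG,\sA_\dmd)$ — this independence is where quasi-projectivity of $Y$ enters, via Serre \cite[III.1.3, Proposition~5]{ser97}, to guarantee that a neutral class actually produces an algebraic $k_0$-model rather than merely a semilinear action.

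Then I would argue the equivalence in both directions. If $\vk_*(\delta[c])$ is neutral, the defining property of a neutral class in nonabelian $H^2$ (in Springer's sense \cite[1.14]{spr66}) gives a $1$-cochain $(a_\gamma)$ in $\sA$ trivializing the obstruction cocycle; replacing the candidate action $\gamma\mapsto g_\gamma\cdot\mu_\gamma(-)$ by $\gamma\mapsto a_\gamma\circ(g_\gamma\cdot\mu_\gamma(-))$ yields a genuine semilinear $\sG$-action on $Y$, still $c$-compatible with the $G$-action, hence by quasi-projective descent a $G_0$-equivariant $k_0$-model. Conversely, a $G_0$-equivariant model $Y_0$ supplies a $c$-compatible semilinear action, and comparing it with the candidate action above produces exactly such a trivializing $1$-cochain, so the obstruction class is neutral.

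The main obstacle I anticipate is the careful handling of the nonabelian $H^2$: one must verify that the obstruction class $\vk_*(\delta[c])$ is well defined (independent of the lift $g$ and of the representative cocycle $\cbar$ in its class, and invariant under isomorphism of the model $Y_\dmd$), that ``neutral'' is the correct vanishing condition given that $\sA_\dmd$ may be nonabelian and that $\vk$ need not be injective, and that the passage from a semilinear $\sG$-action to an actual $k_0$-scheme is legitimate — this last point is exactly where the quasi-projectivity hypothesis is used and cannot be dropped. The abelian connecting-map computation producing $\delta[\cbar]$ is routine; the subtlety is entirely in the noncommutative cohomological formalism and in checking that $\vk$ being merely $\sG$-equivariant (not injective) still makes $\vk_*$ well defined on cohomology.
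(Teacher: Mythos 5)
Your proposal is correct and follows essentially the same route as the paper: lift $c$ to a locally constant map $\ctil\colon\sG\to G(k)$, twist the reference semilinear action $\mu_\gamma$ by $l(\ctil_\gamma)$ (and by a to-be-found $a_\gamma\in\sA$), identify the failure of the homomorphism property with the $2$-cocycle $\vk(\ctil_\gamma\cdot{}^\gamma\ctil_\beta\cdot\ctil_{\gamma\beta}^{-1})$ representing $\vk_*(\delta[c])$, and conclude both directions via the trivializing $1$-cochain and quasi-projective descent (the paper's Lemma~\ref{p:lemma-5-4}). One small misattribution: quasi-projectivity is used only in the final descent from an algebraic semilinear action to an actual $k_0$-model (together with the verification, done in the paper's Lemma~\ref{l:comes-from-k1}, that the twisted action is algebraic), not for the well-definedness of the class $\vk_*(\delta[c])$, which is a purely cohomological independence statement.
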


\begin{remark}
In the case when $\sA$ is abelian, the condition \enquote{$\vk_*(\delta[\cbar])$ is neutral}
means that $\vk_*(\delta[\cbar])=1$; see Subsection \ref{ss:nonab-H2-0} below for details.
\end{remark}

\begin{corollary}
If either $Z=\{1\}$ or $\Am=\{1\}$, then $Y$ does admit a $G_0$-equivariant $k_0$-model
(because then $\vk_*(\delta[c])=1$).
\end{corollary}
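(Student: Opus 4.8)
The plan is to read the corollary off directly from Theorem~\ref{t:twist'} (that is, Theorem~\ref{t:twist}), whose hypotheses I keep in force: $Y$ is quasi-projective and admits a $G_\dmd$-equivariant $k_0$-model $Y_\dmd$, and $G_0={}_c G_\dmd$ for some $\cbar\in Z^1(k_0,\Gbar_\dmd)$. By that theorem it is enough to check, in each of the two cases, that the class $\vk_*(\delta[\cbar])\in H^2(\sG,\sA_\dmd)$ is neutral; I would treat the two hypotheses separately.

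First, suppose $Z=\{1\}$. Then the $k_0$-model $Z_\dmd$ of $Z$ is the trivial $k_0$-group scheme, so $Z_\dmd(k)=\{1\}$ and $H^2(k_0,Z_\dmd)=H^2(\sG,\{1\})$ is the trivial group, whose only element is the neutral one. Hence $\delta[\cbar]$ is neutral. Since $\vk\colon Z_\dmd(k)\to\sA_\dmd$ is a $\sG$-equivariant homomorphism, the induced map $\vk_*\colon H^2(k_0,Z_\dmd)\to H^2(\sG,\sA_\dmd)$ is a morphism of pointed sets and therefore carries the neutral element to the neutral element; thus $\vk_*(\delta[\cbar])$ is neutral, and Theorem~\ref{t:twist'} yields a $G_0$-equivariant $k_0$-model of $Y$.

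Next, suppose $\Am=\sA=\Aut^G(Y)=\{1\}$. Then $\sA_\dmd$ is the trivial $\sG$-group, so, by the definition of $H^2$ for a (possibly nonabelian) $\sG$-group recalled in Springer~\cite[1.14]{spr66}, the set $H^2(\sG,\sA_\dmd)$ has exactly one element, which is neutral. In particular $\vk_*(\delta[\cbar])$ is neutral, and again Theorem~\ref{t:twist'} gives the desired $G_0$-equivariant $k_0$-model.

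I do not expect any genuine obstacle here: the argument is purely formal once one recalls that $H^2$ of the trivial group — whether with abelian or nonabelian coefficients — is a one-point pointed set, and that the connecting map $\delta$ and the induced map $\vk_*$, being morphisms of pointed sets, preserve neutral elements.
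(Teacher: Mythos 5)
Your proof is correct and takes essentially the same route as the paper, which leaves the argument to the parenthetical remark that $\vk_*(\delta[c])=1$ in both cases; you have simply spelled out why that holds (triviality of $H^2(k_0,Z_\dmd)$ when $Z=\{1\}$, triviality of $H^2(\sG,\sA_\dmd)$ when $\sA=\{1\}$, and the fact that $\vk_*$ and $\delta$ preserve neutral elements) and then invoked Theorem~\ref{t:twist}.
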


\begin{subsec}\label{ss:sph-qs}
  In the setting of Subsections~\ref{ss:intro-models}
  and \ref{ss:intro-twist}, we now consider
  the case when $G$ is a (connected) reductive group and $Y = G/H$ is
  a \emph{spherical homogeneous space}, which means that for a Borel
  subgroup $B \subset G$ there exists an open $B$-orbit in $G/H$.
  We say then that $H$ is a \emph{spherical subgroup of} $G$.
  Spherical varieties (spherical homogeneous spaces and spherical
  embeddings) were considered in works of Luna, Vust, Brion, Knop,
  Losev, and others. The classification of spherical homogeneous
  spaces over algebraically closed fields of characteristic $0$ was
  completed in the works of Losev \cite{los09a} and
  Bravi and Pezzini \cite{bp14}, \cite{bp15}, \cite{bp16},
  as well as Cupit-Foutou \cite{cf2}. For general surveys on the
  theory of spherical varieties we refer to Timashev \cite{tim11} and Perrin \cite{per14}.
\end{subsec}

 Motivated by Subsection~\ref{ss:intro-qs} and Theorem~\ref{t:twist'},
 we assume that $G_0$ is quasi-split, and we ask whether $G/H$ admits
 a $G_0$-equivariant $k_0$-model.
We use the notation of Subsections \ref{ss:Luna-Vust} and \ref{ss:Om1-Om2}.

\begin{theorem}[Theorems \ref{t:sphqs} and  \ref{t:k0-point}]
  \label{t:sphqs'}
  Let $k_0$, $k$, and $\sG$ be as in Subsection \ref{ss:mod-Y}.
  Let $G$ be a (connected) reductive $k$-group.
  Let $G/H$ be a spherical homogeneous space of $G$,
  and let $(\sX, \Vm, \Omone,\Omt)$ be
  its combinatorial invariants. Let $G_0$ be a
  \emphb{quasi-split} $k_0$-model of $G$.
  If the $\Gm$-action on $\X^*(B)$ and $\Sm$ defined by $G_0$
  preserves the combinatorial invariants  $(\sX, \Vm, \Omone,\Omt)$ of $G/H$,
   then the spherical homogeneous space
  $G/H$ admits a $G_0$-equivariant $k_0$-model $Y_0$.
  Moreover, then $G/H$ admits a $G_0$-equivariant $k_0$-model $Y_0$
  having a $k_0$-rational point,
  that is, $Y_0=G_0/H_0$, where $H_0\subset G_0$ is a $k_0$-subgroup
  such that $(H_0)_k$ is conjugate to $H$ in $G$.
\end{theorem}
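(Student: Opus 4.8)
The plan is to deduce this statement from the general twisting criterion of Theorem~\ref{t:twist'} together with the quasi-split case described in Subsection~\ref{ss:intro-qs}. First I would observe that since $G_0$ is quasi-split, it is an inner form of itself; but more useful is the fact that the combinatorial invariants $(\sX,\Vm,\Omone,\Omt)$ are, by hypothesis, $\sG$-stable under the $*$-action attached to $G_0$. The strategy is to construct directly a $k_0$-form of the pair $(B,H\cap B\text{-data})$, or rather to descend the spherical datum itself: by Losev's theorem \cite{los09a} the $G$-equivariant isomorphism class of $G/H$ is determined by $(\sX,\Vm,\Dm)$, and by Proposition~\ref{p:BG-Hur}'s converse-type reasoning (which is what must be proved here) the $\sG$-stability of these invariants should suffice to twist a ``standard'' model into the correct one. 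Concretely, I would start from the fact that $G/H$, being spherical, has wonderful-type combinatorial control, and I would use the quasi-split structure to produce a $k_0$-model $Y_\dmd$ of $G/H$ for \emph{some} (possibly different) $k_0$-model $G_\dmd$ of $G$—e.g.\ by realizing $G/H$ inside a product of $G/P_i$'s and projective spaces whose $k_0$-structure is visible because $G_\dmd$ is quasi-split—and then invoke Theorem~\ref{t:twist'}.

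The key steps, in order, would be: (1) Reduce to showing the obstruction class $\vk_*(\delta[c])\in H^2(\sG,\sA_\dmd)$ is neutral, where $G_0 = {}_cG_\dmd$ for a quasi-split reference model $G_\dmd$ and a suitable cocycle $c$; here $\sA = \Aut^G(G/H)$, which for a spherical homogeneous space is well understood—it fits into an exact sequence involving $N_G(H)/H$ and is in particular abelian or at least diagonalizable-by-finite. (2) Identify $\sA_\dmd$ and the map $\vk\colon Z_\dmd(k)\to\sA_\dmd$ explicitly using the structure of $N_G(H)/H$ for spherical $G/H$ (Knop's description: $N_G(H)/H$ is diagonalizable with character lattice related to $\sX$). (3) Show that the $\sG$-stability of $(\sX,\Vm,\Omone,\Omt)$ forces the relevant piece of $H^2$ to vanish, i.e.\ that $\delta[c]$ maps into the part of $H^2(\sG,\sA_\dmd)$ killed by $\vk_*$—this is where quasi-splitness of $G_0$ enters decisively, since for quasi-split groups the relevant $H^2$-obstructions (Tits-class-type obstructions) vanish. (4) For the second assertion ($k_0$-rational point), once a model $Y_0$ exists, use that $Y_0$ is a homogeneous space under $G_0$ and apply a theorem guaranteeing $k_0$-points on homogeneous spaces of quasi-split groups (à la Borovoi, or the fact that for quasi-split $G_0$ the appropriate $H^1$ with coefficients in the relevant group vanishes on the relevant class), yielding $Y_0 = G_0/H_0$ with $(H_0)_k$ conjugate to $H$.

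The main obstacle I expect is step (3): relating the purely combinatorial hypothesis (stability of $(\sX,\Vm,\Omone,\Omt)$) to the vanishing of a cohomological obstruction in $H^2(\sG,\sA_\dmd)$. The subtlety is that $\sA = \Aut^G(G/H)$ need not be connected or even abelian in general, so one must carefully unwind Springer's nonabelian $H^2$ and show that the image of $\delta[c]$ under $\vk_*$ lands in a neutral class; the quasi-split hypothesis should be exactly what makes the corresponding torsor trivial, presumably via a choice of Borel subgroup $B_0\subset G_0$ defined over $k_0$ together with the $\sG$-equivariant identification of colors that the stability hypothesis provides. A secondary difficulty is making the reference model $Y_\dmd$ canonical enough that the cocycle $c$ computing $G_0$ from $G_\dmd$ can be pushed through $\vk_*\circ\delta$ without ambiguity; I would handle this by taking $G_\dmd = G_\qs$ itself and a model $Y_\dmd$ built from the wonderful compactification, whose $k_0$-structure over a quasi-split base is standard.
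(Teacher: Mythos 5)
Your plan has a fundamental circularity that makes it unworkable as stated. Theorem~\ref{t:twist'} (the twisting criterion) has a key \emph{hypothesis}: it assumes that one already possesses a $G_\dmd$-equivariant $k_0$-model $Y_\dmd$ of $Y$ for some reference model $G_\dmd$ of $G$; given that, it characterizes when the inner twist $G_0 = {}_c G_\dmd$ admits a model. But Theorem~\ref{t:sphqs'} is precisely the statement that a model exists over the quasi-split form — it \emph{is} the base case that makes Theorem~\ref{t:twist'} usable (and the paper indeed treats it this way: Main Theorem~\ref{t:sphfull'} is proved by first applying Theorem~\ref{t:sphqs} over $G_\qs$, then applying Proposition~\ref{p:tits} to twist). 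Your proposed reduction to Theorem~\ref{t:twist'} in step (1) requires a prior existence of $Y_\dmd$ over some quasi-split $G_\dmd$, which is the theorem itself; and the brief gesture towards realizing $G/H$ inside a product of $G/P_i$'s and projective spaces does not in general produce a $G_\dmd$-equivariant $k_0$-model of the homogeneous space $G/H$ (it would at best give a model of a variety containing $G/H$, and passing to the orbit over $k_0$ is not automatic). Consequently steps (2) and (3) are analyzing an obstruction that need not be defined.

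The paper's proof is entirely different: it does \emph{not} reduce to a cohomological vanishing statement. Instead, Section~\ref{s:sphaff} descends the Alexeev--Brion moduli scheme $M_\Gamma$ of affine spherical $G$-varieties with given weight monoid, shows that the $\overline{T}$-orbit closure $C_X$ through the class of $X$ is a \emph{smooth toric variety} descending to $k_0$ with a fixed $k_0$-point (the unique closed orbit), and then invokes Voskresenski\u{\i}--Klyachko to produce a $k_0$-point in the open $\overline{T}_0$-orbit, giving a Galois-fixed pair $[X,\tau]$ and hence a model. Section~\ref{s:spharb} reduces an arbitrary spherical homogeneous space to this quasi-affine/affine situation by Brion's trick of enlarging the lattice (replacing $G$ by $G\times C$ and $H$ by a subgroup $H'\subset H\times C$ whose quotient by $C$ recovers $G/H$). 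None of this passes through $H^2$ of the equivariant automorphism group.

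Your step (4) also has a gap: there is no general theorem asserting that homogeneous spaces of a quasi-split reductive $k_0$-group have $k_0$-points ($H^1(k_0,G_0)$ can be nontrivial even for $G_0$ quasi-split, e.g.\ for adjoint groups), nor one asserting vanishing of the relevant $H^1$. The paper obtains the $k_0$-point (Theorem~\ref{t:k0-point}) by an explicit density argument: the affine closure $X_0$ fibers over $\Spec k_0[\Gamma]$ with fibers that are $U_0$-orbits, the base has dense $k_0$-points because $k[\Gamma]\to k$, $\lambda\mapsto 1$ is defined over $k_0$, and torsors under connected unipotent groups over fields of characteristic $0$ are trivial (Proposition~\ref{p:unip-dense} and Lemma~\ref{l:unip-hom}). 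This is a different mechanism from the Galois-cohomological vanishing you propose.
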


We explain the idea of the proof. First, we consider the case
  where $G/H$ is quasi-affine, that is, an open subvariety of an
  affine variety.
  Let $\mathrm{H}^0(G/H, \Om_{G/H})$ denote the ring of regular functions on $G/H$.
  Then the affine closure $X \coloneqq\Spec\hs \mathrm{H}^0(G/H, \Om_{G/H})$
  of $G/H$ is an affine spherical $G$-variety,
  whose $G$-equivariant isomorphism class therefore
  corresponds to a certain subscheme $C_X^\circ$ in the invariant
  Hilbert scheme $M_\Gamma$ defined by Alexeev and Brion \cite[Section 1.3]{ab05}.
  The fact that the $\sG$-action preserves the combinatorial
  invariants of $G/H$ allows us to define a $\sG$-action on
  $M_\Gamma$. We show that $C_X^\circ$ contains a $\sG$-fixed point,
  from which we then obtain a $k_0$-model of $G/H$.

  In the general case (when $G/H$ is not necessarily quasi-affine)
   we use a construction based on Brion
  \cite[Section~4.1]{bri97} to define a torus $C$ and a quasi-affine
  spherical homogeneous space $G'/H' \coloneqq (G \times C)/H'$ such
  that $G/H$ is the quotient of $G'/H'$ by the torus $C$. The
  combinatorial invariants of $G'/H'$ are obtained straightforwardly
  from the combinatorial invariants of $G/H$. Moreover, we construct a
  $k_0$-model $C_0$ of the torus $C$ such that the
  $\sG$-action on $\X^*(B)\oplus\X^*(C)$ and $\Sm$
  defined by the $k_0$-model $G_0 \times_{k_0}C_0$ of $G'=G\times C$ preserves the
  combinatorial invariants of~$G'/H'$. In this way the general case is
  reduced to the quasi-affine case.

\begin{subsec}\label{ss:Aq}
  In the setting of Subsection~\ref{ss:sph-qs}, write
  $G_0= {}_c G_\qs$ as in Subsection~\ref{ss:intro-qs}, where $G_\qs$
  is a quasi-split model of $G$ and $c\in Z^1(k_0,\Gbar_\qs)$. Let
  $G/H$ be a spherical homogeneous space.
  We ask whether $G/H$ admits a $G_0$-equivariant $k_0$-model.

  Assume that the $\sG$-action
  on $(\X^*(B),\Sm)$ defined by the $k_0$-model $G_0$ of $G$ preserves
  the combinatorial invariants $(\sX, \Vm, \Omone,\Omt)$ of $G/H$.
  Then the $\sG$-action defined by the $k_0$-model $G_\qs$ preserves
  the combinatorial invariants $(\sX, \Vm, \Omone,\Omt)$ of $G/H$ as
  well, because it is the same $\sG$-action on $(\X^*(B),\Sm)$. By
  Theorem \ref{t:sphqs'}, the spherical homogeneous space $G/H$ admits
  a $G_\qs$-equivariant $k_0$-model of the form $G_q/H_q$\hs, where
  $H_q\subset G_q$ is a subgroup defined over $k_0$. Set
  \[A_q=\sN_{G_q}(H_q)/H_q\hs,\] which is a $k_0$-model of the abelian $k$-group
  $A\coloneqq \sN_G(H)/H$.
  Then
  \[ A_q(k)=A(k)=\Aut^G(G/H)\text{;}\]
see, for instance, \cite[Corollary 4.3]{BG}.
Let $[G_0, G_0]$ denote the commutator subgroup of $G_0$,
and let $\Gtil_0$ denote the universal cover of the connected semisimple $k_0$-group $[G_0, G_0]$.
Similarly, we denote by $\Gtil_\qs$ the universal cover of $[G_\qs,G_\qs]$.
We may and shall identify the centers  $Z(\Gtil_0)$ and $Z(\Gtil_\qs)$.
Consider the composite homomorphism of $k_0$-groups
 \begin{equation}\label{e:vktil-2}
 \vktil\colon\hs  Z(\Gtil_0) = Z(\Gtil_q)\to Z(G_q) \into \sN_{G_q}(H_q)\to A_q
 \end{equation}
 and the induced homomorphism on cohomology
 \[\vktil_*\colon H^2(k_0, Z(\Gtil_0))\to H^2(k_0,A_q)\text{.}\]

Let $t(G_0) \in H^2(k_0,Z(\Gtil_0))$ denote the {\em Tits class of $G_0$}\hs.
The definition of the Tits class is given in Subsection~\ref{ss:Tits} below.
We shall consider
\[\vktil_*(t(G_0))\in H^2(k_0,A_q)\text{.}\]
The following theorem gives necessary and sufficient conditions for the existence of
a $G_0$-equivariant $k_0$-model of $G/H$.
 \end{subsec}

 \begin{main-theorem} \label{t:sphfull'}
 Let $k_0$, $k$, and $\sG$ be as in Subsection \ref{ss:mod-Y}.
  Let $G$ be a (connected) reductive $k$-group.
  Let $G/H$ be a spherical homogeneous space of $G$,
  and let $(\sX, \Vm, \Omone,\Omt)$ be
  its combinatorial invariants.
   Let $G_0$ be a  $k_0$-model of $G$.
Assume that the $\gal$-action defined by $G_0$ preserves
    $(\sX, \Vm, \Omone,\Omt)$. Then $G/H$ admits a $G_0$-equivariant
    $k_0$-model if and only if
\begin{equation}\label{e:vktil*}
  \vktil_*(t(G_0))=1\in H^2(k_0,A_q)
\end{equation}
  with the notation of \ref{ss:Aq}.
\end{main-theorem}

Theorem \ref{t:sphfull'} will be proved in Subsection \ref{ss:proof-main}.
The idea of proof is that the  theorem follows from Theorem \ref{t:sphqs'}
together with Proposition \ref{p:tits},
which is a special case of Theorem \ref{t:twist'}.

\begin{remark}
The Tits classes $t(G_0)\in H^2(k_0,Z(\Gtil_0))$ were computed
in ``The Book of Involutions'' \cite[Examples (31.8)--(31.11)]{KMRT} for all classical groups $G_0$.
See also the tables of the Tits classes for all simple $\R$-groups
in Borovoi's  appendix to Moser-Jauslin and Terpereau \cite{MJTB}.
\end{remark}

\begin{remark}
  Theorem~\ref{t:sphqs'} generalizes a result of Snegirov
  \cite[Theorem~1.1]{sni18}, who assumes also that $k_0$ is a {\em large field}
  and that $\sN_G(\sN_G(H))=\sN_G(H)$, where $\sN_{G}(H)$ denotes the
  normalizer of $H$ in $G$.
  His result  in turn generalizes a result of
  Akhiezer and Cupit-Foutou \cite[Theorem~4.4]{acf14}, who considered
  the case of a \emph{split} group $G_0$ defined over $k_0=\Rd$.
Theorem \ref{t:sphfull'} generalizes Theorem 1.5 of Snegirov \cite{sni18}.
It also generalizes Theorem 3.19 of Moser-Jauslin and Terpereau \cite{MJTB},
where the authors considered the case  when $k_0=\R$
and $H$ is a horospherical subgroup.
\end{remark}

\begin{remark}
In the paper \cite[Theorem 9.2]{BG},
the first-named author proved  a version of Theorem \ref{t:sphfull'}
with the sufficient condition that the spherical subgroup $H$ is {\em spherically closed,}
instead of the necessary and sufficient condition \eqref{e:vktil*}.
\end{remark}

\begin{subsec}
We consider spherical embeddings.
Let $G/H\into Y^e$ be a spherical embedding, that is, a
$G$-equivariant open embedding of $G/H$ into a normal irreducible
$G$-variety $Y^e$. With a spherical embedding $G/H\into Y^e$, the
Luna-Vust theory \cite{lv83} associates its {\em colored fan}
$\CF(Y^e)$, see Knop \cite[Theorem~3.3]{kno91},
Timashev \cite[Theorem~15.4]{tim11}, Perrin \cite[Theorem~3.1.10]{per14}, or
Subsection \ref{ss:emb-1} below. The following theorem gives necessary
and sufficient conditions for the existence of a $G_0$-equivariant
$k_0$-model of a spherical embedding $G/H\into Y^e$.
\end{subsec}

\begin{theorem}[Theorem  \ref{t:main2} and Proposition \ref{p:nonqp}]
\label{t:main2'}
Let $G/H\into Y^e$ be a spherical embedding with colored fan $\CF(Y^e)$.
Let $G_0$ be a $k_0$-model of $G$ and write $G_0=\hs_c G_\qs$,
where $G_\qs$ is a quasi-split inner form of $G_0$.
Assume that the $\Gm$-action defined by $G_0$ preserves the combinatorial invariants of $G/H$.
Then $Y^e$ admits a $G_0$-equivariant  $k_0$-model if and only if
the  following three conditions are satisfied:
\begin{enumerate}
\item[{\rm (i)}]The $\sG$-action on $\Omega$ defined by $G_0$
can be lifted to a continuous $\sG$-action $\alpha^\Dm$ on $\Dm$ such that
the colored fan $\CF(Y^e)$ is $\sG$-stable with respect to $(G_0\hs,\alpha^\Dm)$
with the notation of Subsection \ref{ss:G-stable} below;
\item [{\rm (ii)}]   $\vktil_*(t(G_0))=1\in H^2(k_0,A_q)$.
\item[{\rm (iii)}] The colored fan $\CF(Y^e)$ is a union of
    quasi-projective colored subfans of $\CF(Y^e)$ that are $\sG$-stable with
    respect to $(G_0\hs,\alpha^\Dm)$.
\end{enumerate}
\end{theorem}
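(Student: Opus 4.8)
The plan is to reduce the statement for the embedding $Y^e$ to the already-established criteria for the open orbit $G/H$, following the two-step philosophy of Subsection~\ref{ss:G-k}: first pass to a quasi-split model, then twist. First I would treat the quasi-split case. Assume $G_0=G_\qs$ is quasi-split and that the $\Gm$-action preserves the combinatorial invariants. By Theorem~\ref{t:sphqs'}, $G/H$ admits a $G_\qs$-equivariant $k_0$-model of the form $G_\qs/H_\qs$ with $H_\qs\subset G_\qs$ defined over $k_0$. The question is then whether this model of the open orbit extends to a $G_\qs$-equivariant model of $Y^e$. By the Luna--Vust classification (\cite{kno91, tim11, per14}), $G$-equivariant embeddings of the fixed $G/H$ are classified by colored fans in $\Hom(\sX,\Q)$ supported on $\Vm$ with colors in $\Dm$; and a $k_0$-model of $Y^e$ extending the given $k_0$-model of the open orbit exists if and only if the colored fan $\CF(Y^e)$ is stable under the induced $\Gm$-action on the set of colored cones. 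Here the subtlety is that $\Gm$ acts canonically on $\Omega=\Omone\sqcup\Omt$ (this is determined by the $*$-action on $\X^*(B)$ and $\Sm$), but not canonically on $\Dm$ itself: over each point of $\Omt$ there are two colors that $\Gm$ may or may not swap. So the correct statement in the quasi-split case is: $Y^e$ admits a $G_\qs$-equivariant $k_0$-model if and only if the $\Gm$-action on $\Omega$ lifts to a continuous $\Gm$-action $\alpha^\Dm$ on $\Dm$ making $\CF(Y^e)$ stable — which is exactly condition~(i). I would prove this by a direct Galois-descent argument: the colored fan is a finite combinatorial datum, so a $\Gm$-stable colored fan descends (quasi-projectivity of $Y^e$ guarantees the descended object is a scheme, via \cite[III.1.3]{ser97}), and conversely a $k_0$-model induces such a stable lifted action.

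Next I would handle the twisting. Write $G_0={}_cG_\qs$ with $c\in Z^1(k_0,\Gbar_\qs)$, and suppose the $\Gm$-action preserves the combinatorial invariants. The key observation is that the $*$-action on $\X^*(B)$ and $\Sm$, hence the induced action on $\Omega$, depends only on the quasi-split form $G_\qs$ underlying $G_0$ — twisting by an inner cocycle does not change it. Therefore condition~(i) for $G_0$ is literally the same as condition~(i) for $G_\qs$. So the content of the twisting step is: \emph{given} that $Y^e$ admits a $G_\qs$-equivariant $k_0$-model $Y^e_\qs$ (equivalently, that condition~(i) holds), $Y^e$ admits a $G_0$-equivariant $k_0$-model if and only if $\vktil_*(t(G_0))=1$. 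This is precisely the analogue of Theorem~\ref{t:twist'} / Main Theorem~\ref{t:sphfull'} but for the embedding in place of the homogeneous space. I would derive it by applying Theorem~\ref{t:twist'} with $G_\dmd=G_\qs$, $Y=Y^e$, and $Y_\dmd=Y^e_\qs$: the theorem says $Y^e$ admits a ${}_cG_\qs$-equivariant model iff $\vk_*(\delta[c])\in H^2(\Gm,\sA_\dmd)$ is neutral, where now $\sA=\Aut^G(Y^e)$. The remaining task is to identify this obstruction with $\vktil_*(t(G_0))$. For this I use two facts: $\Aut^G(Y^e)=\Aut^G(G/H)=A(k)=A_q(k)$ (restriction to the open orbit is an isomorphism, since any $G$-automorphism of $Y^e$ is determined by its restriction to the dense open $G/H$ and conversely every $G$-automorphism of $G/H$ extends to $Y^e$ because it carries $\CF(Y^e)$ to itself — it acts on $\sX$, $\Vm$, $\Dm$ compatibly and in fact fixes the fan since $A$ acts trivially on the relevant data); and the identification $\vk_*\circ\delta = \vktil_*\circ(\text{comparison of }H^2(Z_\dmd)\text{ with }H^2(Z(\Gtil_0)))$ together with the fact that $\delta[c]$, pushed into $H^2$ of the center of the simply connected cover, is by definition the Tits class $t(G_0)$ (Subsection~\ref{ss:Tits}). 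Chasing these identifications — which is exactly the computation already done in the proof of Main Theorem~\ref{t:sphfull'} for $G/H$ — converts "$\vk_*(\delta[c])$ neutral" into "$\vktil_*(t(G_0))=1$", giving condition~(ii).

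Finally I would assemble the two steps. Conditions (i) and (ii) are independent: (i) concerns only the quasi-split form and the combinatorial/colored-fan data, (ii) concerns only the Tits class and the fixed homomorphism $\vktil$. The argument shows: $Y^e$ has a $G_0$-model $\iff$ [$Y^e$ has a $G_\qs$-model \emph{and} the twisting obstruction vanishes] $\iff$ [(i) \emph{and} (ii)]. The main obstacle, I expect, is the careful treatment in step~one of the non-canonical $\Gm$-action on the color set $\Dm$ over the two-element fibers of $\rho\times\vs$ — one must show that the existence of \emph{some} lift $\alpha^\Dm$ compatible with the canonical action on $\Omega$ and stabilizing $\CF(Y^e)$ is equivalent to descent of the embedding, and that different valid lifts give the same answer; this requires unwinding the Luna--Vust dictionary over the non-closed field $k_0$ and is where the definitions of Subsection~\ref{ss:G-stable} do the real work. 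A secondary, more routine obstacle is checking that $\Aut^G(Y^e)\to\Aut^G(G/H)$ is a $\Gm$-equivariant isomorphism so that the $H^2$ obstruction for $Y^e$ coincides with the one for $G/H$ and Main Theorem~\ref{t:sphfull'} can be invoked essentially verbatim.
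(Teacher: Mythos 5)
There is a genuine gap in the twisting step. You claim $\Aut^G(Y^e)=\Aut^G(G/H)=A(k)$, asserting that every $G$-equivariant automorphism of $G/H$ extends to $Y^e$ because ``$A$ acts trivially on the relevant data.'' This is false: $A$ acts on the set of colors $\Dm$ by permuting the two colors above each point of $\Omt$, and the kernel of $A\to\Aut(\Dm)$ is the proper subgroup $A^{\rm ker}=\overline{H}/H$, where $\overline{H}$ is the spherical closure (see Construction~\ref{con:Y^e} and the exact sequence~\eqref{e:Ac}). By Luna--Vust, an automorphism of $G/H$ extends to $Y^e$ precisely when it stabilizes the colored fan $\CF(Y^e)$, including each color set $\Fm$; so $\Aut^G(Y^e)$ is in general the strict subgroup of $A(k)$ fixing $\CF(Y^e)$. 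Consequently, applying Theorem~\ref{t:twist'} with $Y=Y^e$ produces an obstruction in $H^2$ of this smaller group, not in $H^2(k_0,A_q)$, and the identification with $\vktil_*(t(G_0))\in H^2(k_0,A_q)$ required for condition~(ii) does not follow from the computation you cite.

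The paper resolves this by a detour through $A^{\rm ker}$: the homomorphism $\vk\colon Z(G)\to A$ actually lands in $A^{\rm ker}$ (since $Z(G)\subset B$ fixes every color), and Proposition~\ref{l:inj} shows that $i_*\colon H^2(k_0,A^{\rm ker}_\qs)\to H^2(k_0,A_\qs)$ is injective, so the conditions $\vktil_*(t(G_0))=1$ in $H^2(A_q)$ and $\vktil^{\rm ker}_*(t(G_0))=1$ in $H^2(A^{\rm ker}_q)$ are equivalent. This is used in Proposition~\ref{p:main2} to choose the twisting map $a\colon\sG\to A^{\rm ker}_\qs(k)$, which does not disturb the induced action $\alpha^\Dm$ on colors; then Huruguen's theorem extends the resulting model of $G/H$ to $Y^e$. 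Your step~one (quasi-split case plus Huruguen) is essentially the paper's, but without the $A^{\rm ker}$ mechanism the twisting step cannot be made to produce exactly condition~(ii).
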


See Section~\ref{s:embeddings} for the notion of a quasi-projective colored subfan.
Condition (iii) is trivially satisfied if the variety $Y^e$ is quasi-projective.
Theorem~\ref{t:main2'} will be deduced in Section~\ref{s:embeddings}
from our Theorem~\ref{t:sphfull'} and Theorem~2.23 of Huruguen \cite{Huruguen}.

\begin{subsec}
We consider horospherical homogeneous spaces.
In the setting of Subsections~\ref{ss:intro-models}
and \ref{ss:intro-twist}, we  consider
the case when $G$ is a reductive $k$-group
and $H\subset G$ is a {\em horospherical} subgroup,
that is, $H$ contains a maximal unipotent subgroup of $G$
(the unipotent radical of a Borel subgroup).
Then $G/H$ is called a horospherical homogeneous space.

In the case when $G$ is a simply connected simple $k$-group,
and $k_0$ is a {\em local} field
(that is, $\R$ or a $p$-adic field), using Tate duality, we write explicitly
the cohomological condition $\vk_*(t(G_0))=1$ in  Theorem \ref{t:sphfull'}  (here $\vktil_*=\vk_*$);
see Subsection \ref{ss:simple-R-p} and Theorem \ref{t:horo-types}.
For a horospherical homogeneous space of a simply connected absolutely simple
$k_0$-group $G_0$ over a  {\em number field} $k_0$,
we prove a local-global principle for the condition $\vk_*(t(G_0))=1$;
see Theorem \ref{t:number-HP}.
\end{subsec}

\begin{subsec}
The plan for the rest of the article is as follows. In
Section~\ref{s:pre} we recall basic definitions and results. In
Section~\ref{s:existence} we prove Theorem~\ref{t:twist'}. In
Section~\ref{s:tits} we apply Theorem~\ref{t:twist'} to homogeneous
spaces of a reductive group.
In Section~\ref{s:sphaff} we study the existence of models
of affine spherical varieties. In Section~\ref{s:spharb} we use
results of Section~\ref{s:sphaff} to prove Theorem~\ref{t:sphqs'}
and we deduce Theorem~\ref{t:sphfull'}.
In Section~\ref{s:embeddings} we consider equivariant models of spherical
embeddings and prove Theorem~\ref{t:main2'}.
In Sections \ref{s:horo} and \ref{s:horo-l-n} we consider equivariant models
of horospherical homogeneous spaces.
In particular, in Section  \ref{s:horo-l-n} we consider equivariant models
over local fields and number fields.
In Section~\ref{s:H x H} we consider equivariant models
of homogeneous varieties of the form $H^n/\Delta$,
where $H$ is a linear algebraic $k$-group and $\Delta\subset H^n$ is the diagonal,
that is, $H$ embedded into $H^n$ diagonally; in general these varieties are not spherical.
In Section~\ref{s:examples} we give examples.
In Appendix~\ref{s:G/U} we
compare the sets of $k_0$-rational points of a $k_0$-variety $X_0$ on
which a unipotent $k_0$-group $U_0$ acts and of the quotient variety
$X_0/U_0$.
In Appendix~\ref{s:AqAq} we describe the group $A_\qs$ in terms of
combinatorial data.
In Appendix~\ref{s:A} we give an alternative proof
of the important Proposition~\ref{l:inj}.
In Appendix~\ref{s:A-TN} we consider Tate duality.
\end{subsec}

\section{Preliminaries}
\label{s:pre}

\begin{subsec} \label{ss:semi-linear}
Let $k_0$, $k$, and $\sG$ be as in Subsection \ref{ss:mod-Y}.
For $\gamma \in \sG$, denote by $\gamma^{*} \colon \Spec k \to \Spec k$
the morphism of schemes induced by $\gamma$.
Notice that $(\gamma_1 \gamma_2)^{*}=\gamma_2^{*} \circ \gamma_1^{*}$.

Let $(Y,p_Y \colon Y\to \Spec k)$ be a $k$-scheme.
A \emph{$k/k_0$-semilinear automorphism of $Y$} is a pair $(\gamma,\mu)$,
where $\gamma\in \sG$ and $\mu\colon Y\to Y$
is an isomorphism of schemes, such that the following diagram commutes:
\begin{equation*}
\xymatrix@R=25pt@C=40pt{
Y\ar[r]^\mu \ar[d]_{p_Y}          & Y\ar[d]^{p_Y} \\
\Spec k\ar[r]^-{(\gamma^*)^{-1}}   & \Spec k\mathrlap{\text{.}}
}
\end{equation*}
In this case we also say that $\mu$ is a $\gamma$-semilinear automorphism of $Y$.
We shorten ``$\gamma$-semilinear automorphism'' to ``$\gamma$-semi-automorphism''.
Note that if $(\gamma,\mu)$ is a semi-automorphism of $Y$,
then $\mu$ uniquely determines $\gamma$;
see \cite[Lemma 1.6]{BG}.

We denote by $\SAut_{k/k_0}(Y)$ or just by $\SAut(Y)$ the group
of all $\gamma$-semi-automorphisms $\mu$ of $Y$,
where $\gamma$ runs over $\sG=\Gal(k/k_0)$.
By a \emph{semilinear action} of $\sG$ on $Y$ we mean a homomorphism of groups
\[\mu \colon \sG \rightarrow \SAut(Y), \quad \gamma \mapsto \mu_\gamma\]
such that for each $\gamma \in \sG$ the automorphism $\mu_\gamma$ is $\gamma$-semilinear.

If we have a $k_0$-scheme $Y_0$, then the formula
\begin{equation}\label{e:s-action}
\gamma \mapsto \id_{Y_0} \times  (\gamma^{*})^{-1}
\end{equation}
defines a semilinear action of $\sG$ on
\[Y_{0,k}= Y_0 \underset{\Spec k_0}{\times}\Spec k\text{.}\]
Thus a $k_0$-model of $Y$ induces a semilinear action of $\sG$ on $Y$.

\begin{definition}\label{d:algebraic}
Let $Y$ be a $k$-variety.
We say that a semilinear action $\mu$ of $\sG$ on $Y$ is {\em algebraic}
if there exists a finite Galois extension
$k_1/k_0$ in $k$ and a $k_1$-model $Y_1$ of $Y$ inducing the restriction
of $\mu$ to $\Gal(k/k_1)$.
\end{definition}

The semilinear $\sG$-action on $Y$ coming from a $k_0$-model is clearly algebraic.
Conversely:

\begin{lemma}[Galois descent for varieties]
\label{l:semilinear}
Let $Y$ be a $k$-variety.
An algebraic semilinear action $\mu$ of $\sG$ on $Y$
comes from a $k_0$-model of $Y$ if and only if
$Y$ admits a covering by $\sG$-stable affine open subvarieties,
or equivalently, if and only if
$Y$ admits a covering by $\sG$-stable quasi-projective open subvarieties.
In particular, if $Y$ is quasi-projective, then any algebraic
semilinear $\sG$-action on $Y$ comes from a $k_0$-model.
\end{lemma}

\begin{proof}
See Jahnel \cite[Proposition 2.5 and its proof]{Jahnel}.
See also  Borel and Serre \cite[Lemma 2.12]{BS}.
\end{proof}

\begin{subsec}
Let $(G,p_G \colon G \to \Spec k)$ be a linear algebraic group over $k$.
A $k/k_0$-semilinear automorphism of $G$ is a pair
$(\gamma,\tau)$ where $\gamma\in \sG$ and $\tau\colon G\to G$
is a morphism of schemes such that the diagram
\begin{equation*}
\xymatrix@R=25pt@C=40pt{
G\ar[r]^\tau \ar[d]_{p_G}          & G\ar[d]^{p_G} \\
\Spec k\ar[r]^-{(\gamma^*)^{-1}}   & \Spec k
}
\end{equation*}
commutes and that the $k$-morphism
\[\tau_\natural\colon\gamma_*G\to G\]
is an isomorphism of algebraic groups over $k$; see \cite[Definition 2.2]{BG}
for the notations $\tau_\natural$ and $\gamma_* G$.

We denote  by $\SAut_{k/k_0}(G)$, or just by $\SAut(G)$,
the group of all $\gamma$-semilinear automorphisms
$\tau$ of $G$, where $\gamma$ runs over $\sG=\Gal(k/k_0)$.
By a semilinear action of $\sG$ on $G$ we mean a homomorphism
\[\sigma \colon \sG \to \SAut(G), \quad \gamma \mapsto \siga\]
such that for all $\gamma \in \sG$ the semi-automorphism $\siga$ is $\gamma$-semilinear.
As above, a $k_0$-model $G_0$ of $G$ induces an algebraic semilinear action of $\sG$ on $G$.
Conversely, any algebraic semilinear action $\sigma$
of $\sG$ on $G$ comes from a $k_0$-model of $G$.
Indeed, since $G$ is an affine variety,
by Lemma \ref{l:semilinear} the action $\sigma$ comes
from some $k_0$-model $G_0$ of $G$ as a variety.
Since $\sigma_\gamma\in \SAut(G)$ for all $\gamma\in\sG$, we see that
the composition law, the inversion map, and the unit element in $G$
are ``defined over $k_0$'', that is, come from $k_0$-morphisms.
See  Jahnel \cite[Proposition 2.8]{Jahnel}.
\end{subsec}

Let $G$ be a linear algebraic group over $k$, and let $Y$ be a $G$-$k$-variety.
Let $G_0$ be a $k_0$-model of $G$. It gives rise to a semilinear action
$\sigma \colon \sG \rightarrow \SAut(G)$, $\gamma \mapsto \siga$.
Let $Y_0$ be a $G_0$-equivariant $k_0$-model of $Y$.
It gives rise to an algebraic semilinear action $\mu \colon \sG \to \SAut(Y)$ such that
\begin{equation}\label{e:sigma-equi}
\mu_\gamma(g \cdot y)=\sigma_\gamma(g) \cdot \mu_\gamma(y)
      \quad\text{for all }\gamma\in\sG,\  y\in Y(k),\, g\in G(k).
\end{equation}
\end{subsec}

\begin{definition}\label{d:semi-linear}
Let $Y$ be a $G$-variety, and let $G_0$ be a $k_0$-model of $G$
with semilinear action $\sigma\colon\sG\to\SAut(G)$.
Let $\mu\colon\sG\to\SAut(Y)$ be a semilinear action.
We say that $\mu$ is {\em $\sigma$-equivariant} if $\eqref{e:sigma-equi}$ holds.
\end{definition}

If $\mu$ comes from a $G_0$-equivariant $k_0$-model $Y_0$ of $Y$,
then it is $\sigma$-equivariant and algebraic. Conversely:

\begin{lemma}[Galois descent for $G$-varieties]
\label{p:lemma-5-4}
Let $G$, $Y$, $G_0$, and $\sigma$ be as in Definition~\ref{d:semi-linear}.
Let $\mu$ be a $\sigma$-equivariant algebraic semilinear action of $\sG$ on $Y$.
The following two assertions are equivalent:
\begin{enumerate}
\item[\rm (i)] $\mu$ comes from a $G_0$-equivariant $k_0$-model $Y_0$ of $Y$;
\item[\rm (ii)] $Y$ admits a covering by $\sG$-stable quasi-projective open subvarieties.
\end{enumerate}
\end{lemma}

\begin{proof}[Idea of proof]
Clearly, (i) implies (ii). Conversely, if (ii) holds, then by Lemma \ref{l:semilinear}
the semilinear action $\mu$ comes from a $k_0$-model $Y_0$ of $Y$.
Since $\mu$ is $\sigma$-equivariant, the $k$-morphism $G\times_k Y\to Y$
comes from a $k_0$-morphism $G_0\times_{k_0} Y_0\to Y_0$;
see Jahnel \cite[Proposition 2.8]{Jahnel}. See also \cite[Lemma 5.4]{BG}.
\end{proof}

Note that if $Y$ is a {\em homogeneous} $G$-variety, then it is quasi-projective
(see Borel \cite[Theorem 6.8]{bor91} or Springer \cite[Corollary 5.5.6]{Springer-book})
and therefore (ii) is satisfied.

\begin{subsec}\label{ss:tw}
Let $G$ be a group scheme over $k$.
We have an exact sequence
\[ 1\to \Aut(G)\to \SAut(G)\to \sG.\]
Let $G_0$ be a $k_0$-model of $G$; it defines a semilinear action
\[ \sigma\colon \sG\to \SAut(G)\text{.}\] Since $\Aut(G)$ is a normal
subgroup of $\SAut(G)$, this action induces an action of $\sG$ on the
group $\Aut(G)$ regarded as an abstract group.
Recall that a map
\[c\colon \sG\to \Aut(G)\] is called a \emph{1-cocycle} if the map $c$
is locally constant and satisfies the cocycle condition
\eqref{e:cocycle-cond}. The set of such 1-cocycles is denoted by
$Z^1(\sG,\Aut(G))$ or $Z^1(k_0,\Aut(G))$. For every $c\in Z^1(k_0,\Aut(G))$,
we consider the $c$-twisted semilinear action
\[\sigma'\colon \sG\to \SAut(G),\quad \gamma\mapsto c_\gamma\circ \sigma_\gamma.\]
Then, clearly, $\sigma'_\gamma$ is
a $\gamma$-semi-automorphism of $G$ for any $\gamma\in\sG$.
It follows from the cocycle condition \eqref{e:cocycle-cond} that
\[\sigma'_{\gamma \beta  }=\sigma'_\gamma\circ\sigma'_{\beta  }\
     \text{ for all }\gamma,\beta  \in\sG.\]
Since $G$ is an algebraic group, the semilinear action $\sigma'$
comes from some $k_0$-model $G_0'$ of $G$;
see Serre \cite[III.1.3, Corollary of Proposition 5]{ser97}.
We write $G_0'= {}_c G_0$ and say that $G_0'$ is
the \emph{twisted form of $G_0$ defined by the 1-cocycle} $c$.
 \end{subsec}

\begin{subsec}\label{ss:pure}
Let $G$ be a linear algebraic group over $k$, and let $Y$ be a
  \emph{quasi-projective} $G$-$k$-variety. Let $G_\dmd$ be a
  $k_0$-model of $G$, and assume that $Y$ admits a
  $G_\dmd$-equivariant $k_0$-model $Y_\dmd$.
Let
\[\ctil\colon \sG\to G_\dmd(k)\]
be a 1-cocycle \emph{with values in $G_\dmd$}, that is, $\ctil\in Z^1(k_0,G_\dmd)$.
With the notation of Subsection \ref{ss:intro-twist},
consider  $i\circ \ctil\in Z^1(k_0,\Gbar_\dmd)$;
by abuse of notation, we write $_\ctil G_\dmd$ for  $_{i\circ\ctil} G_\dmd$.
We say that $_\ctil G_\dmd$ is a \emph{pure inner form} of $G_\dmd$.

It follows from the cohomology exact sequence \eqref{e:coh-e-s} below
that, for a cocycle $c\in Z^1(k_0, \Gbar_\dmd)$,
the twisted form $_c G_\dmd$ is a pure inner form of $G_\dmd$ if and only if $\delta[c]=1$.
\end{subsec}

\begin{lemma}\label{l:pure-inner}
  Let $G$, $Y$, $G_\dmd$, and $Y_\dmd$ be as in \ref{ss:pure}.
  Let $\ctil\in Z^1(k_0,G_\dmd)$ be a 1-cocycle in $G_\dmd$.
  Consider the pure inner
  form~${}_\ctil G_\dmd$. Then $Y$ admits a
  ${}_\ctil G_\dmd$-equivariant $k_0$-model.
\end{lemma}

\begin{proof}
The $k_0$-models $G_\dmd$ and $Y_\dmd$ define
semilinear actions
\[ \sigma\colon \sG\to\SAut(G)\quad\text{and}\quad\mu\colon\sG\to\SAut(Y)\]
such that for any $\gamma\in \sG$ the semi-automorphism
$\mu_\gamma$ is $\sigma_\gamma$-equivariant, that is,
\[\mu(g\cdot y)=\sigma_\gamma(g)\cdot\mu_\gamma(y)
       \quad\text{for all }g\in G(k),\, y\in Y(k)\text{.}\]
Let $\ctil\colon \sG\to G(k)$ be a 1-cocycle, that is,
$\ctil\in Z^1(k_0,G_\dmd)$. Consider the pure inner form
$G_0= {}_\ctil G_\dmd$; then
\[\sigma_\gamma^0(g)=\ctil_\gamma\cdot \sigma_\gamma(g)\cdot \ctil_\gamma^{\hs-1}
             \text{ for all }\gamma\in\sG,\, g\in G(k)\text{,}\]
where $\sigma^0$ is the semilinear action defined by $G_0$.
Now we define the twisted form ${}_\ctil Y_\dmd$ as follows.
We set
\[\mu_\gamma^0(y)=\ctil_\gamma\cdot\mu_\gamma(y)\text{ for } y\in Y(k);\]
then $\mu_\gamma^0$ is a $\gamma$-semi-automorphism of $Y$.
Since $\ctil$ is a 1-cocycle, we have
\[\mu^0_{\gamma \beta  }=\mu^0_\gamma\circ\mu^0_{\beta  }
       \quad\text{for all }\gamma,\beta  \in\sG;\]
hence, $\mu^0$ is a semilinear action on $Y$.
An easy calculation shows that
\[\mu^0(g\cdot y)=\sigma^0_\gamma(g)\cdot\mu^0_\gamma(y)
       \quad\text{for all }g\in G(k),\, y\in Y(k);\]
hence, the semilinear action $\mu^0$ is $\sigma^0$-equivariant.

Let $k_1/k_0$ be a finite Galois extension in $k$ such that
the restriction of $\ctil$ to $\Gal(k/k_1)$ is identically $1$.
Set $G_1=G_\dmd\times_{k_0} k_1$ and $Y_1=Y_\dmd\times_{k_0} k_1$.
Then $Y_1$ is a $G_1$-equivariant $k_1$-model of $Y$,
and $Y_1$ induces a semilinear action $\mu^1\colon \Gal(k/k_1)\to\SAut(Y)$,
which is the restriction of $\mu$ and of $\mu^0$ to $\Gal(k/k_1)$.
Thus $\mu^0$ is algebraic.

Since $Y$ is quasi-projective, by Lemma \ref{p:lemma-5-4}
the algebraic $\sigma^0$-equivariant semilinear action $\mu^0\colon\sG\to\SAut(Y)$ on $Y$
defines a $G_0$-equivariant $k_0$-model $Y_0=\hs_\ctil Y_\dmd$.
\end{proof}

\begin{subsec}\label{ss:BRD}
Let $G$ be a reductive group over $k$.
Let
\[\BRD(G)=\BRD(G,T,B)=(X,X^\vee,R,R^\vee,\Sm,\Sm^\vee)\]
denote the {\em based root datum of $G$}.
Here $X=\X^*(T)$ is the character group of $T$,
$X^\vee$ is the cocharacter group of $T$,
$R=R(G,T)\subset X$ is the root system,
$R^\vee\subset X^\vee$ is the coroot system,
$\Sm=\Sm(G,T,B)\subset R$ is the basis of $R$ defined by $B$,
$\Sm^\vee\subset R^\vee$ is the corresponding basis of $R^\vee$.
See Springer \cite[Sections 1 and 2]{Springer-RG} for details.

We explain how the Borel subgroup $B$ determines $\Sm$.
Write $\gg=\Lie(G)$. Then we have  root decompositions
\[
\gg=\Lie(T)\oplus\bigoplus_{\beta\in R}\gg_\beta\qquad\text{and}
\qquad\Lie(B)=\Lie(T)\oplus\bigoplus_{\beta\in R^+}\gg_\beta\hs,
\]
where the {\em set of positive roots} $R^+=R^+(G,T,B)$ is defined
by the root decomposition of $\Lie(B)$.
A {\em simple root} is a positive root $\alpha \in R^+$
that is not a sum of two or more positive roots.
The basis $\Sm=\Sm(G,T,B)$ of $R$ is the set of all simple roots in $R^+$.
\end{subsec}

\begin{construction}\label{con:*-action}
Let $G_0$ be a $k_0$-model of $G$.
Let $\gamma\in \Gal(k/k_0)$; it defines  a $\gamma$-semi-automorphism $\sigma_\gamma$ of $G$.
We fix a Borel pair $(T,B)$, that is, a maximal torus $T$
and a Borel subgroup $B$ such that $T\subset B\subset G$.
Consider
\[ \sigma_\gamma(T)\subset\sigma_\gamma(B) \subset G.\]
Then $(\sigma_\gamma(T),\sigma_\gamma(B))$ is again a Borel pair, and by
Theorems 6.2.7 and 6.4.1 in Springer's book \cite{Springer-book},
there exists $g_\gamma\in G(k)$ such that
\[ g_\gamma\cdot\sigma_\gamma(T)\cdot g_\gamma^{-1}=T,
     \quad  g_\gamma\cdot\sigma_\gamma(B)\cdot g_\gamma^{-1}=B.\]
Set $\tau={\rm inn}(g_\gamma)\circ\sigma_\gamma\colon\  G\to G$; then $\tau$
is a $\gamma$-semi-automorphism of $G$, and
\begin{equation}\label{e:sig-T-B}
 \tau(B)=B,\quad \tau(T)=T.
\end{equation}

By \eqref{e:sig-T-B},  $\tau$ naturally acts on $\BRD(G,T,B)$;
we denote the corresponding automorphism of $\BRD(G,T,B)$ by $\veg$.
By definition,
\begin{equation*}
\veg(\chi)(b)=\upgam(\chi(\hs \tau^{-1}(b)\hs )\hs )
              \quad\text{for }\chi\in\X^*(B),\ b\in B(k),
\end{equation*}
and the same holds for the characters of $T$ (recall that $\X^*(B)=\X^*(T)$\hs ).
Since  $\tau(T)=T$ and $\tau(B)=B$, we see that $\veg$, when acting on $\X^*(T)$,
preserves $R$ and  $\Sm$.
Similarly, $\veg$ acts on $X^\vee$, $R^\vee$, and $\Sm^\vee$.
It is well known (see for example \cite[Section 3.2 and Proposition 3.1(a)]{BKLR})
that the automorphism $\veg$ does not depend on the choice of $g_\gamma$ and that the map
\[\ve\colon \Gal(k/k_0)\to \Aut\,\BRD(G,T,B),\quad \gamma\mapsto \veg \]
is a homomorphism.
The action of $\Gal(k/k_0)$ on $\BRD(G,T,B)$ via $\ve$ is called the {\em  $*$-action};
see Tits \cite[Section 2.3]{Tits}, or Conrad \cite[Remark 7.1.2]{Conrad},
or \cite[Section 7.1]{BG}.
\end{construction}

\begin{subsec}\label{ss:Luna-Vust}
Let $G$ be a (connected) reductive $k$-group
and $Y=G/H$ be a spherical homogeneous space.
According to the Luna-Vust theory of spherical varieties
(Luna and Vust \cite{lv83}, Knop \cite{kno91}),
with $Y=G/H$  one can associate a certain triple $(\sX, \Vm, \Dm)$ as follows.

  We fix a Borel subgroup $B$ of $G$ and a maximal torus $T\subset B$.
  We denote by $\X^*(B)$ the character group of $B$.
  Let $k(Y)$ denote the field of rational functions of $Y$.
For $\chi\in\X^*(B)$, let
 $k(Y)^{(B)}_\chi$ denote the space of $\chi$-eigenfunctions in $k(Y)$, that is,
 the $k$-space of rational functions $f\in k(X)$ such that
\[  f(b^{-1}\hm\cdot\hm y)=\chi(b)\hs f(y)\quad \text{for all } b\in B(k),\ y\in Y(k).\]
Since $B$ has an open dense orbit in $Y$,
the $k$-dimension of $k(Y)^{(B)}_\chi$ is  at most 1.
Let $\sX=\sX(Y)\subset\X^*(B)$ denote the set
of characters $\chi$ of $B$ such that $k(Y)^{(B)}_\chi\neq \{0\}$.
Then $\sX$ is a subgroup of $\X^*(B)$ called the {\em weight lattice of $Y$.}
We set
\[ V=V(Y)=\Hom_\Z(\sX,\Q).\]

Let $\Val(k(Y))$ denote the {\em set of $\Q$-valued valuations}
of the field $k(Y)$ that are trivial on $k$.
The group  $G(k)$ naturally acts on $k(Y)$ and on $\Val(k(Y))$.
We shall consider
the set $\Val^B(k(Y))$ of $B(k)$-invariant valuations and
the set $\Val^G(k(Y))$ of $G(k)$-invariant valuations.
We have a canonical map
\[\rho\colon \Val^B(k(Y))\to V,\quad v\mapsto (\chi\mapsto v(f_\chi)\hs ),\]
where $v\in\Val^B(k(Y)),\ \chi\in\sX,\ f_\chi\in k(Y)^{(B)}_\chi$\hm, $f_\chi\neq 0$.
We denote by
\[\sV=\sV(Y)\coloneqq \rho(\Val^G(k(Y)))\subset V\]
the image of $\Val^G(k(Y))$ in $V$.
It is a cone in $V$ called the {\em valuation cone of $Y$}.

Let $\sD=\sD(Y)$ denote the {\em set of colors of $Y$}, that is,
the set of $B$-invariant prime divisors in $Y$.
Each color $D\in\sD$ defines a $B$-invariant valuation of $k(Y)$ that we denote by $\val(D)$.
Thus we obtain a map
\[\val\colon \sD\to\Val^B(k(Y)).\]
By abuse of notation we denote $\rho(\val(D))\in V$ by $\rho(D)$.
Thus we obtain a map
\[\rho\colon \sD\to V.\]

For $D\in\sD$, let $\Stab_G(D)$ denote the stabilizer of $D\subset Y$ in $G$.
Clearly $\Stab_G(D)\supset B$, hence $\Stab_G(D)$ is a parabolic subgroup of $G$.
Let $\Sm=\Sm(G,T,B)$ denote the set of simple roots of $G$.
For $\alpha\in \Sm$, let $P_\alpha\supset B$
denote the corresponding minimal parabolic subgroup of $G$ containing $B$.
Let $\vs(D)$ denote the set of $\alpha\in \Sm$ for which
$P_\alpha$ is {\em not contained} in $\Stab_G(D)$.
We obtain a map
\[\vs\colon \sD\to\sP(\Sm),\]
where $\sP(\Sm)$ denotes the set of all subsets of $\Sm$.
See \cite[Section 6.2]{BG} for details.
\end{subsec}

\begin{subsec}\label{ss:Om1-Om2}
  From $(\Dm,\rho,\vs)$ we obtain two finite sets $\Omone$ and $\Omt$.
  Namely, we denote by $\Omega$ the image of the map
  \[\rho\times\vs\colon\, \Dm\,\longrightarrow \,  \Hom(\sX,\Q)\times\Pm(\Sm)\text{.}\]
  Every element of $\Omega$ has either one or two preimages in $\Dm$
  under $\rho\times\vs$; see \cite[Corollary~6.4]{BG}. Let
  $\Omone$ (resp.\ $\Omt$) denote the subset of  $\Omega$ consisting of
  elements with exactly one preimage (resp.\ exactly two preimages) in $\Dm$.
  Thus, from the set of colors $\Dm$ endowed with the maps $\rho$ and
  $\vs$, we obtained two subsets
  \[\Omone,\Omt\subset \Hom(\sX,\Q)\times\Pm(\Sm)\text{.}\]
  We say that $(\sX, \Vm, \Omone, \Omt)$ are the
  combinatorial invariants of $G/H$. By Losev's Uniqueness Theorem
  \cite[Theorem~1]{los09a} (see also \cite[Proposition 6.7 and Corollary 6.10]{BG}\hs)
  the invariants $(\sX,\Vm,\Omone,\Omt)$
  uniquely determine a spherical homogeneous space $G/H$
  of the given reductive $k$-group $G$ up to a $G$-equivariant isomorphism.

  A $k_0$-model $G_0$ of $G$ defines the  $*$-action $\ve$ of $\sG$ on $X=\X^*(B)$ and
  on $\Sm\subset \X^*(B)$, so for every $\gamma \in \gal$ we obtain a
  new set of invariants
  $({}^\gamma\sX, {}^\gamma\Vm, {}^\gamma\Omone,{}^\gamma\Omt)$.
\end{subsec}

 \begin{proposition}[{Huruguen \cite[Section 2.2]{Huruguen}, \cite[Proposition 8.2]{BG}}]
 \label{p:BG-Hur}
 If $G/H$ admits a $G_0$-equivariant $k_0$-model, then the $*$-action
 of $\sG$ on $\X^*(B)$ and $\Sm$ defined by $G_0$
  preserves the combinatorial invariants  $(\sX, \Vm, \Omone,\Omt)$, that is,
  \[(\hs^\gamma\hm\sX, {}^\gamma\Vm, {}^\gamma\Omone,{}^\gamma\Omt)=(\sX, \Vm, \Omone,\Omt)\]
  for all $\gamma\in\Gm$.
 \end{proposition}

\section{Model for an inner twist of the group}
\label{s:existence}

\begin{subsec}\label{ss:horo-1}
Let $k_0$, $k$, and $\sG$ be as in Subsection \ref{ss:mod-Y}.
Let $G$ be a linear algebraic group over $k$.
Let $Y$ be a $G$-$k$-variety.
Let $Z$ denote the center of $G$.
We consider the algebraic group $\Gbar\coloneqq G/Z$.
The algebraic group $\Gbar$ naturally acts on $G$:
\[g  Z(k)\colon x\mapsto g   x   g^{-1}\quad
       \text{for all }g  Z(k)\in \Gbar(k),\, x\in G(k).\]

Let $G_\dmd$ be a $k_0$-model of $G$.
We write $\Gbar_\dmd=G_\dmd/Z_\dmd$, where $Z_\dmd$ is the center of $G_\dmd$.
The $k_0$-model $\Gbar_\dmd$ of $\Gbar$ defines  a semilinear action:
\[\sigma\colon\sG\to \SAut(\Gbar);\]
see~\eqref{e:s-action}.
We have an action of $\sG$ on the group of $k$-points $\Gbar_\dmd(k)$:
\[(\gamma,g)\mapsto \upgam\hm g=\sigma_\gamma(g) \quad
        \text{for } \gamma\in\sG,\, g\in \Gbar(k)=\Gbar_\dmd(k).\]

Let $c\in Z^1(k_0,\Gbar_\dmd)$ be a 1-cocycle.
We denote by $G_0={}_c G_\dmd$ the corresponding inner twisted form of $G_\dmd$;
see Subsection \ref{ss:tw}.
This means that $G_0(k)=G_\dmd(k)$, but the Galois action is twisted by $c$:
\[\sigma^0_\gamma=c_\gamma\circ\sigma_\gamma\quad\text{for all }\gamma\in \sG,\]
where we regard $\Gbar_\dmd(k)$ as a subgroup of $\Aut(G)$.

In this section we assume that there exists
a $G_\dmd${\em-equivariant} $k_0$-model $Y_\dmd$ of $Y$.
We give a criterion for the existence of
a $G_0${\em -equivariant} $k_0$-model $Y_0$ of $Y$,
where $G_0={}_c G_\dmd$.
\end{subsec}

\begin{subsec}\label{ss:horo-2}
We write $[c]\in H^1(k_0,\Gbar_\dmd)$ for the cohomology class of $c$.
We consider the short exact sequence
\[ 1\to Z_\dmd\to G_\dmd\to \Gbar_\dmd\to 1\]
and the corresponding connecting map
\[\delta\colon H^1(k_0,\Gbar_\dmd)\to H^2(k_0,Z_\dmd)\]
from the cohomology exact sequence
\begin{equation}\label{e:coh-e-s}
H^1(k_0,Z_\dmd)\to H^1(k_0,G_\dmd)\to H^1(k_0,\Gbar_\dmd)\labelto{\delta}H^2(k_0,Z_\dmd);
\end{equation}
see Serre \cite[I.5.7, Proposition 43]{ser97}.
We obtain $\delta[c]\in H^2(k_0,Z_\dmd)$.

The $G_\dmd$-equivariant $k_0$-model $Y_\dmd$ of $Y$  defines
an action of $\sG$ on $\sA\coloneqq \Aut^G(Y)$ by
\begin{equation}\label{e:ggg-action}
(\upgam a)(y)=\upgam(a(\hs^{\gamma^{-1}}\!y))
     \quad \text{for all }\gamma\in \sG,\, a\in\sA,\, y\in Y(k).
\end{equation}
We denote by $\sA_\dmd$ the corresponding $\sG$-group.
We obtain a homomorphism
\[\mu\colon \sG\to\SAut(Y), \quad \gamma\mapsto \mu_\gamma,
        \quad\text{where }\mu_\gamma(y)=\upgam y\ \
        \text{for all }\gamma\in\sG,\, y\in Y(k)=Y_\dmd(k),\]
and a homomorphism
\begin{equation}\label{e:tau-tau}
\tau\colon \sG\to\Aut(\sA),\quad \gamma\mapsto\tau_\gamma\hs ,
    \quad\text{where }\tau_\gamma(a)=\upgam a\ \
\text{for all }\gamma\in\sG,\, a\in \sA.
\end{equation}

The center $Z_\dmd\subset G_\dmd$ acts on $Y_\dmd$,
and this action clearly commutes with the action of~$G_\dmd$.
Thus we obtain a canonical $\sG$-equivariant homomorphism
\begin{equation}\label{e:vk-def}
\vk\colon Z_\dmd(k)\to \sA_\dmd.
\end{equation}
\end{subsec}

\begin{subsec}\label{ss:nonab-H2-0}
We need  the nonabelian cohomology set $H^2(\sG,\sA_\dmd)$;
see Springer \cite[Section 1.14]{spr66}.
Set $\Out(\sA)=\Aut(\sA)/\Inn(\sA)$, the group of outer automorphisms of $\sA$.
The homomorphism $\tau\colon \sG\to\Aut(\sA)$ induces
a homomorphism $\kappa\colon\sG\to\Out(\sA)$.
We consider the set of 2-cocycles $Z^2(\sG,\sA,\kappa)$.
By definition, a 2-{\em cocycle} $(f,g)\in Z^2(\sG,\sA,\kappa)$ is a pair of  maps
\[ f\colon\sG\to \Aut(\sA),\quad g\colon \sG\times \sG\to \sA,\]
 such that the pair $(f,g)$
satisfies conditions (5) of  Springer \cite[Section 1.14]{spr66},
which we do not rewrite here.
Here  the map $g$ must be locally constant, and also $f$
must satisfy a certain continuity condition,
which we do not need in this article;
compare Flicker, Scheiderer, and Sujatha \cite[Definition (1.10)]{FSS}.
There is a natural equivalence relation on $Z^2(\sG,\sA,\kappa)$,
see formula (6) in \cite[Section 1.14]{spr66}.
By definition, $H^2(\sG,\sA_\dmd)\coloneqq H^2(\sG,\sA,\kappa)$
is the quotient of  $Z^2(\sG,\sA,\kappa)$ by this equivalence relation.

Recall that an (abelian) 2-cocycle $z\in Z^2(k_0, Z_\dmd)$ is a locally constant map
\[z\colon \sG\times  \sG\to Z_\dmd(k),\quad (\alpha,\beta)\mapsto z_{\alpha,\beta}\]
such that
\[^\alpha\! z_{\beta,\gamma}\cdot z_{\alpha,\beta\gamma}
       =z_{\alpha,\beta}\cdot z_{\alpha\beta,\gamma}\quad
         \text{for all }\alpha,\beta,\gamma\in \sG.\]
Then $\vk_*([z])\in H^2(\sG,\sA_\dmd)$ is by definition
the class of the 2-cocycle $(\tau,\hs\vk\circ z)$, where
$\tau\colon\sG\to\Aut(\sA)$ is as in \eqref{e:tau-tau}.
This class is called \emph{neutral} if there exists
a locally constant map $a\colon \sG \to \sA$
such that
\begin{equation}\label{e:neut}
 a_\gamma\cdot \upgam a_{\beta  }\cdot\vk(z_{\gamma,\beta  })\cdot a_{\gamma\beta  }^{-1}=1
      \quad\text{for all }\gamma,\beta  \in\sG.
\end{equation}
\end{subsec}

\begin{theorem}\label{t:twist}
Let $k$, $G$, $Y$, $k_0$, $G_\dmd$, $Y_\dmd$, $A_\dmd$, and $\delta$
be as in Subsections~\ref{ss:horo-1} and \ref{ss:horo-2}.
In particular, we assume that the $G$-$k$-variety $Y$
admits a $G_\dmd$-equivariant $k_0$-model $Y_\dmd$.
We also assume that $Y$ is quasi-projective.
Let $c\in Z^1(k_0,\Gbar_\dmd)$ be a 1-cocycle,
and consider its class $[c]\in H^1(k_0,\Gbar_\dmd)$.
Set $G_0= {}_c G_\dmd$ (the inner twisted form of $G_\dmd$ defined by the 1-cocycle~$c$).
Then the $G$-variety $Y$ admits a $G_0$-equivariant $k_0$-model
if and only if the cohomology class
\[\vk_*(\delta[c])\in H^2(\sG, \sA_\dmd)\]
is neutral.
\end{theorem}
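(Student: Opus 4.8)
The plan is to work entirely at the level of semilinear actions, using Lemma \ref{p:lemma-5-4} to translate between $G_0$-equivariant $k_0$-models of $Y$ and algebraic $\sigma^0$-equivariant semilinear actions $\mu^0\colon\sG\to\SAut(Y)$. Recall that $G_0={}_cG_\dmd$ has semilinear action $\sigma^0_\gamma=c_\gamma\circ\sigma_\gamma$, where $c_\gamma\in\Gbar_\dmd(k)\subset\Aut(G)$ and $\sigma$ is the action of $G_\dmd$. The first step is the observation that any candidate $\sigma^0$-equivariant semilinear action on $Y$ must differ from the given $\mu$ (the one coming from $Y_\dmd$) by the action of an automorphism: writing $\mu^0_\gamma=a_\gamma\circ\mu_\gamma$ for some map $\gamma\mapsto a_\gamma$ into $\SAut(Y)$, the equivariance conditions \eqref{e:sigma-equi} for $\mu$ and for $\mu^0$ force $a_\gamma\in\Aut(Y)$ to satisfy $a_\gamma(g\cdot y)=c_\gamma(g)\cdot a_\gamma(y)$ for all $g\in G(k)$, $y\in Y(k)$ — i.e.\ $a_\gamma$ is a lift of the inner automorphism $c_\gamma$ of $G$ in the sense that it intertwines the $G$-action twisted by $c_\gamma$. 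Since $c_\gamma$ lifts (non-canonically) to $G(k)$, say $c_\gamma=i_{\ztil_\gamma}$ with $\ztil_\gamma\in G(k)$, the map $y\mapsto \ztil_\gamma\cdot y$ is one such lift, and any other differs from it by an element of $\Am=\Aut^G(Y)$. So the set of possible $a_\gamma$ is a torsor under $\Am$.

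The second step is to extract a $2$-cocycle. Having chosen lifts $\ztil_\gamma\in G(k)$ of $c_\gamma$, set $\mu^{(1)}_\gamma(y)=\ztil_\gamma\cdot\mu_\gamma(y)$; this is a $\sigma^0$-equivariant $\gamma$-semi-automorphism, but it need not be a homomorphism in $\gamma$. Its failure to be multiplicative is measured by $\mu^{(1)}_\gamma\circ\mu^{(1)}_\deltA\circ(\mu^{(1)}_{\gamma\deltA})^{-1}$, which one checks lies in $\Am$ and, by a direct computation using the cocycle relation for $c$, equals $\vk(z_{\gamma,\deltA})$ where $z_{\gamma,\deltA}=\ztil_\gamma\cdot{}^\gamma\ztil_\deltA\cdot\ztil_{\gamma\deltA}^{-1}\in Z_\dmd(k)$ is precisely the standard $2$-cocycle representing $\delta[c]$. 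Then a general $\sigma^0$-equivariant semilinear action is $\mu^0_\gamma=a_\gamma\circ\mu^{(1)}_\gamma$ with $a_\gamma\in\Am$, and the requirement that $\mu^0$ be a homomorphism becomes exactly
\[
a_\gamma\cdot{}^\gamma a_\deltA\cdot\vk(z_{\gamma,\deltA})\cdot a_{\gamma\deltA}^{-1}=1
\quad\text{for all }\gamma,\deltA\in\sG,
\]
together with the local constancy of $a$ — which is the definition \eqref{e:neut} of $\vk_*(\delta[c])$ being neutral. Here the $\sG$-action on $\Am$ entering this identity is the one induced by $\mu^{(1)}$, and one must check it agrees with $\tau$ from \eqref{e:tau-tau} (the ambiguity in choosing $\ztil_\gamma$ changes $\mu^{(1)}$ by $\Am$-conjugation, hence does not affect the cohomology class, and does not affect the outer action $\kappa$ at all).

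The third step is the passage from an abstract semilinear action to an actual $k_0$-model. If $\vk_*(\delta[c])$ is neutral, we obtain $\mu^0\colon\sG\to\SAut(Y)$; it is continuous/locally constant by the local constancy of $a$, of $c$, and of $\mu$, and it is algebraic because $c$, the chosen lifts, and $\mu$ all become defined over a common finite Galois subextension $k_1/k_0$. Since $Y$ is quasi-projective, Lemma \ref{p:lemma-5-4} produces the desired $G_0$-equivariant $k_0$-model $Y_0$. Conversely, a $G_0$-equivariant $k_0$-model yields such a $\mu^0$, hence a neutralizing cochain $a$, so the class is neutral. \textbf{The main obstacle} I anticipate is the bookkeeping in the second step: verifying carefully that the $\Am$-valued quantity $\mu^{(1)}_\gamma\circ\mu^{(1)}_\deltA\circ(\mu^{(1)}_{\gamma\deltA})^{-1}$ is (i) genuinely $G$-equivariant, (ii) central, i.e.\ in the image of $\vk$, and (iii) equal to $\vk$ applied to the specific cocycle representing $\delta[c]$ under the Serre connecting map — and keeping the (possibly nonabelian) $\sG$-action on $\Am$ and the twisted equivalence relation on $Z^2(\sG,\Am,\kappa)$ consistent throughout, so that "neutral" in the sense of Springer \cite[1.14]{spr66} matches \eqref{e:neut} on the nose and is independent of all the auxiliary choices.
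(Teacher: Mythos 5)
Your plan is correct and follows essentially the same route as the paper: lift $c$ to a locally constant map $\sG\to G(k)$, factor the candidate semilinear action as $\mu^0_\gamma=a_\gamma\circ l(\ctil_\gamma)\circ\mu_\gamma$ with $a_\gamma\in\Am$, verify $\sigma^0$-equivariance (the paper's Lemma~\ref{l:equi}), identify the homomorphism condition with the neutrality condition \eqref{e:neut} (the paper's Lemma~\ref{l:hom}), check algebraicity on a finite subextension (the paper's Lemma~\ref{l:comes-from-k1}), and close via Lemma~\ref{p:lemma-5-4}. The ``main obstacle'' you flag is exactly what those lemmas carry out; the one point you should flesh out is the converse, where the given $k_0$-model must first be compared with $Y_\dmd$ over a large finite extension (the paper invokes \cite[Lemma 5.6(ii)]{BG}) so that the resulting $a_\gamma$ is genuinely locally constant.
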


\begin{proof}
The $k_0$-model $G_\dmd$ of $G$ defines a semilinear action
\[\sigma\colon\sG\to \SAut(G),\quad \gamma \mapsto \sigma_\gamma \hs. \]
The $G_\dmd$-equivariant $k_0$-model $Y_\dmd$ of $Y$
defines a semilinear action
\[\mu\colon \sG\to\SAut(Y),\quad \gamma\mapsto\mu_\gamma\]
such that each $\mu_\gamma$  is $\sigma_\gamma$-equivariant, that is,
\begin{equation}\label{e:mu-g-y}
\mu_\gamma(g\cdot y)=\sigma_\gamma(g)\cdot \mu_\gamma(y)\quad\text{for all }g\in G(k),\, y\in Y(k).
\end{equation}
Since the map $\gamma\mapsto\mu_\gamma$ is a homomorphism, we have
\begin{equation}\label{e:action-hom}
\mu_{\gamma \beta  }=\mu_\gamma\circ\mu_{\beta  }\quad\text{for all }\gamma,\beta  \in\sG.
\end{equation}

Since $c$ is a 1-cocycle, it satisfies the cocycle condition
$c_{\gamma\beta  }=c_\gamma\cdot\upgam c_{\beta  }$,
whence it follows immediately that $c_1=1_{\Gbar}$.
We lift the 1-cocycle $c\colon\sG\to \Gbar(k)$
to a locally constant map
\[\ctil\colon \sG\to G(k)\]
such that $\ctil_1=1_G$.
Note that the map $\ctil$  might not be a 1-cocycle.
Let $\sigma^0\colon \sG\to\SAut(G)$ denote the homomorphism corresponding
to the twisted form $G_0= {}_cG_\dmd$;
then by definition
\[\sigma^0_\gamma(g)=\ctil_\gamma\cdot\sigma_\gamma(g)\cdot\ctil_\gamma^{\hs -1} \quad
\text{for all }\gamma\in\gal,\, g\in G(k)\text{.}\]

For $g\in G(k)$, we write $l(g)$ for the automorphism $y\mapsto g\cdot y$ of $Y$.
We have
\begin{equation}\label{e:g-a}
l(g)\circ a=a\circ l(g)\quad\text{for all } g\in G(k),\, a\in\sA_\dmd\hs ,
\end{equation}
because $a$ is a $G$-equivariant automorphism of $Y$.
By \eqref{e:mu-g-y} we have
\begin{equation}\label{e:mu-g}
\mu_\gamma\circ l(g)=l(\sigma_\gamma(g))\circ\mu_\gamma
    \quad\text{for all }\gamma\in \sG,\, g\in G(k)\text{.}
\end{equation}
Similarly, $\tau_\gamma(a)(\mu_\gamma(y))=\mu_\gamma(a(y))$, hence,
\begin{equation}\label{e:mu-a}
\mu_\gamma\circ a=\tau_\gamma(a)\circ\mu_\gamma
     \quad\text{for all }\gamma\in \sG,\, a\in \sA_\dmd\text{.}
\end{equation}

By definition (Serre \cite[Section I.5.6]{ser97}),
the cohomology class  $\delta[c]\in H^2(k_0, Z_\dmd)$
is the class of the 2-cocycle given by
\[
(\gamma,\beta  )\mapsto \ctil_\gamma\cdot\upgam \ctil_{\beta  }\cdot
\ctil_{\gamma \beta  }^{\hs-1} \in Z_\dmd(k) \quad
  (\gamma,\beta  \in\sG)\text{.}
\]
Then $\vk_*(\delta[c])$ is the class of the 2-cocycle
with first component $\tau$ and second component
\[(\gamma,\beta  )\mapsto \vk( \ctil_\gamma\cdot\upgam \ctil_{\beta  }\cdot
\ctil_{\gamma \beta  }^{\hs-1})\in \sA_\dmd.\]

Let
\[ a\colon \sG\to\sA_\dmd\hs,\quad \gamma\mapsto a_\gamma\]
be any locally constant map.
We define
\begin{equation}\label{e:mu0}
\mu^0_\gamma=a_\gamma\circ l(\ctil_\gamma)\circ\mu_\gamma=
      l(\ctil_\gamma)\circ a_\gamma\circ\mu_\gamma\hs;
\end{equation}
this means that
\[\mu^0_\gamma(y)= a_\gamma(\ctil_\gamma\cdot\mu_\gamma(y)\hs)=
             \ctil_\gamma\cdot a_\gamma(\mu_\gamma(y))\quad\text{for }y\in Y(k).\]
Then $\mu^0_\gamma$ is a $\gamma$-semi-automorphism of $Y$.
\begin{lemma}\label{l:equi}
For any $\gamma\in \sG$, the $\gamma$-semi-automorphism
$\mu^0_\gamma$ is $\sigma^0_\gamma$-equivariant.
\end{lemma}

\begin{proof}
 Using  \eqref{e:mu-g-y} and \eqref{e:g-a}, we compute:
\begin{align*}
\mu^0_\gamma(g\cdot y)
&=(a_\gamma\circ l(\ctil_\gamma))(\mu_\gamma(g\cdot y))\\
&= a_\gamma\big( \ctil_\gamma\cdot\sigma_\gamma(g)\cdot\mu_\gamma(y)\hs\big)
=a_\gamma\big(\ctil_\gamma\hs\sigma_\gamma(g)\hs\ctil_\gamma^{\hs-1}\cdot
\ctil_\gamma\hs\mu_\gamma(y)\hs\big)\\
&=\ctil_\gamma\hs  \sigma_\gamma(g)\hs\ctil_\gamma^{\hs-1}\cdot
a_\gamma(\hs  \ctil_\gamma\cdot \mu_\gamma(y)\hs )
=\sigma^0_\gamma(g)\cdot\mu^0_\gamma(y).\qedhere
\end{align*}
\end{proof}

\begin{lemma}\label{l:hom}
The map $\gamma\mapsto\mu^0_\gamma$  of \eqref{e:mu0} is a homomorphism
if and only if
\begin{equation}\label{e:neutral}
a_\gamma\cdot \upgam a_{\beta  }\cdot\vk( \ctil_\gamma
           \cdot  \upgam \ctil_{\beta  }  \cdot
            \ctil_{\gamma \beta  }^{\hs-1})\cdot a_{\gamma \beta  }^{-1}=1
\end{equation}
for all $\gamma,\beta  \in\sG$.
\end{lemma}

\begin{proof}
Write $b_\gamma=a_\gamma\circ l(\ctil_\gamma)\in\Aut(Y)\subset\SAut(Y)$;
then $\mu^0_\gamma=b_\gamma\circ\mu_\gamma$.
The map $\gamma\mapsto\mu^0_\gamma$ is a homomorphism if and only if
\begin{equation*}
b_\gamma\circ\mu_\gamma\circ b_{\beta  }\circ \mu_{\beta  }
\circ(b_{\gamma\beta  }\circ\mu_{\gamma\beta  })^{-1}=1\quad
\text{for all }\gamma,\beta\in\sG.
\end{equation*}
Since $\mu_{\gamma\beta  }=\mu_\gamma\circ\mu_{\beta  }$, this is equivalent to
\[(b_\gamma\circ\mu_\gamma\circ b_{\beta  }\circ \mu_{\beta  })
\circ(\mu_\beta^{-1}\circ\mu_\gamma^{-1}\circ b_{\gamma\beta}^{-1})=1\]
and to
\begin{equation}\label{e:cocycle}
 b_\gamma\circ(\mu_\gamma\circ b_{\beta } \circ\mu_\gamma^{-1})\circ b_{\gamma\beta}^{-1}=1.
\end{equation}
Since $b_\gamma=a_\gamma\circ l(\ctil_\gamma)$, this is equivalent to
\[
a_\gamma\circ l(\ctil_\gamma)\circ\big(\mu_\gamma\circ a_{\beta  }
\circ\mu_\gamma^{-1}\big)\circ\big(\mu_\gamma\circ l(\ctil_{\beta  })\circ\mu_\gamma^{-1}\big)
\circ\big(a_{\gamma\beta  }\circ l(\ctil_{\gamma\beta  })\hs\big )^{-1}=1\text{.}
\]
Taking into account \eqref{e:mu-a} and  \eqref{e:mu-g}, this is equivalent to
\[a_\gamma\circ l(\ctil_\gamma)\circ\tau_\gamma(a_{\beta  })
   \circ l(\sigma_\gamma(\ctil_{\beta  }))
   \circ \big(a_{\gamma\beta  }\circ l(\ctil_{\gamma\beta  })\hs \big)^{-1}=1.\]
Writing $\upgam a_{\beta  }=\tau_\gamma(a_{\beta })$ and
$\upgam\ctil_{\beta  }=\sigma_\gamma(\ctil_{\beta })$,
and taking into account that  by \eqref{e:g-a} we have
$l(\ctil_\gamma)\circ \upgam a_{\beta  }=\upgam a_{\beta  }\circ l(\ctil_\gamma)$,
this is equivalent to
\[a_\gamma\cdot \upgam a_{\beta  }\cdot l( \ctil_\gamma\upgam \ctil_{\beta  }\,
            \ctil_{\gamma \beta  }^{\hs-1})\cdot a_{\gamma \beta  }^{-1}=1,\]
which is \eqref{e:neutral} since $\ctil_\gamma\upgam \ctil_{\beta  }\,
            \ctil_{\gamma \beta  }^{\hs-1} \in Z_\dmd(k)$.
Thus the map $\gamma\mapsto \mu^0_\gamma$ is a homomorphism if and only if
\eqref{e:neutral} holds for all $\gamma,\beta  \in \sG$,
which completes the proof of Lemma~\ref{l:hom}.
\end{proof}

We resume proving Theorem~\ref{t:twist}.
Assume that $\vk_*(\delta[c])\in H^2(\sG,\sA_\dmd)$ is neutral.
This means that there exists a locally constant map
\[a\colon \sG\to \sA_\dmd\]
such that \eqref{e:neutral} holds.
Then by Lemma~\ref{l:hom} the map
\[\mu^0\colon \sG\to\SAut(Y),\quad \gamma\mapsto\mu^0_\gamma\]
is a homomorphism.
We see that $\mu^0$ is a semilinear action of $\sG$ on $Y$.
By Lemma~\ref{l:equi}, the semilinear action  $\mu^0$ is $\sigma^0$-equivariant.

\begin{lemma}\label{l:comes-from-k1}
With the above assumptions and notation, there exists a finite Galois extension $k_1/k_0$ in $k$
such that the restriction of the map $\mu^0\colon \sG\to \SAut(Y)$ to $\Gal(k/k_1)$
comes from some $G_1$-equivariant $k_1$-model $Y_1$ of $Y$, where $G_1=G_0\times _{k_0} k_1$.
\end{lemma}

\begin{proof}
Since the map $\ctil$ is locally constant and $\ctil_{1}=1_G$,
there exists a normal open subgroup $\sU\subset\sG$ such that $\ctil|_\sU=1_{G}$.
Then for every $\gamma,\beta  \in \sU$ we have
$\ctil_\gamma\hs \upgam\ctil_{\beta  }\hs \ctil_{\gamma\beta  }^{\hs-1}=1$,
hence $\vk(\ctil_\gamma\hs \upgam\ctil_{\beta  }\hs \ctil_{\gamma\beta  }^{\hs-1})=1$,
and from \eqref{e:neutral}
we obtain that
\[ a_\gamma\hs \upgam\hm a_{\beta  }\hs\hs  a_{\gamma\beta  }^{-1}=1.\]
This means that the restriction of $a$ to $\sU$ is a 1-cocycle, and hence, $a_{1}=1_\sA$.
Since the map $\gamma\mapsto  a_\gamma$ is locally constant,
there exists an open subgroup $\sU_1\subset \sU$, normal in $\sG$,
such that $\ctil|_{\sU_1}=1_G$ and $a|_{\sU_1}=1_\sA$.
Then by formula \eqref{e:mu0}, for all $\gamma\in  \sU_1$ we have $\mu_\gamma^0=\mu_\gamma$.

Write  $k_1=k^{\sU_1}$; then $k_1/k_0$ is a finite Galois extension and  $\sU_1=\Gal(k/k_1)$.
We see that  for $\gamma\in  \sU_1=\Gal(k/k_1)$, the semi-automorphism $\mu^0_\gamma$
comes from the $G_1$-equivariant
$k_1$-model $Y_1\coloneqq Y_\dmd\times _{k_0} k_1$ of $Y$,
where  $G_1=G_\dmd\times _{k_0} k_1=G_0\times _{k_0} k_1$.
This completes the proof of the lemma.
\end{proof}

We resume proving Theorem~\ref{t:twist}.
By Lemma~\ref{l:comes-from-k1}, the $\sigma^0$-equivariant
semilinear action $\mu^0$ is algebraic.
Since by assumption  $Y$ is quasi-projective,
by Lemma \ref{p:lemma-5-4} the variety $Y$
admits a $G_0$-equivariant $k_0$-model $Y_0$
inducing the semilinear action $\mu^0$, as required.

Conversely, assume that there exists a $G_0$-equivariant $k_0$-model $Y_0$ of $Y$.
Since $\ctil_{1}=1_G$, there exists a finite Galois extension $k_1/k_0$ in $k$
such that $\ctil_\gamma=1_G$ for all $\gamma\in  \sU_1\coloneqq \Gal(k/k_1)$.
Set $G_1=(G_\dmd)_{k_1}$; then $(G_0)_{k_1}=G_1$.
Set $Y_1=(Y_\dmd)_{k_1}$; then $(Y_0)_{k_1}$ and $Y_1$
are two $G_1$-varieties over $k_1$, and they become $G$-isomorphic over $k$.
By \cite[Lemma 5.6(ii)]{BG}
there exist a finite Galois extension $k_2/k_1$ in $k$
and a $(G_1)_{k_2}$-equivariant isomorphism
\[\varphi_2\colon (Y_0)_{k_2}\isoto (Y_\dmd)_{k_2}.\]
Then $\varphi_2$ induces a $G$-equivariant $k$-isomorphism
\[\varphi\colon  (Y_0)_{k}\isoto (Y_\dmd)_{k}=Y.\]
The $k_0$-model $Y_0$ of $(Y_0)_k$ defines a homomorphism
\[\mu^0\colon \sG\lra\SAut\,(Y_0)_k\labelto{\varphi_*}\SAut(Y)\]
such that $\mu^0_\gamma=\mu_\gamma$ for all $\gamma\in  \sU_2\coloneqq \Gal(k/k_2)$
(because $\varphi_2$ is a $k_2$-isomorphism).

For every $\gamma\in\sG$, set
\[b_\gamma=\mu^0_\gamma\circ(\mu_\gamma)^{-1}\in\Aut_k(Y)\subset\SAut(Y).\]
Since  $\mu^0$ is a homomorphism, by \eqref{e:cocycle} we have
\begin{equation}\label{e:b-cocycle}
b_{\gamma\beta }=b_\gamma\circ (\mu_\gamma\circ b_{\beta }\circ\mu_\gamma^{-1})
\quad\text{for all }\gamma,\beta\in\sG.
\end{equation}
For $\beta  \in \sU_2$ we have $\mu^0_{\beta  }=\mu_{\beta  }$, hence $b_{\beta  }=\id_Y$.
From the cocycle formula \eqref{e:b-cocycle} we see
that the map $\gamma\mapsto b_\gamma$ is locally constant.
Set
\[a_\gamma=b_\gamma\hs  \circ l(\ctil_\gamma)^{-1}\in\Aut_k(Y)\text{;}\]
then the map $a\colon \gamma\mapsto a_\gamma$ is locally constant,
because both maps $b\colon \gamma\mapsto b_\gamma$ and $\ctil\colon \gamma\mapsto \ctil_\gamma$
are locally constant.
We have
\[\mu^0_\gamma = b_\gamma\circ\mu_\gamma = a_\gamma\circ l(\ctil_\gamma)\circ\mu_\gamma\hs .\]
By Lemma~\ref{l:equi}, the $\gamma$-semi-automorphism
$l(\ctil_\gamma)\circ\mu_\gamma$ is $\sigma^0_\gamma$-equivariant.
Since the $\gamma$-semi-automorphism $\mu^0_\gamma$ is $\sigma^0_\gamma$-equivariant as well,
we see that $a_\gamma$ is a $G$-equivariant $k$-automorphism of $Y$,
that is, $a_\gamma\in \Aut^G(Y)=\sA$.
Since the map $\gamma\mapsto \mu^0_\gamma$ is a homomorphism,
by Lemma~\ref{l:hom} the equality \eqref{e:neutral} holds,
and hence $\vk_*(\delta[c])$ is neutral in $H^2(\sG, \sA_\dmd)$.
This completes the proof of Theorem \ref{t:twist}.
\end{proof}

Note that while proving Theorem \ref{t:twist}, we actually proved the
following result:

\begin{proposition}\label{p:twist-0}
  Let $k$, $G$, $Y$, $k_0$, $G_\dmd$, $A_\dmd$, and $\delta$ be as in
  Subsections~\ref{ss:horo-1} and \ref{ss:horo-2}, but instead of
  assuming that the $G$-$k$-variety $Y$ admits a $G_\dmd$-equivariant
  $k_0$-model $Y_\dmd$, we assume only that $Y$ admits an algebraic
  $\sigma$-equivariant semilinear action
  \[\mu\colon \sG\to\SAut(Y),\]
  where $\sigma\colon\sG\to\SAut(G)$ is the semilinear action on $G$
  defined by the $k_0$-model $G_\dmd$ of $G$. We do not assume that
  $Y$ is quasi-projective. Let $c\in Z^1(k_0,\Gbar_\dmd)$ be a
  1-cocycle, and consider its class $[c]\in H^1(k_0,\Gbar_\dmd)$. Set
  $G_0= {}_c G_\dmd$, and let $\sigma^0\colon\sG\to\SAut(G)$ denote
  the corresponding semilinear action on $G$. Then the $G$-variety
  $Y$ admits an algebraic $\sigma^0$-equivariant semilinear action
  $\mu^0\colon \sG\to\SAut(Y)$ if and only if the cohomology class
  \[\vk_*(\delta[c])\in H^2(\sG, \sA_\dmd)\]
  is neutral.
\end{proposition}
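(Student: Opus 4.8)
The plan is to obtain this from the proof of Theorem~\ref{t:twist} by two modifications: I would replace every use of the hypothesis that the semilinear action $\mu\colon\sG\to\SAut(Y)$ comes from a $G_\dmd$-equivariant $k_0$-model $Y_\dmd$ by the weaker hypothesis that $\mu$ is algebraic, and I would drop the concluding appeal to Lemma~\ref{p:lemma-5-4} (which needs $Y$ quasi-projective), since now only the existence of an algebraic $\sigma^0$-equivariant semilinear action is claimed, not of a $k_0$-model. First I would note that all the data in the statement --- the $\sG$-group $\sA_\dmd=\Aut^G(Y)$ with the $\sG$-action \eqref{e:ggg-action}, the $\sG$-equivariant homomorphism $\vk\colon Z_\dmd(k)\to\sA_\dmd$, the set $H^2(\sG,\sA_\dmd)$, and the class $\vk_*(\delta[c])$ --- involve only $\mu$, $G_\dmd$, and $c$, so no choice of $Y_\dmd$ is needed; likewise the identities \eqref{e:g-a}, \eqref{e:mu-g}, \eqref{e:mu-a} together with Lemmas~\ref{l:equi} and~\ref{l:hom} used only that $\mu$ is a $\sigma$-equivariant semilinear action, and remain valid verbatim.

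For the \enquote{if} direction, assuming $\vk_*(\delta[c])$ neutral, I would choose a locally constant $a\colon\sG\to\sA_\dmd$ satisfying \eqref{e:neutral}, a locally constant lift $\ctil\colon\sG\to G(k)$ of $c$ with $\ctil_1=1_G$, and set $\mu^0_\gamma=a_\gamma\circ l(\ctil_\gamma)\circ\mu_\gamma$ as in \eqref{e:mu0}; Lemmas~\ref{l:equi} and~\ref{l:hom} then give that $\mu^0$ is a $\sigma^0$-equivariant semilinear action. To check that $\mu^0$ is algebraic I would argue as in the proof of Lemma~\ref{l:comes-from-k1}: by \eqref{e:neutral} the map $a$ is a $1$-cocycle on a small enough open subgroup normal in $\sG$, so after shrinking (keeping the subgroup normal in $\sG$ and contained in $\Gal(k/k_1)$, where a finite Galois $k_1/k_0$ and a $G_1$-equivariant model $Y_1$ witness the algebraicity of $\mu$) one finds an open $\sU$ with $\ctil|_\sU=1_G$ and $a|_\sU=1_\sA$, hence $\mu^0|_\sU=\mu|_\sU$; the latter is induced by $Y_1\times_{k_1}k^\sU$, so $\mu^0$ is algebraic.

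For the \enquote{only if} direction, given an algebraic $\sigma^0$-equivariant semilinear action $\mu^0$ and a lift $\ctil$ of $c$ with $\ctil_1=1_G$, I would let $k_1,Y_1$ and $k_1',Y_1'$, with their chosen $k$-isomorphisms to $Y$, witness the algebraicity of $\mu$ and of $\mu^0$ respectively. The $G$-equivariant $k$-isomorphism between the two model-induced copies of $Y$ is a morphism between the base changes to $k$ of finite-type schemes over $k_2:=k_1k_1'$, hence is defined over a finite Galois extension $k_3/k_0$; therefore $\mu^0_\gamma=\mu_\gamma$ for all $\gamma\in\Gal(k/k_3)$. Setting $b_\gamma=\mu^0_\gamma\circ\mu_\gamma^{-1}$ and $a_\gamma=b_\gamma\circ l(\ctil_\gamma)^{-1}$, the homomorphism property of $\mu^0$ gives the cocycle identity \eqref{e:b-cocycle}, which together with $b_\gamma=\id$ on $\Gal(k/k_3)$ shows that $b$, hence $a$, is locally constant, while the computation from the proof of Theorem~\ref{t:twist} shows that each $a_\gamma$ lies in $\sA$. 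Since $\mu^0_\gamma=a_\gamma\circ l(\ctil_\gamma)\circ\mu_\gamma$ is a homomorphism in $\gamma$, Lemma~\ref{l:hom} yields \eqref{e:neutral}, that is, $\vk_*(\delta[c])$ is neutral.

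The main obstacle --- essentially the only thing not already contained in the proof of Theorem~\ref{t:twist} --- is the spreading-out bookkeeping in the two places where that proof invoked a concrete model $Y_\dmd$: producing a finite Galois extension over which $\mu^0$ agrees with $\mu$ (or with a model-induced action), while keeping the relevant open subgroups normal in $\sG$. I expect this to be routine, the cohomological substance being carried entirely by Lemmas~\ref{l:equi} and~\ref{l:hom}.
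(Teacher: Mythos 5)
Your proof is correct and follows essentially the route the paper intends: the paper's own ``proof'' is just the remark that Proposition~\ref{p:twist-0} was established en route to Theorem~\ref{t:twist}, and you have spelled out precisely which two spots in that argument (the algebraicity check in the ``if'' direction, and the spreading-out over a common finite extension $k_3$ in the ``only if'' direction) need to be re-worded to use the algebraicity witnesses for $\mu$ and $\mu^0$ in place of the models $Y_\dmd$ and $Y_0$. The cohomological core via Lemmas~\ref{l:equi} and~\ref{l:hom} is untouched, exactly as you say.
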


\section{Model of a homogeneous space of a reductive group}
\label{s:tits}

Let $k_0$, $k$, and $\sG$ be as in Subsection \ref{ss:mod-Y}.
In this section $G$ is a (connected) reductive group over $k$.
We need the following result:

\begin{proposition}\label{p:inn-qs}
Let $G$ be a  reductive group over $k$, and
let $G_0$ be any $k_0$-model of~$G$.
Then there exist a quasi-split
$k_0$-model $G_\qs$ of $G$
 and a cocycle $d\in Z^1(k_0,\Inn(G_0))$ such that
$G_\qs\simeq {}_d G_0$ (we say that $G_\qs$ is a quasi-split inner $k_0$-form of $G_0$).
Moreover, if $G_\qs$ and $G'_\qs$ are two
quasi-split inner $k_0$-forms of $G_0$, then they are isomorphic,
and if $\smash{d,d'\in Z^1(k_0,\Inn(G_0))}$ are two such cocycles,
then they are cohomologous.
\end{proposition}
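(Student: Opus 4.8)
The plan is to establish the three assertions in order: the existence of a quasi-split inner $k_0$-form $G_\qs$ of $G_0$ together with a defining cocycle $d$; the uniqueness of $G_\qs$ up to $k_0$-isomorphism; and the uniqueness of $d$ up to cohomology. For the first assertion I would invoke the classical fact already recalled in Subsection~\ref{ss:intro-qs} (see \cite[(31.5)]{KMRT}): there is a quasi-split $G_\qs$ and a cocycle $\cbar\in Z^1(k_0,\Gbar_\qs)$ with $G_0={}_{\cbar}G_\qs$; by the usual untwisting formalism this is equivalent to the existence of $d\in Z^1(k_0,\Inn(G_0))$ with $G_\qs\simeq{}_dG_0$. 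Alternatively one reproves it intrinsically in the language of semilinear actions used in the paper: starting from the semilinear action $\sigma\colon\sG\to\SAut(G)$ attached to $G_0$, use Construction~\ref{con:*-action} to choose $g_\gamma\in G(k)$ with $\inn(g_\gamma)\circ\sigma_\gamma$ preserving a fixed Borel pair $(T,B)$, and then — exploiting that the simple roots, being linearly independent, define a surjective homomorphism from $T$ onto a torus, hence a surjection on $k$-points since $k$ is algebraically closed — further multiply $g_\gamma$ by a suitable $t_\gamma\in T(k)$ so that $\tau_\gamma\coloneqq\inn(g_\gamma t_\gamma)\circ\sigma_\gamma$ preserves a pinning $(T,B,\{X_\alpha\}_{\alpha\in\Sm})$. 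The map $\gamma\mapsto\tau_\gamma$ is then a locally constant homomorphism: for $\gamma_1,\gamma_2\in\sG$ the composite $\tau_{\gamma_1\gamma_2}^{-1}\circ\tau_{\gamma_1}\circ\tau_{\gamma_2}$ is a genuine $k$-automorphism of $G$ preserving the pinning and inducing the identity on $\BRD(G,T,B)$ (the $*$-action being a homomorphism, by Construction~\ref{con:*-action}), hence is trivial by the isomorphism theorem for reductive groups. Thus $\tau$ comes from a $k_0$-model $G_\qs$ of $G$ by Serre \cite[III.1.3, Corollary of Proposition~5]{ser97}, and since the $\tau$-stable Borel $B$ descends to a $k_0$-Borel of $G_\qs$, this model is quasi-split; one then takes $d_\gamma=\inn(g_\gamma t_\gamma)$.

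For the second assertion I would first note that any inner twist ${}_dG_0$ induces the same $*$-action $\mu^*$ on $\BRD(G,T,B)$ as $G_0$, because inner automorphisms act trivially on the based root datum. I would then argue that a quasi-split $k_0$-model of $G$ is determined, up to $k_0$-isomorphism, by the pair $(\BRD(G,T,B),\mu^*)$: any quasi-split model admits a pinning defined over $k_0$ (Steinberg's theorem on rational splittings for quasi-split groups; a classical fact, see e.g.\ \cite{KMRT}), and such a $k_0$-pinning identifies its semilinear action with the canonical pinning-preserving semilinear action attached to $\mu^*$. Hence any two quasi-split inner forms $G_\qs$ and $G_\qs'$ of $G_0$, having the same $*$-action $\mu^*$, are $k_0$-isomorphic.

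For the third assertion I would use Steinberg's theorem once more, now in the form that a $k_0$-pinning of $G_\qs$ furnishes a section of $\Aut(G_\qs)\to\Out(G_\qs)$ over $k_0$, so that $\Aut(G_\qs)(k_0)\to\Out(G_\qs)(k_0)$ is surjective. In the exact sequence of pointed sets attached to $1\to\Inn(G_\qs)\to\Aut(G_\qs)\to\Out(G_\qs)\to 1$, this surjectivity forces the fibre over the base point of $H^1(k_0,\Inn(G_\qs))\to H^1(k_0,\Aut(G_\qs))$ to consist of the base point alone. Now, writing $G_0={}_eG_\qs$ with $e\in Z^1(k_0,\Inn(G_\qs))$ (the first assertion applied in the other direction), the twist by $e$ sets up a bijection $Z^1(k_0,\Inn(G_0))\to Z^1(k_0,\Inn(G_\qs))$, $d\mapsto e\ast d$, compatible with the twisting operation, so that ${}_dG_0\simeq{}_{e\ast d}G_\qs$ and ${}_{d'}G_0\simeq{}_{e\ast d'}G_\qs$. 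Since by the second assertion both sides are $k_0$-isomorphic to $G_\qs$ — the base point of the classifying set $H^1(k_0,\Aut(G_\qs))$ — the classes $[e\ast d]$ and $[e\ast d']$ lie in the trivial fibre above, hence $[e\ast d]=[e\ast d']=1$; transporting back through the twist by $e$ gives $[d]=[d']$ in $H^1(k_0,\Inn(G_0))$.

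I expect the main obstacle to be this last step: keeping the cohomological bookkeeping straight — the iterated-twist composition rule and the twisting bijection relating $H^1(k_0,\Inn(G_0))$ to $H^1(k_0,\Inn(G_\qs))$ — and, more fundamentally, isolating the single structural input on which both uniqueness statements genuinely rely, namely that a quasi-split group over $k_0$ admits a pinning defined over $k_0$, equivalently that $\Aut\to\Out$ splits over $k_0$. Everything else is formal manipulation of semilinear actions and of nonabelian $H^1$.
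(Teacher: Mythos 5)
Your proof is correct. The paper itself only cites \cite[(31.5) and (31.6)]{KMRT} for this proposition, whereas you reconstruct a self-contained argument in the paper's own language of semilinear actions. Your argument is the standard one behind the KMRT reference: adjust a cocycle-lift so as to preserve a pinning (using that the simple roots, being $\Zd$-linearly independent in $\X^*(T)$, give a surjective map $T(k)\to(k^\times)^{\Sm}$ over the algebraically closed field $k$), obtain the quasi-split model from the resulting pinned semilinear action via the isomorphism theorem for reductive groups and \cite[III.1.3, Corollary of Proposition~5]{ser97}, and then derive both uniqueness statements from the single fact that a quasi-split group admits a Galois-stable pinning, hence a $\Gal$-equivariant splitting of $\Aut(G_\qs)\to\Out(G_\qs)$; the latter kills the connecting map $\Out(G_\qs)^\sG\to H^1(k_0,\Inn(G_\qs))$, so the fibre of $H^1(k_0,\Inn(G_\qs))\to H^1(k_0,\Aut(G_\qs))$ over the base point is trivial, and the twisting bijection then finishes the third assertion. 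Two small remarks on attribution and phrasing: the input you call ``Steinberg's theorem'' is really the elementary existence of a Galois-stable pinning on a quasi-split group (one does not need Steinberg's vanishing theorem $H^1=1$); and the notation $\Aut(G_\qs)(k_0)$ should be read as $\Aut(G_\qs)^\sG$, the group of $k_0$-automorphisms, since $\Aut(G_\qs)$ is treated in this paper as an abstract $\sG$-group rather than as an algebraic group. Neither affects the validity of the argument.
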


\begin{proof}
Let $\Gtil_0$ denote the universal cover of the commutator subgroup $[G_0,G_0]$ of $G_0$.
Then $\Gtil_0$ is a simply connected semisimple $k_0$-group, and $\Inn(\Gtil_0)=\Inn(G_0)$.
By \enquote{The Book of Involutions} \cite[Propositions (31.5) and (31.6)]{KMRT},
there exists a unique (up to isomorphism) quasi-split $k_0$-model $\Gtil_\qs$ of $\Gtil$
and a unique (up to equivalence) 1-cocycle $d\in Z^1(k_0,\Inn(\Gtil_0))=Z^1(k_0,\Inn(G_0))$
such that $\Gtil_\qs\simeq {}_d \Gtil_0$.
Then $G_\qs\coloneqq {}_d G_0$ is the required unique quasi-split inner form of $G_0$.
\end{proof}

\begin{subsec}\label{ss:Tits}
Let $G$ be a  reductive group over $k$, and let $G_0$ be a $k_0$-model of $G$.
Write $\Gbar=G/Z(G)$ for the corresponding adjoint group, and $\Gtil$ for the universal cover of
the connected semisimple group $[G,G]$.
We fix $d\in Z^1(k_0, G_0/Z(G_0))$ as in Proposition~\ref{p:inn-qs};
then   $G_\qs\coloneqq {}_d G_0$ is a quasi-split $k_0$-model of $G$.
We write $\Gbar_0=G_0/Z(G_0)$ and $\Gbar_\qs=G_\qs/Z(G_\qs)$.

We write $\Ztil_\qs$ for the center $Z(\Gtil_\qs)$ of the universal cover
$\Gtil_\qs$ of the connected semisimple group $[G_\qs,G_\qs]$.
Similarly, we write $\Ztil_0$ for the center $Z(\Gtil_0)$ of the universal cover
$\Gtil_0$ of the connected semisimple group $[G_0,G_0]$.
The short exact sequence
\[1\to \Ztil_0\to \Gtil_0 \to \Gbar_0\to 1\]
induces a cohomology exact sequence
\[H^1(k_0,\Ztil_0)\to  H^1(k_0,\Gtil_0)\to H^1(k_0,\Gbar_0)\labelto{\deltatil_0} H^2(k_0,\Ztil_0).\]
Similarly, the short exact sequence
\[1\to \Ztil_\qs\to \Gtil_\qs \to \Gbar_\qs\to 1\]
induces a cohomology exact sequence
\[H^1(k_0,\Ztil_\qs)\to  H^1(k_0,\Gtil_\qs)\to H^1(k_0,\Gbar_\qs)
    \labelto{\deltatil_\qs} H^2(k_0,\Ztil_\qs).\]
By definition, the \emph{Tits class} $t(\Gtil_0)\in H^2(k_0,\Ztil_0)$ is defined by
\[t(\Gtil_0)=(\deltatil_0[d])^{-1},\]
the inverse of the image of the cohomology class $[d]\in H^1(k_0,\Gbar_0)$
under the connecting map $\deltatil_0\colon H^1(k_0,\Gbar_0)\to H^2(k_0,\Ztil_0)$.
See \cite[Section~31, before (31.7)]{KMRT},
where the Tits class $t(\Gtil_0)$ is denoted by $t_{\Gtil_0}$.

Set $c_\gamma=d_\gamma^{-1}\in\Gbar(k)$ for all $\gamma\in \sG$.
Let
\[\mu^0\colon \sG\to \SAut(G)\quad\text{and}\quad\mu^\qs\colon \sG\to \SAut(G)\]
be the semilinear actions corresponding to the $k_0$-models
$G_0$ and $G_\qs$\hs, respectively. Then
\[ \mu^\qs_\gamma=\inn(d_\gamma)\circ\mu^0_\gamma\hs ,\ \ \text{and hence,}\ \
\mu^0_\gamma=\inn(c_\gamma)\circ\mu^\qs_\gamma\ \ \text{for all } \gamma\in \sG,\]
where we write  $\inn(g)$ for the inner automorphism of $G$
defined by an element $g\in \Gbar(k)$.
It follows that $c$ is a 1-cocycle, that is,  $c\in Z^1(k_0,G_\qs)$\hs .
\end{subsec}

\begin{lemma}\label{l:Tits-qs}
With the above assumptions and notation we have
\[\deltatil_\qs[c]=t(\Gtil_0)\in H^2(k_0,\Ztil_\qs),\]
where we identify $\Ztil_\qs$ with $\Ztil_0$.
\end{lemma}

\begin{proof} A straightforward calculation.\end{proof}

Let $H\subset G$ be an algebraic subgroup (not necessarily spherical).
We consider the homogeneous $G$-variety $Y=G/H$. Consider the abstract
group $\sA=\Aut^G(G/H)$ and the algebraic group $A=\sN_G(H)/H$; then
there is a canonical isomorphism $\smash{A(k)\isoto\sA}$ (see, for instance,
\cite[Corollary~4.3]{BG}). Let $G_\qs$ be a quasi-split $k_0$-model of $G$
and assume that there exists a $G_\qs$-equivariant model of $G/H$ of
the form $G_q/H_q$, where $H_q\subset G_q$ is a $k_0$-subgroup. We set
$A_q=\sN_{G_q}(H_q)/H_q$, which is a $k_0$-model of $A$.
We have a canonical homomorphism
\begin{equation}\label{e:vktil}
\vktil\colon\, \Ztil_\qs\labelto{\pi} Z_\qs\labelto{\vk_\qs} A_\qs\hs ,
\end{equation}
where the homomorphism $\pi\colon \Ztil_\qs\to Z_\qs$
is induced by the canonical homomorphism $\Gtil_\qs\to G_\qs$,
and $\vk_\qs\colon Z_\qs\to A_\qs$ is the homomorphism of $k_0$-groups
inducing the homomorphism  of groups of $k$-points
$\vk\colon Z_q(k)\to\Am=A_\qs(k)$  of \eqref{e:vk-def}.
By abuse of notation we shall write $\vk$ for $\vk_\qs$\hs.

\begin{proposition}\label{p:tits}
Let $k_0$, $k$, and $\sG$ be as in Subsection \ref{ss:mod-Y}.
  Let $G$ be a (connected) reductive group over $k$. Let $H\subset G$ be an
  algebraic subgroup. Let $G_0$ be a $k_0$-model of $G$.
  Write $G_0= {}_c G_\qs$, where $G_\qs$ is a quasi-split inner form
  of $G_0$ and where $c\in Z^1(k_0,\Gbar_\qs)$. Assume that $G/H$
  admits a $G_\qs$-equivariant $k_0$-model of the form $G_q/H_q$,
  where $H_q\subset G_q$ is a $k_0$-subgroup.
  Set  $\smash{A_q=\sN_{G_q}(H_q)/H_q}$.
  Then $G/H$ admits  a $G_0$-equivariant $k_0$-model if and only if the image in
  $\smash{H^2(k_0,A_\qs)}$ of the Tits class
  \[t(\Gtil_0)\in H^2(k_0,Z(\Gtil_0))=H^2(k_0,Z(\Gtil_\qs))\] under the map
\[\vktil_*\colon\, H^2(k_0,Z(\Gtil_\qs))\labelto{\pi_*}
    H^2(k_0,Z(G_\qs))\labelto{\vk_*}  H^2(k_0,A_\qs)\]
 is neutral.
\end{proposition}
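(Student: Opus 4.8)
The plan is to deduce Proposition~\ref{p:tits} from Theorem~\ref{t:twist}, which is the main engine, by translating the hypotheses and the conclusion from the language of the cocycle $c\in Z^1(k_0,\Gbar_\qs)$ and the map $\vk_*\circ\delta$ into the language of the Tits class and the map $\vktil_*$. First I would take $G_\dmd = G_\qs$ in Theorem~\ref{t:twist}: by hypothesis $G/H$ admits a $G_\qs$-equivariant $k_0$-model, namely $Y_\dmd = G_q/H_q$, and this model induces the canonical $\sG$-action on $\sA = \Aut^G(G/H) = A(k)$, identified via the $k_0$-model $A_\qs = \sN_{G_q}(H_q)/H_q$. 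Since $G_0 = {}_c G_\qs$, Theorem~\ref{t:twist} says that $G/H$ admits a $G_0$-equivariant $k_0$-model if and only if $\vk_*(\delta[c]) \in H^2(\sG,\sA_\dmd)$ is neutral, where $\delta\colon H^1(k_0,\Gbar_\qs)\to H^2(k_0, Z(G_\qs))$ is the connecting map for $1\to Z(G_\qs)\to G_\qs\to\Gbar_\qs\to 1$ and $\vk\colon Z(G_\qs)(k)\to\sA$ is induced by the action of the center. (Quasi-projectivity of $Y = G/H$ is automatic here since $G/H$ is quasi-projective as a homogeneous space of an affine group; alternatively it is covered by the running hypotheses.) Since $\sA = A_\qs(k)$ is abelian, neutrality of $\vk_*(\delta[c])$ is equivalent to $\vk_*(\delta[c]) = 1 \in H^2(k_0, A_\qs)$, so it suffices to identify $\vk_*(\delta[c])$ with $\vktil_*(t(\Gtil_0))$.

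The next step is the cohomological identification, which is where Lemma~\ref{l:Tits-qs} enters. Recall from Subsection~\ref{ss:Tits} that $c_\gamma = d_\gamma^{-1}$ where $d\in Z^1(k_0,\Gbar_0)$ realizes $G_\qs = {}_d G_0$, and that $t(\Gtil_0) = (\deltatil_0[d])^{-1}$ where $\deltatil_0\colon H^1(k_0,\Gbar_0)\to H^2(k_0,\Ztil_0)$. Lemma~\ref{l:Tits-qs} gives the key identity $\deltatil_\qs[c] = t(\Gtil_0) \in H^2(k_0,\Ztil_\qs)$, where $\deltatil_\qs\colon H^1(k_0,\Gbar_\qs)\to H^2(k_0,\Ztil_\qs)$ is the connecting map for $1\to\Ztil_\qs\to\Gtil_\qs\to\Gbar_\qs\to 1$. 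Now I would use functoriality of connecting maps with respect to the morphism of short exact sequences
\[
\begin{gathered}
\xymatrix{
1\ar[r] & \Ztil_\qs\ar[r]\ar[d]_{\pi} & \Gtil_\qs\ar[r]\ar[d] & \Gbar_\qs\ar[r]\ar@{=}[d] & 1\\
1\ar[r] & Z_\qs\ar[r] & G_\qs\ar[r] & \Gbar_\qs\ar[r] & 1\mathrlap{\text{,}}
}
\end{gathered}
\]
where the left vertical map is $\pi\colon\Ztil_\qs\to Z_\qs$ induced by $\Gtil_\qs\to G_\qs$ and the middle vertical map is this same morphism. Functoriality yields $\pi_*(\deltatil_\qs[c]) = \delta[c] \in H^2(k_0, Z_\qs)$. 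Applying $\vk_*\colon H^2(k_0,Z_\qs)\to H^2(k_0,A_\qs)$ and combining with Lemma~\ref{l:Tits-qs} gives
\[
\vk_*(\delta[c]) \;=\; \vk_*\bigl(\pi_*(\deltatil_\qs[c])\bigr) \;=\; (\vk_*\circ\pi_*)\bigl(t(\Gtil_0)\bigr) \;=\; \vktil_*\bigl(t(\Gtil_0)\bigr),
\]
by the very definition of $\vktil = \vk\circ\pi$ in \eqref{e:vktil}. This completes the translation, and the proposition follows.

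The main technical obstacle is ensuring that the $\sG$-action on $\sA$ used in Theorem~\ref{t:twist} (defined via the model $Y_\dmd = G_q/H_q$ by formula \eqref{e:ggg-action}) agrees with the $\sG$-action defining the $k_0$-model $A_\qs$, so that the nonabelian $H^2(\sG,\sA_\dmd)$ really is the abelian $H^2(k_0,A_\qs)$ and the map $\vk_*$ on the two sides is the same; this uses the canonical identification $A(k) \xrightarrow{\sim} \Aut^G(G/H)$ and its $\sG$-equivariance, which is the content of \cite[Corollary~4.3]{BG} applied over $k_0$ to $G_q/H_q$. One should also check that $\pi\colon\Ztil_\qs\to Z_\qs$ here is exactly the map appearing in \eqref{e:vktil}, and that the identification $\Ztil_\qs \cong \Ztil_0$ used in Lemma~\ref{l:Tits-qs} (coming from the inner twisting, which acts trivially on the center of the simply connected cover) is compatible with the identification $H^2(k_0,Z(\Gtil_0)) = H^2(k_0,Z(\Gtil_\qs))$ in the statement. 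Modulo these compatibility verifications — all of which are routine given Lemma~\ref{l:Tits-qs} and the naturality of connecting homomorphisms — the proof is a direct chase through Theorem~\ref{t:twist}.
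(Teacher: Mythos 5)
Your proof is correct and follows the paper's argument exactly: apply Theorem~\ref{t:twist} with $G_\dmd = G_\qs$ and $Y_\dmd = G_q/H_q$, use functoriality of connecting homomorphisms for the morphism of short exact sequences from $(\Ztil_\qs, \Gtil_\qs, \Gbar_\qs)$ to $(Z_\qs, G_\qs, \Gbar_\qs)$ to get $\delta_\qs[c] = \pi_*(\deltatil_\qs[c])$, then invoke Lemma~\ref{l:Tits-qs} and the definition of $\vktil$. The only small inaccuracy is your aside that $\sA = A_\qs(k)$ is abelian, which need not hold since Proposition~\ref{p:tits} is stated for an arbitrary algebraic subgroup $H$ (not just a spherical one); this is precisely why the paper phrases the conclusion as \enquote{neutral} rather than \enquote{equal to $1$}, and your argument does not actually depend on that aside.
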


This partial result  will be used in Subsection \ref{ss:proof-main}
in the proof of Theorem~\ref{t:sphfull'}.

\begin{proof}
By Theorem \ref{t:twist}, the homogeneous space $G/H$
admits a $G_0$-equivariant $k_0$-model
if and only if the image
\[\vk_{*}(\delta_\qs[c])\in H^2(k_0,A_\qs)\]
is neutral.
We write  $Z_\qs$ for $Z(G_\qs)$ and  $\Ztil_\qs$ for $Z(\Gtil_\qs)$.
From the commutative diagram with exact rows
\[
\xymatrix{
1 \ar[r] & \Ztil_\qs \ar[r]\ar[d]_\pi & \Gtil_\qs \ar[r]\ar[d] & \Gbar_\qs \ar[r]\ar[d]^{\id} & 1 \\
1 \ar[r] & Z_\qs \ar[r]           &  G_\qs \ar[r]          & \Gbar_\qs \ar[r]             & 1
}
\]
we obtain a commutative diagram
\[
\xymatrix{
H^1(k_0,\Gbar_\qs)\ar[r]^{\deltatil_\qs} \ar[d]_{\id}  & H^2(k_0, \Ztil_\qs)\ar[d]^{\pi_*} \\
H^1(k_0,\Gbar_\qs)\ar[r]^{\delta_\qs}                                   & H^2(k_0, Z_\qs)\mathrlap{,}
}
\]
which shows that
\[ \delta_\qs[c]=\pi_*(\deltatil_\qs[c]).\]
By Lemma~\ref{l:Tits-qs} we have
\[t(\Gtil_0)=\deltatil_\qs[c]\in  H^2(k_0,\Ztil_\qs).\]
Thus
\[\vk_{*}(\delta_\qs[c])=\vk_*(\pi_*(\deltatil_\qs[c]))=\vktil_*(\deltatil_\qs[c])
    =\vktil_*(t(\Gtil_0))\in H^2(k_0,A_\qs)\text{.}\]
We conclude that the homogeneous space $G/H$ admits
a $G_0$-equivariant $k_0$-model if and only if
$\vktil_*(t(\Gtil_0))\in H^2(k_0,A_\qs)$ is neutral, as required.
\end{proof}

\section{Models of affine spherical varieties}
\label{s:sphaff}
Let $k_0$, $k$, and $\sG$ be as in Subsection \ref{ss:mod-Y}.
Let $G$ be a (connected) reductive group over $k$.
We fix a Borel subgroup $B \subset G$ and a
maximal torus $T \subset B$. Let $U$ denote the unipotent radical of $B$;
then $B=U\rtimes T$.

\begin{subsec}
For any affine $G$-variety $X$, its \emph{weight monoid}
$\Gamma \subset \X^*(B)$ is defined to be the monoid consisting of
those $\lambda \in \X^*(B)$ for which there exists a nonzero
$B$-$\lambda$-semi-invariant regular function in the coordinate ring $k[X]$.

Now let $\Gamma \subset \X^*(B)$ be a submonoid of dominant weights.
For each $\lambda\in\Gamma$ we denote by $V_\lambda$ an irreducible
$G$-module of highest weight $\lambda \in \X^*(B)$. For any affine
\emph{spherical} $G$-variety $X$ with weight monoid $\Gamma$ we have
\begin{align*}
  k[X] \cong \bigoplus_{\lambda \in \Gamma} V_\lambda \quad\text{ and }\quad
  k[X]^U \cong \bigoplus_{\lambda \in \Gamma} V_\lambda^U
\end{align*}
because the coordinate ring $k[X]$ of an affine spherical $G$-variety
$X$ is a \emph{multiplicity-free} rational representation of the
reductive group $G$ (see, for example, Perrin
\cite[Theorem~2.1.2]{per14}). In this case, the monoid $\Gamma$ is
finitely generated and moreover saturated, that is, $\Gamma$ is the
intersection of the rational cone spanned by $\Gamma$ with the lattice
generated by $\Gamma$; see, for instance, Avdeev and Cupit-Foutou \cite[Subsection~2.2]{acf18}.
\end{subsec}

\begin{subsec}
Let $X$ be an affine spherical $G$-variety.
We write $\smash{k[X]^{(B)}}$ for the monoid of nonzero
$B$-semi-invariant regular functions on $X$.
We write $k[X]^U$ for the ring of $U$-invariants in $k[X]$; then $T$ acts on $k[X]^U$.
We write $\smash{(k[X]^U)^{(T)}}$ for the monoid
of nonzero $T$-semi-invariants in $k[X]^U$;
then clearly $\smash{(k[X]^U)^{(T)}=k[X]^{(B)}}$.
\end{subsec}

\begin{subsec}
  Up to $G$-equivariant isomorphism, there may be several different
  affine spherical $G$-varieties $X$ with given weight monoid
  $\Gamma$; in other words there may be several multiplicative
  structures on the $G$-vector space
  $\bigoplus_{\lambda\in\Gamma}V_\lambda$ (compatible with the
  $G$-action).

  On the other hand, the ring of $U$-invariants $k[X]^U$ is always
  (non-canonically) $T$-equivariantly isomorphic to the semigroup
  algebra $k[\Gamma]$ as a $k$-algebra. To construct an isomorphism of
  $k$-algebras $\smash{k[\Gamma] \to k[X]^U}$, choose a point $x$ in
  the open $B$-orbit in $X$ and for every $\lambda \in \Gamma$ define
  $\smash{f_\lambda \in V_\lambda^{(B)} = V_\lambda^U \subset k[X]^U}$
  to be the unique $B$-$\lambda$-semi-invariant function such that
  $f_\lambda(x) = 1$. We obtain a homomorphism of monoids
  \[\Gamma\longrightarrow k[X]^{(B)}=(k[X]^U)^{(T)},
        \qquad \lambda\mapsto f_\lambda\hs ,\]
  which uniquely extends by linearity to a homomorphism of
  $k$-algebras $k[\Gamma]\isoto k[X]^U$, and this homomorphism is an
  isomorphism because every $V_\lambda^U$ is $1$-dimensional.
  Note that for different choices of $x$, we may get
  different isomorphisms of $k$-algebras $k[\Gamma] \to k[X]^U$, but
  the induced isomorphism of monoids
  \[\Gamma \to (k[X]^U)^{(T)}/k^*, \qquad \lambda\mapsto [f_\lambda] \]
  will be the same.
\end{subsec}

\begin{subsec}
  Given $\Gamma$, Alexeev and Brion \cite{ab05}, see also Brion \cite{bri13},
  defined an affine moduli scheme $M_\Gamma$ which parameterizes
  varieties $X$ as above considered with an additional structure. We explain it
  as in Avdeev and Cupit-Foutou \cite[Section~2]{acf18}. The $k$-points of $M_\Gamma$
  correspond to equivalence classes $[X, \tau]$ of pairs $(X, \tau)$
  where $X$ is an affine spherical $G$-variety with weight monoid
  $\Gamma$ and the map $\tau: k[X]^U \to k[\Gamma]$ is a
  $T$-equivariant isomorphism of $k$-algebras (there exists at least
  one such map by the above discussion). Two pairs $(X_1, \tau_1)$ and
  $(X_2, \tau_2)$ are equivalent if there is a $G$-equivariant
  isomorphism of $k$-algebras $\varphi\colon k[X_1] \isoto k[X_2]$
  such that the following diagram commutes:
  \begin{align*}
  \xymatrix{
    k[X_1]^U \ar[rd]_{\tau_1} \ar[rr]^{\varphi_*} & & k[X_2]^U \ar[ld]^{\tau_2} \\
    & k[\Gamma]\text{\rlap{.}}
  }
  \end{align*}
  Since the $k$-algebra $k[X_1]$ is generated by $G(k)\cdot k[X_1]^U$
  (even as a vector space), every $G$-equivariant isomorphism of
  $k$-algebras $k[X_1] \isoto k[X_2]$ is uniquely determined by its
  restriction to the subalgebra of $U$-invariants
  $k[X_1]^U \isoto k[X_2]^U$. It follows that the additional structure
  $\tau$ guarantees that for any two equivalent pairs $(X_1, \tau_1)$ and
  $(X_2, \tau_2)$, there is a \emph{unique} isomorphism of
  $G$-$k$-varieties $X_1 \isoto X_2$ that respects this additional
  structure (while if $X_1$ has non-trivial $G$-equivariant
  automorphisms, then there is more than one isomorphism $X_1 \isoto X_2$
  not necessarily respecting $\tau$).
\end{subsec}

\begin{subsec}\label{ss:G-B-X}
  Let $X$ be an affine spherical $G$-variety, and let
  $\Gamma=\Gamma(X) \subset \X^*(B)$ denote
    its weight monoid.
  Let $G_0$ be a quasi-split $k_0$-model $G_0$ of $G$,
  and let $B_0\subset G_0$ be a Borel subgroup defined over $k_0$.
  Let $T_0$ be a maximal torus in $B_0$,
  and let $U_0$ be the unipotent radical of $B_0$. We choose
  $B$, $T$, and $U$ to be the base changes from $k_0$ to $k$
  of $B_0$, $T_0$, and $U_0$,  respectively.

  The $k_0$-model $B_0$ of $B$ defines a $\gal$-action on $\X^*(B)$.
  This $\gal$-action coincides with the $\gal$-action on $\X^*(B)$
  defined by the $k_0$-model $G_0$ of $G$ in \cite[Section 7.1]{BG}.
 \end{subsec}

 \begin{subsec}\label{ss:gamma*-G-B}
Let $G$, $B$, and  $X$ be as in  \ref{ss:G-B-X}, and let $\gamma\in\gal$.
Consider the $k$-variety $\upgam X$; see \cite[Section 1.1]{BG},
where $\upgam X$ is denoted by $\gamma_*X$.
We may assume that our affine $k$-variety $X$ is embedded into the affine space $\A^n_k$
for some natural number $n$.
Then
\[\upgam X=\{\upgam x\hs\mid\hs x\in X(k)\subset A^n(k)=k^n\},\]
where by abuse of notation we identify $\upgam X$ with $(\upgam X)(k)$;
see~\cite[Section~1.15]{BG}.

Similarly, we define  the algebraic $k$-groups $\upgam G$ and $\upgam B$
naturally acting on $\upgam X$; see~\cite[Construction~2.6]{BG}.
The compatible $k_0$-models $G_0$ of $G$ and $B_0$ of $B$
define a $\gamma$-semilinear automorphism $\sigma_\gamma\colon G\to G$ preserving $B$.
By the definition of a $\gamma$-semilinear automorphism,
$\sigma_\gamma$ is the same as a $k$-isomorphism of algebraic $k$-groups
$\upgam G\to G$, which restricts to a $k$-isomorphism $\upgam B\to B$;
see \cite[Section 1.9, formula (9) and Construction 2.6]{BG}.
Thus we obtain canonical structures of a $G$-variety and a $B$-variety
on the affine $k$-variety $\upgam X$.
\end{subsec}

\begin{lemma}\label{l:Gamma(upgam-X)}
With the notation and assumptions of \ref{ss:G-B-X} and \ref{ss:gamma*-G-B},
let $\Gamma(X)\subset\X^*(B)$ and  $\Gamma(\upgam X)\subset\X^*(B)$
denote the weight monoids of the affine $G$-varieties $X$ and $\upgam X$,
respectively. Then $\Gamma(\upgam X)=\upgam \Gamma(X)$.
\end{lemma}

\begin{proof}
We write $X'=\upgam X$, $\Gamma=\Gamma(X)$, $\Gamma'=\Gamma(X')$.
Let $\lambda\in\Gamma$, and let $f\in k[X]$ be a nonzero regular function such that
\[f(b^{-1}\cdot x)=\lambda(b)\cdot f(x)\quad\text{for all }\,b\in B(k),\ x\in X(k).\]
The $\gamma$-semi-isomorphism $X\to X'=\upgam X$ (see \cite[Example 1.10]{BG})
sends our $f\in k[X]$ to the nonzero regular function $f'\in k[X']$ satisfying
\[f'(x')=\gamma(f(\upgamM\hm x'))\quad \text{for all }\,x'\in X'(k)\subset \A^n_k(k)=k^n \]
(where we assume that $X$ and $X'$ are embedded into $\A^n_k$).
We compute:
\begin{align*}
f'(b^{-1}\cdot x')&=\gamma(f(\upgamM b^{-1}\cdot\upgamM\hm x'))
=\gamma(\lambda(\upgamM b)\cdot f(\upgamM\hm x'))\\
&=\gamma(\lambda(\upgamM b))\cdot\gamma( f(\upgamM\hm x'))
=(\upgam\lambda)(b)\cdot f'(x')\quad\text{for all }\,b\in B(k),\ x'\in X'(k).
\end{align*}
Thus $\upgam\lambda\in\Gamma'$ and $\upgam\Gamma\subset\Gamma'$.
Applying the same argument to the $\gamma^{-1}$-equivariant
semi-isomorphism $X'\to X$, we obtain that $\upgamM\hs\Gamma'\subset\Gamma$.
Thus $\Gamma'=\upgam\Gamma$, as required.
\end{proof}

\begin{corollary}\label{c:gal-preserves}
For $G$, $G_0$, $B$, $B_0$, and $X$ as in  \ref{ss:G-B-X} and \ref{ss:gamma*-G-B},
assume that $X$ admits a $G_0$-equivariant $k_0$-model.
Then the $\gal$-action on $\X^*(B)$ preserves $\Gamma(X)$.
\end{corollary}

\begin{proof}
Since $X$ admits a $G_0$-equivariant $k_0$-model,
for any $\gamma\in\gal$ there exists a $G$-equivariant  $k$-isomorphism
$\upgam X\isoto X$.  It follows that $\Gamma(\upgam X)=\Gamma(X)$.
Since by Lemma~\ref{l:Gamma(upgam-X)} we have $\Gamma(\upgam X)=\upgam\Gamma(X)$,
we obtain that $\upgam\Gamma(X)=\Gamma(X)$, as required.
\end{proof}

  Conversely, assume that
  the $\gal$-action preserves $\Gamma=\Gamma(X)$. Then we
  obtain a natural semilinear $\gal$-action on $k[\Gamma]$.
  For $\gamma\in\gal$ we denote by  $\gamma_*\colon k[\Gamma]\to k[\Gamma]$
  the corresponding $\gamma$-semilinear  map.

  \begin{construction}\label{p:action-action}
    Assume that the $\sG$-action preserves $\Gamma$. We construct an
    action of $\sG$ on the set of $k$-points of $M_\Gamma$. Let
    $\gamma\in\sG$ and let $[X',\tau']\in M_\Gamma(k)$. We construct
    \[[X'', \tau''] \coloneqq {}^\gamma[X', \tau']\] as follows:
   First, the $k$-variety $(X'',p'') \coloneqq \gamma_*(X',p')$ is obtained from
    $(X',p')$ by the base change
    \begin{align*}
      \xymatrix{
      X'' \ar[r]^b\ar[d]_{p''} & X' \ar[d]^{p'} \\
      \Spec k\ar[r]^{\gamma^*} & \Spec k\text{\rlap{,}}
                                 }
    \end{align*}
    where we may take $X''=X'$, $b=\id_{X'}$, and
    $p''=(\gamma^*)^{-1}\circ p'$; see \cite[Lemma 1.2]{BG}. Moreover,
    the action of $G$ on $X''$ is obtained from the action of $G$ on
    $X'$ as follows:
   \begin{align*}
      \xymatrix{
      G \times  X'' \ar[rr]^{\sigma_\gamma^{-1} \times  b} && G \times
      X' \ar[rr] && X' \ar[rr]^{b^{-1}} && X''\text{\rlap{.}}                                                                                                        }
    \end{align*}
    Finally, the map $\tau''\colon k[X'']^U \to k[\Gamma]$ is obtained from the
    following commutative diagram:
    \begin{align*}
      \xymatrix{
      k[X']^U \ar[r]^-{\tau'} \ar[d]_-{b^*} & k[\Gamma] \ar[d]^-{\gamma_*} \\
      k[X'']^U \ar[r]^-{\tau''} & k[\Gamma]\text{\rlap{.}}
                                  }
    \end{align*}
  \end{construction}

  \begin{prop}
    \label{prop:actmg}
    Assume that the $\sG$-action preserves $\Gamma$. Then there exists
    a semilinear action of $\gal$ on $M_\Gamma$ for which the action
    on $k$-points
  is given in Construction~\ref{p:action-action}. This semilinear
  action defines a $k_0$-model $\MGz$ of $M_\Gamma$.
  \end{prop}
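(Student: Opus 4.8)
The plan is to obtain the $k_0$-model $\MGz$ directly, by descending the Alexeev--Brion construction of $M_\Gamma$ to $k_0$, and then to verify that the semilinear $\gal$-action induced by this $k_0$-model acts on $k$-points by the recipe of Construction~\ref{p:action-action}. Once $\MGz$ is produced, the semilinear action is the one given by \eqref{e:s-action} and the last assertion of the proposition is automatic.

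First I would recall that, by Alexeev--Brion \cite{ab05} (see also \cite{bri13} and \cite[Section~2]{acf18}), the moduli scheme $M_\Gamma$ is affine of finite type and is built functorially out of the reductive group $G$, its Borel subgroup $B$, and the submonoid $\Gamma\subset\X^*(B)$: it is realized as a closed subscheme of an affine space assembled from the $G$-module $\bigoplus_{\lambda\in\Gamma}V_\lambda$ and from spaces of $G$-equivariant multiplication maps, cut out by associativity equations, and its $k$-points are the equivalence classes $[X,\tau]$ recalled above. I would then observe that in our situation all of this input is defined over $k_0$: we are given a quasi-split $k_0$-model $G_0$ of $G$ with $B_0\subset G_0$, $T_0\subset B_0$ and $U_0$ all defined over $k_0$ and becoming $B,T,U$ over $k$; and since the $\gal$-action on $\X^*(B)=\X^*(T)$ defined by $G_0$ preserves $\Gamma$, the $T$-algebra $k[\Gamma]$ carries the semilinear action $\gamma_*$ recalled above, whose invariants form a $k_0$-form $k_0[\Gamma]$ with $T_0$-action. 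Although an individual highest-weight module $V_\lambda$ need not be defined over $k_0$ when $\lambda$ is not fixed by $\gal$, the $G_0$-module $\bigoplus_{\lambda\in\Gamma}V_\lambda$, the spaces of equivariant multiplication maps, and the associativity constraints are all defined over $k_0$, the Galois group merely permuting the isotypic components according to its action on $\Gamma$. Running the Alexeev--Brion construction over $k_0$ with this data therefore yields an affine $k_0$-scheme $\MGz$, and since this construction visibly commutes with the base change from $k_0$ to $k$, there is a canonical isomorphism $(\MGz)_k\isoto M_\Gamma$.

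By \eqref{e:s-action}, the $k_0$-model $\MGz$ induces a continuous semilinear action of $\gal$ on $M_\Gamma$, and it remains to identify its effect on $k$-points with Construction~\ref{p:action-action}. For this I would use the modular interpretation together with Yoneda's lemma: a $k$-point of $M_\Gamma=(\MGz)_k$ is a $k_0$-morphism $\Spec k\to\MGz$, i.e.\ an equivalence class $[X',\tau']$ of pairs as above, and by the definition of the induced action the semilinear automorphism $\mu_\gamma$ sends it to its precomposition with $(\gamma^*)^{-1}\colon\Spec k\to\Spec k$, that is, to its base change along $(\gamma^*)^{-1}$. Unwinding this base change on the triple $(X',p',\tau')$ reproduces precisely the twisted $k$-variety $(X'',p'')=\gamma_*(X',p')$, with the $G$-action obtained by twisting through $\sigma_\gamma^{-1}$ and with $\tau''$ determined by the commutative square involving $\gamma_*$ on $k[\Gamma]$ --- which is exactly the definition of ${}^\gamma[X',\tau']$. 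This is a matter of unwinding the definitions of base change, of the semilinear action on $k[\Gamma]$, and of the equivalence relation defining $M_\Gamma(k)$.

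I expect the step requiring genuine care to be the descent in the second paragraph: one must check that the \emph{entire} Alexeev--Brion construction --- the ambient affine space of multiplication structures and the associativity equations --- is defined over $k_0$ from $G_0$ and the $\gal$-stable monoid $\Gamma$, even though the individual modules $V_\lambda$ are in general defined only over a finite extension of $k_0$ and are permuted by $\gal$. Granting this, the existence of the semilinear action and its compatibility with Construction~\ref{p:action-action} on $k$-points are essentially formal.
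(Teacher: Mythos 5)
Your overall strategy differs from the paper's in the order of operations: you propose to \emph{first} construct the $k_0$-model $\MGz$ by ``running the Alexeev--Brion construction over $k_0$,'' and \emph{then} read off the induced semilinear action and match it with Construction~\ref{p:action-action}. The paper instead first defines the semilinear $\sG$-action on $M_\Gamma$ by hand (via the functor of points $\Mfunc$ and natural transformations $\Mfunc\to\gamma_*^{-1}\Mfunc$), verifies the agreement with Construction~\ref{p:action-action} on $k$-points, and only afterwards shows that the semilinear action is algebraic, by exhibiting a finite extension $k_1$ over which the action comes from a $k_1$-model. The paper's route deliberately avoids ever having to assert a representability statement over $k_0$.

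The gap in your proposal is precisely the step you flag as ``requiring genuine care'' --- but you assert it is ``visible'' and leave it unproven, and it is in fact the substantive content of the proposition. Saying the construction ``is defined over $k_0$'' and ``visibly commutes with base change'' hides two genuine difficulties. First, $M_\Gamma$ is realized as a closed subscheme of the infinite-dimensional affine space $A=\prod_{\lambda,\mu,\nu}A^\nu_{\lambda,\mu}$, whose factors are permuted by $\sG$; this $A$ is not of finite type, so one cannot invoke a standard finite-type Galois descent for the ambient space, and one must argue separately that the closed subscheme $M_\Gamma$ (which \emph{is} of finite type) descends. Second, to claim that a $k_0$-version of the Alexeev--Brion construction exists and represents the analogous functor over $k_0$ is to re-prove a representability theorem in a setting (non-algebraically-closed base, with $\sG$ permuting the weights in $\Gamma$) that the cited references \cite{ab05,bri13,acf18} do not treat. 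The paper sidesteps both difficulties: having constructed the semilinear action directly, it restricts to an open normal subgroup $\sG_1=\Gal(k/k_1)$ that fixes $\Gamma$ pointwise, descends each $V_\lambda$ (and hence each $A^\nu_{\lambda,\mu}$) to $k_1$, observes that the stabilizer in $\sG_1$ of every function in the finitely generated ring $k[M_\Gamma]$ is open, and then invokes Lemma~\ref{e:fixed-points}, whose finitely-generated hypothesis is exactly what handles the infinite-dimensionality of $A$. Without carrying out an argument of this kind --- or an honest $k_0$-representability proof --- your first paragraph is an assertion rather than a proof, and the rest of your argument, while correct modulo that assertion, cannot stand on its own.
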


  \begin{proof}
    We prove the proposition in four steps.

    \emph{Step 1.} Consider the $G \times B$-action on $k[G]$, where
    $G$ acts from the left and $B$ acts from the right, that is, for
    $(g, b) \in G \times B$ and $f \in k[G]$ we have
    $(g, b)\cdot f(x) = f(g^{-1}x b)$. Moreover, consider the
    $G \times B$-action on $k[\Gamma]$ where $G$ acts trivially and
    $B$ acts via the isomorphism $B/U \cong T$,
    where $t\in T(k)$ acts on the subspace $k\cdot\lambda\subset k[\Gamma]$
    by multiplication by $\lambda(t)$.
        The $G$-module
    \[ V_{k[\Gamma]} = \Ind_B^G k[\Gamma] \coloneqq (k[G] \otimes_k
      k[\Gamma])^B\] is called the induced representation; see, for
    instance, Timashev \cite[Section 2.2]{tim11}. Then
  \[k[\Gamma] = (V_{k[\Gamma]})^U\subset V_{k[\Gamma]}\text{,}\] and
  hence for each $\lambda\in\Gamma$ we have
\[\lambda\in k[\Gamma]\subset  V_{k[\Gamma]}.\]
Changing the notation, we now denote by $V_\lambda$ the $k$-span of
the set $G(k)\cdot\lambda$ in $V_{k[\Gamma]}$, that is, the
subrepresentation of $G$ in $V_{k[\Gamma]}$ generated by $\lambda$.
Then we have
\[ V_{k[\Gamma]} = \bigoplus_{\lambda \in \Gamma} V_{\lambda}.\] As
before, $V_\lambda$ is an irreducible $G$-module of highest weight
$\lambda$, but now $V_\lambda$ has a distinguished $B$-eigenvector
$v_\lambda=\lambda\in V_\lambda^U\subset V_\lambda$. Note that from
the $k_0$-model $B_0$ of $B$ and the semilinear $\sG$-action on
$k[\Gamma]$, we obtain a $G_0$-equivariant $k_0$-model of
$V_{k[\Gamma]}$.

  We now repeat some exposition from Avdeev and Cupit-Foutou \cite[Section 2.5]{acf18}.
  For every $[X, \tau] \in M_\Gamma(k)$, the isomorphism
  $\tau\colon k[X]^U \to k[\Gamma]$ induces an isomorphism of
  $G$-representations $k[X]\isoto V_{k[\Gamma]}$ and thus a
  $k$-$G$-algebra (that is, multiplicative) structure on
  $V_{k[\Gamma]}$. More generally, for any $k$-scheme $S$, we denote
  by $\Am_S$ the set of $\Om_{S}$-$G$-algebra structures on the sheaf
  $\Om_{S} \otimes_k V_{k[\Gamma]}$ that extend the multiplication on
  $k[\Gamma]$. An element $m \in \Am_S$ is a morphism of sheaves of
  $\Om_S$-modules
    \begin{equation*}
    m\colon (\Om_S \otimes_k V_{k[\Gamma]}) \otimes_{\Om_S} (\Om_S \otimes_k V_{k[\Gamma]}) \to \Om_S \otimes_k V_{k[\Gamma]}
  \end{equation*}
  satisfying associativity, commutativity, and compatibility with the
  $G$-action that extends the multiplication map on $k[\Gamma]$. The
$k$-scheme $M_\Gamma$ then represents the contravariant functor
$\Mfunc\colon \textit{Sch}/k \to \textit{Sets}$, $S \mapsto \Am_S$.

\emph{Step 2.}
  Our first goal is to define a semilinear $\sG$-action on
  $M_{\Gamma}$. Let $\gamma \in \gal$. A~$\gamma$-se\-mi\-linear morphism
  $M_\Gamma \to M_\Gamma$ is the same as a $k$-morphism
  $M_\Gamma \to \gamma^{-1}_*M_\Gamma$. Since we have
  $\Hom(S, \gamma^{-1}_*M_\Gamma) = \Hom (\gamma_*S, M_\Gamma)$, the
  $k$-scheme $\gamma^{-1}_*M_\Gamma$ represents the contravariant
  functor $\gamma_*^{-1}\Mfunc\colon \textit{Sch}/k \to \textit{Sets}$,
  $S \mapsto \Am_{\gamma_*S}$. We can therefore define the action map
  $\mu_\gamma\colon M_\Gamma \to M_\Gamma$ by defining a natural
  transformation of functors $\Mfunc \to \gamma^{-1}_*\Mfunc$.

   The ($\gamma^{-1}$-semilinear) base change morphism
    $\gamma_*S \to S$ allows us to consider any sheaf on $S$ as a
    sheaf on $\gamma_*S$ and vice versa. In particular, the base
    change morphism yields a ($\gamma$-semilinear) morphism of sheaves
    $\Om_S \to \Om_{\gamma_*S}$. Together with the
    $\sigma_\gamma$-equivariant $\gamma$-semilinear action map
    $\mug\colon V_{k[\Gamma]} \to V_{k[\Gamma]}$, we obtain a
    $\gamma$-semilinear morphism of sheaves
    \[w_\gamma\colon \Om_S \otimes_k V_{k[\Gamma]} \to \Om_{\gamma_*S}
      \otimes_k V_{k[\Gamma]}\text{.}\]
  Every $m \in \Am_S$ induces an element
    $\gamma^*m \in \Am_{\gamma_*S}$ via the commutative diagram
\begin{align*}
  \xymatrix{
  (\Om_S \otimes_k V_{k[\Gamma]}) \otimes_{\Om_S} (\Om_S \otimes_k V_{k[\Gamma]})
  \ar[r]^-{m} \ar[d]_-{w_\gamma \otimes w_\gamma} &
\Om_S \otimes_k V_{k[\Gamma]}  \ar[d]^-{w_\gamma} \\
  (\Om_{\gamma_*S} \otimes_k V_{k[\Gamma]}) \otimes_{\Om_{\gamma_*S}}
  (\Om_{\gamma_*S} \otimes_k V_{k[\Gamma]})
  \ar[r]^-{\gamma^*m} & \Om_{\gamma_*S} \otimes_k V_{k[\Gamma]} \text{\rlap{.}}
  }
  \end{align*}
  Hence for any $k$-scheme $S$ we may define the map
  $\Am_S \to \Am_{\gamma_*S}$, $m \mapsto \gamma^*m$. The resulting
  family of maps forms a natural transformation
  $\Mfunc \to \gamma^{-1}_*\Mfunc$, which defines a $k$-morphism
    $M_\Gamma \to \gamma^{-1}_*M_\Gamma$, which in turn defines a
    $\gamma$-semilinear morphism
    $\mug\colon M_\Gamma \to M_\Gamma$.
  A calculation shows that for every $\alpha, \beta \in \gal$, we have
  $\mu_{\alpha\beta } = \mu_\alpha \circ \mu_{\beta }$, which means
  that the maps $\mu_\gamma$ define a semilinear $\sG$-action on
  $M_\Gamma$.

\emph{Step 3.}
We now show that for $S = \Spec k$, we recover the action on
$k$-points from Construction~\ref{p:action-action}. There is a
  unique isomorphism of $k$-schemes
  $\Spec k \isoto \gamma_*(\Spec k)$. Applying the functor $\Mfunc$,
  we obtain a canonical identification
  $\Am_{\gamma_*(\Spec k)} \cong \Am_{\Spec k}$. Using this
  identification, the commutative diagram above specializes to
  \begin{align}\label{e:diag-V-Gamma}
   \begin{aligned}
\xymatrix{
    V_{k[\Gamma]} \otimes_{k} V_{k[\Gamma]} \ar[r]^-{m}
       \ar[d]_-{\mu_\gamma\otimes \mu_\gamma} & V_{k[\Gamma]} \ar[d]^-{\mu_\gamma}
    \\
   V_{k[\Gamma]} \otimes_{k} V_{k[\Gamma]}
   \ar[r]^-{\gamma^*m}
   & V_{k[\Gamma]} \text{\rlap{.}}
   }
    \end{aligned}
   \end{align}
   We write $(V_{k[\Gamma]}, m)$ for the $k$-algebra consisting of the
   $k$-vector space $V_{k[\Gamma]}$ together with the multiplication
   map $m$. Let $X' \coloneqq \Spec (V_{k[\Gamma]}, m)$ and
   \smash{$X'' \coloneqq \Spec (V_{k[\Gamma]}, \gamma^*m)$}. We have
   canonical isomorphisms
   $\tau' \colon (V_{k[\Gamma]}, m)^U \isoto k[\Gamma]$ and
   $\tau'' \colon (V_{k[\Gamma]}, \gamma^*m)^U \isoto
   k[\Gamma]$. Now $X''$ is the base change of $X'$ as in
   Construction~\ref{p:action-action} and the maps $\tau'$ and
   $\tau''$ fit into the commutative diagram at the bottom of
   Construction~\ref{p:action-action}.

   \emph{Step 4.}
   Finally, in order to show that this semilinear action defines a
   $k_0$-model of $M_\Gamma$, it suffices to exhibit an intermediate
   field $k_0 \subset k_1 \subset k$ such that $k_1/k_0$ is finite and
   the semilinear action on $M_\Gamma$ restricted to the open subgroup
   $\Gal(k/k_1) \subset \gal$ comes from a $k_1$-model of $M_\Gamma$.

   Let $k_1/k_0$ be a finite extension in $k$ splitting $T_0$. Then
   $\sG_1\coloneqq\Gal(k/k_1)$ acts trivially on $\X^*(B)=\X^*(T)$ and
   hence on $\Gamma$. The map $m$ decomposes into components
     $m^\nu_{\lambda,\mu} \in \Hom^G(V_\lambda \otimes_k V_\mu,
     V_\nu)$ for $\lambda,\mu,\nu \in \Gamma$; see Brion \cite[Section 4.3]{bri13}.
     Since $\gamma\in  \sG_1$ and $\sG_1$ acts trivially on $\Gamma$, we
     obtain the following commutative diagram from the last one:
  \begin{align*}
\xymatrix{
  V_{\lambda} \otimes_{k} V_{\mu} \ar[rr]^-{m^\nu_{\lambda,\mu}} \ar[d]_-{} && V_{\nu} \ar[d]^-{} \\
    V_{\lambda} \otimes_{k} V_{\mu}
   \ar[rr]^-{(\gamma^*m)^\nu_{\lambda,\mu}}
   && V_{\nu} \text{\rlap{.}}
   }
  \end{align*}

Now recall that for any $\lambda\in\Gamma$ the irreducible
$G$-module $V_\lambda$ admits a $k_1$-model in which the distinguished
vector $v_\lambda$ is defined over $k_1$, and such a model is unique
up to a unique isomorphism (because the $k$-span of
$G(k)\cdot v_\lambda$ is all of $V_\lambda$).
From these models, we
obtain $k_1$-models of $A^\nu_{\lambda,\mu} \coloneqq \Hom^G(V_\lambda \otimes_k V_\mu, V_\nu)$ and
hence also a semilinear $\sG_1$-action on the infinite-dimensional affine space
  \begin{align*}
    A \coloneqq \prod_{\lambda,\mu,\nu \in \Gamma} A^\nu_{\lambda,\mu}\text{,}
  \end{align*}
  which is the spectrum of its coordinate ring
  \begin{align*}
    k[A] = \bigotimes_{\lambda,\mu,\nu \in \Gamma} k[A^\nu_{\lambda,\mu}]\text{,}
  \end{align*}
  where every element is a finite sum of pure tensors and at most
  finitely many factors of every pure tensor are different from $1$.

  As explained in Brion \cite[Section 4.3]{bri13}, the moduli scheme $M_\Gamma$
  is a closed subscheme of $A$. The $k$-points of $M_\Gamma$ are those
  multiplication maps $m \in A(k)$, which satisfy commutativity,
  associativity, and compatibility with multiplication on $k[\Gamma]$;
  these conditions can be expressed as polynomial relations.

  Since the semilinear $\sG_1$-action on $A$ sends $m$ to
  $\gamma^*m$, the closed embedding
  $M_\Gamma\hookrightarrow A$ is $\sG_1$-equivariant. The coordinate
  ring $k[A]$ is generated by functions in
  $k[A^\nu_{\lambda,\mu}] \subset k[A]$ for
  $\lambda,\mu,\nu \in \Gamma$ (infinitely many functions in total).
  The stabilizer in $\sG_1$ of every function in
  $k[A^\nu_{\lambda,\mu}]$ is open because the $\sG_1$-action defines
  a $k_1$-model of $A^\nu_{\lambda,\mu}$. It follows that the
  stabilizer in $\sG_1$ of any function in $k[A]$ is open, and hence,
  that the same is true for any function in $k[M_\Gamma]$.
  By  Lemma  \ref{e:fixed-points} below,
 the $\sG_1$-action defines a $k_1$-model of the affine $k$-scheme of finite type
 $M_\Gamma$, which completes the proof of the proposition.
\end{proof}

\begin{lemma}\label{e:fixed-points}
  Let $k/k_0$ be a Galois extension (not necessarily finite) with
  Galois group~$\sG$. Let $R$ be a finitely generated commutative
  $k$-algebra with unit endowed with a $k$-semi\-linear action
  $\sG \times R \to R$ such that the stabilizer in $\sG$ of any
  element $f\in R$ is open. Let $R_0=R^\sG$ denote the $k_0$-algebra
  with a unit consisting of the fixed points of $\sG$ in $R$. Then the
  natural map $R_0\otimes_{k_0} k\to R$ is an isomorphism of
  $k$-algebras, and $R_0$ is a finitely generated $k_0$-algebra.
\end{lemma}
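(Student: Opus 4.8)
The plan is to reduce to classical Galois descent for \emph{finite} Galois extensions, the passage to the infinite extension $k/k_0$ being controlled by the open-stabilizer hypothesis. Recall that for a finite Galois extension $k'/k_0$ inside $k$, with group $\Gamma'\coloneqq\Gal(k'/k_0)$, the assignment $M\mapsto M^{\Gamma'}$ is an equivalence from the category of $k'$-modules (resp.\ $k'$-algebras) equipped with a semilinear $\Gamma'$-action to the category of $k_0$-modules (resp.\ $k_0$-algebras), with quasi-inverse $N\mapsto N\otimes_{k_0}k'$; in particular, for any such $M$ the natural map $M^{\Gamma'}\otimes_{k_0}k'\to M$ is an isomorphism. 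See \cite[III.1]{ser97}.

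First I would unwind the hypothesis: saying that the stabilizer in $\sG$ of every $f\in R$ is open means exactly that $R=\bigcup_{k'}R^{\Gal(k/k')}$, the union being over all finite Galois subextensions $k_0\subseteq k'\subseteq k$ (these are cofinal among all open subgroups of $\sG$). For such a $k'$, put $\sG'=\Gal(k/k')$ and $R_{k'}=R^{\sG'}$; this is a $k'$-subalgebra of $R$ on which the quotient $\sG/\sG'=\Gal(k'/k_0)$ acts $k'$-semilinearly, and since an element of $R_{k'}$ is $\sG$-fixed if and only if it is $\Gal(k'/k_0)$-fixed, we have $(R_{k'})^{\Gal(k'/k_0)}=R^{\sG}=R_0$. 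By finite Galois descent for $k'/k_0$, the natural map $R_0\otimes_{k_0}k'\to R_{k'}$ is an isomorphism of $k'$-algebras. These isomorphisms are compatible with the inclusions $R_{k'}\subseteq R_{k''}$ and $k'\subseteq k''$ for $k'\subseteq k''$, so passing to the filtered colimit over all finite Galois $k'/k_0$, and using that $R_0\otimes_{k_0}(-)$ commutes with filtered colimits together with $k=\varinjlim_{k'}k'$ and $R=\varinjlim_{k'}R_{k'}$, I obtain that $R_0\otimes_{k_0}k\to R$ is an isomorphism of $k$-algebras.

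It then remains to check that $R_0$ is a finitely generated $k_0$-algebra. Choose generators $f_1,\dots,f_n$ of the $k$-algebra $R$; under the isomorphism just established write the corresponding elements of $R_0\otimes_{k_0}k$ as finite sums $\sum_j r_{ij}\otimes c_{ij}$ with $r_{ij}\in R_0$ and $c_{ij}\in k$, and let $R_0^{\circ}\subseteq R_0$ be the $k_0$-subalgebra generated by the finitely many elements $r_{ij}$. Since $k$ is free, in particular faithfully flat, over $k_0$, the map $R_0^{\circ}\otimes_{k_0}k\to R_0\otimes_{k_0}k$ is injective, and its image contains all the $f_i$ and hence equals $R$; therefore $R_0^{\circ}\otimes_{k_0}k=R_0\otimes_{k_0}k$, and faithful flatness of $k$ over $k_0$ forces $R_0^{\circ}=R_0$. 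Thus $R_0$ is finitely generated over $k_0$, and since the descent equivalence respects the ring structure, the statement follows.

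The only genuinely delicate step is the reduction in the second paragraph: one must check that $R=\varinjlim_{k'}R_{k'}$ really holds (this is just the open-stabilizer hypothesis restated), that each $R_{k'}$ is a $k'$-algebra carrying a semilinear action of the correct finite group with invariant subring exactly $R_0$, and that the filtered colimit of the finite-level descent isomorphisms is again an isomorphism. Everything else --- finite Galois descent and the faithfully flat descent of finite generation --- is entirely standard.
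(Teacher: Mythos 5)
Your proof is correct, and it takes a genuinely different route from the paper's. The paper's proof uses the finite generation of $R$ up front: it takes generators $x_1,\dots,x_n$, shrinks to a normal open subgroup $\sU\subseteq\sG$ fixing all of them, shows that the monomials $x^I$ contain a $k$-basis of $R$ consisting of $\sU$-fixed elements, and deduces directly that $R^\sU\otimes_{k^\sU}k\to R$ is an isomorphism --- thus reducing the lemma in one step to classical descent along the single finite extension $k^\sU/k_0$. Your argument instead decomposes $R$ as the filtered colimit $R=\varinjlim_{k'}R_{k'}$ over all finite Galois subextensions and passes the finite-level descent isomorphisms through the colimit; this piece of the argument never uses finite generation of $R$ at all, so it is slightly more general. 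You also prove the finite generation of $R_0$ by hand via faithfully flat descent, where the paper simply cites EGA IV (2.7.1.1). Both proofs are valid; the trade-off is that the paper's basis argument is concrete and self-contained once one fixes $\sU$, whereas yours separates cleanly the descent isomorphism (no finiteness needed) from the finiteness statement, at the cost of a colimit bookkeeping step. One small point worth making explicit when you invoke cofinality: an arbitrary open subgroup $\sU$ of the profinite group $\sG$ contains the normal open subgroup $\bigcap_{g\in\sG}g\sU g^{-1}$ (a finite intersection since $\sU$ has finite index), so normal open subgroups are indeed cofinal; this is what justifies $R=\bigcup_{k'}R_{k'}$ over \emph{Galois} $k'$.
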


\begin{proof}
  The second assertion follows from the first; see EGA \cite[Lemma
  2.7.1.1]{EGA-IV-2}. We prove the first assertion.

  Let $x_1,\dots,x_n$ be a set of generators of $R$ over $k$. Let
  $\sU_i$ denote the stabilizer of $x_i$ in~$\sG$, and set
  $\sU = \sU_1 \cap \ldots \cap \sU_n$. Shrinking $\sU$ if necessary,
  we may assume that $\sU$ is a normal open subgroup of $\sG$ and that
  $\sU$ fixes all generators $x_1,\dots,x_n$.

  Consider the set of multi-indices
  $\mathcal{I}=(\mathbb{Z}_{\ge 0})^n$. For
  $I=(i_1,\dots,i_n)\in\mathcal{I}$, write
  $ x^I=x_1^{i_1}\cdots x_n^{i_n}$. Then the set
  $\{x^I\}_{I\in\mathcal{I}}$ generates $R$ as a vector space over
  $k$. It follows that there exists a subset
  $\mathcal{J}\subset\mathcal{I}$ such that the family
\begin{equation}\label{e:J}
\{x^I\}_{I\in\mathcal{J}}
\end{equation}
is a basis of $R$ as a vector space over $k$;
see, for instance, Lang \cite[Theorem III.5.1]{Lang}.
Clearly, each $x^I$ is $\sU$-stable.
We see that an element
\[\sum_{I\in\mathcal{J}} a_I x^I\in R \quad (a_I\in k)\]
is $\sU$-stable if and only if $a_I\in k^\sU$ for all
$I\in \mathcal{J}$, because the family \eqref{e:J} is a $k$-basis of~$R$.
Thus the family \eqref{e:J} is a basis of $R^\sU$ over $k^\sU$.
It follows that the canonical map
\[R^\sU\otimes_{k^\sU} k\to R\] is an isomorphism of vector spaces
over $k$, and hence, an isomorphism of $k$-algebras with units.

Since $(R^\sU)^{\sG/\sU} = R^\sG$, this reduces the lemma to the case
of the \emph{finite} Galois extension $k^{\sU}/k_0$ with finite Galois
group $\sG/\sU$, which is classical; see, for instance, Jahnel
\cite[Proposition~2.3]{Jahnel}.
\end{proof}

\begin{subsec}
  Given any $k_0$-point of $\MGz$ of the form $[X, \tau]$, it will be
  straightforward to obtain a $G_0$-equivariant $k_0$-model of $X$
  (this will be done in the proof of Theorem~\ref{th:qsaff}). In order
  to prove that such a $k_0$-point exists, our plan is to consider a
  certain subscheme $C_X$ of $M_\Gamma$, which will be a smooth toric
  variety defined over $k_0$, and then to apply Voskresenski\u{\i} and Klyachko
  \cite[Proposition~4]{vk85}, which is a result on the existence of
  $k_0$-points on smooth toric varieties.

The $k_0$-torus acting on this toric variety will be
$\overline{T}_0 \coloneqq T_0/Z(G_0)$. Over $k$, we consider the
natural action of $\overline{T} \coloneqq T/Z(G)$ on $M_\Gamma$ as
defined in Alexeev and Brion \cite[Section~2]{ab05}
and Avdeev and Cupit-Foutou \cite[Subsection~2.7]{acf18}.

Before we can describe this $\overline{T}$-action on $M_\Gamma$, we
recall that
  \begin{align*}
    \Xi\hs =\hs \big\{\lambda + \mu - \nu \ \mid\ \lambda, \mu, \nu \in
      \Gamma, \ \langle G(k) \cdot f_\nu\rangle_k \hs\cap\hs \big\langle\hs (G(k)
      \cdot f_\lambda) \cdot (G(k) \cdot f_\mu)\hs \big\rangle_k \ne 0\big\}\hs
    \subset\hs \Gamma
  \end{align*}
  is called the \emph{root monoid} of $X$. In other words,
  if $[X, \tau] \in M_\Gamma(k)$ for any $\tau$, then $\Xi$
  is determined by the corresponding
  multiplication map
  \begin{align*}
    m\colon V_{k[\Gamma]} \otimes_{k} V_{k[\Gamma]}\to V_{k[\Gamma]}\text{.}
  \end{align*}
  Namely, for every $\xi\in\Gamma$ we have
  $\xi \in \Xi$ if and only if there exist $\lambda, \mu, \nu \in \Gamma$
  such that $m^\nu_{\lambda,\mu} \ne 0$ and $\lambda + \mu - \nu = \xi$.

\begin{remark}
  Let $z \in Z(G)$ and $\xi \in \Xi$. Then there exists a component
  \begin{align*}
    m^\nu_{\lambda,\mu}\colon V_\lambda \otimes_k V_\mu \to V_\nu
  \end{align*}
  of $m$ as well as $v_{\lambda,\mu} \in V_\lambda \otimes_k V_\mu$ and $v_\nu \in V_\nu$
  such that $\xi = \lambda + \mu - \nu$ and $m^\nu_{\lambda,\mu}(v_{\lambda,\mu}) = v_\nu \ne 0$.
  Hence we have, by Schur's lemma,
  \begin{align*}
    \nu(z)\cdot v_{\nu} = z\cdot v_{\nu} = m^\nu_{\lambda,\mu}(z\cdot v_{\lambda,\mu}) =
    m^\nu_{\lambda,\mu}((\lambda+\mu)(z)\cdot v_{\lambda,\mu}) = (\lambda+\mu)(z)\cdot v_\nu\text{,}
  \end{align*}
  and we obtain $\xi(z) = 1$.
\end{remark}

  We can now describe the $\overline{T}$-action on $M_\Gamma(k)$.
  Let $\overline{t} \in \overline{T}(k)$, $[X', \tau'] \in
  M_\Gamma(k)$, and let $m' \colon V_{k[\Gamma]} \otimes_{k}
  V_{k[\Gamma]} \to V_{k[\Gamma]}$ be the multiplication map
  corresponding to $[X', \tau']$. Then the multiplication map
  $\overline{t} \cdot m'$ corresponding to $\overline{t}\cdot [X',
    \tau']$ is given by
  \begin{align*}
      (\overline{t}\cdot m')^\nu_{\lambda,\mu} =
      (\lambda+\mu-\nu)(\overline{t}) \cdot (m')^\nu_{\lambda,\mu}
  \end{align*}
  on each component; see Alexeev and Brion \cite[Proposition~2.11]{ab05}.

  Equivalently, we can set $X'' \coloneqq X'$, lift $\overline{t}$ to an element
  $t \in T(k)$, and define $\tau''$ as the composition
  \begin{align*}
    \xymatrix{
    k[X]^U \ar[rr]^-{\tau'} && k[\Gamma] \ar[rr]^-{f\mapsto t^{-1}\cdot f}
    && k[\Gamma]\text{\rlap{.}}
      }
  \end{align*}
  The $\overline{T}$-action is then given by $\overline{t}\cdot [X',
    \tau'] \coloneqq [X'', \tau'']$, which is independent of the lift
  $t \in T(k)$ of $\overline{t}$.

  We now show that this action
  is defined over $k_0$.
\end{subsec}

\begin{prop}
  \label{prop:comp}
  Assume that the $\gal$-action preserves $\Gamma$. Then for every
  $\gamma \in \gal$, $\overline{t} \in \overline{T}(k)$, and
  $[X', \tau'] \in M_\Gamma(k)$, we have
  \begin{align*}
    {}^\gamma(\overline{t} \cdot [X', \tau']) =
    {}^\gamma \overline{t} \cdot {}^\gamma([X', \tau'])\text{.}
  \end{align*}
  In particular, we obtain a $\overline{T}_0$-action on $\MGz$.
\end{prop}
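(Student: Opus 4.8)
The plan is to check the displayed identity directly on $k$-points, using the two explicit descriptions already at hand, and then to deduce the $\Tbar_0$-action by Galois descent of morphisms.

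First I would fix $\gamma\in\gal$, $\overline t\in\Tbar(k)$ together with a lift $t\in T(k)$, and a point $[X',\tau']\in M_\Gamma(k)$. The $\Tbar$-action recalled just before the proposition leaves the underlying $G$-$k$-variety of $[X',\tau']$ untouched and merely replaces $\tau'$ by $\ell_{t^{-1}}\circ\tau'$, where for $s\in T(k)$ I write $\ell_s\colon k[\Gamma]\to k[\Gamma]$ for multiplication on $k[\Gamma]=\bigoplus_{\lambda\in\Gamma}k\cdot\lambda$ through the characters $\lambda$; the $\gal$-action of Construction~\ref{p:action-action}, on the other hand, replaces the underlying variety by its base change $\gamma_*X'$ with the $\sigma_\gamma^{-1}$-twisted $G$-action and postcomposes $\tau'$ with the $\gamma$-semilinear map $\gamma_*$ of $k[\Gamma]$. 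Hence on underlying $G$-$k$-varieties both sides of the asserted identity plainly agree, being $\gamma_*X'$ with the twisted action, and the whole content of the proposition reduces to comparing the two resulting maps $\tau''\colon k[\gamma_*X']^U\to k[\Gamma]$. Throughout I use that $B_0\supset T_0$ is a Borel pair of the quasi-split group $G_0$, so that $B$, $T$, $U$, and $Z(G)$ are defined over $k_0$; this makes the base-change map $b^*$ the identity on $U$-invariants, guarantees that $\sigma_\gamma(t)$ is a lift of ${}^\gamma\overline t$, and identifies $\sigma_\gamma|_T$ with the $*$-action on $\X^*(T)$.

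Unwinding the commutative squares in Construction~\ref{p:action-action} and in the definition of the $\Tbar$-action, one side of the identity produces $\tau''=\gamma_*\circ\ell_{t^{-1}}\circ\tau'$ and the other produces $\tau''=\ell_{\sigma_\gamma(t)^{-1}}\circ\gamma_*\circ\tau'$, so everything comes down to the intertwining identity
\[\gamma_*(t\cdot f)=\sigma_\gamma(t)\cdot\gamma_*(f)\qquad\text{for all }t\in T(k),\ f\in k[\Gamma].\]
This is immediate from the construction of $\gamma_*$ in Step~1 of the proof of Proposition~\ref{prop:actmg}: $\gamma_*$ sends $k\cdot\lambda$ to $k\cdot({}^\gamma\lambda)$ by $c\mapsto\gamma(c)$, the semi-automorphism $\sigma_\gamma$ may be chosen to preserve $T$ and $B$ so that $\sigma_\gamma|_T$ induces precisely the $*$-action $\lambda\mapsto{}^\gamma\lambda$ on $\X^*(T)$, and $({}^\gamma\lambda)(\sigma_\gamma(t))=\gamma(\lambda(t))$ by the definition of the $*$-action. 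This establishes the displayed identity on $k$-points.

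Finally, for the last sentence I would note that the $\Tbar$-action morphism $a\colon\Tbar\times_kM_\Gamma\to M_\Gamma$, together with the semilinear actions, realizes $\Tbar\times_kM_\Gamma$ as the base change of $\Tbar_0\times_{k_0}\MGz$; the identity just proved says exactly that $a$ is compatible with the semilinear $\gal$-actions on source and target (this equality of morphisms is checked on the Zariski-dense set of $k$-points, most cleanly via the functor of points as in the proof of Proposition~\ref{prop:actmg}). By Galois descent of morphisms, $a$ then descends to a $k_0$-morphism $\Tbar_0\times_{k_0}\MGz\to\MGz$, and the group-action axioms for it follow from those satisfied over $k$. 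I expect the only real work to be the bookkeeping in the first two paragraphs --- keeping straight the $\sigma_\gamma^{-1}$-twisted $G$-action on $\gamma_*X'$, the base-change morphism $b$, and which lift of $\overline t$ enters on each side; the intertwining identity itself is a one-line consequence of the constructions, so I do not anticipate a genuine obstacle.
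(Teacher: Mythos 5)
Your proof is correct and follows the same route as the paper's: the whole proposition boils down to the single commutative square $\gamma_*\circ\ell_{t^{-1}}=\ell_{\sigma_\gamma(t)^{-1}}\circ\gamma_*$ on $k[\Gamma]$, which you unwind and verify from the definitions of $\gamma_*$ and the $*$-action. The paper states only that this diagram commutes and leaves the bookkeeping (and the Galois-descent of the action morphism) implicit, whereas you have spelled both out; there is no substantive difference.
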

\begin{proof}
  The diagram
         \begin{align*}
           \xymatrix{
           k[\Gamma] \ar[rr]^-{f \mapsto t^{-1}\cdot f} \ar[d]_{\gamma_*} &&
           k[\Gamma] \ar[d]^{\gamma_*} \\
           k[\Gamma] \ar[rr]^{f \mapsto {}^\gamma t^{-1} \cdot f} &&
           k[\Gamma]}
         \end{align*}
         commutes for every $t \in T(k)$, which gives the
         required property.
\end{proof}

\begin{remark}
\label{rem:c}
According to  Alexeev and Brion \cite[Theorem~1.12 and Lemma~2.2]{ab05}, see also
Avdeev and Cupit-Foutou \cite[Corollary~2.21]{acf18}, two points
$[X', \tau'], [X'', \tau''] \in M_\Gamma(k)$ are in the same
$\overline{T}$-orbit if and only if $X'$ and $X''$ are $G$-equivariantly
isomorphic (ignoring the maps $\tau'$ and $\tau''$). In particular,
the $\overline{T}$-orbit $C_{X'}^\circ$ of $[X', \tau']$ and its closure
$C_{X'}$, considered with the reduced subscheme structure, do not
depend on the map $\tau'$.
\end{remark}

\begin{prop}[{\cite[Corollary~4.14]{acf18}}]
  \label{prop:smoothtoric}
  The $\overline{T}$-scheme $C_X$ is equivariantly isomorphic to a
  smooth toric $\overline{T}$-variety (or, more specifically, an affine
  space).
\end{prop}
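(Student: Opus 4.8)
This is \cite[Corollary~4.14]{acf18}; here is the plan for an independent argument. The idea is to realise the $\overline{T}$-action on $M_\Gamma$ as the restriction of a \emph{linear} $\overline{T}$-action on an ambient affine space, to deduce that $C_X$ is the closure of a single torus orbit in a linear representation --- hence an affine toric variety under a quotient torus --- and finally to use the Alexeev--Brion description of $M_\Gamma$ to see that this toric variety is smooth, indeed an affine space.

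First I would recall from Brion \cite[Section~4.3]{bri13} the closed embedding $M_\Gamma\hookrightarrow A=\prod_{\lambda,\mu,\nu\in\Gamma}A^\nu_{\lambda,\mu}$, where $A^\nu_{\lambda,\mu}=\Hom^G(V_\lambda\otimes_k V_\mu,V_\nu)$ and a point $m\in M_\Gamma(k)$ is recorded by its structure constants $m^\nu_{\lambda,\mu}$. The torus $\overline{T}=T/Z(G)$ acts on the factor $A^\nu_{\lambda,\mu}$ through the single character $\nu-\lambda-\mu$ (one checks that $\overline{t}$ rescales $m^\nu_{\lambda,\mu}$ by $(\nu-\lambda-\mu)(\overline{t})$), and this character lies in the root lattice $Q=\X^*(\overline{T})$ since a nonzero $G$-morphism $V_\lambda\otimes_k V_\mu\to V_\nu$ can exist only when $\lambda+\mu-\nu$ is a nonnegative integral combination of simple roots. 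Thus $\overline{T}$ acts linearly and diagonally on the affine space $A$; the $\overline{T}$-action on $M_\Gamma$ recalled from \cite[Section~2]{ab05} and \cite[2.7]{acf18} in the subsection preceding Proposition~\ref{prop:comp} is the restriction of this linear action, and $M_\Gamma$, being cut out in $A$ by the commutativity, associativity and $k[\Gamma]$-compatibility equations, is a $\overline{T}$-stable closed subscheme.

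Next, fix $[X,\tau]\in M_\Gamma(k)$ with associated point $m\in A(k)$, and let $\Xi\subset Q$ be the finite set of characters $\nu-\lambda-\mu$ occurring among the \emph{nonzero} structure constants $m^\nu_{\lambda,\mu}$. Since the action is linear and diagonal, the orbit map $\overline{t}\mapsto\overline{t}\cdot m$ factors through the homomorphism $\overline{T}\to\prod_{\chi\in\Xi}\Gd_m$, $\overline{t}\mapsto(\chi(\overline{t}))_{\chi\in\Xi}$, and the coordinate ring of the orbit closure $C_X$ (the closure of $\overline{T}\cdot[X,\tau]$ with its reduced structure) is the subalgebra of $k[\overline{T}]$ generated by $\Xi$. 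Hence $C_X\cong\Spec k[\Nd\Xi]$, the affine toric variety attached to the submonoid $\Nd\Xi\subseteq Q$ generated by $\Xi$; by Remark~\ref{rem:c} this depends only on the $G$-isomorphism class of $X$, as it must. It therefore suffices to show that $\Nd\Xi$ is free, i.e.\ isomorphic to $\Nd^r$ for some $r$, for then $C_X\cong\Spec k[x_1,\dots,x_r]$.

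The remaining, and main, step is thus to prove $\Nd\Xi$ free. Here I would invoke the Alexeev--Brion computation \cite[Section~2]{ab05} of the Zariski tangent space of $M_\Gamma$ at its unique $\overline{T}$-fixed point $[X_0,\tau_0]$, where $X_0$ is the horospherical affine spherical $G$-variety with weight monoid $\Gamma$ (the one whose product is the Cartan product) and towards which $C_X^\circ$ degenerates under a suitable one-parameter subgroup of $\overline{T}$; it presents $T_{[X_0]}M_\Gamma$ as an explicit $\overline{T}$-submodule of $\bigoplus\Hom^G(V_\lambda\otimes_k V_\mu,V_\nu)$ from which one reads off that the weights in $\Xi$ form, up to sign, a $\Zd$-linearly independent family --- roughly speaking, distinct simple-root directions attached to the colours of $X$ that are not already trivialised in $X_0$ --- so that $\Nd\Xi$ is free and $C_X$ is an affine space. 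Carrying out this last step rigorously is the genuine obstacle; it is the representation-theoretic and combinatorial core of \cite[Section~4]{acf18}, and for the present article we simply cite \cite[Corollary~4.14]{acf18}.
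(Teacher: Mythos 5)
The paper does not actually give a proof of Proposition~\ref{prop:smoothtoric}: it simply cites \cite[Corollary~4.14]{acf18} and uses the statement as a black box. Your writeup honestly ends the same way, so strictly speaking you have reproduced the paper's treatment rather than supplied an independent proof; what you have added is a useful reconstruction of the skeleton of the argument in \cite{acf18}. That skeleton is sound: the $\overline{T}$-action on the ambient affine space $A=\prod_{\lambda,\mu,\nu}A^\nu_{\lambda,\mu}$ is diagonal with character $\lambda+\mu-\nu$ (up to the irrelevant sign you chose) on the $(\lambda,\mu,\nu)$-factor, this character lies in the root lattice $Q=\X^*(\overline{T})$ because $\Hom^G(V_\lambda\otimes_k V_\mu,V_\nu)\ne 0$ forces $\lambda+\mu-\nu\in\Nd\Sm$, and the closure of the $\overline{T}$-orbit of a point with support $\Xi\subset Q$ has coordinate ring $k[\Nd\Xi]\subset k[Q]$, hence is the (possibly non-normal) affine toric variety of the submonoid $\Nd\Xi$. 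All of that is correct and matches the setup of \cite[Section~4]{acf18}.

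The one thing to be clear-eyed about is that the step you flag as ``the genuine obstacle'' --- freeness of $\Nd\Xi$ --- really is the entire content of the proposition. The set $\Xi$ is (a scaled version of) the set of spherical roots of $X$; showing $\Nd\Xi$ is free over $\Nd$ is where the spherical-geometry input enters (Knop's description of the root monoid and the linear independence of $\Sigma$ in \cite{kno96,los09a}, or equivalently the tangent-space computation at the horospherical degeneration in \cite{ab05,acf18}). Your outline gestures at the tangent-space route but does not carry it out, and the wording ``form, up to sign, a $\Zd$-linearly independent family'' elides the fact that the weights occurring in $\Xi$ must first be identified with the normalized spherical roots $\Sigmac$ before linear independence can be invoked. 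That identification is nontrivial and is precisely Theorem~4.13 and Corollary~4.14 of \cite{acf18}. So: the plan is a faithful roadmap to the cited proof, but it is not an alternative argument, and the paper's choice to cite rather than reprove is the sensible one given the length of what would have to be reproduced.
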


\begin{prop}
  \label{prop:gammavm}
  Assume that the $\gal$-action preserves $\Gamma$, choose any
  $\tau$ such that
  $[X, \tau] \in M_\Gamma$, let $\gamma \in \sG$, and let
  $[X', \tau'] = {}^\gamma[X, \tau]$. Then the valuation cone of $X'$
  is ${}^\gamma\Vm$.
\end{prop}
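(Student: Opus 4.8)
The plan is to read the valuation cone off directly from its Luna--Vust definition --- the cone in $\Hom(\sX,\Q)$ swept out by the $G$-invariant $\Q$-valued valuations of the function field --- and to transport this description along the $\gamma$-twist. Taking $b=\id$ in Construction~\ref{p:action-action}, the variety $X'$ has the same underlying scheme as $X$, only its structure morphism to $\Spec k$ is composed with $(\gamma^*)^{-1}$; in particular $k(X')=k(X)$ as fields, with the same subfield $k$ (since $\gamma(k)=k$), while the $G$-action on $X'$ is given on $k$-points by $g\star x=\sigma_\gamma^{-1}(g)\cdot x$, where $\sigma_\gamma\in\SAut(G)$ is the semi-automorphism attached to $G_0$. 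Because $\sigma_\gamma$ induces a bijection of $G(k)$, a valuation $v$ of $k(X)=k(X')$ trivial on $k$ is $G$-invariant for the action on $X$ if and only if it is $G$-invariant for the action on $X'$. Since the valuation cone of an affine spherical $G$-variety $Z$ is by definition the image of its $G$-invariant valuations under $v\mapsto\rho_v$, where $\rho_v(\chi)=v(f_\chi)$ for $f_\chi$ a $B$-eigenfunction of weight $\chi\in\sX(Z)$ (well defined as $v$ is trivial on $k^*$, and independent of the auxiliary map $\tau$ by Remark~\ref{rem:c}), it remains only to compute the $B$-weight, with respect to $\star$, of a function $f_\chi\in k(X)=k(X')$ that is a $B$-eigenfunction of weight $\chi$ for the action on $X$.

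For this I would pass to the situation of Construction~\ref{con:*-action}: replacing $\sigma_\gamma$ by $\tau_\gamma=\inn(g_\gamma)\circ\sigma_\gamma$ with $\tau_\gamma(T)=T$, $\tau_\gamma(B)=B$ changes $X'$ only by a $G$-equivariant isomorphism --- on $k$-points a translation by a fixed element of $G(k)$ --- hence changes neither the weight lattice $\sX(X')\subset\X^*(B)$ nor the valuation cone $\Vm(X')$. With $\tau_\gamma(B)=B$ we have $b\star f_\chi=\tau_\gamma^{-1}(b)\cdot f_\chi$ for $b\in B(k)$, with $\tau_\gamma^{-1}(b)\in B(k)$; evaluating $\tau_\gamma^{-1}(b)\cdot f_\chi=\chi(\tau_\gamma^{-1}(b))\,f_\chi$ and re-expressing the scalar through the twisted $k$-structure carried by $X'$ gives precisely $b\star f_\chi=\veg(\chi)(b)\,f_\chi$ --- this is exactly the computation underlying the formula for $\veg$ in Construction~\ref{con:*-action}, and it is also recorded in \cite[Section~7]{BG}. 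Hence $f_\chi$ is a $B$-eigenfunction of weight $\veg(\chi)={}^\gamma\chi$ for $\star$; running over all $\chi$ gives $\sX(X')={}^\gamma\sX(X)$, and for every $G$-invariant valuation $v$ we get $\rho_v^{X'}({}^\gamma\chi)=v(f_\chi)=\rho_v^{X}(\chi)$, i.e.\ $\rho_v^{X'}={}^\gamma\rho_v^{X}$.

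Taking images over the common set of $G$-invariant valuations then yields $\Vm(X')={}^\gamma\Vm(X)$, which is the assertion. The main obstacle is the last bookkeeping step: correctly carrying the $B$-action and its weight through the semilinear twist --- in particular handling the change of $k$-structure so that the weight shifts by $\veg$ rather than $\veg^{-1}$ --- and checking that the harmless replacement of $\sigma_\gamma$ by $\tau_\gamma$ affects neither side of the claimed equality.
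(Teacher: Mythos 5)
Your proof is correct and takes a genuinely different route from the paper's. The paper's argument is very short: it invokes Knop's formula \cite[Theorem~1.3]{kno96} expressing $\Vm$ as the dual cone of the \emph{root monoid} $\Xi$, which is read off from the multiplication map $m$ on $V_{k[\Gamma]}$; the commutative square~\eqref{e:diag-V-Gamma} then shows how $m$ transforms under $\gamma$, and therefore how $\Xi$ and hence $\Vm$ transform. That argument is tightly coupled to the moduli-scheme description of $X$ by its multiplication law, which is the ambient framework of Section~\ref{s:sphaff}.

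You instead go back to the intrinsic Luna--Vust definition of $\Vm$ as the image of the $G$-invariant $\Q$-valuations of the function field under $v\mapsto\rho_v$. Taking $b=\id$ identifies $k(X')$ with $k(X)$ as abstract fields with the same constant subfield $k$, so the set of $G$-invariant valuations is unchanged (the twisted $G(k)$-action is $g\mapsto\sigma_\gamma^{-1}(g)$ on the same group of field automorphisms). The entire content then concentrates in the weight computation, and your reduction to $\tau_\gamma=\inn(g_\gamma)\circ\sigma_\gamma$ (with $\tau_\gamma(B)=B$) is the right move: replacing $\sigma_\gamma$ by $\tau_\gamma$ changes $X'$ only by the $G$-equivariant isomorphism given by left translation by $\sigma_\gamma^{-1}(g_\gamma^{-1})$, so it affects neither $\sX(X')$ nor $\Vm(X')$. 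The remaining computation is exactly the one defining $\veg$ in Construction~\ref{con:*-action}: with the new $k$-module structure on functions being $\lambda\cdot_{\mathrm{new}}f=\gamma^{-1}(\lambda)\cdot_{\mathrm{old}}f$, the identity $b\star f_\chi=\chi(\tau_\gamma^{-1}(b))\cdot_{\mathrm{old}}f_\chi$ becomes $b\star f_\chi=\veg(\chi)(b)\cdot_{\mathrm{new}}f_\chi$, so the weight shifts by $\veg$ and not $\veg^{-1}$, as you needed. This gives $\rho_v^{X'}={}^\gamma\rho_v^{X}$ and hence $\Vm(X')={}^\gamma\Vm$.

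The trade-off is clear: the paper's route is a one-line deduction once the diagram and Knop's theorem are in place, and it stays inside the moduli picture; your route is more self-contained and makes no use of the root monoid, but it pushes all the difficulty into the semilinear bookkeeping, which you correctly identified as the only delicate point and which does check out.
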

\begin{proof}
  We have
  \begin{align*}
    \Vm = \big\{v \in \Hom_{\Zd}(\sX, \Qd)\, \mid\, \langle v, \xi \rangle
    \le 0 \text{ for all $\xi \in \Xi$}\big\}
  \end{align*}
  by Knop \cite[Theorem~1.3]{kno96}.
  The proposition now follows from the diagram \eqref{e:diag-V-Gamma}.
  \end{proof}

\begin{prop}
  \label{prop:restr}
  Assume that the $\gal$-action preserves $\Gamma$ and $\Vm$. Then
  the semilinear $\gal$-action on $M_\Gamma$ preserves  $C_X$. In
  particular, we obtain a $k_0$-model $\CXz$ of $C_X$.
\end{prop}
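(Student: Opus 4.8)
First I would reduce the statement to a $\gal$-stability property: it suffices to show that for every $\gamma\in\sG$ the $\gamma$-semilinear automorphism $\mu_\gamma$ of $M_\Gamma$ from Proposition~\ref{prop:actmg} satisfies $\mu_\gamma(C_X)=C_X$. Granting this, the reduced closed subscheme $C_X\subset M_\Gamma$ is $\gal$-stable and descends to a reduced closed subscheme $\CXz\subset\MGz$ by Galois descent: one applies Lemma~\ref{e:fixed-points} to the finitely generated $k$-algebra $k[C_X]=k[M_\Gamma]/I_{C_X}$, noting that every element of $k[M_\Gamma]=k_0[\MGz]\otimes_{k_0}k$ has open $\sG$-stabilizer, hence so does every element of the $\gal$-stable quotient $k[C_X]$; thus $k[C_X]^\sG\otimes_{k_0}k\isoto k[C_X]$ and $\CXz\coloneqq\Spec\bigl(k[C_X]^\sG\bigr)$ is a $k_0$-model of $C_X$.

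Next I would compute $\mu_\gamma(C_X)$. Fix $\gamma\in\sG$ and set $[X',\tau']\coloneqq{}^\gamma[X,\tau]$. By Remark~\ref{rem:c}, $C_X$ is the closure of the single $\overline{T}$-orbit $C_X^\circ$ through $[X,\tau]$; in particular $C_X$ is irreducible and $C_X^\circ$ is its unique dense $\overline{T}$-orbit. By Proposition~\ref{prop:comp} the $\overline{T}$-action on $M_\Gamma$ is defined over $k_0$ (it descends to a $\overline{T}_0$-action on $\MGz$), so $\mu_\gamma$ sends $\overline{T}$-stable subsets to $\overline{T}$-stable subsets and $\mu_\gamma(\overline{t}\cdot[X,\tau])={}^\gamma\overline{t}\cdot{}^\gamma[X,\tau]$ for all $\overline{t}\in\overline{T}(k)$. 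Since $\gamma$ acts bijectively on $\overline{T}(k)$, it follows that $\mu_\gamma(C_X^\circ)$ is the $\overline{T}$-orbit through $[X',\tau']$, and, $\mu_\gamma$ being a homeomorphism, $\mu_\gamma(C_X)=C_{X'}$. By uniqueness of the dense orbit, the desired equality $\mu_\gamma(C_X)=C_X$ is therefore equivalent to $X'\cong X$ as $G$-$k$-varieties.

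The heart of the proof is to deduce $X'\cong X$ from the hypotheses. The variety $X'$ is an affine spherical $G$-variety with weight monoid $\Gamma$ (it represents a point of $M_\Gamma$), and by Proposition~\ref{prop:gammavm} its valuation cone equals ${}^\gamma\Vm$, which equals $\Vm$ because $\gal$ preserves $\Vm$ by assumption. So $X'$ and $X$ are affine spherical $G$-varieties with the same weight monoid and the same valuation cone. I would then invoke that such a variety is determined up to $G$-equivariant isomorphism by this data: this follows from Losev's uniqueness theorem \cite[Theorem~1]{los09a} applied to the open orbit, together with the observation that an affine spherical embedding is forced by its open orbit and its weight monoid; equivalently it can be extracted from the Avdeev--Cupit-Foutou analysis of $M_\Gamma$, in which the $\overline{T}$-orbit closure $C_X$ (a smooth toric $\overline{T}$-variety, by Proposition~\ref{prop:smoothtoric}) is cut out inside $M_\Gamma$ by $\Gamma$ together with the valuation cone $\Vm_X$, equivalently by $\Gamma$ and the root monoid $\Xi_X$ of the proof of Proposition~\ref{prop:gammavm}. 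Hence $C_{X'}=C_X$, so $\mu_\gamma(C_X)=C_X$ for all $\gamma$, and the descent of the first paragraph yields $\CXz$.

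I expect the step just described to be the main obstacle: what is really needed is precisely that an affine spherical $G$-variety with a prescribed weight monoid is pinned down by its valuation cone, so that $C_{{}^\gamma X}=C_X$. If one prefers to avoid this uniqueness input, the alternative route is to prove directly that $C_X=\{[X'',\tau'']\in M_\Gamma : \Vm_{X''}\supseteq\Vm_X\}$ (moving to the boundary of the toric variety $C_X$ enlarges the valuation cone), a condition which is closed, $\overline{T}$-stable, and --- by Propositions~\ref{prop:comp} and~\ref{prop:gammavm} --- manifestly $\gal$-stable. Either way, the proposition reduces to Proposition~\ref{prop:gammavm} and the hypothesis ${}^\gamma\Vm=\Vm$; the remaining ingredients (the orbit computation, uniqueness of the dense orbit, and Galois descent via Lemma~\ref{e:fixed-points}) are routine.
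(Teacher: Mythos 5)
Your argument follows the paper's route exactly: via Proposition~\ref{prop:comp} compute that $\mu_\gamma(C_X^\circ)$ is the $\overline{T}$-orbit of ${}^\gamma[X,\tau]$, note via Proposition~\ref{prop:gammavm} and the hypothesis ${}^\gamma\Vm=\Vm$ that ${}^\gamma X$ has the same weight monoid and valuation cone as $X$, and invoke uniqueness of an affine spherical $G$-variety with prescribed $\Gamma$ and $\Vm$ to get $C_{{}^\gamma X}=C_X$. Your descent bookkeeping via Lemma~\ref{e:fixed-points} is a correct elaboration of what the paper treats as routine.

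The one real issue is the reference for the central uniqueness input. What you need is that an affine spherical $G$-variety is determined up to $G$-equivariant isomorphism by its weight monoid $\Gamma$ and valuation cone $\Vm$. That is Losev's Theorem~1.2 of \cite{los09b} (the paper on the Knop conjecture), not Theorem~1 of \cite{los09a}, which is the uniqueness theorem for spherical \emph{homogeneous} spaces in terms of $(\sX,\Vm,\Dm)$. Your sketch of deriving the former from the latter --- apply \cite[Theorem~1]{los09a} to the open orbit and then claim the affine embedding is forced by the open orbit and the weight monoid --- conceals genuine work: the invariants $(\sX,\Vm,\Dm)$ of the open orbit, in particular the color data $\Dm$ with its maps $\rho$ and $\vs$, are not visibly recoverable from $\Gamma$ and $\Vm$ alone, and establishing that is essentially the content of the result you are trying to avoid citing. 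The moduli-theoretic alternative you name at the end is precisely \cite[Corollary~4.16]{acf18}, which the paper offers as the second reference. Committing to either \cite[Theorem~1.2]{los09b} or \cite[Corollary~4.16]{acf18} closes the gap and makes your proof complete.
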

\begin{proof}
  According to Losev \cite[Theorem~1.2]{los09b} (see also Avdeev and
  Cupit-Foutou \cite[Corollary~4.16]{acf18} for a proof using the
  moduli scheme $M_\Gamma$), any affine spherical $G$-variety $X$ is
  determined up to $G$-equivariant isomorphism by $\Gamma$ and $\Vm$.
  Therefore, it follows from Proposition~\ref{prop:gammavm} that
  $\gal$ preserves $C_X^\circ$, hence also its closure $C_X$.
\end{proof}

We can now show that the open $\overline{T}_0$-orbit in $\CXz$ admits a
$k_0$-point.

\begin{prop}
  \label{prop:fixed}
  Assume that the $\gal$-action preserves $\Gamma$ and $\Vm$. Then
  the open $\overline{T}_0$-orbit in $\CXz$ admits a $k_0$-point.
\end{prop}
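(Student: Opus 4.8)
The plan is to recognize $\CXz$ as a smooth affine toric variety defined over $k_0$ and then to invoke the existence of rational points on such varieties. By Proposition~\ref{prop:restr} the semilinear $\sG$-action on $M_\Gamma$ preserves the reduced closed subscheme $C_X\subset M_\Gamma$ and so endows it with a $k_0$-model $\CXz$, and by Proposition~\ref{prop:comp} this $k_0$-model carries a $\overline{T}_0$-action compatible with the $\overline{T}$-action on $C_X$ over $k$. Since $M_\Gamma$ is affine, so is its closed subscheme $C_X$; and by Proposition~\ref{prop:smoothtoric} the $k$-variety $C_X$, with the induced $\overline{T}$-action, is a smooth affine toric variety. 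In particular $\overline{T}$ acts on $C_X$ with a dense orbit $C_X^\circ$ whose closure is all of $C_X$, and by Remark~\ref{rem:c} this dense orbit does not depend on the auxiliary datum $\tau$; the $k_0$-model of $C_X^\circ$ induced from $\CXz$ is exactly $\CXcz$, which is the open $\overline{T}_0$-orbit in $\CXz$.

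Next I would pass to the torus acting faithfully. The scheme-theoretic kernel $F_0$ of the $\overline{T}_0$-action on $\CXz$ is a $k_0$-subgroup of multiplicative type (we are in characteristic $0$, so it is even smooth), hence $S_0\coloneqq \overline{T}_0/F_0$ is a $k_0$-torus that acts faithfully on $\CXz$ with the same orbits as $\overline{T}_0$. Thus $\CXz$ is a smooth affine toric variety over $k_0$ under $S_0$, and its open $S_0$-orbit coincides, as a subvariety, with $\CXcz$ (a torsor under $S_0$). Now apply \cite[Proposition~4]{vk85}: a smooth affine toric variety over $k_0$ has a $k_0$-rational point lying in its dense torus orbit. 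Applied to $\CXz$, this yields a $k_0$-point of $\CXcz$, which is what we want.

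The mathematical content of the argument is entirely concentrated in the cited result \cite[Proposition~4]{vk85}: the point is that $\sG$-stability of the smooth cone defining $C_X$ forces its primitive ray generators to be permuted by $\sG$, which is precisely what lets one find a rational point on the \emph{open} orbit and not merely at the torus-fixed vertex of the cone. So the main obstacle is not in our manipulations but in having that toric-geometry input available; on our side the only thing requiring care is the bookkeeping that checks the hypotheses of that proposition — namely that $\CXz$ genuinely is a smooth affine toric variety over $k_0$ — and this is supplied by Propositions~\ref{prop:smoothtoric}, \ref{prop:restr}, and \ref{prop:comp}.
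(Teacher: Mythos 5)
Your overall route is the same as the paper's — identify $\CXz$ as a smooth toric $\overline{T}_0$-variety over $k_0$ via Propositions~\ref{prop:smoothtoric}, \ref{prop:comp}, \ref{prop:restr}, and then invoke Voskresenski\u{\i}--Klyachko \cite[Proposition~4]{vk85} to get a $k_0$-point in the open orbit. But there is a genuine gap in how you invoke that result.

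You state \cite[Proposition~4]{vk85} as: ``a smooth affine toric variety over $k_0$ has a $k_0$-rational point lying in its dense torus orbit.'' That is not what the proposition says, and taken at face value it is false: a nontrivial torsor under a $k_0$-torus is a smooth affine variety on which the torus acts with a single dense orbit, yet it has no $k_0$-points at all. The actual statement requires, as a hypothesis, that the smooth toric variety already contain \emph{some} $k_0$-point; the conclusion is then that the open orbit contains one. So before applying it you must first exhibit a $k_0$-point of $\CXz$, and this is a real step — twisted $k_0$-forms of affine space need not be seen to carry a $k_0$-point for free. The paper supplies it by citing \cite[Theorem~2.7]{ab05} (see also \cite[Theorem~2.22]{acf18}): the affine toric variety $C_X$ has exactly one closed $\overline{T}$-orbit, and that orbit is a single point; being unique, it is automatically $\sG$-invariant and hence gives a $k_0$-point of $\CXz$. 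Your closing paragraph gestures at ``the torus-fixed vertex of the cone,'' so you clearly know the mechanism, but the formal proof never establishes that such a vertex exists, that it is unique, or that it is $\sG$-fixed; ``being a smooth affine toric variety over $k_0$'' is not by itself the hypothesis that needs checking. Fill in this step and the argument matches the paper's.

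The side remark about replacing $\overline{T}_0$ by the quotient torus $S_0$ acting faithfully is harmless but unnecessary; the paper applies \cite[Proposition~4]{vk85} directly to the $\overline{T}_0$-action.
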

\begin{proof}
  According to Propositions~\ref{prop:restr} and \ref{prop:comp}, we
  have a $\overline{T}_0$-equivariant (toric) $k_0$-model $\CXz$ of
  $C_X$. According to \cite[Theorem~2.7]{ab05}, see also
  \cite[Theorem~2.22]{acf18}, the toric variety $C_X$ contains exactly
  one closed $\overline{T}$-orbit, which is one point. Clearly, this
  point is a $k_0$-point in $\CXz$. Since the {\em smooth} toric variety
  $\CXz$ contains a $k_0$-point, by Voskresenski\u{\i} and Klyachko \cite[Proposition~4]{vk85}
  the open $\overline{T}_0$-orbit in $\CXz$ also contains a $k_0$-point.
\end{proof}

We state the main result of this section in a self-contained way.

\begin{theorem}
  \label{th:qsaff}
  Let $k_0$, $k$, and $\sG$ be as in Subsection \ref{ss:mod-Y}.
  Let $X$ be an affine spherical $G$-variety over $k$ with weight monoid $\Gamma$
  and valuation cone $\Vm$.  Let $G_0$ be a
  \emphb{quasi-split} $k_0$-model of $G$.
  Then  $X$ admits a $G_0$-equivariant $k_0$-model if and only if
  the $\sG$-action defined by $G_0$ preserves $\Gamma$ and $\Vm$.
\end{theorem}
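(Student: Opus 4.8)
The plan is to prove both implications, obtaining the substantive \enquote{if} direction by assembling the propositions established in this section. For the \enquote{only if} direction I would start from a $G_0$-equivariant $k_0$-model $X_0$ of $X$, with semilinear actions $\sigma\colon\sG\to\SAut(G)$ and $\mu\colon\sG\to\SAut(X)$ attached to $G_0$ and $X_0$, each $\mu_\gamma$ being $\sigma_\gamma$-equivariant. Since the open $G$-orbit $X^\circ\subset X$ is intrinsically defined, $\mu$ restricts to an algebraic $\sigma$-equivariant semilinear action on the quasi-affine variety $X^\circ\cong G/H$; by Lemma~\ref{p:lemma-5-4} this amounts to a $G_0$-equivariant $k_0$-model of $G/H$, so Proposition~\ref{p:BG-Hur} forces the $\sG$-action on $\X^*(B)$ to preserve $\Vm$. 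To see that it also preserves the weight monoid $\Gamma=\{\lambda\in\X^*(B):k[X]^{(B)}_\lambda\neq0\}$ I would compose $\mu_\gamma$ with translation by an element $g_\gamma\in G(k)$ with $\inn(g_\gamma)\circ\sigma_\gamma$ stabilizing $(T,B)$ as in Construction~\ref{con:*-action}; a direct computation then shows that the resulting semi-automorphism of $X$ carries the eigenspace $k[X]^{(B)}_\lambda$ onto $k[X]^{(B)}_{{}^\gamma\lambda}$, where ${}^\gamma\lambda$ is the $*$-action, whence $\Gamma$ is $\sG$-stable.

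For the \enquote{if} direction, assume the $\sG$-action defined by $G_0$ preserves $\Gamma$ and $\Vm$. The chain of constructions is: by Proposition~\ref{prop:actmg} the semilinear $\sG$-action on $M_\Gamma$ of Construction~\ref{p:action-action} defines a $k_0$-model $\MGz$; by Propositions~\ref{prop:comp} and~\ref{prop:restr} this action preserves $C_X$ and makes $\CXz$ a $\overline{T}_0$-equivariant toric $k_0$-model of $C_X$; and by Proposition~\ref{prop:fixed} the open $\overline{T}_0$-orbit of $\CXz$ carries a $k_0$-point, say $[X_1,\tau_1]\in\CXz(k_0)\subset\MGz(k_0)$. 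Viewed as a $k$-point of $M_\Gamma=(\MGz)_k$, the pair $[X_1,\tau_1]$ is fixed by the semilinear $\sG$-action, and by Remark~\ref{rem:c} (together with the fact that $C_X^\circ$ is the $\overline{T}$-orbit of $[X,\tau]$) the $k$-variety $X_1$ is $G$-equivariantly isomorphic to $X$ over $k$.

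The key remaining step is to convert this $\sG$-fixed point into an honest $k_0$-structure. Unwinding Construction~\ref{p:action-action}, the identity ${}^\gamma[X_1,\tau_1]=[X_1,\tau_1]$ supplies, for each $\gamma\in\sG$, the \emph{unique} $G$-equivariant $k$-isomorphism between $(X_1,\tau_1)$ and its $\gamma$-twist respecting $\tau_1$; composing with the base-change morphism over $\gamma^*$ yields a $\gamma$-semi-automorphism $\mu_\gamma$ of $X_1$, and uniqueness of these isomorphisms forces $\gamma\mapsto\mu_\gamma$ to be a homomorphism $\sG\to\SAut(X_1)$, which is $\sigma$-equivariant because Construction~\ref{p:action-action} transports the $G$-action through $\sigma_\gamma^{-1}$. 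To apply Lemma~\ref{p:lemma-5-4} I also need $\mu$ to be algebraic in the sense of Definition~\ref{d:semi-linear}; this I would obtain by restricting to a finite extension $k_1/k_0$ splitting $T_0$ and using the realization of $M_\Gamma$ as a closed subscheme of $\prod_{\lambda,\mu,\nu\in\Gamma}A^\nu_{\lambda,\mu}$ from the proof of Proposition~\ref{prop:actmg}, over which the $k_1$-point $[X_1,\tau_1]$ corresponds to multiplication maps defined over $k_1$ and hence to a $G_1$-equivariant $k_1$-model of $X_1$ inducing $\mu|_{\Gal(k/k_1)}$. Since $X_1$ is affine, Lemma~\ref{p:lemma-5-4} then produces a $G_0$-equivariant $k_0$-model of $X_1$, and transporting it along the $k$-isomorphism $X_1\simeq X$ yields one of $X$.

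The main obstacle is not in this theorem itself, which is essentially a repackaging of the section, but upstream in Proposition~\ref{prop:fixed}: the existence of a $k_0$-point on the open $\overline{T}_0$-orbit of $\CXz$ rests on the smoothness of the toric variety $C_X$ (Proposition~\ref{prop:smoothtoric}) and on the Voskresenski\u{\i}--Klyachko theorem that a smooth toric variety with a rational point has one in the open orbit. Within the present proof, the delicate point is the bookkeeping of the last paragraph: verifying that the isomorphisms furnished by the moduli interpretation genuinely assemble into a semilinear $\sG$-action and that this action is algebraic.
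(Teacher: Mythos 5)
Your proof is correct and follows essentially the same route as the paper's: fix a $\sG$-fixed point of $C_X^\circ$ via Propositions~\ref{prop:actmg}, \ref{prop:comp}, \ref{prop:restr}, \ref{prop:fixed}, then use the uniqueness of the $G$-equivariant isomorphism respecting $\tau$ to build the semilinear action. Two small remarks on where you diverge from the paper. First, for the \enquote{only if} direction the paper says only that it is clear; your argument is a sound way to fill that in, and the computation showing that twisting by $l(g_\gamma)$ carries $k[X]^{(B)}_\lambda$ onto $k[X]^{(B)}_{{}^\gamma\lambda}$ is exactly the right check (note also that preservation of $\Vm$ for $X$ and for its open orbit $G/H$ are the same thing, since $\Vm$ is a birational invariant). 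Second, for the algebraicity of the semilinear action, the paper finishes more directly by observing that over a splitting field $k_1$ the $\sG_1$-action on $k[X]$ restricts to each $V_\lambda$ fixing the distinguished eigenvector $v_\lambda$, so stabilizers of functions in $k[X]$ are open and Lemma~\ref{e:fixed-points} applies to the affine coordinate ring; you instead route through the realization of $M_\Gamma$ inside $\prod A^\nu_{\lambda,\mu}$ and then apply Lemma~\ref{p:lemma-5-4}. Both work (affine implies quasi-projective, so Lemma~\ref{p:lemma-5-4} applies), though the paper's version sidesteps the moduli scheme at the endgame.
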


\begin{proof}
  If  $X$ admits a $G_0$-equivariant $k_0$-model,
  then by Corollary \ref{c:gal-preserves} the $\gal$-action preserves $\Gamma$.
  Moreover, then the unique  open $G$-orbit $X^\circ$ in $X$ admits a
   $G_0$-equivariant $k_0$-model, and by Huruguen \cite[Section 8.2]{Huruguen},
   see also \cite[Proposition 2.2]{BG}, the $\gal$-action preserves $\sV(X^\circ)=\sV(X)$.

  Conversely, assume that the $\sG$-action defined by $G_0$ preserves $\Gamma$ and
  $\Vm$. We first construct a semilinear $\gal$-action on $X$ such
  that
  \begin{align*}
    {}^\gamma(g \cdot x) = {}^\gamma g \cdot {}^\gamma x
  \end{align*}
  for every $\gamma \in \gal$, $g \in G(k)$, and $x \in X(k)$.
  According to Proposition~\ref{prop:fixed}, there exists a
  $\sG$-fixed point $[X'', \tau''] \in C_X^\circ(k)$. By
    Remark~\ref{rem:c}, there exists $\tau$ such that $[X'', \tau''] =
    [X, \tau]$. Let $\gamma \in \gal$ and $[X', \tau'] = {}^\gamma
  [X, \tau]$ with $X'$ and $\tau'$ as in Construction~\ref{p:action-action}.
  Since we have $[X, \tau] = [X', \tau']$, there exists a uniquely
  determined $G$-equivariant isomorphism $X \to X'$ such that the
  following diagram commutes:
  \begin{align*}
    \xymatrix{
    k[X]^U \ar[rd]_{\tau} \ar[rr]^-\cong & & k[X']^U \ar[ld]^-{\tau'} \\
                                        & k[\Gamma]\text{\rlap{.}}
                                          }
  \end{align*}
  The isomorphism fits into the following diagram (in which the left square
  is as in Construction~\ref{p:action-action}):
  \begin{align*}
       \xymatrix{
    X \ar[r]^{b^{-1}}\ar[d] & X' \ar[r]^{\smash{\cong}} \ar[d] & X\ar[d] \\
    \Spec k\ar[r]^{(\gamma^*)^{-1}} & \Spec k \ar[r]^{\operatorname{id}} & \Spec k\text{\rlap{.}}}
  \end{align*}
  Now we use the top row to define the required action map
  $\mu_\gamma\colon X \to X$.

  In order to show that this semilinear action defines a $k_0$-model
  of $X$, it remains to exhibit a finite extension $k_1/k_0$ in $k$
  such that the semilinear action on $X$ restricted to the open
  subgroup $\Um \coloneqq \Gal(k/k_1) \subset \gal$ comes from a
  $k_1$-model of $X$. Let $k_0 \subset k_1 \subset k$ be such that
  $G_1 \coloneqq G_0 \times _{k_0} k_1$ is split and $k_1/k_0$ is
  finite. For every $\lambda \in \Gamma$ the action of $\sG_1$
    on $k[X]$ restricts to $V_\lambda$ and fixes the distinguished
    $B$-eigenvector $v_\lambda$. Moreover, there is only one such
    $\sG_1$-action, namely the $\sG_1$-action coming from the unique
    $k_1$-model of $V_\lambda$ such that $v_\lambda$ is defined over
    $k_1$. In particular, the stabilizer in $\sG_1$ of every function
    in $k[X]$ is open.  Hence by Lemma~\ref{e:fixed-points}, the
    $\sG_1$-action defines a $k_1$-model of $X$.
\end{proof}

\begin{theorem}
  \label{th:affp}
  Let $X_0$ be a $G_0$-equivariant $k_0$-model of
  $X$ constructed as in Theorem~\ref{th:qsaff}.
  Then the set of $k_0$-points in $X_0$ is dense.
\end{theorem}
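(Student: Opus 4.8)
The plan is to reduce the statement to the existence of a single $k_0$-point lying in the open $G_0$-orbit of $X_0$, and then to deduce density from the fact that $G_0(k_0)$ is Zariski dense in $G_0$.

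First I would record this reduction. Suppose that $x_0\in X_0(k_0)$ lies in the open $G_0$-orbit $\mathcal{O}\subseteq X_0$, so that $\mathcal{O}\cong G_0/(G_0)_{x_0}$ is open and dense in $X_0$. Since $G_0$ is a connected reductive group, it is unirational over $k_0$, and as $\charr k_0=0$ the field $k_0$ is infinite, so $G_0(k_0)$ is Zariski dense in $G_0$. The orbit morphism $\rho\colon G_0\to X_0$, $g\mapsto g\cdot x_0$, is dominant, hence $\rho(G_0(k_0))=G_0(k_0)\cdot x_0$ is Zariski dense in $\overline{\rho(G_0)}=X_0$; since this set is contained in $X_0(k_0)$, we conclude that $X_0(k_0)$ is dense. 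So it remains to produce one $k_0$-point of $X_0$ in $\mathcal{O}$.

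To this end I would apply the Luna--Vust local structure theorem (Knop; see also Timashev \cite{tim11} and \cite{kno91}) to the affine spherical $G_0$-variety $X_0$. Because $G_0$ is \emphb{quasi-split}, the parabolic $P_0\supseteq B_0$ and the Levi subgroup $L_0\supseteq T_0$ occurring in the theorem are defined over $k_0$, so the theorem can be applied over $k_0$: there are a $B_0$-stable affine open subset $X_0^\circ\subseteq X_0$ meeting the open $B_0$-orbit, and an $L_0$-stable closed affine subvariety $Z_0\subseteq X_0^\circ$ on which $[L_0,L_0]$ acts trivially, such that the action map induces a $k_0$-isomorphism $R_u(P_0)\times_{k_0}Z_0\isoto X_0^\circ$. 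Thus $Z_0$ is an affine toric variety under a $k_0$-torus $A_0$ (a quotient of $L_0$) whose character lattice is the weight lattice $\Z\Gamma$ with its $\sG$-action, and one has canonical $\sG$-equivariant identifications $k_0[X_0]^{U_0}\cong k_0[Z_0]\cong k_0[\Gamma]$ — recall from the construction in Theorem~\ref{th:qsaff} that $k_0[X_0]^{U_0}\cong k_0[\Gamma]$ $\sG$-equivariantly, because $X_0$ was obtained from a $\sG$-fixed point $[X,\tau]$. In particular $Z_0\cong\Spec k_0[\Gamma]$ over $k_0$, with open $A_0$-orbit $\Spec k_0[\Z\Gamma]=A_0$. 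The identity element of $A_0$ is a $k_0$-point $z_0$ of $Z_0$ lying in its open orbit, and the image of $z_0$ in $X_0$ lies in the open $B_0$-orbit, hence in $\mathcal{O}$. Together with the first paragraph this proves the theorem. (Equivalently: the augmentation $k_0[\Gamma]\to k_0$, $\lambda\mapsto 1$ for all $\lambda\in\Gamma$, is $\sG$-equivariant and defines such a $k_0$-point.)

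The main obstacle is the middle step: one must check carefully that the local structure theorem descends to $k_0$ — that $P_0$, $L_0$, the open set $X_0^\circ$, and the slice $Z_0$ may all be taken $\sG$-stable — and that the resulting identification $k_0[Z_0]\cong k_0[\Gamma]$ is precisely the $\sG$-equivariant one coming from the $k_0$-structure on $k_0[X_0]^{U_0}$ fixed in the proof of Theorem~\ref{th:qsaff}, so that the open $A_0$-orbit of $Z_0$ really has a $k_0$-point. Everything else is formal.
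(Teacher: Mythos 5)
Your route is genuinely different from the paper's. The paper shows that $X_0/U_0 \cong \Spec k_0[\Gamma]$ has a $k_0$-point given by the augmentation (lying in the open $T_0$-orbit, hence with dense $T_0(k_0)$-translates), and then lifts density along the unipotent fibration $X_0 \to X_0/U_0$ via the appendix result Proposition~\ref{p:unip-dense} — which in turn rests on Douai's theorem that nonabelian $H^2$ with unipotent kernel is neutral, plus vanishing of $H^1$ for unipotent groups in characteristic zero. You instead reduce to producing a single $k_0$-point in the open $G_0$-orbit (a correct reduction, since $G_0$ is unirational over the infinite field $k_0$) and then invoke a descended local structure theorem. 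Interestingly, your reduction in the first paragraph is exactly the content of Theorem~\ref{t:k0-point}, which the paper derives \emph{from} Theorem~\ref{th:affp}; your plan reverses that logical order, so if it works it gives both statements directly.

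However, the step you yourself flag as ``the main obstacle'' is a real gap and is not filled. Descending $X_0^\circ$, $P_0$, and $L_0$ to $k_0$ is unproblematic: $B_0$ and $T_0$ are defined over $k_0$ since $G_0$ is quasi-split, the Galois group permutes $B$-stable prime divisors, and so $X^\circ$, $P$, $L$ are all $\sG$-stable. But the slice $Z$ is the image of an $L$-equivariant section of the quotient $X^\circ \to X^\circ/\!\!/R_u(P)$, and such sections are not unique; producing a $\sG$-equivariant one is an extra cohomological argument (essentially another application of the vanishing results for unipotent groups that the paper packages into Appendix~\ref{s:G/U}). You would need to supply this. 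There is also a small imprecision: with the standard choice of $X^\circ$ (removing all $B$-stable prime divisors) the slice $Z_0$ is the open torus orbit $\Spec k_0[\Zd\Gamma]$, a localization of $\Spec k_0[\Gamma]$, not $\Spec k_0[\Gamma]$ itself; the augmentation $\lambda\mapsto 1$ is still $k_0$-rational on $\Spec k_0[\Zd\Gamma]$, so the conclusion survives, but the identification as stated is off. In short: the idea is sound and shares the key ingredient (the $k_0$-rational augmentation point) with the paper, but the paper's route via Proposition~\ref{p:unip-dense} cleanly avoids the descent problem for the local structure theorem that your proof only names without resolving.
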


\begin{proof}
   We have $X_0/U_0 \cong \Spec(k[\Gamma])$ as a $k_0$-variety.
   The map $k[\Gamma] \to k$ sending $\lambda$ to $1$ for every
    $\lambda \in \Gamma$ is defined over $k_0$ and hence defines a
    $k_0$-point. This point lies in the open $T$-orbit of
    $\Spec(k[\Gamma])$; see, for instance, Cox, Little, and Schenck
    \cite[Proposition~1.1.14]{cls11}. By Borel \cite[Corollary~18.3]{bor91},
    the set of $k_0$-points in $\Spec(k[\Gamma])$ is dense. Since
  $X_0 \to X_0/U_0 \cong \Spec(k[\Gamma])$ is a quotient by a
  unipotent $k_0$-group, by Proposition \ref{p:unip-dense} in
  Appendix \ref{s:G/U} the set of $k_0$-points in $X_0$ is also dense.
\end{proof}

\section{Models of spherical homogeneous spaces of quasi-split groups}
\label{s:spharb}

\begin{subsec}
Let $k_0$, $k$, and $\sG$ be as in Subsection \ref{ss:mod-Y}.
Let $G$ be a (connected) reductive group over $k$.
We fix a Borel subgroup $B \subset G$ and
a maximal torus $T \subset B$.
Let $G/H$ be a spherical homogeneous space with combinatorial
invariants $(\sX, \Vm, \Dm)$. Recall  that $\Dm$
is a finite set equipped with two maps
$\rho\colon \Dm \to \Hom(\sX,\Q)$ and
$\varsigma\colon \Dm \to \Pm(\Sm)$. We denote by
$\Sigma \subset \sX$  the set of {\em spherical roots} of $G/H$, that is,
the uniquely determined linearly independent
set of primitive elements  of $\sX$ such that
\begin{align*}
  \Vm = \bigcap_{\gamma \in \Sigma}\big\{v \in \Hom_\Zd(\sX,\Q)
  \mid \langle v, \gamma \rangle \le 0\big\}\text{.}
\end{align*}
Whenever $\sX$ is given, we may specify $\Sigma$ instead of $\Vm$ and
vice versa. We recall from Subsection~\ref{ss:sph-qs} the
alternative presentation of $(\sX, \Vm, \Dm)$ as
$(\sX, \Vm, \Omone, \Omt)$. As before, we define
$\Omega = \Omone \cup \Omt$. For all $\alpha \in S$ we write
$\Dm(\alpha) = \{D \in \Dm \mid \alpha \in \varsigma(D)\}$.
\end{subsec}

\begin{prop}[Corollary of Luna {\cite[Proposition~6.4]{lun01}}]
  \label{prop:aug}
  Let $\sX' \subset \X^*(B)$ be a sublattice containing $\sX$ such
  that all elements of $\Sigma$ are primitive in $\sX'$, and let
  $\rho' \colon \Dm \to \Hom(\sX', \Zd)$ be a map such that
  $\rho'(D)|_{\sX} = \rho(D)$ for every $D \in \Dm$. We write $\Dm'$
  for the set $\Dm$ equipped with the maps $\rho'$ and
  $\varsigma' \coloneqq \varsigma$. Assume that for every
  $\alpha \in \Sm$ the following conditions are satisfied:
   \begin{enumerate}
   \item if $\Dm(\alpha)$ contains two different elements $D^{+}$ and $D^-$, then
     $\rho'(D^{+}) + \rho'(D^-) = \alpha^\vee|_{\sX'}$,
   \item if $2\alpha \in \Sigma$ with $\Dm(\alpha) = \{D\}$, then
     $\rho'(D) = \frac{1}{2}\alpha^\vee|_{\sX'}$,
   \item if $2\alpha \notin \Sigma$ and $\Dm(\alpha) = \{D\}$, then
     $\rho'(D) = \alpha^\vee|_{\sX'}$,
   \item if $\Dm(\alpha) = \emptyset$, then $\alpha^\vee|_{\sX'} = 0$.
   \end{enumerate}
   Then $(\sX', \Sigma, \Dm')$ are the invariants of a
  spherical subgroup  $H'\subset G$.
 \end{prop}

 \begin{prop}[Luna {\cite[proof of Proposition~6.3]{lun01} and Losev \cite[Theorem 1]{los09a},
 see also Knop \cite[Section~4]{kno91} and Hofscheier \cite[Theorem~1.4]{hofcont}}]
   \label{prop:sub}
   In  Proposition~\ref{prop:aug}, the spherical
   subgroup $H' \subset G$ with combinatorial invariants
   $(\sX', \Sigma, \Dm')$ can be chosen such that $H' \subset H$.
 \end{prop}

 \begin{prop}[{Timashev \cite[Corollary~15.6]{tim11}}\hs]
   \label{prop:qaf}
   The spherical homogeneous space $G/H$ is quasi-affine if and only
   if the set $\{\rho(D) \mid D \in \Dm \}$ does not contain
   $0$ and spans a strictly convex cone in $\Hom(\sX,\Q)$.
\end{prop}

\begin{subsec} We recall a construction based on Section 4.1 of Brion \cite{bri97}, see
  also Gagliardi \cite[Proposition~3.1]{gag14}, which permits us to write $G/H$ as a
  quotient of a {\em quasi-affine} spherical homogeneous space by a torus.

  Let $\langle\Dm\rangle$ denote the free abelian group with basis
  $\{e_D \mid D\in\Dm\}$, and let $C$ denote the $k$-torus with character
  group $\langle\Dm\rangle$; then $C(k)$ is the group of maps
  $\Dm\to k^\times$. We define $G' = G \times C$ and
  $B' = B \times C$, so that $\X^*(B') = \X^*(B) \oplus \X^*(C)$.
  Then $\sX$ can be regarded as a sublattice of $\X^*(B')$, and in
  this case $(\sX, \Sigma, \Dm)$ are the combinatorial invariants of
  the spherical subgroup $H \times C \subset G'$.

 We denote by $\omega_\alpha\in \X^*(B)\otimes_\Z \Q$ the fundamental dominant weight
   associated with a simple root $\alpha$, and we choose an integer
   $q > 0$ such that $q\omega_\alpha \in \X^*(B)$ for all
   $\alpha \in \Sm$. For each $D \in \Dm$ we set
 \begin{align*}
   \omega_D = q\cdot r_D\cdot\sum_{\alpha \in \varsigma(D)} \omega_\alpha\ \in\X^*(B),
 \end{align*}
 where
 \begin{align*}
   r_D = \begin{cases}
     2 &\text{if $\varsigma(D) \subset \frac{1}{2}\Sigma$,} \\
     1 &\text{otherwise.}
     \end{cases}
   \end{align*}
   Furthermore, we set
 \[\lambda_D = (\omega_D, e_D) \in\X^*(B)\oplus\X^*(C)= \X^*(B').\]
 \end{subsec}

 \begin{subsec}
  We define new invariants. First, we set
 \begin{align*}
   \sX' = \sX \, +\,  \langle \lambda_D : D \in \Dm \rangle_\Zd \subset \X^*(B')\text{,}
 \end{align*}
 where the sum is clearly direct. Moreover, for each $D \in \Dm$ we
 define
 \[\rho'(D) \in \Hom(\sX',\Z)\subset\Hom(\sX',\Q)\]
 by the equalities
 \begin{align*}
   \langle \rho'(D), \lambda \rangle =
   \begin{cases}\langle \rho(D), \lambda \rangle & \text{for $\lambda \in \sX$,} \\
     q & \text{for $\lambda = \lambda_D$,} \\
     0 & \text{for $\lambda = \lambda_{D'}$ with $D' \ne D$.}\end{cases}
 \end{align*}
 Finally, we set
 \[\varsigma'(D) = \varsigma(D) \subset \Sm.\]
 Thus we obtain a new set of colors $\Dm'$, which is the same set
 $\Dm$, but with the associated maps $\rho'$ and $\varsigma'$. Note
 that the map $\rho'\times\vs'$ is injective, because the map $\rho'$
 is clearly injective. If we set $\Omega'=\im(\rho'\times\vs')$, then
 with the natural notation we have $\Omega^{\prime\ste{2}}=\emptyset$
 and $\Omega^{\prime\ste{1}}=\Omega'\cong \Dm'=\Dm$.
\end{subsec}

\begin{prop}
   \label{prop:invqaf}
   The invariants $(\sX', \Sigma, \Dm')$ come from a
   quasi-affine spherical homogeneous space
   $G'/H' = (G\times C)/H'$ such that $H' \subset H \times C$.
   \end{prop}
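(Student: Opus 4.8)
The plan is to apply Proposition~\ref{prop:aug}, then Proposition~\ref{prop:sub}, then Proposition~\ref{prop:qaf}, all with $G$ replaced by $G'=G\times C$ and with the spherical homogeneous space $G'/(H\times C)$, whose combinatorial invariants are $(\sX,\Sigma,\Dm)$ with the maps $\rho,\vs$ — now regarded inside $\X^*(B')=\X^*(B)\oplus\X^*(C)$, while the enlarged lattice $\sX'$ sits in $\X^*(B')$. Note that the simple roots of $G'$ coincide with those of $G$, since $C$ is a torus, so $\Sm$ is unchanged. First I would check the two preliminary hypotheses of Proposition~\ref{prop:aug}: since $\sX'=\sX\oplus\bigoplus_{D\in\Dm}\Z\lambda_D$ (the sum being direct because the $e_D$ form a basis of $\X^*(C)$), the sublattice $\sX$ is a direct summand of $\sX'$, so the elements of $\Sigma$, being primitive in $\sX$, remain primitive in $\sX'$; and $\rho'(D)|_{\sX}=\rho(D)$ holds by the definition of $\rho'$.

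The heart of the matter is the verification of conditions (1)--(4) of Proposition~\ref{prop:aug} for $(\sX',\Sigma,\Dm')$. Because $\Dm'$ is the set $\Dm$ with $\vs'=\vs$, we have $\Dm'(\alpha)=\Dm(\alpha)$ for every $\alpha\in\Sm$, so the case distinction is the same as for $(\sX,\Sigma,\Dm)$; and since all four identities already hold after restriction to $\sX$ — they hold for $(\sX,\Sigma,\Dm)$, as these are the invariants of the spherical subgroup $H\times C\subset G'$ — it suffices to check them after pairing with each basis vector $\lambda_D$. The key computation is that $\langle\alpha^\vee,\lambda_D\rangle=\langle\alpha^\vee,\omega_D\rangle$ equals $q\,r_D$ if $\alpha\in\vs(D)$ and $0$ otherwise, using that $\alpha^\vee$ is a coroot of $G$ and hence pairs trivially with $\X^*(C)$, together with $\langle\alpha^\vee,\omega_\beta\rangle=\delta_{\alpha\beta}$; whereas $\langle\rho'(E),\lambda_D\rangle=q\,\delta_{ED}$ by the definition of $\rho'$. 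In cases (1), (3), (4) the required equality then reduces to $r_D=1$ whenever $\alpha\in\vs(D)$, which holds because in each of these cases $2\alpha\notin\Sigma$ — in case (1) because $\alpha\in\Sigma$ and $\Sigma$, being linearly independent, cannot also contain $2\alpha$ — so that $\vs(D)$, containing $\alpha\notin\tfrac12\Sigma$, is not contained in $\tfrac12\Sigma$, forcing $r_D=1$. In case (2), where $2\alpha\in\Sigma$ and $\Dm(\alpha)=\{D\}$, one instead needs $r_D=2$; this follows from the standard structural fact that a color moved by a simple root $\alpha$ with $2\alpha\in\Sigma$ is moved by no other simple root, so that $\vs(D)=\{\alpha\}\subset\tfrac12\Sigma$.

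Once the hypotheses are checked, Proposition~\ref{prop:aug} yields a spherical subgroup $H'\subset G'$ with combinatorial invariants $(\sX',\Sigma,\Dm')$, and Proposition~\ref{prop:sub} allows us to choose $H'\subset H\times C$. It then remains to apply Proposition~\ref{prop:qaf}: the identity $\langle\rho'(D),\lambda_D\rangle=q>0$ shows that $0\notin\{\rho'(D):D\in\Dm'\}$, and if $\sum_{D}c_D\rho'(D)=0$ with all $c_D\ge 0$, then pairing with $\lambda_E$ gives $q\,c_E=0$, whence $c_E=0$ for all $E$; thus the $\rho'(D)$ span a strictly convex cone in $\Hom(\sX',\Q)$, and $G'/H'$ is quasi-affine.

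I expect the main obstacle to be case (2) in the verification of the conditions of Proposition~\ref{prop:aug}: this is precisely where the definition of $r_D$ — and the structure of colors attached to spherical roots of the form $2\alpha$ — is needed, whereas everything else is bookkeeping with the direct-sum decomposition $\sX'=\sX\oplus\bigoplus_{D}\Z\lambda_D$ and the two pairing formulas $\langle\alpha^\vee,\lambda_D\rangle=q\,r_D\,[\alpha\in\vs(D)]$ and $\langle\rho'(E),\lambda_D\rangle=q\,\delta_{ED}$.
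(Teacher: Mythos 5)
Your proposal is correct and follows essentially the same route as the paper: verify conditions (1)--(4) of Proposition~\ref{prop:aug} by checking the identities separately on $\sX$ and on each $\lambda_D$, then invoke Proposition~\ref{prop:sub} to get $H'\subset H\times C$, and finally Proposition~\ref{prop:qaf} via the pairing $\langle\rho'(D),\lambda_{D'}\rangle=q\,\delta_{DD'}$ to establish quasi-affineness. The one place you are more explicit than the paper is in isolating the $r_D$ bookkeeping (in particular, that $r_D=1$ in cases (1) and (3) because $\alpha\notin\tfrac12\Sigma$, and $r_D=2$ in case (2) because a color of type $2a$ has $\vs(D)\subset\tfrac12\Sigma$) — the paper uses these facts tacitly in its displayed computations.
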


   \begin{proof}
     We begin by verifying that  conditions  (1--4)  of
       Proposition~\ref{prop:aug} are satisfied.

       Let $\alpha \in S$ with $\Dm(\alpha) = \{D^{+}, D^-\}$.
       Then, for $\lambda \in \sX$, we have
       \begin{align*}
         \langle \rho'(D^{+}) + \rho'(D^-), \lambda \rangle
         = \langle \rho(D^{+}) + \rho(D^-), \lambda \rangle
         = \langle \alpha^\vee, \lambda \rangle\text{.}
       \end{align*}
       Moreover, for every $D' \in \Dm$ we have
       \begin{align*}
         \langle \rho'(D^{+}) + \rho'(D^-), \lambda_{D'} \rangle
         = \begin{dlrcases}q & \text{if $\alpha \in \varsigma(D')$}\\ 0 & \text{otherwise}\end{dlrcases}
         = \langle \alpha^\vee, \omega_{D'} \rangle = \langle \alpha^\vee, \lambda_{D'} \rangle\text{.}
       \end{align*}
      {This gives $(1)$.}

     Let $2\alpha \in \Sigma$ with $\Dm(\alpha) = \{D\}$. Then, for $\lambda \in \sX$, we have
     \begin{align*}
       \langle \rho'(D), \lambda \rangle = \langle \rho(D), \lambda \rangle
        = \langle \tfrac{1}{2}\alpha^\vee, \lambda \rangle\text{.}
     \end{align*}
     Moreover, for every $D' \in \Dm$, we have
     \begin{align*}
       \langle \rho'(D), \lambda_{D'} \rangle = \begin{dlrcases}q &
       \text{if $\alpha \in \varsigma(D')$}\\
       0 & \text{otherwise}\end{dlrcases} = \tfrac{1}{2}\langle \alpha^\vee , \omega_{D'}\rangle=
        \langle \tfrac{1}{2}\alpha^\vee, \lambda_{D'} \rangle\text{.}
     \end{align*}
     {This gives $(2)$.}

     Let $\alpha \in S$, $2\alpha \notin \Sigma$, and
     $\Dm(\alpha) = \{D\}$. Then, for $\lambda \in \sX$, we have
     \begin{align*}
       \langle \rho'(D), \lambda \rangle = \langle \rho(D), \lambda \rangle =
        \langle \alpha^\vee, \lambda \rangle\text{.}
     \end{align*}
     Moreover, for every $D' \in \Dm$, we have
     \begin{align*}
       \langle \rho'(D), \lambda_{D'} \rangle = \begin{dlrcases}q & \text{if $\alpha \in \varsigma(D')$}\\
       0 & \text{otherwise}\end{dlrcases} = \langle \alpha^\vee , \omega_{D'}\rangle=
        \langle \alpha^\vee, \lambda_{D'} \rangle\text{.}
     \end{align*}
     {This gives $(3)$.}

     Let $\alpha \in S$ such that $\Dm(\alpha) = \emptyset$. Then, we
     have $\alpha^\vee|_{\sX} = 0$ and
     $\langle \alpha^\vee, \lambda_{D'}\rangle = 0$ for every
     $D' \in \Dm$. Therefore, we have $\alpha^\vee|_{\sX'} = 0$.
     {This gives $(4)$.}

     We have shown that the conditions of Proposition~\ref{prop:aug}
     are satisfied.
     Hence there exists a spherical subgroup
     $H' \subset G'$ with invariants
     $(\sX', \Sigma, \Dm')$.
     By Proposition~\ref{prop:sub}, we can choose $H'$ to be a subgroup of $H \times C$.

     The set $\{\rho'(D) \mid D \in \Dm' \}$ does not contain $0$ and
     is linearly independent. In particular, it spans a strictly convex
     cone in $\Hom(\sX,\Q)$. It follows from
     Proposition~\ref{prop:qaf} that the spherical homogeneous space
     $G'/H'$ is quasi-affine.
   \end{proof}

   \begin{lemma}
     In Proposition~\ref{prop:invqaf} we have
     $H'\cdot C=H\times C\subset G\times C$.
   \end{lemma}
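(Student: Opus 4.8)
\noindent\emph{Proof plan.}
The plan is the following. Since $H'\subseteq H\times C$ and the factor $C$ is central, hence normal, in $G'=G\times C$, the product $H'\cdot C$ is a subgroup of $G'$ containing $C$; every subgroup of $G'$ containing $C$ has the form $K\times C$, and here $K=\pi_G(H'\cdot C)=\pi_G(H')\subseteq\pi_G(H\times C)=H$, where $\pi_G\colon G'\to G$ is the first projection. So it suffices to prove $K=H$. Quotienting by $C$ gives a $G$-equivariant isomorphism $G'/(H'\cdot C)\cong G/K$, so $G/K$ is a spherical homogeneous space of $G$, and the tautological $G$-morphism $G/K\to G/H$ (coming from $K\subseteq H$) is the further quotient $G'/(H'\cdot C)\to G'/(H\times C)=G/H$. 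I will compute the combinatorial invariants of $G/K$, show they equal $(\sX,\Vm,\Dm)$, conclude by Losev's uniqueness theorem that $G/K\cong G/H$, and then, since $K\subseteq H$ and $K$ is conjugate to $H$, deduce $K=H$.

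For the computation, the key point is that $p\colon G'/H'\to G'/(H'\cdot C)$ is a torsor under the torus $C''\coloneqq C/(C\cap H')$ (it is a torus, being a quotient of $C$; and $H'$ is normal in $H'\cdot C$ because $C$ is central in $G'$, the quotient being $C''$), with $C''$ acting on $G'/H'$ by $G'$-equivariant automorphisms, namely by translations from the central torus $C$. First, a rational $B'$-eigenfunction on $G'/H'$ of weight $\chi\in\sX(G'/H')=\sX'$ descends along $p$ exactly when it is $C$-invariant, i.e.\ when $\chi|_C=1$, i.e.\ when $\chi\in\X^*(B)\oplus\{0\}\subset\X^*(B')$; hence
\[
\sX(G/K)=\sX(G'/(H'\cdot C))=\sX'\cap\bigl(\X^*(B)\oplus\{0\}\bigr)=\sX,
\]
the last equality because the sum $\sX'=\sX\oplus\bigoplus_{D\in\Dm}\Z\lambda_D$ is direct and every $\lambda_D$ has nonzero $C$-component $e_D$. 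Next, since $p$ is a $C''$-torsor and $C''$ acts by $G$-automorphisms, pullback along $p$ is a bijection between the colors of $G/K$ and the colors $\Dm'=\Dm$ of $G'/H'$, compatible with the maps $\varsigma$ (which only depend on the $G$-action) and with the maps $\rho$ (by restriction of valuations, using $\rho'(D)|_{\sX}=\rho(D)$); so $\Dm(G/K)=(\Dm,\rho,\varsigma)=\Dm(G/H)$. Finally, recalling that $G$-invariant valuations of a spherical homogeneous space are determined by their restrictions to eigenfunctions of weights in its weight lattice, the dominant $G$-morphisms $G'/H'\to G/K$ and $G/K\to G/H$ induce, after restriction to $\sX$, inclusions $\Vm\subseteq\Vm(G/K)\subseteq\Vm$: the right-hand one because $\sX(G/K)=\sX(G/H)$, and the left-hand one because $\Sigma\subseteq\sX$, so the inequalities cutting out $\Vm(G'/H')\subset\Hom(\sX',\Q)$ involve only $\sX$ and the image of $\Vm(G'/H')$ in $\Hom(\sX,\Q)$ is all of $\Vm$. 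Thus $\Vm(G/K)=\Vm$, and $(\sX,\Vm,\Dm)$ are the combinatorial invariants of $G/K$.

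By Losev's uniqueness theorem \cite[Theorem~1]{los09a}, $G/K$ and $G/H$ are $G$-equivariantly isomorphic, so $K=gHg^{-1}$ for some $g\in G(k)$. Combined with $K\subseteq H$, comparing dimensions gives $gH^\circ g^{-1}=H^\circ$, and then $gHg^{-1}/H^\circ$ is a subgroup of $H/H^\circ$ of the same finite order, so $gHg^{-1}=H$; that is, $K=H$, and therefore $H'\cdot C=K\times C=H\times C$. The main obstacle is the identification of the combinatorial invariants of $G/K$ via the torus torsor $p$ — especially the colors and the valuation cone — for which I rely on standard facts about quotients of spherical homogeneous spaces; everything else is formal.
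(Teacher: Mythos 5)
Your proposal is correct and follows essentially the same strategy as the paper: show the intermediate homogeneous space (your $G/K$, the paper's $(G\times C)/(H'\cdot C)$) has the same combinatorial invariants $(\sX,\Vm,\Dm)$ as $G/H$, apply Losev's uniqueness theorem, and then use the inclusion to upgrade conjugacy to equality. Your decomposition $H'\cdot C = K\times C$ with $K=\pi_G(H')\subseteq H$ is a harmless reformulation of what the paper does with $H'\cdot C\subseteq H\times C$ directly. The only genuine divergence is in the valuation cone step: the paper simply cites Knop \cite[Section~4]{kno96} for the fact that nested spherical subgroups with the same weight lattice have the same valuation cone, whereas you reconstruct the equality $\Vm(G/K)=\Vm$ by sandwiching via the two dominant maps $G'/H'\to G/K\to G/H$ and explicitly exhibiting extensions of valuations; both routes work, yours is more self-contained while the paper's is more economical. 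Your color argument (pullback along the $C''$-torsor) is also essentially what underlies the paper's assertion that the factored map induces a bijection on colors with matching $\rho$ and $\varsigma$.
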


   \begin{proof}
     Since we have $H' \cdot C \subset H \times C$, in order to show
     $H'\cdot C = H \times C$, it is sufficient to show that the
     spherical homogeneous spaces $(G\times C)/(H'\cdot C)$ and
     $(G\times C)/(H \times C)$ of the same group $G\times C$
     have the same combinatorial invariants.

     The weight lattice of $(G\times C)/(H'\cdot C)$ consists of the
     $C$-invariants
     \[(\sX')^C = \sX' \cap \X^*(B)\subset \X^*(B)\oplus \X^*(C)\]
     of the weight lattice
     $\sX'$ of the spherical homogeneous space $(G\times C)/H'$.
     It follows from the definition of $\sX'$
     that we have $\sX' \cap \X^*(B) = \sX$, which is the weight lattice
     of $(G\times C)/(H \times C)$.

     By construction, the surjective map $(G\times C)/H' \to (G\times C)/(H\times C)$
     induces a bijection of colors $\Dm\isoto \Dm'$, where $\varsigma'=\vs$
     and $\rho$ is the composite map
     \[\Dm\labelto{\rho'}\Hom(\Xm',\Q)\to\Hom(\Xm,\Q),\]
     where the right-hand map is induced by the embedding $\Xm\into\Xm'$.
     Moreover, this surjective  map $(G\times C)/H' \to (G\times C)/(H\times C)$
     factors as
     \begin{align*}
       (G\times C)/H' \to (G\times C)/(H'\cdot C) \to (G\times C)/(H\times C)\text{.}
     \end{align*}
     We see that the map
     $(G\times C)/(H'\cdot C) \to (G\times C)/(H\times C)$
     induces a bijection of colors as well (now preserving both associated
     maps, because the weight lattices are identical).

     Since $H'\cdot C\subset H\times C$ and  the weight lattices are identical, it follows from
     Knop \cite[Section~4]{kno96} that the valuation cones of
     $(G\times C)/(H'\cdot C)$ and $(G\times C)/(H \times C)$ are the
     same.

     We see that the spherical homogeneous spaces
 $(G\times C)/(H'\cdot C)$ and  $(G\times C)/(H \times C)$ of the  group $G\times C$
 have the same combinatorial invariants.
 By Losev's uniqueness theorem \cite[Theorem 1]{los09a},
 the subgroups $H'\cdot C$ and $H\times C$ are conjugate in $G\times C$.
 Since $H'\cdot C\subset H\times C$, we conclude that $H'\cdot C= H\times C$.
    \end{proof}

\begin{corollary}\label{c:quot}
 Any fiber of  the quotient map
   \[(G \times C)/H' \to (G\times C)/(H'\cdot C)=(G\times C)/(H\times C)= G/H\]
is an orbit of the torus $C$  under  the natural action of $C\subset G\times C$ on $(G\times C)/H'$.
\end{corollary}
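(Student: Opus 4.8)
The plan is to exploit the fact that the torus $C=\{1\}\times C$ is a \emph{central} (in particular normal) subgroup of $G\times C$, so that left and right translation by elements of $C$ coincide. This essentially trivializes the statement once one unwinds the definitions.

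First I would invoke the preceding Lemma, which gives $H'\cdot C=H\times C$; thus the map in the statement is the natural projection $q\colon (G\times C)/H'\to (G\times C)/(H'\cdot C)$, and its fiber over the point $q(xH')$ is the locally closed subset $x(H'\cdot C)/H'\subset (G\times C)/H'$. On the other hand, the orbit of the point $xH'$ under the natural left action of $C\subset G\times C$ on $(G\times C)/H'$ is $C\cdot(xH')=\{\,c\,x\,H':c\in C\,\}$. The one computation to carry out is that these two subsets coincide: since $C$ is central we have $cx=xc$ for all $c\in C$, hence
\[
C\cdot(xH')=\{\,x\,c\,H':c\in C\,\}=x\cdot(C\cdot H')/H'=x\cdot(H'\cdot C)/H',
\]
where the last equality again uses that $C$ is normal in $G\times C$, so that $C\cdot H'=H'\cdot C$. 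Therefore the fiber of $q$ over $q(xH')$ is exactly the $C$-orbit of $xH'$.

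To upgrade this set-theoretic identity to the level of schemes I would note that $q$ is a quotient morphism, so each of its fibers is a reduced homogeneous space under $H'\cdot C$; the orbit morphism $C\to (G\times C)/H'$, $c\mapsto c\cdot(xH')$, then factors through the fiber and induces an isomorphism $C/(C\cap xH'x^{-1})\isoto x\cdot(H'\cdot C)/H'$ precisely because $C\cdot H'=H'\cdot C$. I do not expect any genuine obstacle here: the only point requiring a little care is keeping track of which translations are on the left and which are on the right, and this is entirely handled by the centrality of $C$ in $G\times C$.
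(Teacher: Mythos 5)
Your proof is correct and is essentially the argument the paper has in mind: the Corollary is stated without proof precisely because, once the preceding Lemma identifies $H'\cdot C$ with $H\times C$, the claim reduces to the elementary observation that for a normal (here even central) subgroup $C\subseteq H'\cdot C$ the fiber $x(H'\cdot C)/H'$ of the quotient is swept out by right (equivalently, left) $C$-translates of $xH'$. Your write-up just makes explicit the translation bookkeeping and the scheme-theoretic upgrade that the paper leaves implicit.
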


 \begin{subsec}
   \label{rem:gpp}
   Let $G_0$ be a quasi-split $k_0$-model of $G$, and assume that
   the induced $\gal$-action preserves the quadruple
   $(\sX, \Sigma, \Omone, \Omt)$.
   Consider the $\gal$-action
   \[\alpha_\Omega^\ste{2}\colon \sG\times\Omt\to\Omt\text{,}\]
   which is clearly continuous (the stabilizer of any point of $\Omt$ is open in $\gal$).
   Let $\Dm^\ste{2}$ denote the preimage of $\Omt$ in $\Dm$.
   Choose a continuous action
   \[\alpha_\Dm^\ste{2}\colon \gal\times \Dm^\ste{2}\to\Dm^\ste{2}\]
   lifting $\alpha_\Omega^\ste{2}$.
   In this way we obtain a continuous action
   \[\alpha_\Dm\colon\gal\times\Dm\to \Dm\]
   lifting the $\gal$-action on $\Omega$, that is, such that the map
        \[\rho\times\vs\colon \ \Dm\longrightarrow \  \Omega\subset \Hom(\sX,\Q)\times\Pm(\Sm)\]
     is $\sG$-equivariant.

     \begin{remark}
       At least one such lift always exists. Let $\sU$ denote the
       kernel of the action of $\sG$ in $\Omega$; then, clearly, $\sU$
       is an open subgroup of $\sG$. By \cite[Lemma 9.3]{BG} the
       $\sG$-action on the finite set $\Omega$,
        preserving the subsets $\Omone$ and $\Omt$, can be lifted (in
       general non-uniquely) to a $\sG$-action on $\Dm$ such that the
       kernel of this new action is again $\sU$.
   \end{remark}

   Now $\sG$ acts on $\Dm$ and hence on the basis $(e_D : D \in \Dm)$
   of $\X^*(C)$. We consider the corresponding action of $\sG$ on
   $\X^*(C)$ and the corresponding quasi-trivial $k_0$-torus $C_0$
   (which depends on $\alpha_\Dm$).

   \begin{prop}
     \label{prop:psemact}
     Set $G_0'=G_0\times_{k_0} C_0$ and  $G'=(G'_0)_k= G \times C$.
     Then the $\gal$-action on
     $\X^*(B)$ and $\Sm$
     preserves the combinatorial invariants
     $(\sX', \Sigma, \Omega'{}^{\ste{1}}, \Omega'{}^{\ste{2}})$.
   \end{prop}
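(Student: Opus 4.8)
The plan is to check, piece by piece, that the product $\gal$-action on $\X^*(B')=\X^*(B)\oplus\X^*(C)$ fixes each of $\sX'$, $\Sigma$, $\Omega^{\prime\ste{1}}$ and $\Omega^{\prime\ste{2}}$. The three inputs are: the $*$-action defined by $G_0$ preserves $\sX$, $\Sigma$ and $\Sm$ (standing assumption); the maps $\rho$ and $\vs$ on $\Dm$ are $\sG$-equivariant by the choice of $\alpha_\Dm$; and the $*$-action permutes the fundamental weights according to the permutation of simple roots, i.e.\ ${}^\gamma\omega_\alpha=\omega_{\gamma\cdot\alpha}$. Recall also that by construction the $\gal$-action on $\X^*(C)$ sends $e_D$ to $e_{\gamma\cdot D}$.

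The heart of the argument is the identity
\[
{}^\gamma\lambda_D=\lambda_{\gamma\cdot D}\qquad(\gamma\in\gal,\ D\in\Dm).
\]
Writing $\lambda_D=(\omega_D,e_D)$, the $\X^*(C)$-component is the definition of the action on $\X^*(C)$; for the $\X^*(B)$-component, $\sG$-equivariance of $\vs$ gives $\vs(\gamma\cdot D)=\gamma\cdot\vs(D)$, and since $\gal$ preserves $\Sigma$ we have $r_{\gamma\cdot D}=r_D$, whence
\[
{}^\gamma\omega_D = q\,r_D\sum_{\alpha\in\vs(D)}{}^\gamma\omega_\alpha = q\,r_{\gamma\cdot D}\sum_{\beta\in\vs(\gamma\cdot D)}\omega_\beta = \omega_{\gamma\cdot D},
\]
using ${}^\gamma\omega_\alpha=\omega_{\gamma\cdot\alpha}$. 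From this identity, since $\gal$ already preserves $\sX$, it preserves $\sX'=\sX\oplus\bigoplus_{D\in\Dm}\Z\lambda_D$, simply permuting the summands $\Z\lambda_D$ the same way it permutes $\Dm$. Moreover $\Sigma\subset\sX\subset\sX'$ and the action on the $\X^*(B)$-summand is unchanged, so ${}^\gamma\Sigma=\Sigma$ by hypothesis (and hence the valuation cone $\Vm'$ of $(\sX',\Sigma)$ is $\gal$-stable as well).

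It remains to handle $\Omega^{\prime\ste{1}}$ and $\Omega^{\prime\ste{2}}$. I would show that $\rho'\times\vs'\colon\Dm\to\Omega'\subset\Hom(\sX',\Q)\times\Pm(\Sm)$ is $\sG$-equivariant; then $\Omega'=(\rho'\times\vs')(\Dm)$ is $\gal$-stable since $\Dm$ is. Equivariance of $\vs'=\vs$ is the defining property of $\alpha_\Dm$. For $\rho'$, it suffices to compare $\rho'(\gamma\cdot D)$ and ${}^\gamma(\rho'(D))$ on the generators of $\sX'$: on $\lambda\in\sX$ the equality reduces (using that $\gal$ preserves $\sX$) to the $\sG$-equivariance of $\rho$, and on each $\lambda_{D''}$ it follows from the identity ${}^\gamma\lambda_{D''}=\lambda_{\gamma\cdot D''}$ of the previous step together with the defining formula for $\rho'$ (both sides equal $q$ if $D''=\gamma\cdot D$ and $0$ otherwise). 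Finally, $\rho'\times\vs'$ is injective, so intrinsically $\Omega^{\prime\ste{2}}=\emptyset$ and $\Omega^{\prime\ste{1}}=\Omega'$; both are therefore $\gal$-stable. This establishes all four invariances. The only step requiring any care — and the one everything else rests on — is the identity ${}^\gamma\lambda_D=\lambda_{\gamma\cdot D}$; the rest is bookkeeping.
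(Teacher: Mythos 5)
Your proposal is correct and follows essentially the same route as the paper's proof: establish the identity ${}^\gamma\lambda_D=\lambda_{\gamma\cdot D}$ via the equivariance of $\vs$ and the permutation of fundamental weights, deduce preservation of $\sX'$, then reduce preservation of $\Omega'=\Omega^{\prime\ste{1}}$ to $\sG$-equivariance of $\rho'$ by checking it separately on $\lambda\in\sX$ and on the generators $\lambda_{D''}$. Your brief remark that $\Sigma$ (hence $\Vm'$) is automatically $\gal$-stable because $\Sigma\subset\sX$ and the action on $\X^*(B)$ is unchanged is a minor explicit addition to what the paper leaves implicit.
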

   \begin{proof}
     For every $\gamma \in \gal$ and $D \in \Dm$ we have
     $r_{\hs^\gamma\hm D} = r_D$ and
     \begin{align*}
       \sum_{\alpha \in\hs \varsigma(D)} {}^\gamma\hm\omega_\alpha
       = \sum_{\alpha\in \hs \varsigma(D)} \omega_{\hsss^\gamma\hm\alpha}
       = \sum_{\alpha \in \hs^\gamma\hm\varsigma(D)} \omega_\alpha
       = \sum_{\alpha \in\hs \varsigma({}^\gamma\hm D)} \omega_\alpha\text{,}
     \end{align*}
     hence ${}^\gamma\hm \lambda_D = ({}^\gamma\omega_D, {}^\gamma\hm e_D) =
     (\omega_{\hsss^\gamma\hmm D}, e_{\hs^\gamma\hmm D}) = \lambda_{\hs^\gamma\hmm D}$.
    Taking into account that $\Gm$ preserves $\Xm$, we conclude
    that $\gal$ preserves $\sX'$.

Since $\Omega^{\prime\ste{2}}=\emptyset$, it remains to prove that $\Gm$ preserves   the subset
$\Omega^{\prime\ste{1}}=\Omega'$.
We have $\Omega'=\im(\rho'\times\vs')$, and so it suffices to show that the maps $\rho'$ and $\vs'$
are $\Gm$-equivariant.
Since $\vs'=\vs$, it remains to show that the map
$\rho'\colon \Dm'\to\Hom_\Z(\Xm',\Q)$ is $\Gm$-equivariant.

     Let  $D\in\Dm'=\Dm$. Then
     \begin{align*}
   \langle \rho'(D), \lambda \rangle =
   \begin{cases}\langle \rho(D), \lambda \rangle & \text{for $\lambda \in \sX$,} \\
     q & \text{for $\lambda = \lambda_D$,} \\
     0 & \text{for $\lambda = \lambda_{D'}$ with $D' \ne D$.}\end{cases}
     \end{align*}
   Let  $\gamma \in \gal$.
   For every $\lambda \in \sX$ we have
     \begin{align*}
       \langle \hs^\gamma\hmm\rho'(D), \lambda \rangle
       = \langle \rho'(D), {}^{\gamma^{-1}}\hm\hm\lambda \rangle
       = \langle \rho(D), {}^{\gamma^{-1}}\hm\hm\lambda \rangle
       = \langle \hs^\gamma\hm \rho(D), \lambda \rangle
       = \langle \rho(\hs^\gamma\hmm D), \lambda \rangle
       =\langle \rho'(\hs^\gamma\hmm D), \lambda \rangle
       \text{,}
     \end{align*}
     and for every $D' \in \Dm$ we have
     \begin{align*}
       \langle \hs^\gamma\hmm\rho'(D), \lambda_{\hs^\gamma\hm D'} \rangle
       = \langle \rho'(D), {}^{\gamma^{-1}}\hm\hm\lambda_{\hs^\gamma\hm D'} \rangle
       = \langle \rho'(D), \lambda_{D'} \rangle = \begin{cases}
         q & \text{for ${}^\gamma D' = {}^\gamma D$,}\\
         0 & \text{for ${}^\gamma D' \ne {}^\gamma D$,}\end{cases}
     \end{align*}
     which shows that
     \[\langle \hs^\gamma\hmm\rho'(D), \lambda \rangle
     =\langle \rho'(\hs^\gamma\hmm D), \lambda \rangle\]
     for all $\lambda\in\Xm'$.
     Thus $\hs^\gamma\hmm\rho'(D)= \rho'(\hs^\gamma\hmm D)$, and hence,
     the map $\rho'$ is $\Gm$-equivariant, as required.
   \end{proof}

\end{subsec}

\begin{theorem}  \label{t:sphqs}
  Let $k_0$, $k$, and $\sG$ be as in Subsection \ref{ss:mod-Y}.
  Let $G/H$ be a spherical homogeneous space over $k$, and let
  $(\sX, \Vm, \Omone,\Omt)$ be its combinatorial invariants.
   Let $G_0$ be a \emphb{quasi-split} $k_0$-model of $G$.
 Assume that  the $\sG$-action
  defined by $G_0$  preserves the combinatorial invariants $(\sX, \Vm, \Omone,\Omt)$.
  Then for any continuous lift $\alpha_\Dm\colon \sG\times \Dm\to\Dm$
  of the $\sG$-action on $\Omega$,
  there exists a $G_0$-equivariant $k_0$-model $Y_0$ of $G/H$
  inducing this action $\alpha_\Dm$ on $\Dm$.
\end{theorem}

\begin{proof}
  Let $G'/H'$ be the quasi-affine spherical homogeneous space from
  Proposition~\ref{prop:invqaf} with combinatorial invariants
  $(\sX', \Sigma, \Omega'{}^{\ste{1}}, \Omega'{}^{\ste{2}})$, and let
  $X = \Spec(\Gamma(G'/H', \Om_{G'/H'}))$ be its affine closure.
  According to Grosshans \cite[Theorem~4.2]{gro97}, the codimension of
  $X\smallsetminus G'/H'$ in $X$ is at least~$2$, which means that
  there are no $G'$-invariant prime divisors in~$X$. In other words,
  the set of colors $\Dm'$ is the set of all $B'$-invariant prime
  divisors in~$X$. A~rational $B'$-semi-invariant function
  $f \in k(X)^{(B')}$ is regular, that is, $f \in k[X]^{(B')}$, if and
  only if $\nu_D(f) \ge 0$ for every $D \in \Dm'$ (that is, if and
  only if $f$ has no poles). Since by definition
  $\langle \rho'(D), \lambda \rangle = \nu_D(f)$ where $\lambda$ is
  the $B'$-weight of $f$, the weight monoid $\Gamma'$ of $X$ is given
  by
  \begin{align}\label{a:Gamma'}
    \Gamma' = \{\lambda \in \sX' \hs\mid\hs \text{$\langle \rho'(D), \lambda \rangle
     \ge 0$ for every $D \in \Dm'$} \}\text{.}
  \end{align}
  Set $G'_0 = G_0 \times C_0$ with $C_0$ as in Subsection~\ref{rem:gpp}. In
    particular, the $\sG$-action on $C$ defined by $C_0$ comes from
     $\alpha_\Dm$. Then the $\sG$-action defined by $G'_0$
  preserves
  $(\sX', \Sigma, \Omega'{}^{\ste{1}}, \Omega'{}^{\ste{2}})$, hence it
  also preserves~$\Gamma'$, which can be described in terms of the
  tuple $(\sX', \Sigma, \Omega'{}^{\ste{1}}, \Omega'{}^{\ste{2}})$;
  see \eqref{a:Gamma'}. By Theorem~\ref{th:qsaff}, there exists a
  $G'_0$-equivariant $k_0$-model $X_0$ of $X$, which induces a
  $G'_0$-equivariant $k_0$-model $X_0'$ of the open $G'$-orbit
  $G'/H' = (G\times C)/H'$ in~$X$. Its quotient by $C_0$ is a
  $G_0$-equivariant $k_0$-model $Y_0$ of $G/H$.
  By construction, the action of $\sG$ on
  $\Omega'{}^{\ste{1}} \cong \Dm$ is $\alpha_\Dm$,  as required.
\end{proof}

  \begin{theorem}\label{t:k0-point}
   The $k_0$-model of $G/H$ constructed in the proof of
      Theorem~\ref{t:sphqs} contains a $k_0$-point.
  \end{theorem}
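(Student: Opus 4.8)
The plan is to deduce the existence of a $k_0$-point on $Y_0$ directly from Theorem~\ref{th:affp}. Recall the construction in the proof of Theorem~\ref{t:sphqs}: we passed to the quasi-affine spherical homogeneous space $G'/H' = (G\times C)/H'$, formed its affine closure $X = \Spec(\Gamma(G'/H', \Om_{G'/H'}))$, applied Theorem~\ref{th:qsaff} to the quasi-split $k_0$-model $G'_0 = G_0\times_{k_0} C_0$ of $G'$ to obtain a $G'_0$-equivariant $k_0$-model $X_0$ of $X$, restricted to the open $G'_0$-orbit $X_0'$ (a $k_0$-model of $G'/H'$), and defined $Y_0 = X_0'/C_0$, the quotient by the $k_0$-torus $C_0$.

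First I would invoke Theorem~\ref{th:affp}, which applies since $G'_0$ is quasi-split (a product of the quasi-split group $G_0$ with a torus) and $X_0$ is constructed exactly as in Theorem~\ref{th:qsaff}: the set $X_0(k_0)$ of $k_0$-points is dense in $X_0$ for the Zariski topology. The open $G'$-orbit $G'/H'$ is a nonempty open subset of the irreducible affine variety $X$, so $X_0'$ is a nonempty Zariski-open subscheme of $X_0$; since the complement $X_0\smallsetminus X_0'$ is a proper closed subset and $X_0(k_0)$ is dense, $X_0(k_0)$ cannot be contained in this complement, and hence $X_0'(k_0) = X_0(k_0)\cap X_0'$ is nonempty. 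Pick $x_0 \in X_0'(k_0)$.

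Finally, by Corollary~\ref{c:quot} the fibers of $G'/H' \to G/H$ are the $C$-orbits for the free $C$-action, so $G/H$ is the geometric quotient $(G'/H')/C$; this quotient descends to the $k_0$-structures, giving a morphism of $k_0$-varieties $q_0\colon X_0' \to X_0'/C_0 = Y_0$. Therefore $q_0(x_0) \in Y_0(k_0)$ is the desired $k_0$-rational point. The argument is short: the only point that deserves attention is the passage from the density of $k_0$-points on the affine model $X_0$ (Theorem~\ref{th:affp}) to the existence of a $k_0$-point inside the open orbit $X_0'$, which rests on the elementary fact that a Zariski-dense subset meets every nonempty open subset; the rest is bookkeeping of the construction in Theorem~\ref{t:sphqs}.
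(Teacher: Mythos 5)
Your proposal is correct and follows essentially the same route as the paper's own proof: invoke Theorem~\ref{th:affp} for density of $k_0$-points in $X_0$, conclude that the open $k_0$-subscheme $X_0'$ meets this dense set, and push the resulting $k_0$-point forward under the quotient morphism $X_0'\to Y_0=X_0'/C_0$. The paper compresses this into a single sentence; you spell out the intermediate density argument, but there is no substantive difference.
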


  \begin{proof}
    We use the notation of the proof of Theorem~\ref{t:sphqs}.
    By Theorem~\ref{th:affp}, the set of $k_0$-points in $X_0$ is dense.
    Therefore, the induced $k_0$-model $X_0'$ of the open orbit
    $G'/H'\subset X$ contains a $k_0$-point $x_0'$, the image of which in $Y_0=X'_0/C_0$
    is a $k_0$-point of the  $k_0$-model $Y_0$ of $G/H$.
  \end{proof}

\begin{subsec}\label{ss:proof-main}
{\em Proof of  Theorem~\ref{t:sphfull'}.}
  Assume that the $\sG$-action on $X^*(B)$ and $\Sm$ defined by $G_0$
  preserves $(\sX, \Vm, \Omone,\Omt)$. Then the $\sG$-action defined
  by $G_q$ preserves $(\sX, \Vm, \Omone,\Omt)$ as well, because it is
  the same action. By virtue of Theorem~\ref{t:sphqs}, there exists a
  $G_\qs$-equivariant $k_0$-model of $G/H$ of the form $G_q/H_q$,
  where $H_q\subset G_q$ is a $k_0$-subgroup. By
  Proposition~\ref{p:tits}, the homogeneous space $G/H$ admits a
  $G_0$-equivariant $k_0$-model if and only if
  $\vktil_*(t(G_0))=1\in H^2(k_0,A_\qs)$.
  \qed
\end{subsec}

\section{Models of spherical embeddings}
\label{s:embeddings}

\begin{subsec}\label{ss:emb-1}
Let $k_0$, $k$, and $\sG$ be as in Subsection \ref{ss:mod-Y}.
Let $G/H$ be a spherical homogeneous
space of a (connected) reductive group $G$ defined over $k$. Let
$G/H\into Y^e$ be a spherical embedding, that is, a $G$-equivariant
open embedding of $G/H$ into a normal irreducible $G$ variety $Y^e$.

Let $\Delta$ denote the set of {\em $B$-invariant} prime divisors in $Y^e$. If we
identify the colors with their closures in $Y^e$, we have
$\Dm \subset \Delta$, and $\Delta \smallsetminus \Dm$ is the set of
{\em $G$-invariant} prime divisors in~$Y^e$.
We write $V= \Hom_\Z(\sX, \Qd)$.
Every prime divisor $D \in \Delta$ defines an element $\rho(D) \in V$.

We briefly recall how a spherical embedding $G/H\into Y^e$ defines a colored fan $\CF(Y^e)$;
we refer to Knop \cite{kno91} for details.
For every $G$-orbit $Z$ in $Y^e$ consider the set $I_Z \subset \Delta$ consisting of those
divisors that contain $Z$.
The finite set $I_Z$ defines a colored cone
\begin{align*}
  (\Cm_Z, \Fm_Z) = \big(\cone(\rho(I_Z)),\hs I_Z\cap \Dm\hs\big)\text{.}
\end{align*}
associated with the $G$-orbit $Z$, where we write $\cone(\rho(I_Z))$ for the cone in $V$
generated by the finite set $\rho(I_Z)$.    Then the set of colored cones
\begin{align*}
  \CF(Y^e) =\big\{ (\Cm_Z, \Fm_Z) \,\mid\, \text{$Z \subset Y^e$ is a $G$-orbit}\big\}
\end{align*}
is the colored fan associated with the spherical embedding
$G/H \hookrightarrow Y^e$.
\end{subsec}

\begin{subsec}\label{ss:G-stable}
Let $G_0$ be a $k_0$-model of $G$.
Assume that there exists a $G_0$-equivariant $k_0$-model of $G/H$.
Then $\sG$ naturally acts on $V$ by a continuous homomorphism
$\varphi\colon \sG\to\GL(V)$, and
for any $\gamma\in\sG$ and any cone $\Cm\subset V$
we obtain a new cone $\hs^\gamma\Cm=\varphi_\gamma(\Cm)$.
Furthermore, $\sG$ naturally acts on $\Omega\subset V\times \Pm(\Sm)$.

Now let us {\em fix} a $G_0$-equivariant $k_0$-model $Y_0$ of $G/H$.
It defines a continuous action $\alpha^\Dm$ of $\gal$ on $\Dm$
lifting the $\sG$-action on $\Omega$.
If $\gamma\in \sG$, then for any colored cone $(\Cm,\Fm)$ in $V$
we obtain a new colored cone $(\upgam\hs \Cm,\upgam \Fm)\coloneqq
(\hs\varphi_\gamma(\Cm),\hs\alpha^\Dm_\gamma(\Fm)\hs)$.
Following Huruguen \cite{Huruguen},
we say that the colored fan $\CF(Y^e)$ is {\em $\gal$-stable
with respect to $(G_0\hs,\alpha^\Dm)$}
if for any $\gamma\in\gal$ and for any colored cone $(\Cm,\Fm)$ in $\CF(Y^e)$,
the colored cone $(\varphi_\gamma(\Cm),\alpha^\Dm_\gamma(\Fm))$
is contained in $\CF(Y^e)$ as well.
\end{subsec}

\begin{construction}\label{con:Y^e}
With the  notation and assumptions of Subsection~\ref{ss:G-stable},
consider the group of $G$-equivariant  automorphisms
$\sA=\Aut^G(G/H)$. Note that $\Am$ is the group
of $k$-points of the algebraic group  $A=\sN_G(H)/H$.
The group $\Am$ naturally acts on the finite set $\Dm$.
Consider the homomorphism $\Am\to\Aut(\Dm)$. It is clear that the kernel
$\Am^{\kk}\coloneqq\ker[\Am\to\Aut(\Dm)]$ is the group of $k$-points
of some algebraic $k$-subgroup $A^{\kk}$ of $A$.
Then
$$A^\kk=\overline{H}/H\, \subset\, \NGH/H=A,$$
where $\overline{H}$ denotes the spherical closure of $H$
(this follows from the definition of spherical closure).

We have a canonical surjective map
\[\pi_\Dm=\rho\times\vs\colon\, \Dm\to\Omega.\]
Let $\Aut_\Omega(\Dm)$ denote the group of permutations $s$ of the finite set $\Dm$
such that $\pi_\Dm(s(D))=\pi_\Dm(D)$ for all $D\in\Dm$.
Our homomorphism $\Am\to\Aut(\Dm)$ is actually  a homomorphism
\begin{equation}\label{e:Losev-sur}
A\to\Aut_\Omega(\Dm).
\end{equation}
By Losev's theorem, see \cite[Theorem B.5]{BG}, the homomorphism \eqref{e:Losev-sur}
is surjective. Thus we have a short exact sequence
\begin{equation}\label{e:Ac}
 1\to A^{\kk}\to A \to \Aut_\Omega(\Dm)\to 1.
\end{equation}
\end{construction}

\begin{construction}\label{con:Y^e-2}
Let $G_0$ be a $k_0$-model of $G$, inducing a semilinear $\sG$-action
on $G$ and a continuous $\sG$-action on $\Omega$.
Assume that there exists a lifting of the $\sG$ action on $\Omega$
to a continuous $\sG$-action $\alpha^\Dm$ on $\Dm$
such that the colored fan $\CF(Y^e)$ is $\sG$-stable
with respect to $(G_0\hs,\alpha^\Dm)$.
Write $G_0=\hs_c G_\qs$, where $G_\qs$ is a quasi-split $k_0$-group
and $c\in Z^1(k, G_\qs/Z(G_\qs))$ is a cocycle.

By Theorem \ref{t:sphqs}, there exists a $G_\qs$-equivariant
$k_0$-model $Y_\qs$ of $G/H$ inducing this $\sG$-action
$\alpha^\Dm$ on $\Dm$ and a $G_\qs$-semilinear action $\mu^\qs$ on $G/H$.
This model $Y_\qs$ induces compatible semilinear $\sG$-actions on the algebraic groups
in the short exact sequence \eqref{e:Ac}, and so we obtain
a short exact sequence of commutative algebraic $k_0$-groups
\begin{equation}\label{e:Ac-qs}
 1\to A^{\kk}_\qs\labelto{i} A_\qs \labelto{\pi} \Aut_\Omega(\Dm)\to 1,
\end{equation}
(which actually does not depend on the choice of $\alpha^\Dm$;  see Appendix \ref{s:A}).
Consider the induced cohomology exact sequence
\begin{equation}\label{e:Ac-qs-coho}
 H^1(k_0, A_q)\labelto{\pi_*}H^1(k_0,\Aut_\Omega(\Dm)\hs)\labelto{\delta}H^2(k_0,A^{\kk}_\qs)
       \labelto{i_*}H^2(k_0,A_\qs).
\end{equation}
\end{construction}

\begin{proposition}\label{l:inj}
In the exact sequence of abelian groups \eqref{e:Ac-qs-coho},
the homomorphism $i_*\colon H^2(k_0,A^{\kk}_\qs)\to H^2(k_0,A_\qs)$ is injective.
\end{proposition}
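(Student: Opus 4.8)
The plan is to deduce the proposition from the exactness of \eqref{e:Ac-qs-coho} together with a splitting of the short exact sequence \eqref{e:Ac-qs}. By exactness of \eqref{e:Ac-qs-coho} the kernel of $i_*$ equals the image of the connecting map $\delta\colon H^1(k_0,\Aut_\Omega(\Dm))\to H^2(k_0,A^\kk_\qs)$, and $\delta=0$ if and only if the preceding map $\pi_*\colon H^1(k_0,A_\qs)\to H^1(k_0,\Aut_\Omega(\Dm))$ is surjective. Hence it suffices to exhibit a homomorphism of $k_0$-groups $s\colon \Aut_\Omega(\Dm)\to A_\qs$ with $\pi\circ s=\id$: then $\pi_*\circ s_*=\id$ on $H^1(k_0,\Aut_\Omega(\Dm))$, so $\pi_*$ is (split) surjective, $\delta=0$, and $i_*$ is injective.

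Since $\charr k_0=0$, all three groups in \eqref{e:Ac-qs} are of multiplicative type, so producing such a section $s$ is equivalent to producing a $\sG$-equivariant retraction of the dual short exact sequence of finitely generated $\sG$-modules
\[
0\longrightarrow \X^*(\Aut_\Omega(\Dm))\longrightarrow \X^*(A_\qs)\longrightarrow \X^*(A^\kk_\qs)\longrightarrow 0 .
\]
Here I would use the combinatorial descriptions from Section~\ref{s:AqAq} and Appendix~\ref{s:A}. One has $\X^*(A_\qs)=\sX/\Xi$ with $\Xi=\langle \SigmaN\rangle=\langle c_\gamma\gamma : \gamma\in\Sigma\rangle$ and each $c_\gamma\in\{1,2\}$; the submodule $\X^*(\Aut_\Omega(\Dm))$ has the form $\Xi_0/\Xi$ for a $\sG$-stable lattice $\Xi\subseteq\Xi_0\subseteq\sX$, it is $2$-torsion, and it is in fact a permutation module, namely $\Xi_0/\Xi\cong\bigoplus_{\omega\in\Omt}(\Z/2\Z)$ with $\sG$ permuting the summands along the $\sG$-set $\Omt$ of two-preimage elements of $\Omega$; and $\X^*(A^\kk_\qs)=\sX/\Xi_0$. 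The required retraction is then a $\sG$-equivariant map $r\colon\sX\to\Xi_0/\Xi$ extending the quotient map on $\Xi_0$. I would build $r$ from the divisorial valuations $\nu_D$ of the colors $D\in\Dm$ lying over $\Omt$: the two colors over a fixed $\omega\in\Omt$ form a $\sG$-stable unordered pair, so $\nu_{D^+}+\nu_{D^-}$ reduced modulo $2$ is a $\sG$-invariant functional on all of $\sX$; assembling these functionals into the coordinates of $\bigoplus_{\omega\in\Omt}(\Z/2\Z)$ and checking — via Proposition~\ref{prop:aug} and the computation of the $c_\gamma$ from the $\sG$-invariant combinatorial data — that the resulting map restricts to the quotient map on $\Xi_0$, yields $r$.

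The main obstacle is precisely this last verification. In general the torsion submodule of a Galois lattice need not be a $\sG$-equivariant direct summand, so the splitting of \eqref{e:Ac-qs} genuinely uses the special structure of the triple $(A,A^\kk,\Aut_\Omega(\Dm))$: that the coefficients $c_\gamma$ depend only on the $\sG$-invariant combinatorial invariants of $G/H$, and that the two-element color fibres over $\Omt$ are $\sG$-stable as unordered pairs, so that the functionals built from the colors are well defined on all of $\sX$ and restrict correctly on $\Xi_0$. This is where the input of Appendix~\ref{s:A} and of Subsection~\ref{ss:aut} is essential. Once the $\sG$-equivariant retraction $r$ is constructed, dualizing gives the section $s$ of \eqref{e:Ac-qs}, hence $\pi_*$ is surjective, hence $\delta=0$, and therefore $i_*\colon H^2(k_0,A^\kk_\qs)\to H^2(k_0,A_\qs)$ is injective, as required.
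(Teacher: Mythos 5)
The retraction $r$ you propose does not exist as written, and this is a genuine gap. For $\omega \in \Omt$, the two preimages $D^{+}, D^{-}$ of $\omega$ in $\Dm$ under $\rho\times\vs$ have the \emph{same} image in $\Hom(\Xm,\Q)\times\Pm(\Sm)$ by the very definition of $\Omt$; in particular $\rho(D^{+})=\rho(D^{-})$. Writing $\alpha\in\Sigmat$ for the corresponding simple root, one has $\nu_{D^{+}}=\nu_{D^{-}}=\tfrac{1}{2}\alpha^\vee|_\Xm$, hence $\nu_{D^{+}}+\nu_{D^{-}}=2\nu_{D^{+}}=\alpha^\vee|_\Xm$ is divisible by $2$ in $\Hom(\Xm,\Z)$, and its mod-$2$ reduction is the \emph{zero} functional on $\Xm$. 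Your map $r$ is therefore identically zero; in particular it cannot restrict to the quotient map $\Xi_0\to\Xi_0/\Xi$, which is nonzero whenever $\Sigmat\neq\varnothing$. Moreover a $\sG$-equivariant — or even a plain — retraction need not exist at all: the torsion subgroup of $\Xm/\langle\Sigma^N\rangle$ is the saturation quotient $\langle\Sigma^N\rangle^{\rm sat}/\langle\Sigma^N\rangle$, which strictly contains $\langle\Sigmac\rangle/\langle\Sigma^N\rangle$ whenever $\langle\Sigmac\rangle$ fails to be saturated in $\Xm$, and this can produce non-split extensions such as $0\to\Z/2\to\Z/4\to\Z/2\to 0$.

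The paper never tries to split \eqref{e:Ac-qs}. The proof in Section~\ref{s:embeddings} obtains surjectivity of $\pi_*$ on $H^1$ not by a group-theoretic section but by \emph{producing models}: Theorem~\ref{t:sphqs} guarantees that every continuous lift $\beta^\Dm$ of the $\sG$-action on $\Omega$ is realized by a $G_\qs$-equivariant $k_0$-model of $G/H$, which is exactly surjectivity of $\pi_*$; then $\delta=0$ and $i_*$ is injective. The alternative proof in Appendix~\ref{s:A} is closer in spirit to your plan but sidesteps the splitting: embedding $A^\kk_\qs\subset A_\qs\subset T_q^\Xm$, one gets two short exact sequences whose quotients are tori with character lattices $\langle\Sigmac\rangle$ and $\langle\Sigma^N\rangle$. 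Those tori are quasi-trivial because $\Sigmac$ and $\Sigma^N$ are $\sG$-stable $\Z$-bases, so their $H^1$ vanishes, and both $H^2(k_0,A^\kk_\qs)\to H^2(k_0,T_q^\Xm)$ and $H^2(k_0,A_\qs)\to H^2(k_0,T_q^\Xm)$ are injective; injectivity of $i_*$ then follows from the obvious commutative square. The decisive combinatorial input is not a splitting of \eqref{e:Ac-qs} (which is unavailable in general) but the existence of the $\sG$-stable bases $\Sigmac$ and $\Sigma^N$.
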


\begin{proof}
We fix a $G_\qs$-equivariant $k_0$-model $Y_\qs$ of $G/H$. Let $\alpha_\qs^\sD$
denote the induced action of $\sG$ on $\sD$.
The group $H^1(k_0,\Aut_\Omega(\Dm)\hs)$ classifies $\sG$-actions $\beta^\Dm$ on $\Dm$
compatible with the given $\sG$-action on $\Omega$,
and the neutral element $1\in H^1(k_0,\Aut_\Omega(\Dm)\hs)$
corresponds to $\beta^\sD=\alpha_\qs^\sD$.
The group $ H^1(k_0, A_q)$ classifies $G_\qs$-equivariant $k_0$-models of $Y=G/H$,
and the neutral element $1\in H^1(k_0, A_q)$ corresponds to $Y_\qs$.
By Theorem \ref{t:sphqs}, for any such $\beta^\Dm$ there exists
a $G_\qs$-equivariant $k_0$-model $Y_\beta$ of $Y=G/H$
that induces this action $\beta^\Dm$ on $\Dm$.
Thus  the homomorphism $\pi_*\colon H^1(k_0, A_q)\to H^1(k_0,\Aut_\Omega(\Dm)\hs)$ is surjective.
It follows that the homomorphism $\delta$ is identically 1 and hence,
the homomorphism $i_*$ is injective.
\end{proof}

See Appendix \ref{s:A} for a proof of Proposition \ref{l:inj} not using Theorem \ref{t:sphqs}.

\begin{construction}\label{con:Y^e-3}
Since $Z(G)\subset B$, the composite homomorphism
\[Z(G)\to A\to \Aut_\Omega(\Dm)\]
of $Z(G)$ on $\Dm$ is trivial (that is,
it maps all elements of $Z(G)(k)$ to the identity).
It follows that the homomorphism of $k_0$-groups $\vk\colon Z(G_\qs)\to A_\qs$
comes from a homomorphism $\vk^{\kk}\colon Z(G_\qs)\to A^{\kk}_\qs$.
We obtain a natural homomorphism
\[\vktil^{\kk}\colon Z(\Gtil_0)=Z(\Gtil_\qs)
\labelto{} Z(G_\qs)\labelto{\vk^\kk} A^{\kk}_\qs,\]
and a commutative diagram
\begin{equation}\label{e:d:inj}
\begin{aligned}
\xymatrix{
H^2(k_0, Z(G_0)\ar[d]_{\vktil^\kk_*}\ar[rd]^{\vktil_*})\\
H^2(k_0,A^{\kk}_\qs)\ar@{^(->}[r]^{i_*} & H^2(k_0,A_\qs),
}
\end{aligned}
\end{equation}
in which the horizontal arrow is injective by Proposition \ref{l:inj}.
\end{construction}

\begin{proposition}\label{p:main2}
With the notation of Constructions \ref{con:Y^e} and \ref{con:Y^e-2},
let $Y=G/H$ be a spherical homogeneous space.
Let $G_0$ be a $k_0$-model of $G$ and write $G_0=\hs_c G_\qs$,
where $G_\qs$ is a quasi-split inner form of $G_0$.
Assume that the $\Gm$-action defined by $G_0$
preserves the combinatorial invariants of $G/H$
and that the following two equivalent conditions are satisfied:
\begin{enumerate}
\item[{\rm (a)}]  $\vktil_*(t(G_0))=1\in H^2(k_0,A_q)$, or, which is the same,
\item[{\rm (b)}] $\vktil^\kk_*(t(G_0))=1\in H^2(k_0,A^\kk_q)$.
\end{enumerate}
Then for any  continuous $\sG$-action $\alpha^\Dm$ on $\Dm$
lifting the $\sG$-action on $\Omega$ defined by $G_0$,
there exists a $G_0$-equivariant $k_0$-model $Y_0$ of $Y=G/H$
inducing this action $\alpha^\Dm$ on $\Dm$.
\end{proposition}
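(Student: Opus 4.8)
The plan is to combine the two criteria of Section~\ref{s:AqAq} — more precisely Proposition~\ref{p:tits} and its reformulation via the Tits class — with the injectivity of $i_*$ just established in Proposition~\ref{l:inj}. First I would note that conditions (a) and (b) are indeed equivalent: the commutative diagram \eqref{e:d:inj} gives $i_*(\vktil^\kk_*(t(G_0))) = \vktil_*(t(G_0))$, and $i_*$ is injective by Proposition~\ref{l:inj}, so $\vktil_*(t(G_0))=1$ if and only if $\vktil^\kk_*(t(G_0))=1$. This disposes of the parenthetical claim in the statement and lets me use whichever of the two is more convenient.

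Next, since the $\Gm$-action defined by $G_0$ preserves the combinatorial invariants $(\sX,\Vm,\Omone,\Omt)$ of $G/H$, and $G_\qs$ is an inner form of $G_0$ defining the \emph{same} $*$-action on $(\X^*(B),\Sm)$, the $\Gm$-action defined by $G_\qs$ also preserves these invariants. Hence by Theorem~\ref{t:sphqs}, applied to the given continuous lift $\alpha^\Dm$ of the $\sG$-action on $\Omega$, there exists a $G_\qs$-equivariant $k_0$-model $Y_\qs$ of $G/H$ inducing exactly this action $\alpha^\Dm$ on $\Dm$; moreover by Theorem~\ref{t:k0-point} we may take $Y_\qs = G_q/H_q$ with $H_q\subset G_q$ a $k_0$-subgroup, so that $A_q = \sN_{G_q}(H_q)/H_q$ is a genuine $k_0$-model of $A$ and the homomorphism $\vktil$ of \eqref{e:vktil-2} is defined over $k_0$.

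Now I would invoke Proposition~\ref{p:tits} with this quasi-split model $G_\qs$ and the $k_0$-subgroup $H_q$: writing $G_0 = {}_cG_\qs$ with $c\in Z^1(k_0,\Gbar_\qs)$, the homogeneous space $G/H$ admits a $G_0$-equivariant $k_0$-model if and only if the image $\vktil_*(t(G_0))\in H^2(k_0,A_\qs)$ is neutral — and since $A_q$ is abelian, "neutral" means "$=1$", which is precisely hypothesis (a). This produces \emph{some} $G_0$-equivariant $k_0$-model $Y_0$ of $G/H$. The remaining point — and the one requiring care — is that we must obtain a model inducing the \emph{prescribed} action $\alpha^\Dm$ on $\Dm$, not merely some model. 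Here I would re-run the proof of Theorem~\ref{t:twist} (or invoke Proposition~\ref{p:twist-0}) starting from the specific $G_\qs$-equivariant model $Y_\qs$ chosen above, which already induces $\alpha^\Dm$: the twisting construction $\mu^0_\gamma = a_\gamma\circ l(\ctil_\gamma)\circ\mu_\gamma$ modifies the semilinear action only by elements $a_\gamma\in\sA = \Aut^G(G/H) = A(k)$ and by the central translations $l(\ctil_\gamma)$ with $\ctil_\gamma$ lifting $c_\gamma\in\Gbar(k)$. Since $\sA$ acts on $\Dm$ through $\Aut_\Omega(\Dm)$, a priori this could change the induced action on $\Dm$; the key observation is that the constraint \eqref{e:neutral} forces the relevant $a$ to be (after restriction to an open subgroup) a cocycle valued in $A^\kk(k) = \ker[A(k)\to\Aut(\Dm)]$ when we use hypothesis (b) in the form $\vktil^\kk_*(t(G_0))=1$ — that is, the neutralizing cochain can be chosen with values in $A^\kk$, so it acts trivially on $\Dm$. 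Therefore the twisted semilinear action $\mu^0$ still induces $\alpha^\Dm$ on $\Dm$, and by Lemma~\ref{p:lemma-5-4} it comes from a $G_0$-equivariant $k_0$-model $Y_0$ with the required property.

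The main obstacle is precisely this last step: ensuring compatibility with the prescribed coloring action $\alpha^\Dm$. The resolution hinges on using hypothesis (b) rather than (a) at the crucial moment — lifting the neutrality of $\vktil^\kk_*(t(G_0))$ in $H^2(k_0,A^\kk_q)$ to a neutralizing $0$-cochain $a\colon\sG\to A^\kk(k)$, which then acts trivially on $\Dm$ by the very definition of $A^\kk$. The equivalence of (a) and (b) via injectivity of $i_*$ (Proposition~\ref{l:inj}) is what makes this legitimate. Everything else is a bookkeeping recombination of Theorems~\ref{t:sphqs}, \ref{t:k0-point}, \ref{t:twist} and Proposition~\ref{p:tits}.
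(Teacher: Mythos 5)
Your proposal is correct and follows essentially the same route as the paper's proof: equivalence of (a) and (b) via the injectivity of $i_*$ from Proposition~\ref{l:inj}; Theorem~\ref{t:sphqs} to produce a quasi-split model $Y_\qs$ inducing the prescribed $\alpha^\Dm$; and then a twist $\mu^0_\gamma = l(\ctil_\gamma)\circ a_\gamma\circ\mu_\gamma$ with the crucial choice of the neutralizing cochain $a$ valued in $A^\kk_\qs(k)$ (possible because $\vktil^\kk_*(t(G_0))=1$), so that the twist fixes every color and $\mu^0$ still induces $\alpha^\Dm$. Two small inefficiencies compared to the paper: the detour through Proposition~\ref{p:tits} to first establish existence of \emph{some} model is superfluous once you carry out the explicit twisting construction, which produces the model directly; and your phrase that the constraint \enquote{forces $a$ to be valued in $A^\kk$} is imprecise — condition (b) permits \emph{choosing} such an $a$, it does not force an arbitrary neutralizing cochain into $A^\kk$ — though your closing paragraph states the logic correctly.
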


\begin{proof}
We see from the diagram \eqref{e:d:inj} that (a) and (b) are equivalent.
Let $\ctil\colon\sG\to G_\qs(k)$ be a locally constant lift
of the inner cocycle $c\colon \sG\to \Gbar(k)$
such that $\ctil_1=1_G$.
Since (b) holds, we have $\vk^{\kk}_*(\delta[c])=1$,
where $\vk^{\kk}\colon Z(G_\qs)\to A_\qs^{\kk}$ is the canonical homomorphism.
This  means that there exists a locally constant map
$a\colon\sG\to A^{\kk}_\qs(k)$ such that
\begin{equation}\label{e:neutral-bis}
a_\gamma\cdot \upgam a_{\beta  }\cdot\vk( \ctil_\gamma
\cdot  \upgam \ctil_{\beta  }  \cdot
            \ctil_{\gamma \beta  }^{\hs-1})\cdot a_{\gamma \beta  }^{-1}=1
\end{equation}
for all $\gamma,\beta  \in\sG$.

Let $\alpha^\Dm$ be as above.
Since $G_\qs$ is quasi-split,
by Theorem \ref{t:sphqs} there exists
a $G_\qs$-equivariant $k_0$-model $Y_\qs$ of $Y$ inducing  $\alpha^\Dm$.
Let $\mu\colon \sG\to\SAut(Y)$ denote the corresponding
$G_\qs$-equivariant semilinear action.
As in the proof of Theorem \ref{t:twist}, we set
\begin{equation}\label{e:mu0-bis}
\mu^0_\gamma=l(\ctil_\gamma)\circ a_\gamma\circ\mu_\gamma\hs;
\end{equation}
then $\mu^0_\gamma$ is a $G_0$-equivariant $\gamma$-semi-automorphism of $Y$.
Since \eqref{e:neutral-bis} holds, the proof of Theorem \ref{t:twist} shows
that $\mu^0$ is an algebraic $G_0$-equivariant semilinear $\sG$-action of $Y$,
which by Lemma \ref{p:lemma-5-4} defines a $G_0$-equivariant $k_0$-model $Y_0$ of $Y$.
Since  for all $\gamma\in\sG$ we have $a_\gamma\in A^{\kk}_\qs(k)$,
we see that $a_\gamma$ fixes all colors $D\in\Dm$, and hence, $\mu^0$
induces the same action $\alpha^\Dm$ on $\Dm$ as $\mu$ does, as required.
\end{proof}

\begin{theorem}\label{t:main2}
With the notation of Constructions \ref{con:Y^e},  \ref{con:Y^e-2}, and \ref{con:Y^e-3},
let $G/H\into Y^e$ be a spherical embedding.
Let $G_0$ be a $k_0$-model of $G$ and write $G_0=\hs_c G_\qs$,
where $G_\qs$ is a quasi-split inner form of $G_0$.
Assume that the $\Gm$-action defined by $G_0$
preserves the combinatorial invariants of $G/H$.
If $Y^e$ admits an algebraic $G_0$-equivariant semilinear $\sG$-action, then
the following assertions hold:
\begin{enumerate}
\item[{\rm (i)}]The $\sG$-action on $\Omega$ can be lifted
to a continuous $\sG$-action $\alpha^\Dm$ on $\Dm$ such that
the colored fan $\CF(Y^e)$ is $\sG$-stable with respect to $(G_0\hs,\alpha^\Dm)$;
\item [{\rm (ii)}]   $\vktil_*(t(G_0))=1\in H^2(k_0,A_q)$,
or, equivalently, $\vktil^\kk_*(t(G_0))=1\in H^2(k_0,A^\kk_q)$.
\end{enumerate}
Conversely, if conditions (i) and (ii) are satisfied,
then $Y^e$ admits an algebraic $G_0$-equivariant
semilinear $\sG$-action inducing the $\sG$-action $\smash{\alpha^\Dm}$ on $\Dm$ from (i).
\end{theorem}

\begin{proof}
If $Y^e$ admits an algebraic $G_0$-equivariant semilinear $\sG$-action $\mu^{0,e}$,
then it induces a continuous $\sG$-action $\alpha^\Dm$ on $\Dm$
lifting the $\sG$-action on $\Omega$, and then $\sG$ preserves $\CF(Y^e)$,
which gives (i); see Huruguen \cite[Theorem 2.23]{Huruguen}.
Moreover, clearly $\mu^{0,e}$ preserves the open $G$-orbit $Y=G/H$, and hence
induces an algebraic $G_0$-equivariant semilinear $\sG$-action $\mu^e$ on $Y$.
Since the variety $Y$ is quasi-projective,
by Lemma \ref{p:lemma-5-4} the algebraic $\sG$-action $\mu^e$ induces
a $G_0$-equivariant $k_0$-model $Y_0$ of $Y$.
By Theorem \ref{t:sphfull'} (proved in Subsection \ref{ss:proof-main}), (ii) holds.

Conversely, assume that conditions (i) and (ii) are satisfied.
Let $\alpha^\Dm$ be as in (i).
Since (ii) holds, by Proposition \ref{p:main2} there exists
a $G_0$-equivariant $k_0$-model $Y_0$ of $Y=G/H$  with semilinear action $\mu^0$ on $G/H$
inducing this action $\alpha^\Dm$ on $\Dm$.
Since by (i) the pair $(G_0,\alpha^\Dm)$ preserves $\CF(Y^e)$,
by Huruguen's theorem  \cite[Theorem 2.23]{Huruguen}
the $G_0$-equivariant  semilinear $\sG$-action $\mu^0$ on $G/H$
extends to a $G_0$-equivariant semilinear $\sG$-action $\mu^{0,e}$ on $Y^e$.

Note that Huruguen \cite{Huruguen}
does not mention the algebraicity property,
though he implicitly uses it when constructing
a $k_0$-model of $Y^e$ from $\mu^{0,e}$ via Galois descent
in the proof of his Proposition 2.21.
We show here that the semilinear $\sG$-action $\mu^{0,e}$ on $Y^e$ is algebraic.
By \cite[Corollary 5.7]{BG}
the variety $Y^e$ and the action of $G$ on  $Y^e$ are
``defined'' over some finite Galois extension $k_1/k_0$ in $k$.
This means that there exists a $G_1$-equivariant $k_1$-model $Y_1^e$ of $Y^e$,
where $G_1=G\times_{k_0}k_1$\hs.
This $k_1$-model  defines a $G_1$-equivariant $\sG_1$-action
\[\mu^{1,e}\colon\sG_1\to\SAut_{k/k_1}(Y^e),\]
where $\sG_1=\Gal(k/k_1)$.
The $\sG_1$-action $\mu^{1,e}$ preserves the open $G$-orbit $Y\subset
Y^e$ and thus defines a $G_1$-equivariant $k_1$-model $Y_1\subset Y_1^e$ of $Y$
and a $\sG_1$-action $\mu^1$ on $Y$.

Now we have two $k_1$-models $Y_0\times_{k_0}k_1$ and $Y_1$ of $Y$.
By \cite[Proposition 5.6(ii)]{BG} the composite isomorphism
\[(Y_0\times_{k_0}k_1)\times _{k_1} k\,\labelto{\sim}\,
            Y\,\labelto{\sim}\, Y_1\times_{k_1}k \]
is ``defined'' over some finite Galois extension $k_2/k_1$ in $k$.
It follows that the restrictions  of the semilinear actions $\mu^0$ and $\mu^1$  to
$\sG_2\coloneqq\Gal(k/k_2)$ coincide.
Since $Y$ is Zariski-dense in $Y^e$, we obtain that
that the restrictions to $\sG_2$ of the semilinear actions $\mu^{0,e}$
and $\mu^{1,e}$  coincide.
Since the restriction  of $\mu^{1,e}$ to $\sG_2$
comes from the $k_2$-model $Y_1^e\times_{k_1}\! k_2$ of $Y^e$,
we conclude that the $\sG$-action $\mu^{0,e}$ on $Y^e$ is algebraic.
\end{proof}

\begin{corollary}
\label{c:main2}
If the spherical embedding $Y^e$ in Theorem \ref{t:main2} is quasi-projective,
then $Y^e$ admits a $G_0$-equivariant $k_0$-model if and only if (i) and (ii) are satisfied.
\end{corollary}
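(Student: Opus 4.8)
The plan is to deduce Corollary~\ref{c:main2} from Theorem~\ref{t:main2} by promoting an algebraic $G_0$-equivariant semilinear $\sG$-action on the spherical embedding $Y^e$ to an actual $G_0$-equivariant $k_0$-model, using the quasi-projectivity hypothesis. First I would recall that, by Theorem~\ref{t:main2}, conditions (i) and (ii) are exactly equivalent to the existence of an algebraic $G_0$-equivariant semilinear $\sG$-action $\mu^{0,e}$ on $Y^e$. So it suffices to prove that, when $Y^e$ is quasi-projective, $Y^e$ admits such an action if and only if it admits a $G_0$-equivariant $k_0$-model. One direction is trivial: a $G_0$-equivariant $k_0$-model of $Y^e$ induces, via the formula~\eqref{e:s-action}, an algebraic $G_0$-equivariant semilinear $\sG$-action on $Y^e$ (the action is algebraic by its very construction from a model, or directly by Definition~\ref{d:semi-linear}).

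For the converse, I would invoke Lemma~\ref{p:lemma-5-4} (that is, \cite[Lemma~5.4]{BG}): if $Y^e$ is quasi-projective, then any algebraic $\sigma^0$-equivariant semilinear action of $\sG$ on $Y^e$ comes from a $G_0$-equivariant $k_0$-model $Y_0^e$ of $Y^e$, where $\sigma^0\colon\sG\to\SAut(G)$ is the semilinear action defined by $G_0$. Here I should be slightly careful about matching up the notion of ``$\sigma^0$-equivariant'' in Definition~\ref{d:semi-linear} with the notion of ``$G_0$-equivariant semilinear $\sG$-action'' used in Theorem~\ref{t:main2}: these are the same thing, since $\mu^{0,e}_\gamma(g\cdot y)=\sigma^0_\gamma(g)\cdot\mu^{0,e}_\gamma(y)$ is precisely condition~\eqref{e:sigma-equi}. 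Thus Lemma~\ref{p:lemma-5-4} applies verbatim and produces the desired model $Y_0^e$.

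Putting this together: assume $Y^e$ is quasi-projective. If $Y^e$ admits a $G_0$-equivariant $k_0$-model, then conditions (i) and (ii) hold by Theorem~\ref{t:main2} (the ``only if'' part of that theorem applies, since the existence of a model gives in particular an algebraic $G_0$-equivariant semilinear $\sG$-action). Conversely, if (i) and (ii) hold, then by Theorem~\ref{t:main2} there exists an algebraic $G_0$-equivariant semilinear $\sG$-action $\mu^{0,e}$ on $Y^e$; since $Y^e$ is quasi-projective, by Lemma~\ref{p:lemma-5-4} this action comes from a $G_0$-equivariant $k_0$-model $Y_0^e$ of $Y^e$. This establishes the equivalence.

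I do not expect a serious obstacle here: the corollary is essentially a formal consequence of Theorem~\ref{t:main2} together with the descent statement Lemma~\ref{p:lemma-5-4}, and the only point requiring attention is the bookkeeping of hypotheses — namely checking that the semilinear action produced in Theorem~\ref{t:main2} is genuinely ``algebraic'' in the sense of Definition~\ref{d:semi-linear} (which it is, since Theorem~\ref{t:main2} is stated in exactly those terms via its reference to Proposition~\ref{p:main2} and the proof of Theorem~\ref{t:twist}), and that quasi-projectivity of the embedding $Y^e$ is what is needed to invoke \cite[Lemma~5.4]{BG}. If desired, one could also remark that $Y^e$ quasi-projective forces $G/H$ quasi-projective as an open subvariety, so the hypothesis of Theorem~\ref{t:sphqs'} used inside the proof of Theorem~\ref{t:main2} is automatically available; but this is not strictly needed for the deduction of the corollary itself.
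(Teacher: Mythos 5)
Your proposal is correct and takes essentially the same route as the paper, which deduces Corollary~\ref{c:main2} from Theorem~\ref{t:main2} together with Lemma~\ref{p:lemma-5-4}; you simply spell out the bookkeeping (matching ``algebraic $G_0$-equivariant semilinear $\sG$-action'' with the hypotheses of Lemma~\ref{p:lemma-5-4}) that the paper leaves implicit.
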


\begin{proof}
The corollary follows from Theorem \ref{t:main2} and Lemma \ref{p:lemma-5-4}.
\end{proof}

We now consider the case of a not necessarily quasi-projective embedding $Y^e$.

\begin{definition}
  Let $(\sC, \sF), (\sC', \sF') \in \CF(Y^e)$. Then  the colored cone
  $(\sC', \sF')$ is said to be a \emph{face} of the colored cone $(\sC, \sF)$ if
  the cone $\sC'$ is a face of the cone $\sC$.
\end{definition}

\begin{remark}
  Here we consider only colored cones inside the fixed colored fan
  $\CF(Y^e)$. For the general definition of colored cones and their
  faces, we refer to Knop \cite[Section~3]{kno91}.
\end{remark}

\begin{definition}
  A subset $\Sigma \subset \CF(Y^e)$ is called a colored subfan of
  $\CF(Y^e)$ if for every $(\sC, \sF) \in \Sigma$ all its faces
  $(\sC', \sF')$ in $\CF(Y^e)$ belong to $\Sigma$.
\end{definition}

\begin{prop}
  \label{p:subsph}
  The map
  \begin{align*}
    \{\text{$G$-stable open subvarieties of $Y^e$}\}\, \longrightarrow\, \{\text{colored subfans of $\CF(Y^e)$}\},\ \
    Y' \mapsto \CF(Y')
  \end{align*}
  is a bijection.
\end{prop}
\begin{proof}
  This follows from  Knop \cite[Lemma~3.2]{kno91}.
\end{proof}

We call a colored fan \emph{quasi-projective} if the corresponding
spherical embedding is quasi-projective. A criterion for a colored fan
to be quasi-projective is given in Brion \cite[Corollary~3.3]{bri89}.
Following Huruguen \cite{Huruguen}, we consider coverings of $Y^e$ by
$G$-stable quasi-projective open subvarieties.

\begin{prop}  \label{p:nonqp}
  The spherical embedding $Y^e$ in Theorem \ref{t:main2} admits a
  $G_0$-equivariant $k_0$-model if and only if in addition to (i) and
  (ii), the following condition is satisfied:
  \begin{enumerate}
  \item[{\rm (iii)}] The colored fan $\CF(Y^e)$ is a union of
    quasi-projective colored subfans of $\CF(Y^e)$ that are $\sG$-stable with
    respect to $(G_0\hs,\alpha^\Dm)$.
  \end{enumerate}
\end{prop}

\begin{proof}
  Assume that $Y^e$ satisfies (i) and (ii). We fix the continuous
  $\sG$-action $\alpha^\Dm$ on $\Dm$ and a $G_0$-equivariant
  $k_0$-model $Y_0$ of $Y=G/H$ inducing $\alpha^\Dm$; see Proposition \ref{p:main2}.
  Then by the proof of Theorem~\ref{t:main2}, $Y^e$ admits
  an algebraic semilinear $\sG$-action $\mu^e$
  extending the $\sG$-action $\mu$ on $Y=G/H$ corresponding to the $k_0$-model $Y_0$.
  If, in addition, $Y^e$ satisfies (iii),
  then there exist $\sG$-stable quasi-projective
  colored subfans $\Sigma^{(1)}, \dots, \Sigma^{(n)} \subset
  \CF(Y^e)$ such that $\CF(Y^e) = \bigcup_{i=1}^n \Sigma^{(i)}$. For
  every $i \in \{1, \dots, n\}$, let $Y^{(i)} \subset Y^e$ be the
  $G$-stable open subvariety from Proposition~\ref{p:subsph} such that
  $\CF(Y^{(i)}) = \Sigma^{(i)}$. Then $Y^{(i)}$ is quasi-projective
  and $Y^e = \bigcup_{i=1}^n Y^{(i)}$. By the proof of Theorem \ref{t:main2},
  since $\Sigma^{(i)}$ is $\sG$-stable,  the $\sG$-action $\mu$ on $Y$
  extends to an algebraic semilinear $\sG$-action $\mu^{(i)}$ on $Y^{(i)}$.

  \begin{lemma}\label{l:res}
     The $\sG$-action  $\mu^{(i)}$ on $Y^{(i)}$ is the restriction to $Y^{(i)}$
     of the $\sG$-action $\mu^e$ on~$Y^e$.
  \end{lemma}

  \begin{proof}
    Let $\gamma\in\sG$. Consider the $k$-morphism
    \[ \nu\coloneqq (\mu^e_\gamma)^{-1}\circ\mu^{(i)}_\gamma\colon\, Y^{(i)}\to Y^{(i)}\to Y^e\]
    and the closed subvariety
    \[Z^{(i)}=\{y\in Y^{(i)}\mid\nu(y)=y\}.\]
    Since the restrictions of the $\sG$-actions $\mu^{(i)}$ and  $\mu^e$ to $Y$ coincide,
    we see that  $Z^{(i)}\supset Y$.
    Since $Y$ is dense in $Y^{(i)}$, we conclude that $Z^{(i)}=Y^{(i)}$.
    Therefore $\smash{\mu^{(i)}_\gamma(y)=\mu^e_\gamma(y)}$ for every $y\in Y^{(i)}$, as required.
  \end{proof}

  We resume proving Proposition \ref{p:nonqp}.
  Since $\smash{Y^{(i)}_0}$ is quasi-projective,
  by Corollary~\ref{c:main2} the $\sG$-action  $\mu^{(i)}$
  on $Y^{(i)}$ defines a $G_0$-equivariant $k_0$-model  $\smash{Y^{(i)}_0}$.
  Since the $k_0$-models $\smash{Y^{(1)}_0},
  \dots, \smash{Y^{(n)}_0}$ are defined by the restriction of the same
  $\sG$-action $\mu^e$ on $Y^e$, they glue together into a $G_0$-equivariant
  $k_0$-model $Y^e_0$ of $Y^e$.

  Conversely, assume that $Y^e$ admits a $G_0$-equivariant $k_0$-model
  $Y^e_0$. Let $Y_0$ denote the induced $k_0$-model of $Y=G/H$. Then
  $Y^e_0$ defines an algebraic $G_0$-equivariant semilinear
  $\sG$-action on $Y^e$, and by Theorem~\ref{t:main2} the assertions
  (i) and (ii) hold. Since $Y^e_0$ is a $k_0$-variety, there
  exist affine open $k_0$-subvarieties $\smash{U^{(1)}_0, \dots,
    U^{(n)}_0 \subset Y^e_0}$ such that $Y^e_0 = \bigcup_{i=1}^n
  \smash{U^{(i)}_0}$. We set $\smash{U^{(i)}=U^{(i)}_0\times_{k_0}k}$
  in $Y^e$; then each $U^{(i)}$ is a $\sG$-stable affine open
  subvariety of $Y^e$, and we have $Y^e = \bigcup_{i=1}^n U^{(i)}$.
  For every $i \in \{1, \dots, n\}$, by Sumihiro \cite[Theorem~3.8]{sum75} the
  open subvariety $Y^{(i)} \coloneqq G\cdot U^{(i)}$ of $Y^e$ is
  quasi-projective. It is $G$-stable, so that we may consider the
  quasi-projective colored fan $\Sigma^{(i)} \coloneqq \CF(Y^{(i)})$,
  which by Proposition~\ref{p:subsph} is a colored subfan of
  $\CF(Y^{e})$. Since $Y^e = \bigcup_{i=1}^n U^{(i)}$, we have $Y^e
  =\bigcup_{i=1}^n Y^{(i)}$, and hence $\CF(Y^e) =\bigcup_{i=1}^n
  \Sigma^{(i)}$. Since the $G$-action and the $\sG$-action on $Y^e$
  commute, each open subvariety $Y^{(i)}$ is $\sG$-stable, and
  therefore the $\sG$-action on $Y=G/H$ corresponding to the
  $k_0$-model $Y_0$ of $G/H$ extends to $Y^{(i)}$. By Huruguen
  \cite[Theorem~2.23]{Huruguen}, the quasi-projective colored fan
  $\Sigma^{(i)}$ of $Y^{(i)}$ is $\sG$-stable, which completes the
  proof.
\end{proof}

\begin{remark}
  If the spherical embedding $Y^e$ does not admit a $G_0$-equivariant
  $k_0$-model in the category of algebraic $k_0$-schemes because
  condition (iii) in Proposition~\ref{p:nonqp} is not satisfied, we
  still obtain from Theorem~\ref{t:main2} a $G_0$-equivariant
  $k_0$-model {\em in the category of algebraic $k_0$-spaces}; see Wedhorn
  \cite[Proposition~8.1]{wed18}.
\end{remark}

\section{Models of horospherical homogeneous spaces}
\label{s:horo}

This section is inspired by Moser-Jauslin and Terpereau \cite{MJTB},
where the case $k_0=\R$ was considered.

Let $k_0$, $k$, and $\sG$ be as in Subsection \ref{ss:mod-Y}.
Let $G$ be a (connected) reductive group over $k$.
We fix a Borel subgroup $B \subset G$ and
a maximal torus $T \subset B$.
Let $U$ denote the unipotent radical of  $B$.
The Bruhat decomposition shows that $U$ is a spherical subgroup of $G$.

\begin{definitions}
An algebraic subgroup $H\subset G$ is called {\em horospherical}
if $H$ contains $gUg^{-1}$ for some $g\in G(k)$.
A {\em horospherical homogeneous space of $G$} is a homogeneous space  of the form $Y=G/H$,
where $H$ is a horospherical subgroup of $G$.
\end{definitions}

Clearly, any horospherical subgroup $H$ is spherical,
because it contains a spherical subgroup of the form $gUg^{-1}$.

We describe the conjugacy classes of horospherical subgroups
following Pasquier \cite{Pasquier}.

\begin{subsec}\label{ss:BRD-BRD}
Let
\[\BRD(G)=\BRD(G,T,B)=(X,X^\vee,R,R^\vee,\Sm,\Sm^\vee)\]
denote the based root datum of $G$; see Subsection \ref{ss:BRD}.
If $\beta\in R$, we write $U_\beta$ for
the one-dimensional unipotent subgroup of $G$ with Lie algebra $\gg_\beta$.

Let $I$ be a subset of $\Sm$. We set
\[R^+_I=R^+\cap\sum_{\alpha\in I}\Z\alpha,\]
where $R^+\subset R$ is the set of positive roots with respect to $B$.
We denote by $P_I$ the parabolic subgroup of $G$
generated by the Borel subgroup $B$  and the one-dimensional
unipotent subgroups $U_{-\beta}$ for $\beta\in R^+_I$.
We say that $P_I$ is the
{\em standard parabolic subgroup of $G$ corresponding to $I$}.
It is well known that
\begin{equation}\label{e:P-I-char}
\X^*(P_I)=\big\{\chi\in X\, \mid\, \langle\chi,\alpha^\vee\rangle=0\ \forall\alpha\in I\big\},
\end{equation}
where $\alpha^\vee$ denotes the simple coroot corresponding to a simple root $\alpha$.
\end{subsec}

\begin{definition}
A {\em horospherical datum for} $G$ is a pair $(I,M)$,
where $I$ is a subset of $\Sm$ and $M$ is a subgroup of $X=\X^*(T)$ satisfying
\begin{equation}\label{e:s-datum}
\langle\chi,\alpha^\vee\rangle=0\quad\text{for all }\chi\in M,\ \alpha\in I.
\end{equation}
\end{definition}

\begin{construction}\label{con:horo-subgp}
For given $(T,B)$ and for a horospherical datum $(I,M)$,
we construct a horospherical subgroup $H_{I,M}\subset G$ as follows.
Let $P_I$ denote the standard parabolic subgroup of $G$ corresponding to $I$.
By \eqref{e:P-I-char}, we may regard elements $\chi\in M$ as characters of $P_I$.
We set
\[H_{I,M}=\bigcap_{\chi\in M}\ker[\chi\colon P_I\to\G_m].\]
Since $P_I\supset B\supset U$, and for any
$\chi\in M\subset\X^*(P_I)$ the restriction of $\chi$ to $U$ is trivial,
we see that $H_{I,M}\supset U$.
Thus  $H_{I,M}$ is a horospherical subgroup of $G$.
We denote by $[H_{I,M}]$ the conjugacy class of the subgroup $H_{I,M}$ in $G$.
\end{construction}

\begin{proposition}[{Pasquier \cite[Proposition 2.4]{Pasquier}}]
\label{p:Pasquier}
The construction
\[(I,M)\mapsto H_{I,M}\mapsto [H_{I,M}]\]
induces  bijections between the set of horospherical data for $G$,
the set of horospherical subgroups of $G$ containing $U$,
and the set of conjugacy classes of horospherical subgroups of $G$.
\end{proposition}

\begin{subsec}\label{ss:Y-A}
If $H\subset G$ is a horospherical subgroup, then by Proposition
\ref{p:Pasquier} the subgroup $H$ is conjugate
to a unique subgroup of the form $H_{I,M}$.
We say then that $I$ and $M$ are the {\em horospherical invariants of $H$}.
We say also that $H_{I,M}$ is the
{\em standard horospherical subgroup with horospherical invariants $(I,M)$}.

We consider $Y=G/H$.
We may and shall assume that $H=H_{I,M}\subset P_I\subset G$
with the notation of Construction \ref{con:horo-subgp}.
Write
\[ A=\sN_G(H)/H=\sN_G(H_{I,M})/H_{I,M}=P_I/H_{I,M}\hs;\]
then $A(k)=\Aut^G(G/H)$.
We see that $A$ is a $k$-torus with character group $\X^*(A)=M$.
\end{subsec}

\begin{subsec}\label{ss:H1-H2}
Let  $H_1\subset G$ be a horospherical subgroup with horospherical invariants $(I,M)$.
Let $Y_1=G/H_1$ be the corresponding horospherical homogeneous space.
Let $G_0$ be a $k_0$-model  of $G$, and let $\gamma\in \Gm$.
The $k_0$-model $G_0$ defines  a $\gamma$-semi-automorphism $\sigma_\gamma$ of $G$.
Set $H_2=\sigma_\gamma(H_1)\subset G$, and denote by $Y_2\coloneqq G/H_2$
the corresponding horospherical homogeneous space.

We wish to know whether the horospherical homogeneous spaces
$Y_1$ and $Y_2$ are isomorphic as $G$-varieties,
that is, whether $H_1$ and $H_2$ are conjugate in $G$.
For this end we compare the horospherical invariants of $H_1$ and $H_2$.
We write $\upgam\hmm I=\veg(I)$ and  $\upgam\hmm M=\veg(M)$,
where $\veg$ is the automorphism of the based root datum  $\BRD(G)$
defined in Construction \ref{con:*-action}.
\end{subsec}

\begin{proposition}\label{p:*-horo}
Let $k_0$, $k$, and $\Gm$  be as in Subsection \ref{ss:mod-Y}.
Let $G$ be a reductive group over $k$, and let $G_0$ be a $k_0$-model of $G$.
Let $H_1\subset G$ be a horospherical subgroup with horospherical invariants $(I,M)$,
and let $\gamma\in \Gm$.
Then the subgroup  $H_2\coloneqq \upgam\hm H_1$ is a horospherical subgroup
with horospherical invariants $(\upgam\hmm I,\upgam\hmm M)$.
\end{proposition}

\begin{proof}
Let $g_\gamma$, $\tau$, and $\veg$ be as in Construction \ref{con:*-action}.
We set $H'_2=\tau(H_1)\subset G$.
We have $H'_2=g_\gamma\cdot H_2\cdot g_\gamma^{-1}$, so  $H_2$ and $H_2'$ are conjugate, and
hence they have the same horospherical invariants.

We may and shall assume that $H_1=H_{I,M}$.
The automorphism $\veg$ of $\BRD(G,T,B)$
takes $R^+_I$ to $R^+_{\veg(I)}$\hs.
The semi-automorphism $\tau$ of $G$ acts also on $\Lie(G)$,
and it is clear that $\tau(\gg_{-\beta})=\gg_{-\veg(\beta)}$
for any $\beta\in R^+_I$.
 It follows easily that $\tau(P_I)=P_{\veg(I)}$.
We have
\begin{equation}\label{e:H-I-M}
\begin{aligned}
\tau(H_{I,M})\, &=\, \bigcap_{\chi\in M}\ker\left[\hs\veg(\chi)\colon P_{\veg(I)}\to\G_{m,k}\hs\right]\\
&=\, \bigcap_{\chi\in \veg(M)}\ker\left[\hs\chi\colon
   P_{\veg(I)}\to\G_{m,k}\hs\right]\, =\, H_{\veg(I),\hs\veg(M)}\hs.
\end{aligned}
\end{equation}
Thus $H_2'$ and  hence $H_2$ are horospherical subgroups with invariants
$\big(\veg(I),\veg(M)\big)=(\upgam\hmm I,\upgam\hmm M)$, as required.
\end{proof}

\begin{corollary}\label{c:*-horo}
If in Proposition \ref{p:*-horo} we set $Y=G/H_1$\hs,
then the $G$-variety $\upgam Y$ is isomorphic to $Y$
if and only if\/ $\upgam\hmm I=I$ and $\upgam\hmm M=M$.
\end{corollary}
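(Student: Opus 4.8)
The plan is to deduce Corollary~\ref{c:*-horo} directly from Proposition~\ref{p:*-horo} together with Pasquier's classification result Proposition~\ref{p:Pasquier}. First I would recall the setup: $H_1\subset G$ is horospherical with invariants $(I,M)$, and by Proposition~\ref{p:*-horo} the subgroup $H_2=\upgam\hm H_1$ is horospherical with invariants $(\upgam\hmm I,\upgam\hmm M)$. Since $\upgam Y=G/(\upgam\hm H_1)=G/H_2$ as $G$-varieties (the left $G$-action being the twisted one, but the point is just that $\upgam Y$ is the homogeneous space attached to $H_2$), the question of whether $\upgam Y\cong Y$ as $G$-varieties is precisely the question of whether $H_1$ and $H_2$ are conjugate in $G$.

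Next I would invoke Proposition~\ref{p:Pasquier}: the map sending a horospherical subgroup to its conjugacy class, and thence to its horospherical datum $(I,M)$, is a bijection between conjugacy classes of horospherical subgroups of $G$ and horospherical data for $G$. In particular, two horospherical subgroups are conjugate in $G$ if and only if they have the same horospherical invariants. Applying this to $H_1$ and $H_2$, whose invariants are $(I,M)$ and $(\upgam\hmm I,\upgam\hmm M)$ respectively, gives: $H_1$ and $H_2$ are conjugate in $G$ if and only if $(I,M)=(\upgam\hmm I,\upgam\hmm M)$, i.e. $\upgam\hmm I=I$ and $\upgam\hmm M=M$. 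Combining this with the reduction in the previous paragraph finishes the proof.

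There is essentially no obstacle here; the corollary is a formal consequence of the proposition it follows. The only point requiring a line of care is the identification $\upgam Y\cong G/H_2$ and the observation that $\upgam Y\cong Y$ as abstract $G$-$k$-varieties is equivalent to conjugacy of the stabilizers $H_1,H_2$ in $G$ — this is the standard fact that two homogeneous spaces $G/H_1$ and $G/H_2$ of the same group $G$ are $G$-equivariantly isomorphic if and only if $H_1$ and $H_2$ are conjugate in $G$. I would state this briefly and then cite Proposition~\ref{p:Pasquier} for the translation into the combinatorial invariants, which is what makes the criterion effective.

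\begin{proof}
By Proposition~\ref{p:*-horo}, the subgroup $H_2=\upgam\hm H_1$ is horospherical with horospherical invariants $(\upgam\hmm I,\upgam\hmm M)$. The $G$-variety $\upgam Y$ is the homogeneous space $G/H_2$. Two homogeneous spaces $G/H_1$ and $G/H_2$ of $G$ are isomorphic as $G$-varieties if and only if the subgroups $H_1$ and $H_2$ are conjugate in $G$. By Proposition~\ref{p:Pasquier}, two horospherical subgroups of $G$ are conjugate in $G$ if and only if they have the same horospherical invariants. Hence $\upgam Y\cong Y$ as $G$-varieties if and only if $(I,M)=(\upgam\hmm I,\upgam\hmm M)$, that is, if and only if $\upgam\hmm I=I$ and $\upgam\hmm M=M$.
\end{proof}
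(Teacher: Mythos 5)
Your proof is correct and takes essentially the same route as the paper: apply Proposition~\ref{p:*-horo} to identify the invariants of $\upgam H_1$, reduce the isomorphism $\upgam Y\cong Y$ to conjugacy of stabilizers, and then invoke Proposition~\ref{p:Pasquier}. The only difference is that the paper cites a specific lemma from \cite{BG} for the reduction step while you invoke it as a standard fact.
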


\begin{proof}
We may assume that $H_1=H_{I,M}$. The $G$-variety $\upgam Y$  is isomorphic to $Y$ if and only if
the subgroup $\upgam\hmm H_{I,M}$ is conjugate to $H_{I,M}$; see,  for instance, \cite[Corollary 4.7]{BG}.
By Proposition \ref{p:*-horo}, the subgroup $\upgam\hmm H_{I,M}$ is a horospherical subgroup
with  horospherical invariants $(\upgam\hmm I,\upgam\hmm M)$.
By Proposition \ref{p:Pasquier}, the subgroup $\upgam\hmm H_{I,M}$ is conjugate to $H_{I,M}$
if and only if $(\upgam\hmm I,\upgam\hmm M)=(I,M)$, as required.
\end{proof}

\begin{proposition}\label{p:conj-(I,M)}
Let $k_0$,  $k$, and $\Gm$  be as in Subsection \ref{ss:mod-Y}.
Let $G$ be a reductive group over $k$, and let $G_0$ be a {\emm quasi-split} $k_0$-model of $G$.
Let $H\subset G$ be a horospherical subgroup with $\Gm$-stable horospherical invariants $(I,M)$,
that is, such that $(\upgam\hmm I,\upgam\hmm M)=(I,M)$ for all $\gamma\in\Gm$.
Then $H$ is conjugate to a subgroup $H_0$ defined over $k_0$,
and  $G/H$ admits the $G_0$-equivariant $k_0$-model  $G_0/H_0$ with a $k_0$-point $1\cdot H_0$.
\end{proposition}

\begin{proof}
Let $B_0\subset G_0$ be a  Borel subgroup of the quasi-split $k_0$-group $G_0$,
and let $T_0\subset B_0$ be a maximal torus.
Consider the Borel pair  $(T,B)=(T_0,B_0)\times_{k_0}k$.
We construct the standard horospherical subgroup $H_{I,M}$
{\em starting with this Borel pair $(T,B)$.}
Then $H$ is conjugate to $H_{I,M}$\hs.
Let $\gamma\in\sG$.
Since the Borel pair $(T,B)$ comes from the Borel pair $(T_0,B_0)$ over $k_0$\hs,
we may take  $g_\gamma=1$  with the notation of Construction \ref{con:*-action}.
Then we have  $\tau=\sigma_\gamma$, and hence
\[\upgam\hm(H_{I,M})=\sigma_\gamma(H_{I,M})=\tau(H_{I,M})=
    H_{\ve_\gamma(I),\hs\ve_\gamma(M)}=H_{\hs\upgam\hm I,\upgam\hm M}\]
(we use formula \eqref{e:H-I-M}\hs).
Since by assumption $(\upgam\hm I,\upgam\hm M)=(I,M)$,
we obtain that  $\upgam\hm(H_{I,M})=H_{I,M}$.
We see that $H_{I,M}$ is $\Gm$-invariant, and hence, defined over $k_0$,
that is, comes from some $k_0$-subgroup $H_0\subset G_0$.
The homogeneous space $G/H$ has a $G_0$-equivariant
$k_0$-model $G_0/H_0$ with a $k_0$-point $1\cdot H_0$
\end{proof}

\begin{remark}
Proposition \ref{p:conj-(I,M)} generalizes \cite[Corollary 3.12]{MJTB},
where the case $k_0=\R$ was considered,
with a similar proof.
\end{remark}

\begin{remark}\label{r:comb-inv}
The combinatorial invariants $(\Xm,\Vm,\Omone,\Omt)$ of
a {\em horospherical} spherical variety $G/H_{I,M}$
can be computed from $(I,M)$. Namely,
\[
\Xm=M\subset\X^*(T),\quad \Vm=\Hom(M,\Q),\quad
\Omone=\big\{\hs(\alpha^\vee|_M,\{\alpha\})\,
\mid\, \alpha\in\Sm\smallsetminus I\big\},\quad \Omt=\varnothing.
\]
We see that Proposition \ref{p:conj-(I,M)} follows
from (difficult) Theorems \ref{t:sphqs} and \ref{t:k0-point},
proved for all spherical homogeneous spaces, not necessarily horospherical.
\end{remark}

\begin{theorem}\label{t:tits-horo}
Let $k_0$, $k$, and $\sG$ be as in Subsection \ref{ss:mod-Y}.
Let $G$ be a  reductive group over $k$.
Let $H\subset G$ be a  horospherical
subgroup with horospherical invariants $(I,M)$.
Let $G_0$ be a $k_0$-model of $G$.
Assume that $H$ has $\Gm$-stable horospherical invariants
with respect to the $*$-action defined by the $k_0$-model $G_0$ of $G$, that is,
$(\upgam\hmm I,\upgam\hmm M)=(I,M)$ for all $\gamma\in\Gm$\hs.
Let $G_\qs$ be a quasi-split inner form of $G_0$\hs.
  Set  $\smash{A_q=\sN_{G_q}(H_q)/H_q}$.
  Then $G/H$ admits  a $G_0$-equivariant $k_0$-model if and only if the image in
  $\smash{H^2(k_0,A_\qs)}$ of the Tits class
  \[t(\Gtil_0)\in H^2(k_0,Z(\Gtil_0))=H^2(k_0,Z(\Gtil_\qs))\] under the map
\[\vktil_*\colon\, H^2(k_0,Z(\Gtil_\qs))\labelto{\pi_*}
    H^2(k_0,Z(G_\qs))\labelto{\vk_*}  H^2(k_0,A_\qs)\]
 is 1.
\end{theorem}

\begin{proof}
Since $G_0$ is an inner form of $G_\qs$, these two $k_0$-models of $G$,
namely, $G_0$ and $G_\qs$\hs, define the same $*$-action of $\Gm$ on  $(\X^*(T), \Sm)$.
It follows that $H$ has $\Gm$-stable horospherical invariants
with respect to the $*$-action defined by the $k_0$-model $G_\qs$ of $G$.
Since the model $G_q$ of $G$ is quasi-split,
by Proposition \ref{p:conj-(I,M)}
the homogeneous space $G/H$ admits a $G_\qs$-equivariant $k_0$-model
of the form $G_q/H_q$, where $H_q\subset G_q$ is a $k_0$-subgroup.
The group $A_\qs$ is abelian
(namely, $A_\qs$ is the $k_0$-torus with character group $M$), and therefore,
 $H^2(k_0,A_\qs)$ has a unique neutral element 1.
Now the theorem follows from Proposition \ref{p:tits}.
\end{proof}

We wish to write explicitly the cohomological condition on the image
of the Tits class in Theorem \ref{t:tits-horo}.

\begin{proposition}\label{p:A1-Bn...}
Let $G$ be a {\emm simply connected, simple} $k$-group of
one of the types $\Af_1$, $\BB_n$ $(n\ge 2)$,
$\CC_n$ $(n\ge 3)$, or $\EE_7$. Let $G_0$ be a $k_0$-model of $G$.
Let $H\subset G$ be a horospherical subgroup.
\begin{enumerate}
\item[\rm (i)] If either $t(G_0)=1$ or $H\supseteq Z(G)$,
then $G/H$ admits a  unique $G_0$-equivariant $k_0$-model.
\item[\rm (ii)] If $t(G_0)\neq 1$ and $H\not\supseteq Z(G)$,
then $G/H$ does not admit a $G_0$-equivariant $k_0$-model.
\end{enumerate}
\end{proposition}

\begin{proof}
We deduce the proposition from Lemma \ref{l:A1-Bn-C} below.

Under our assumptions, the Dynkin diagram of $G$ has no nontrivial automorphisms,
and therefore,  the condition $(\upgam I,\upgam M)=(I,M)$ is trivially satisfied.
Since $G$ is simply connected, the homomorphism $\vktil_*$ coincides with $\vk_*$.
If either $t(G_0)=1$ or $H\supseteq Z(G)$, then $\vk_*(t(G_0))=1$,
and by Theorem \ref{t:tits-horo} $G/H$ admits a $G_0$-equivariant $k_0$-model.
Since $H$ is horospherical, $A_q$ is a torus.
Moreover, under our assumptions the only quasi-split model of $G$ is split,
and therefore $G_0$ is an inner form of a split $k_0$-model of $G$.
By Lemma \ref{l:A1-Bn-C}(i) below, $A_q$ is a {\em split} torus,
and by Hilbert's Theorem 90 we have $H^1(k_0,A_q)=1$.
Since $\Aut^{G_q}(Y_q)=A_q(k)$,
we see that $H^1(\Gm,\Aut^{G_\qs}(Y_\qs)\hs)=1$, and hence
$G/H$ admits a {\em unique} $G_0$-equivariant $k_0$-model, which proves  (i).

Under our assumptions, $Z(G)\simeq\mu_2$, where we write $\mu_2$
for the $k$-group $\{\pm 1\}$ of square roots of unity.
If $H\not\supseteq Z(G)$, then $Z(G)\cap H=\{1\}$, and by Lemma \ref{l:A1-Bn-C}(ii) below,
the homomorphism $\vk_*$ is injective.
If, moreover,  $t(G_0)\neq 1$, then we obtain that $\vk_*(t(G_0))\neq 1$,
and by Theorem \ref{t:tits-horo}
$G/H$ does not admit a $G_0$-equivariant $k_0$-model, which proves (ii).
\end{proof}

\begin{lemma}\label{l:A1-Bn-C}
Let $k$, $k_0$, and $\Gm$ be as in Subsection \ref{ss:mod-Y}.
Let $G$ be a reductive $k$-group.
Let $H\subset G$ be a spherical subgroup such that the group $A=\sN_G(H)/H$ is connected.
Let $G_0$ be $k_0$-model of $G$ and assume that $G_0$ is an inner form
of a {\emm split} $k_0$-model of $G$.
Then
\begin{enumerate}
\item[\rm (i)] $A_q$ is a split $k_0$-torus;
\item[\rm (ii)] If, moreover,  $Z(G)\cap H=\{1\}$,
then the homomorphism $\vk_*\colon H^2(k_0,Z(G_0))\to H^2(k_0,A_\qs)$ is injective.
\end{enumerate}
\end{lemma}

\begin{proof}
Since $G_0$ is an {\em inner form} of a {\em split} $k_0$-model of $G$,
the Galois group $\Gm$ acts (by $*$-action) on $X=\X^*(T)$ trivially,
and hence, it acts on $\Xm\subset X$ trivially.
Since $\X^*(A_q\times_{k_0} k)$ is a quotient group of $\Xm$,
we see that $\Gm$ acts on  it trivially.
Since by hypothesis $A$ is connected, hence a torus,
we conclude that $A_q$ is a  split $k_0$-torus, which proves (i).

If $Z(G)\cap H=\{1\}$, then
the homomorphism $\vk\colon Z(G)\into \sN_G(H)\onto \sN_G(H)/H=A$ is injective.
We identify $Z(G_0)$ with its image in $A_q$\hs.
Consider the short exact sequence
\[ 1\to Z(G_0)\labelto{\vk} A_q\to A_q/Z(G_0)\to 1\]
and the induced cohomology exact sequence
\begin{equation}\label{e:vk-90}
H^1(k_0,A_q/Z(G_0))\to H^2(k_0,Z(G_0))\labelto{\vk_*} H^2(k_0, A_q).
\end{equation}
Since $A_\qs$ is a split $k_0$-torus, the $k_0$-torus $A_q/Z(G_0)$ is split as well,
and by Hilbert's Theorem 90 we have $H^1(k_0,A_q/Z(G_0))=1$.
Now we see from the exact sequence \eqref{e:vk-90}
that the homomorphism $\vk_*$ is injective, which proves (ii).
\end{proof}

\section{Horospherical homogeneous spaces
     over local fields and number fields}
\label{s:horo-l-n}

In this section we consider the case when $k_0$ is $\R$,
or a $p$-adic field, or a number field.
Recall that a $p$-adic field is a finite extension
of  the field $\Q_p$ of $p$-adic numbers,
and a number field is a finite extension of $\Q$.
We retain the notation of the previous section.

\begin{subsec}
Assume that $G$ is a {\em simply connected} semisimple group.
Let $T\subset B\subset G$ be as in \ref{ss:Luna-Vust}.
We write $\Pf $ for $\X^*(T)$; it is the {\em weight lattice of} $G$.
We write $\Qf  \subset \Pf $ for the {\em root lattice of} $G$,
that is, $\Qf  =\X^*(T/Z(G))$.
Then $\X^*(Z(G))\cong\Pf/\Qf$.
The canonical epimorphism $\Pf \to \Pf/\Qf  $ is dual
to the inclusion map $Z(G)\into T$.

Let  $H=H_{I,M}\subset G$  be the standard horospherical subgroup
with horospherical invariants $(I,M)$ with respect to $T$ and $B$.
Note that $\sN_G(H)=P_I\supset T$.
It follows that the canonical homomorphism
\[\vk\colon Z(G)\into \sN_G(H)\onto A=\sN_G(H)/H\]
factors as
\[Z(G)\into T\to A,\]
where the homomorphism $T\to A$ is surjective
because $T\cdot H_{I,M}=P_I=\sN_G(H_{I,M})$.
By duality we obtain a homomorphism $M\to \Pf/\Qf  $, which factors as
\begin{equation}\label{e:M-X-X/Q}
M\into \Pf \onto \Pf/\Qf  ,
\end{equation}
where $M\into \Pf=\X^*(T)$ is the inclusion map, and $\Pf \onto \Pf/\Qf  $
is the canonical epimorphism.

Let $G_0$ be a $k_0$-model of $G$,
an inner form of a quasi-split model $G_q$.
Let $B_q\subset G_q$ be a Borel subgroup, and
$T_q\subset B_q$ be a maximal torus, both defined over $k_0$.
We assume that the standard parabolic subgroup $P_I$
and the standard horospherical subgroup $H_{I,M}$
are constructed starting from $T=T_q\times_{k_0}k$
and $B=B_q\times_{k_0}k$.
We assume also that $\Gm$ preserves $(I,M)$.
Then $\Gm$ acts on $M=\X^*(A)$ and thus defines a $k_0$-model $A_q$ of $A$.
The homomorphisms in \eqref{e:M-X-X/Q} are $\Gm$-equivariant,
and they induce a homomorphism
\begin{equation}
\vk^*\colon M^\Gm\into \Pf ^\Gm\to (\Pf/\Qf  )^\Gm.
\end{equation}
\end{subsec}

\begin{subsec}\label{ss:Theta}
Assume that  $t=t(G_0)\in H^2(k_0,Z(G_q))$ is {\em nontrivial}.
In the case when $k_0$ is $\R$ or $k_0$ is a $p$-adic field,
we write the conditions under which $\vk_*(t)=1 \in H^2(k_0,A_q)$.

If  $k_0$ is a $p$-adic field, then $t\in H^2(\Gm,Z(G))$
defines by cup product a homomorphism
\[t^0\colon (\Pf/\Qf  )^\Gm\to \Br k_0\cong\Q/\Z,\]
where $\Br k_0\coloneqq H^2(k_0,\G_m)$ is the Brauer group of $k_0$;
see formula \eqref{e:Br-char} in Appendix \ref{s:A-TN}.
If $k_0=\R$, then $t$  defines by cup product a homomorphism
\[t^0\colon (\Pf/\Qf  )^\Gm\to \Br k_0\cong\half\Z/\Z\subset\Q/\Z.\]
In both cases we regard $t^0$ as a homomorphism to $\Q/\Z$.
By Proposition \ref{p:t0} in Appendix \ref{s:A-TN}, the condition
 $\vk_*(t)=1\in H^2(\Gm, A)$ is satisfied if and only if the homomorphism
\begin{equation}\label{e:t0-vk}
 t^0\circ \vk^*\colon\, M^\Gm\into \Pf ^\Gm\to
      (\Pf/\Qf  )^\Gm\labelto{t^0} \Q/\Z
\end{equation}
is identically zero.

Since $t\neq 1$, by Lemmas \ref{l:dual-R} and
\ref{l:dual-p} we have $t^0\neq 0$.
Write
\[\Theta=\ker(t^0)\subset (\Pf/\Qf)^\Gm.\]
Since $t^0\neq 0$, we see that $\Theta\neq (\Pf/\Qf)^\Gm$.
Let $\Theta_\Pf\subset \Pf^\Gm$ denote
the preimage in $\Pf^\Gm$ of $\Theta\subset (\Pf/\Qf)^\Gm$.
From \eqref{e:t0-vk} we see that
 $ t^0\circ \vk^*=0$ if and only if
 \begin{equation}\label{e:cohom-cond}
 M^\Gm\subset\Theta_\Pf \hs.\tag{$*$}
 \end{equation}
\end{subsec}

\begin{subsec}\label{ss:simple-R-p}
Now assume that $G$ is a  {\em simply connected, simple}
algebraic group over $k$,
and $G_0$ is a $k_0$-model of $k$.
We compute $\Theta_\Pf \subset \Pf ^\Gm$ for all such $G$ and  $G_0$
over the field $k_0=\R$ and over the $p$-adic fields.
We use the notation of Tits \cite[Table II, starting p.~54]{Tits}.
We consider five cases:

(1) $G$ is of one of the types $\Af_1$, $\BB_n$ $(n\ge 2)$,
$\CC_n$ $(n\ge 3)$, and $\EE_7$\hs.
Then $\Pf/\Qf  $ is of order 2, $\Theta=\{0\}$, $\Theta_\Pf =\Qf$,
and the condition \eqref{e:cohom-cond} on $M$ is
\begin{equation}\label{e:*1}
M\subset \Qf .\tag{$*1$}
\end{equation}
See also Proposition \ref{p:A1-Bn...}.

(2) $G_q$ is of one of the types $\hs^2\Af_{2m-1}$ $(m\ge2)$,
$\hs^2\DD_{2m+1}$ $(m\ge2)$, and $\hs^2\DD_{2m}$ $(m\ge2)$.
Then $(\Pf/\Qf  )^\Gm$ is of order 2, $\Theta=\{0\}$, $\Theta_\Pf =\Qf^\Gm$,
and the condition \eqref{e:cohom-cond} on $M$ is
\begin{equation}\label{e:*2}
M^\Gm\subset \Qf^\Gm.\tag{$*2$}
\end{equation}
This does not imply that $M\subset \Qf$;
see Example \ref{ex:SU(6)} below.
Note that the case $\hs^2\DD_{2m}$ does not appear over $k_0=\R$,
because $H^2(\R,Z(G_q))=1$ in this case.

(3)  $G_q$ is of one the types $\hs^1\Af_{n-1}$ $(n\ge3)$,
$\hs^1\DD_{2m+1}$ $(m\ge2)$, and $\hs^1\EE_6$.
For $\hs^1\DD_{2m+1}$ we put $n=4$.
For $\hs^1\EE_6$ we put $n=3$.
Then $\Pf/\Qf  $ is a cyclic group of order $n$.
The element $t\in H^2(\Gm,Z(G))$ is of order $l>1$ such that $l|n$.
Then
\[\Pf/\Qf  \simeq \tfrac{1}{n}\Z/\Z,\quad \Theta=l\cdot(\Pf/\Qf  )=
(l\Pf +\Qf)/\Qf,\quad \Theta_\Pf =l\Pf +\Qf,\]
and the condition \eqref{e:cohom-cond} on $M$ is
\begin{equation}\label{e:*3}
M\subset l\Pf +\Qf.\tag{$*3$}
\end{equation}
Note that over $k_0=\R$ we have $l=2$,
which excludes the cases  $\hs^1\Af_{n-1}$ with odd $n$
and $\hs^1\EE_6$ with $n=3$ because of the requirement $l|n$.

(4--5) $G_q$ is of the type  $\hs^1\DD_{2m}$ $(m\ge2)$. Write $n=2m$.
We use the notation of Bourbaki \cite[Plate IV]{bou02}.
In particular,
\[\alpha_1,\dots,\alpha_n\in\Qf,\quad
\alpha_1=\ve_1-\ve_2,\ \alpha_2=\ve_2-\ve_3,\
\dots, \ \alpha_{n-1}=\ve_{n-1}-\ve_n,\ \alpha_{n}=\ve_{n-1}+\ve_n\]
are the simple roots, and
$\omega_1,\dots,\omega_{n-1},\omega_n\in \Pf$ are the fundamental weights.
Then $\Qf$ is the sublattice of \, $\oplus\Z\ve_i$ \,
consisting of elements with {\em even} sum of (integer) coordinates.
The homomorphism
\[(\omega_{n-1},\omega_{n})\colon T\to\G_{m,k}\times\G_{m,k}\]
induces  isomorphisms
\[Z(G)\isoto\mu_2\times\mu_2\quad\text{and}\quad
    \Z/2\Z\,\oplus\hs\Z/2\Z\,\isoto\, \Pf/\Qf.\]
We consider
\[t^0\colon \Pf/\Qf  \to \half\Z/\Z\subset\Q/\Z.\]
Up to an automorphism of $G$, we have either
$t^0=(\half,\half)$ or $t^0=(0,\half)$.

If $t^0=(\half,\half)$, then the condition \eqref{e:cohom-cond} on $M$ is
\begin{equation}\label{e:*4}
M\subset \oplus\Z\ve_i\hs.\tag{$*4$}
\end{equation}
If  $t^0=(0,\half)$, then the condition \eqref{e:cohom-cond} on $M$ is
\begin{equation}\label{e:*5}
M\subset \langle \Qf,\omega_{n-1}\rangle.\tag{$*5$}
\end{equation}

Note that in the cases $\EE_8$, $\FF_4$, and $\GG_2$
we have $Z(G)=1$ and hence $t(G_0)=1$; therefore, these cases do not appear.
Moreover, the  cases
$\hs^2\Af_{2m}$, $\hs^2\EE_6$, $\hs^3\DD_4$, and $\hs^6\DD_4$
do not appear either, because  in these cases $\X^*(Z(G))^\Gm=0$,
and by Tate duality (see Subsections  \ref{ss:case-R} and \ref{ss:case-p})
we have  $H^2(\Gm, Z(G))=1$.

We state the results of this subsection as a theorem.
\end{subsec}

\begin{theorem}\label{t:horo-types}
Let $k$, $k_0$, $\Gm$ be as in as in Subsection \ref{ss:mod-Y}.
Moreover, assume that either $k_0=\R$ or $k_0$ is a $p$-adic field.
Let $G$ be a  {\emm simply connected, simple}
algebraic group over $k$, and $G_0$ be a $k_0$-model of $G$.
Let $t=t(G_0)\in H^2(k_0,Z(G_0))$ be the Tits class of $G_0$,
and assume that $t\neq 1$.
Let $H\subset G$ be a horospherical subgroup with invariants $(I,M)$,
and assume that the Galois group $\Gm$ preserves $I$ and $M$.
Then  $G/H$ admits a $G_0$-equivariant $k_0$-model
if and only if $M$ satisfies condition $(*i)$,
where $i=1,2,3,4,5$, depending on the type of $G$ and on $t$.
\end{theorem}

\begin{proof}
By Theorem \ref{t:tits-horo}, $G/H$ admits
a $G_0$-equivariant model  if and only if
$$\vk_*(t)=1\in H^2(k_0,Z(G_q)).$$
From Subsection \ref{ss:Theta} we know that $\vk_*(t)=1$
if and only if condition \eqref{e:cohom-cond} is satisfied.
By Subsection \ref{ss:simple-R-p},  condition \eqref{e:cohom-cond}
is satisfied if and only if condition $(*i)$ is satisfied.
\end{proof}

\begin{remark}
In order to use Theorem \ref{t:horo-types},
one needs to know the Tits class $t(G_0)$
for each $k_0$-model $G_0$ of
a simply connected simple $k$-group $G$,
where $k_0=\R$ or $k_0$ is a $p$-adic field.
For the case $k_0=\R$, the Tits classes
are tabulated in the appendix to \cite{MJTB}.
For the $p$-adic case, $t(G_0)=1$
if and only $G_0$ is quasi-split;
see Proposition \ref{p:q-s} below.
\end{remark}

\begin{proposition}[well-known]
\label{p:q-s}
If $G_0$ is a simply connected semisimple group
over a {\emm $\boldsymbol{p}$-adic} field $k_0$,
then   $t(G_0)=1\in H^2(k_0, Z(G_0)\hs)$ if and only if  $G_0$ is quasi-split.
\end{proposition}

\begin{proof}
For the reader's convenience we provide a short proof.
We know that $G_0$ is an inner form of a quasi-split group, say, $G_0=\hs_c G_q$,
where $G_q$ is a quasi-split group and $c\in Z^1(k_0,G/Z(G)\hs)$.
We have
$$t(G_0)=\delta[c] \,\in \, H^2(k_0, Z(G_q)\hs)=H^2(k_0, Z(G_0)\hs),$$
where $\delta$ is the connecting map in the cohomology exact sequence
\[ H^1(k_0,G_q)\to H^1(k_0,G_q/Z(G_q)\hs)\labelto{\delta}H^2(k_0, Z(G_q)\hs)\]
corresponding to the short exact sequence
\[1\to Z(G_q)\to G_q\to G_q/Z(G_q)\to 1.\]
If $G_0$ is quasi-split, then we may take $G_q=G_0$ and $c=1$.
It follows that $t(G_0)=\delta[1]=1$.
Conversely, if $\delta[c]=t(G_0)=1$,
then $[c]$ lies in the image of $H^1(k_0, G_q)$.
However, since $G_q$ is simply connected and $k_0$
is $p$-adic, by Kneser's theorem
(see Platonov and Rapinchuk \cite[Theorem 6.4]{PR})
we have $H^1(k_0, G_q)=\{1\}$.
It follows that $[c]=1$, hence $G_0\simeq G_q$,
and thus $G_0$ is quasi-split, as required.
\end{proof}

\begin{subsec}\label{ss:number}
We consider the case of a number field $k_0$.
Assuming that $G$ is simple and simply connected,
we reduce the cohomological condition $\vk_*(t(G_0))=1$
over a number field to the similar conditions over local fields,
treated in Theorem \ref{t:horo-types}.

Let $k$, $k_0$, and $\sG$ be as in Subsection \ref{ss:mod-Y},
and assume that $k_0$ is a number field.
We write $\Pl(k_0)$ for the set of places of $k_0$.
For a place $v\in \Pl(k_0)$, let $k_{0,v}$
denote the completion of $k_0$ at $v$.
Let $G$ be a  simply connected, simple $k$-group,
and $G_0$ be a $k_0$-model of $G$.
\end{subsec}

\begin{theorem}\label{t:number-HP}
Let $k$, $k_0$, $\sG$, $G$, $G_0$ be as in Subsection \ref{ss:number},
in particular, $G_0$ be a $k_0$-model of a
{\emm simply connected,  simple} $k$-group $G$, where $k_0$ is a number field.
Let $H\subset G$ be a  horospherical subgroup
with horospherical invariants $(I,M)$, and assume that $\Gm$ preserves $(I,M)$.
Let $t=t(G_0)\in H^2(k_0,Z(G_0)\hs)$ be the Tits class.
For $v\in \Pl(k_0)$, we write $t_v=\loc_v(t)\in H^2(k_{0,v}\hs,\hs Z(G_0)\hs)$,
where
$$\loc_v\colon\hs H^2(k_{0},Z(G_0)\hs)\hs \to\hs H^2(k_{0,v}\hs,\hs Z(G_0)\hs)$$
denotes the localization map.
Then $\vk_*(t)=1\in H^2(k_0,A_q)$ if and only if
$\vk_{*,v}(t_v)=1\in H^2(k_{0,v},A_q)$ for all $v\in \Pl(k_0)$,
where
$$\vk_{*,v}\colon\hs H^2(k_{0,v}\hs,\hs Z(G_0)\hs)\hs
    \to\hs H^2(k_{0,v}\hs,\hs A_q)$$
is the map defined by the $k_{0,v}$\hs-group
$G_{0,v}\coloneqq G_0\times_{k_0} k_{0,v}$.
\end{theorem}

\begin{proof}
We consider the localization map
\[\loc\colon H^2(k_0,A_q)\,\lra \prod_{v\in\Pl(k_0)} H^2(k_{0,v}\hs,\hs A_q)\]
and the Tate-Shafarevich group
\[ \Sha^2(k_0,A_q)=\ker\left[H^2(k_0,A_q)\labelto{\loc}
      \prod_{v\in\Pl(k_0)} H^2(k_{0,v}\hs,\hs A_q)\right].\]
We consider also
\[ \Sha^1(k_0,M)=\ker\left[H^1(k_0,M)\labelto{\loc}
    \prod_{v\in\Pl(k_0)} H^1(k_{0,v}\hs,\hs M)\right].\]

Let  $G_q$ be a quasi-split inner form of $G_0$, $B_q\subset G_q$
be a Borel subgroup (defined over $k_0$),
and $T_q\subset B_q$ be a maximal torus.
The Galois group $\Gm$ acts on
$\Pf \coloneqq \X^*(T_q\times_{k_0}k)$
via the automorphism group
 of the Dynkin diagram $\Dyn(G,T,B)$,
that is, via a group isomorphic to $\{1\}$,
or $\Z/2\Z$, or $\Z/3\Z$, or $S_3$
(the symmetric group on three letters).
Each of these groups is metacyclic
in the sense of Sansuc \cite[p.~13]{Sansuc},
that is, all its Sylow subgroups are cyclic.
Since $M\subset \Pf $, it follows that $\Gm$
acts on $M=\X^*(A_q\times_{k_0}k)$
via a metacyclic quotient group.
By \cite[Lemma 3.4]{Borovoi-Ramanujan},
it follows  that $\Sha^1(k_0,M)=0$.
(Lemma 3.4 of \cite{Borovoi-Ramanujan}
is a straightforward generalization
of Lemmas 1.2 and 1.3 of Sansuc \cite{Sansuc},
who assumed that $M$ is a {\em finite} abelian $\Gm$-group.
The easy proof of Lemma 3.4  is omitted
in the published version of \cite{Borovoi-Ramanujan},
but it can be found in the last arXiv version.)
By Milne \cite[Theorem I.4.20(a)]{Milne-ADT},
the group $\Sha^2(k_0,A_q)$ is dual to $\Sha^1(k_0,M)$,
and hence, $\Sha^2(k_0,A_q)=1$.

If $\vk_*(t)=1$, then from the commutative diagram
\[
\xymatrix@R=27pt@C=50pt{
H^2(k_0, Z(G_q))\ar[r]^-{\vk_*}\ar[d]_-\loc  & H^2(k_0, A_q)\ar[d]^\loc  \\
   \prod_v H^2(k_{0,v}\hs, Z(G_q))\ar[r]^-{\Pi_v\hs\vk_{*,v}}
       &\prod_v H^2(k_{0,v}\hs, A_q)
}
\]
we see that  $\vk_{*,v}(t_v)=1$ for all $v$.
Conversely, if $\vk_{*,v}(t_v)=1$ for all $v$, then we see
from the diagram that $\vk_*(t)\in \Sha^2(k_0, A_q)$.
Since $\Sha^2(k_0, A_q)=1$, we conclude that $\vk_*(t)=1$,  as required.
\end{proof}

\begin{example}\label{ex:SU(6)}
Let $k_0=\Q$, $k=\Qbar$, the algebraic closure of $\Q$ in $\C$.
Let $K/\Q$ be an imaginary quadratic extension,
for example, $K=\Q(\sqrt{-1})$.
Write  $\Gal(K/\Q)=\{1,\tau\}$.
Consider a nondegenerate diagonal Hermitian form in 6 variables
\[ \Hm(x)=\sum_{i=1}^6 \lambda_i\hs x_i \hs^\tau\!\hmm x_i\hs,\]
where $\lambda_i\in\Q^\times$.
Let $G_0=\SU(K^6,\Hm)$, where by abuse of notation
we write $\SU(K^6,\Hm)$ for the algebraic $\Q$-group
{\em with the group of $\Q$-points} $\SU(K^6,\Hm)$.
Then $G_0$ is a simply connected absolutely simple $\Q$-group,
a $\Q$-model of $G=\SL_{6,\Qbar}$.
The group $\Gm=\Gal(\Qbar/\Q)$ acts
on the based root datum $\BRD(G)$
via its quotient group $\Gal(K/\Q)=\{1,\tau\}$.

Let $H\subset G$ be a horospherical subgroup
with horospherical invariants $(I,M)$.
Assume that $^\tau\hm I=I$ (for example,
$I=\varnothing$) and $^\tau\hm M=M$.
We show below that {\em if the number $m$
of negative coefficients $\lambda_i$ is even}
(for example, $m=0$), then
$G/H$ admits a $G_0$-equivariant $\Q$-model
if and only if $M^\tau\subset \Qf^\tau$.

Indeed, by Theorem \ref{t:number-HP}
it suffices to check the local conditions $\vk_{*,v}(t_v)=1$
for all places $v$ of $\Q$.
For those $v$ for which $K\otimes_\Q \Q_v\simeq \Q_v\times \Q_v$,
the group $G_{0,v}\coloneqq G_0\times_\Q \Q_v$
is isomorphic to $\SL_{6,\Q_v}$, and hence, $G_{0,v}$ is split, $t_v=1$,
and there is no local condition on $M$.
For those $v$ for which $K\otimes_\Q \Q_v$ is a field,
the group $G_{0,v}$ is of type $^2\hm\Af_5$.
If $t_v=1$, then we have no local condition at $v$ on $M$.
If $t_v\neq 1$, then by Theorem \ref{t:horo-types}
we have the local condition $(*2)$ at $v$,
the same condition $M^\tau\subset \Qf^\tau$ for all such places $v$.
Observe that the set of  places $v$
for which $t_v\neq 1$ is nonempty, because it contains
the place $\infty$ of $\Q$ corresponding to the completion $\Q_\infty=\R$.
Indeed, since $K/\Q$ be an imaginary quadratic extension,
we have $K\otimes_\Q\R\simeq\C$, hence it is a field.
Moreover, for the group $G_{0,\infty}\simeq\SU(6-m,m)$,
we have $H^2(\hs\R,Z(G_{0,\infty})\hs)\cong\{\pm1\}$
and $t_\infty=(-1)^{3-m}$\hs;
see the tables in the appendix to \cite{MJTB}.
Since $m$ is even, we have $t_\infty=-1\neq 1$.
Thus  $G/H$ admits a $G_0$-equivariant $\Q$-model
if and only if $\vk_*(t)=1$
if and only if $M^\tau\subset \Qf^\tau$, as required.

For example, we claim that the lattice
$M=2\Pf +\Qf$ satisfies the conditions
$^\tau\hm M=M$ and $M^\tau\subset \Qf^\tau$.
Indeed, the equality $^\tau\hm M=M$
follows from $^\tau\Pf=\Pf$ and $^\tau\Qf=\Qf$.
Concerning the condition $M^\tau\subset \Qf^\tau$,
we have $\Pf/\Qf\simeq \Z/6\Z$,
and $\tau$ sends $x\in\Pf/\Qf$ to  $-x$.
It follows that $(\Pf/\Qf)^\tau=\{\bar 0,\bar 3\}$.
Furthermore, $M\supset \Qf$, and
\[M/\Qf=2(\Pf/\Qf)=2\Z/6\Z=\{\bar 0,\bar 2,\bar 4\},\]
 whence
\[(M/\Qf)^\tau=2(\Pf/\Qf)\cap (\Pf/\Qf)^\tau=
    \{\bar 0,\bar 2,\bar 4\}\cap\{\bar 0,\bar 3\}=\{\bar 0\}.\]
Thus $M^\tau\subset \Qf$ and $M^\tau\subset\Qf^\tau$, as claimed.
(Actually we have $M^\tau=\Qf^\tau$,
because $M\supset\Qf$ and hence $M^\tau\supset \Qf^\tau$.
Note that although $M^\tau\subset\Qf^\tau$,
$M$ is not contained in $\Qf$.)
If we  consider the subgroup $\mu_2=\{\pm1\}\subset Z(G)$ and set
$H=U\cdot\mu_2=H_{I,M}$ for  $I=\varnothing$ and $M=2\Pf +\Qf$,
then we conclude that the horospherical
homogeneous space $G/(U\cdot\mu_2)$
does admit a $G_0$-equivariant $\Q$-model.

On the other hand, we claim that the lattice $M'=\Pf$
does not satisfy the condition $M^{\prime\,\tau}\subset \Qf^\tau$.
Indeed, consider the fundamental weight
\[\omega_3=\half(\alpha_1+2\alpha_2+3\alpha_3+2\alpha_4+\alpha_5)\in \Pf,\]
where $\alpha_1,\dots,\alpha_5$ are the simple roots;
see Bourbaki \cite[Plate I]{bou02}.
Since the simple roots $\alpha_1,\dots,\alpha_5$
constitute a basis of the lattice $\Qf$,
we see that  $\omega_3\notin \Qf$.
We have $^\tau\hm\hm\alpha_i=\alpha_{6-i}$
and $^\tau\hmm\omega_i=\omega_{6-i}$\hs,
whence $\omega_3\in \Pf^\tau$.
Thus $\Pf^\tau\not\subset \Qf$, and therefore
$M^{\prime\,\tau}=\Pf^\tau\not\subset\Qf^\tau$, as claimed.
Take $I=\varnothing$; then $H_{I,M'}=H_{\varnothing,\Pf}=U$.
We conclude that if $m$ is even,
then the horospherical homogeneous space $G/U$
does not admit a $G_0$-equivariant $\Q$-model.
\end{example}

\section{Models of  $H^n/\Delta$} \label{s:H x H}

\begin{subsec}\label{ss:H-times-H/Delta}
Let $n\ge 2$ be a natural number.
Let $H$ be a linear algebraic $k$-group, not necessarily reductive,
and set $G=H^n\coloneqq H\times\dots\times H$.
Set $Y=H^{n-1}$, where $G$ acts on $Y$ by
\begin{equation}\label{e:action-*}
 (h_1,h_2,\dots,h_n)*(y_1,\dots,y_{n-1})=(h_1y_1h_2^{-1},\dots,
  h_{n-1}y_{n-1}h_n^{-1})\quad\text{for $h_i,y_j\in H$.}
\end{equation}
Then $G$ acts on $Y$ transitively, and the stabilizer of the point
$(1,\dots,1)\in H^{n-1}= Y$ is the diagonal $\Delta=\{(h,\dots,h) \mid h\in H\}\subset H^n=G$.
Thus $Y\cong G/\Delta$.
Note that $Y$ might not be spherical.

Let $H^{\ste{1}}_0$,\dots,$H^{\ste{n}}_0$ be $n$ $k_0$-models of $H$.
We set $G_0=H^{\ste{1}}_0\times _{k_0}\cdots \times _{k_0} H^{\ste{n}}_0$
and ask whether $Y$ admits a $G_0$-equivariant $k_0$-model.
\end{subsec}

\begin{theorem}\label{t:H-times-H}
  With the notation and assumptions of Subsection~\ref{ss:H-times-H/Delta}, the
  homogeneous space $Y=H^n/\Delta$ admits an
  $H^{\ste{1}}_0\times _{k_0}\cdots \times _{k_0}
  H^{\ste{n}}_0$-equivariant $k_0$-model if and only if each
  $H^{\ste{i}}_0$ for $i=2,\dots,n$ is a pure inner form of
  $H^{\ste{1}}_0$.
\end{theorem}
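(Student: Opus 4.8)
The plan is to apply Theorem~\ref{t:twist} with a judicious choice of the base model $G_\dmd$. The natural candidate is to take all factors equal to the first model, i.e.\ set $H_\dmd = H^{\ste{1}}_0$ and $G_\dmd = (H^{\ste{1}}_0)^n = \underbrace{H^{\ste{1}}_0\times_{k_0}\cdots\times_{k_0} H^{\ste{1}}_0}_{n}$. The key preliminary observation is that $Y = H^n/\Delta$ \emph{does} admit a $G_\dmd$-equivariant $k_0$-model: indeed, $\Delta \cong H^{\ste{1}}_0$ diagonally embedded gives $Y_\dmd = G_\dmd/\Delta_\dmd$, or more concretely $Y_\dmd = (H^{\ste{1}}_0)^{n-1}$ with $G_\dmd$ acting by the formula \eqref{e:action-*}. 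So the hypothesis of Theorem~\ref{t:twist} is satisfied (note $Y$ is an affine, hence quasi-projective, variety since $H$ is affine). Next I would express the given model $G_0 = H^{\ste{1}}_0\times_{k_0}\cdots\times_{k_0} H^{\ste{n}}_0$ as an inner twist ${}_c G_\dmd$: each $H^{\ste{i}}_0$ is an inner form of $H^{\ste{1}}_0$ \emph{precisely when} the corresponding component of the $*$-action on the Dynkin diagram agrees; but one must be careful here, since for a general $H$ the models $H^{\ste{i}}_0$ need not be inner forms of each other. I expect the statement to be implicitly read as: if some $H^{\ste{i}}_0$ is \emph{not} an inner form of $H^{\ste{1}}_0$, then already the $\Gm$-action fails to preserve the combinatorial invariants and no equivariant model exists — this case must be handled first and separately, essentially via Proposition~\ref{p:BG-Hur} or a direct argument on the $*$-action on the two Borel subgroups of $H\times H$.

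Assuming each $H^{\ste{i}}_0$ \emph{is} an inner form of $H^{\ste{1}}_0$, write $H^{\ste{i}}_0 = {}_{c^{\ste{i}}} H^{\ste{1}}_0$ with $c^{\ste{i}} \in Z^1(k_0, \overline{H}^{\ste{1}}_0)$ (and $c^{\ste{1}} = 1$), so that $G_0 = {}_c G_\dmd$ with $c = (c^{\ste{1}},\dots,c^{\ste{n}}) \in Z^1(k_0, \overline{G}_\dmd)$ where $\overline{G}_\dmd = (\overline{H}^{\ste{1}}_0)^n$. The center is $Z_\dmd = Z(H^{\ste{1}}_0)^n$, and the connecting map $\delta$ is computed componentwise, so $\delta[c] = (\delta[c^{\ste{1}}],\dots,\delta[c^{\ste{n}}]) \in H^2(k_0, Z(H^{\ste{1}}_0))^n$ with $\delta[c^{\ste{1}}] = 1$. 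By Theorem~\ref{t:twist}, the desired model exists iff $\vk_*(\delta[c])$ is neutral in $H^2(\sG, \sA_\dmd)$, where $\sA = \Aut^G(Y)$. The crux of the argument is now to \emph{identify the group $\sA$ and the homomorphism $\vk$} for $Y = H^n/\Delta$. Since $Y \cong G/\Delta$ and $\sN_G(\Delta)$ can be computed explicitly (for $H$ connected, $\sN_{H^n}(\Delta)/\Delta \cong Z(H)^{n-1}$, identified via $(z_1,\dots,z_{n-1}) \mapsto$ the automorphism $(y_1,\dots,y_{n-1}) \mapsto (z_1 y_1,\dots,z_{n-1}y_{n-1})$), we get $\sA \cong Z(H)^{n-1}$, an abelian group. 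Then the map $\vk\colon Z_\dmd(k) = Z(H)^n \to \sA = Z(H)^{n-1}$ sending $(z_1,\dots,z_n)$ to its action on $Y$ should work out to $(z_1,\dots,z_n) \mapsto (z_1 z_2^{-1}, z_2 z_3^{-1}, \dots, z_{n-1}z_n^{-1})$, or something equivalent to it, because the center of the $i$-th factor acts on $y_{i-1}$ and $y_i$.

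Because $\sA$ is abelian and (one checks) the $\sG$-action makes $\vk$ and $\vk_*$ into honest homomorphisms of abelian cohomology, neutrality of $\vk_*(\delta[c])$ is equivalent to $\vk_*(\delta[c]) = 1$ in $H^2(\sG, \sA_\dmd)$. Writing out $\vk_*$ on the components, $\vk_*(\delta[c]) = 1$ becomes the system $\delta[c^{\ste{i}}] \cdot \delta[c^{\ste{i+1}}]^{-1} = 1$ for $i = 1,\dots,n-1$ in $H^2(k_0, Z(H^{\ste{1}}_0))$ — wait, more precisely it is the image under the map $H^2(k_0,Z(H^{\ste{1}}_0))^n \to H^2(k_0,Z(H^{\ste{1}}_0))^{n-1}$ dual to the above $\vk$ — which, since $\delta[c^{\ste{1}}] = 1$, forces $\delta[c^{\ste{2}}] = \delta[c^{\ste{3}}] = \cdots = \delta[c^{\ste{n}}] = 1$. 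By the remark in the excerpt (after Subsection on pure inner forms), $\delta[c^{\ste{i}}] = 1$ is \emph{exactly} the condition that ${}_{c^{\ste{i}}}H^{\ste{1}}_0 = H^{\ste{i}}_0$ is a \emph{pure} inner form of $H^{\ste{1}}_0$. This yields the criterion. Conversely, if each $H^{\ste{i}}_0$ is a pure inner form of $H^{\ste{1}}_0$, one can also argue directly via Lemma~\ref{l:pure-inner} applied iteratively, avoiding the cohomological computation entirely.

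The main obstacle I anticipate is the computation of $\sA = \Aut^G(Y)$ and of the homomorphism $\vk$ together with its behaviour under $\delta$ and $\vk_*$ — in particular verifying that the composite map on $H^2$'s is surjective onto the relevant quotient (so that neutrality really does pin down each $\delta[c^{\ste{i}}]$), and correctly matching the $\sG$-actions on $Z(H)^n$ and $Z(H)^{n-1}$ so that $\vk$ is genuinely $\sG$-equivariant. A secondary subtlety is the reduction handling the case where some $H^{\ste{i}}_0$ fails to be an inner form of $H^{\ste{1}}_0$ at all — there one shows directly that the two natural Borel subgroups of $H^n$ relevant to $Y$ cannot both be $\sG$-stable, contradicting the existence of any equivariant model by the $*$-action argument underlying Proposition~\ref{p:BG-Hur}; alternatively, observe that a $G_0$-model of $G/\Delta$ would in particular make $\Delta$ into a $\sG$-stable (up to conjugacy) subgroup, which pins down the $*$-actions of all factors to coincide.
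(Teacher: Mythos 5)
Your proposal follows essentially the same route as the paper: the ``if'' direction via Lemma~\ref{l:pure-inner}; the ``only if'' direction by first showing (via the observation that $\sigma^0_\gamma(\Delta)$ must remain conjugate to $\Delta$ for all $\gamma$, which is Lemma~\ref{l:Stephan}) that each $H^{\ste{i}}_0$ is an inner form of $H^{\ste{1}}_0$, then computing $\sA = Z(H)^n/Z(\Delta) \cong Z(H)^{n-1}$ and applying Theorem~\ref{t:twist}. The one caveat is your suggested use of Proposition~\ref{p:BG-Hur} (or a Borel-subgroup $*$-action argument) to handle the non-inner-form case: those concern reductive $G$ and spherical $Y$, but here $H$ is an arbitrary linear algebraic group and $Y$ is generally not spherical, so you must use the conjugacy-of-$\Delta$ argument from your final ``alternatively'' clause instead --- which is exactly what the paper does.
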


We need a lemma.

\begin{lemma}[well-known]
\label{l:Stephan}
Let $k_0$, $k$, and $\sG$ be as in Subsection \ref{ss:mod-Y}.
Let $G$ be an algebraic group over $k$, and let $F\subset G$ be an algebraic $k$-subgroup.
Set $Y=G/F$. Let $G_0$ be a $k_0$-model of $G$
with semilinear action $\sigma^0\colon \sG\to\SAut(G)$.
If $Y$ admits a $G_0$-equivariant $k_0$-model, then for any
$\gamma\in\sG$ the subgroup $\sigma^0_\gamma(F)$ is conjugate to $F$ in $G$.
\end{lemma}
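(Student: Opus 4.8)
The plan is to unwind the definitions and use a transport-of-structure argument. Suppose $Y_0$ is a $G_0$-equivariant $k_0$-model of $Y = G/F$, with structure isomorphism $\nu_Y\colon Y_{0,k}\isoto Y$; this gives a semilinear action $\mu\colon\sG\to\SAut(Y)$ compatible with $\sigma^0$ in the sense of \eqref{e:sigma-equi}, that is, $\mu_\gamma(g\cdot y)=\sigma^0_\gamma(g)\cdot\mu_\gamma(y)$ for all $\gamma\in\sG$, $g\in G(k)$, $y\in Y(k)$. Fix $\gamma\in\sG$ and let $y^{(0)}=eF\in Y(k)$ be the base point, so that $\Stab_{G(k)}(y^{(0)})=F(k)$. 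Set $y'=\mu_\gamma(y^{(0)})\in Y(k)$.

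First I would compute the stabilizer of $y'$. From the equivariance relation, for $g\in G(k)$ we have $g\cdot y' = g\cdot\mu_\gamma(y^{(0)}) = \mu_\gamma\bigl((\sigma^0_\gamma)^{-1}(g)\cdot y^{(0)}\bigr)$, using that $\sigma^0_\gamma$ is an automorphism of the abstract group $G(k)$ (more precisely a $\gamma$-semi-automorphism, but it is a group automorphism of $G(k)$ as an abstract group, cf.\ Subsection~\ref{ss:semi-linear}). Hence $g\cdot y' = y'$ if and only if $(\sigma^0_\gamma)^{-1}(g)\cdot y^{(0)}=y^{(0)}$, i.e.\ $(\sigma^0_\gamma)^{-1}(g)\in F(k)$, i.e.\ $g\in\sigma^0_\gamma(F(k))=\sigma^0_\gamma(F)(k)$. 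Thus $\Stab_{G(k)}(y')=\sigma^0_\gamma(F)(k)$.

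Next I would use transitivity of the $G$-action on $Y=G/F$: since $G(k)$ acts transitively on $Y(k)$ (as $k$ is algebraically closed, or simply because $G/F$ is a homogeneous space and $k$ is algebraically closed of characteristic $0$ so $Y(k)=G(k)/F(k)$), there is $g_0\in G(k)$ with $y'=g_0\cdot y^{(0)}$. Then $\Stab_{G(k)}(y')=g_0\,\Stab_{G(k)}(y^{(0)})\,g_0^{-1}=g_0\,F(k)\,g_0^{-1}$. Comparing with the previous computation, $\sigma^0_\gamma(F)(k)=g_0 F(k) g_0^{-1}$, and since both $\sigma^0_\gamma(F)$ and $g_0 F g_0^{-1}$ are algebraic $k$-subgroups of $G$ with the same $k$-points, they coincide (here we use $\charr k=0$, so algebraic subgroups are reduced and determined by their $k$-points; alternatively invoke that $\sigma^0_\gamma$ is a semi-automorphism carrying $F$ to an algebraic subgroup). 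Hence $\sigma^0_\gamma(F)$ is conjugate to $F$ in $G$, as claimed.

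The only mild subtlety — and the step I would be most careful about — is the bookkeeping around $\sigma^0_\gamma$: it is a $\gamma$-semilinear automorphism of the scheme $G$, not a $k$-automorphism, so ``$\sigma^0_\gamma(F)$'' means the image subscheme $\tau_\natural(\gamma_* F)$ in the notation of \cite[Definition~2.2]{BG}, which is again an algebraic $k$-subgroup of $G$; and ``conjugate in $G$'' should be read as conjugate by an element of $G(k)$. Once one fixes these conventions (exactly as in Subsection~\ref{ss:semi-linear}), the argument above is a direct orbit–stabilizer computation and there is no real obstacle. I would present it essentially as written, perhaps citing \cite[Lemma~5.6]{BG} or the homogeneity of $G/F$ for the transitivity of $G(k)$ on $Y(k)$.
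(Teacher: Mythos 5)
Your proof is correct. Note that the paper itself does not give an argument for Lemma~\ref{l:Stephan}; it simply cites Snegirov \cite[Lemma~4.1]{sni18}, so there is no in-text proof to compare against. Your orbit--stabilizer computation (compute $\Stab_{G(k)}(\mu_\gamma(eF))$ two ways: once via the equivariance identity $\mu_\gamma(g\cdot y)=\sigma^0_\gamma(g)\cdot\mu_\gamma(y)$, and once via transitivity of $G(k)$ on $Y(k)$, then conclude equality of the two smooth subgroups from equality of their $k$-points in characteristic~$0$) is the standard and natural argument, and all the bookkeeping you flag — that $\sigma^0_\gamma$ is an abstract group automorphism of $G(k)$, that $\sigma^0_\gamma(F)=\tau_\natural(\gamma_*F)$ is an algebraic $k$-subgroup, that $Y(k)=G(k)/F(k)$ over the algebraically closed field $k$ — is handled correctly. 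This is almost certainly the same argument as in the cited reference.
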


\begin{proof} See, for instance, Snegirov \cite[Lemma 5.1]{sni18}.\end{proof}

\begin{proof}[Proof of Theorem \ref{t:H-times-H}]
  Set $G_1=(H^{\ste{1}}_0)^n$; then $Y$ admits
  a $G_1$-equivariant  $k_0$-model $Y_1= (H^{\ste{1}}_0)^{n-1}$
  (with the action \eqref{e:action-*} of  $G_1$).
  Assume that $H_0^{\ste{i}}$ is a pure inner form of
  $H_0^{\ste{1}}$ for each $i=2,\dots, n$; then $G_0$
  is a pure inner form of $G_1$, and by Lemma \ref{l:pure-inner} the
  $G$-variety $Y$ admits a $G_0$-equivariant $k_0$-model.

  Conversely, assume that $Y$ admits an
  $H^{\ste{1}}_0\times _{k_0}\cdots \times _{k_0}  H^{\ste{n}}_0$--equivariant $k_0$-model. First we
  show that then $H_0^{\ste{i}}$ is an \emph{inner form} of $H_0^{\ste{1}}$ for all $i=2,\dots,n$.
  Indeed, let
\[\sigma^{\ste{i}}\colon \sG\to\SAut(H) \]
denote the semilinear action corresponding to the $k_0$-model
$H_0^{\ste{i}}$ of $H$ for $i=1,\dots,n$. Recall that
$\Delta(k)=\big\{(h,\dots,h) \mid h\in H(k)\big\}$. Then for any $\gamma\in \sG$ we
have
\[\smash{(\sigma^{\ste{1}}_\gamma\times\dots\times
  \sigma^{\ste{n}}_\gamma)(\Delta(k))=\big\{\hs(\sigma^{\ste{1}}_\gamma(h),\dots,
  \sigma^{\ste{n}}_\gamma(h))\,\mid\, h\in H(k)\hs\big\}\text{.}}\]
Since $Y$ admits an
$H^{\ste{1}}_0\times _{k_0}\dots\times _{k_0} H^{\ste{n}}_0$-equivariant $k_0$-model,
by Lemma~\ref{l:Stephan} the subgroup
\[(\sigma^{\ste{1}}_\gamma\times\dots\times \sigma^{\ste{n}}_\gamma)(\Delta)\]
is conjugate to $\Delta$ in $G=H^n$.
This means that there exists a tuple
$(h_1,\dots,h_n)\in H(k)^n$ such that for any $h\in H(k)$ there exists
$h'\in H(k)$ such that
\[(\sigma^{\ste{1}}_\gamma(h),\dots, \sigma^{\ste{n}}_\gamma(h))=(h_1 h'
h_1^{-1},\dots, h_n h' h_n^{-1}).\]
Then we have
\[h'=h_i^{-1}\hs\sigma^{\ste{i}}_\gamma(h)\hs h_i\text{ for each
}i=1,\dots,n.\]
It follows that
\[\sigma^{\ste{i}}_\gamma(h)=(h_i  h_1^{-1})\cdot
  \sigma^{\ste{1}}_\gamma(h)\cdot(h_i h_1^{-1})^{-1}\text{.}\]
We see that for any $\gamma\in\sG$ we have
\[\sigma^{\ste{i}}_\gamma=\inn(h_i  h_1^{-1})\circ \sigma^{\ste{1}}_\gamma\hs.\]
This means that $\smash{H^{\ste{i}}_0}$ is an inner form of
$\smash{H^{\ste{1}}_0}$ for each $i=2,\dots,n$.

Now we know that $H^{\ste{i}}_0= {}_{c_i} (H^{\ste{1}}_0)$ for some
1-cocycle $c_i\in Z^1(k_0,\Hbar^{\ste{1}}_0)$,
where $\Hbar^{\ste{1}}_0=H^{\ste{1}}_0/Z(H^{\ste{1}}_0)$. Set
$G_1=(H_0^{\ste{1}})^n$; then $Y_1\coloneqq (H_0^{\ste{1}})^{n-1}$ with the
natural action of $G_1$ is a $G_1$-equivariant $k_0$-model of $Y$.
Moreover,
$G_0=H^{\ste{1}}_0\times _{k_0}\dots \times _{k_0} H^{\ste{n}}_0$ is
the inner twisted form of $G_1$ given by the 1-cocycle
$c=(1,c_2,\dots,c_n)\in Z^1(k_0, G_1)$. Then
\[ \delta[c]\ \in\ H^2(k_0, Z(G_1))=H^2(k_0, Z( H_0^{\ste{1}}))^n\] is
$(1,\delta_H[c_2],\dots,\delta_H[c_n])$, where
\[\delta_H\colon H^1(k_0,\Hbar_0^{\ste{1}})\to H^2(k_0, Z(H_0^{\ste{1}}))\]
is the connecting map. By Theorem \ref{t:twist}, there exists a $G_0$-equivariant $k_0$-model
of $Y$ if and only if
$\vk_*(\delta[c])=1$, that is, if and only if
$\vk_*(1,\delta_H[c_2],\dots,\delta_H[c_n])=1$. An easy calculation
shows that
\[\sN_G(\Delta)=Z(G)\cdot\Delta\]
and
\[A \coloneqq \sN_G(\Delta)/\Delta= Z(G)/Z(\Delta)=Z(H)^n \hs /\hs  Z(\Delta)\text{.}\]
Similarly, over $k_0$ we obtain
\[\sN_{G_1}(\Delta_1)=Z(G_1)\cdot\Delta_1\]
and
\[A_1\coloneqq \sN_{G_1}(\Delta_1)/\Delta_1=
  Z(G_1)/Z(\Delta_1)=Z(H^{\ste{1}}_0)^n\hs /\hs  Z(\Delta_1)\text{.}\]
It is easy to see that the homomorphism of abelian $k_0$-groups
\[ Z(H_0^{\ste{1}})^{n-1}\to A_1,\quad (z_2,\dots,z_n)
        \mapsto (1,z_2,\dots,z_n)\cdot Z(\Delta)\ \ \text{for } z_i\in Z(H_0^{\ste{1}})\]
is an isomorphism.
It follows that the induced map on cohomology
\[ H^2(k_0,Z(H_0^{\ste{1}}))^{n-1}\to H^2(k_0, A_1)\] is an
isomorphism of abelian groups. We see that
$\vk_*(1,\delta_H[c_2],\dots,\delta_H[c_n])=1$ if and only if
$\delta_H[c_i]=1$ for all $i=2,\dots,n$. Therefore, an
$H^{\ste{1}}_0\times _{k_0}\dots \times _{k_0}
H^{\ste{n}}_0$-equivariant $k_0$-model of $Y$ exists if and only if
$\delta_H[c_i]=1$ for all $i=2,\dots,n$, that is, if and only if
$H^{\ste{i}}_0$ is a pure inner form of $H^{\ste{1}}_0$ for all
$i=2,\dots,n$, as required.
\end{proof}

Now we consider the case when $k_0$ is a $p$-adic field.

\begin{lemma}\label{l:Kneser}
Let $H_0$ be a simply connected semisimple group over a  $p$-adic field $k_0$\hs.
Then any pure inner form of $H_0$ is isomorphic to $H_0$.
\end{lemma}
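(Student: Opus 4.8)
The plan is to deduce the lemma from Kneser's vanishing theorem for the first Galois cohomology of simply connected semisimple groups over $p$-adic fields. First I recall, from the definition of \emph{pure inner form} given just before Lemma~\ref{l:pure-inner}, that a pure inner form of $H_0$ is the twisted $k_0$-group $_\ctil H_0 \coloneqq {}_{i\circ\ctil}H_0$ associated with a $1$-cocycle $\ctil\in Z^1(k_0,H_0)$, where $i\circ\ctil\in Z^1(k_0,\Hbar_0)$ and $\Hbar_0\coloneqq H_0/Z(H_0)$. Since cohomologous cocycles yield isomorphic twisted $k_0$-forms (Serre \cite[III.1.3, Corollary of Proposition~5]{ser97}), the $k_0$-isomorphism class of $_\ctil H_0$ depends only on the class $[\ctil]\in H^1(k_0,H_0)$; in particular, if $\ctil$ is a coboundary, then $_\ctil H_0$ is $k_0$-isomorphic to the untwisted group $H_0$.

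Next I would invoke the theorem of Kneser asserting that $H^1(k_0,H_0)$ is trivial whenever $H_0$ is simply connected semisimple and $k_0$ is a $p$-adic field. It then follows at once that every $1$-cocycle $\ctil\in Z^1(k_0,H_0)$ is a coboundary, so $_\ctil H_0\cong H_0$ over $k_0$, which is exactly the assertion of the lemma. Equivalently: the pure inner forms of $H_0$ are classified, up to $k_0$-isomorphism, by the pointed set $H^1(k_0,H_0)$, which is trivial, so $H_0$ is the only one.

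The only subtlety worth flagging is the distinction between pure inner forms and arbitrary inner forms: pure inner forms of $H_0$ are parametrized by the pointed set $H^1(k_0,H_0)$, whereas all inner forms are parametrized by the larger pointed set $H^1(k_0,\Hbar_0)$, and it is only the former that Kneser's theorem shows to be trivial. Apart from matching these two notions carefully and from citing Kneser's (deep) arithmetic input, there is no genuine obstacle.
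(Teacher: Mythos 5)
Your proof is correct and takes the same approach as the paper: both reduce the statement to Kneser's vanishing theorem $H^1(k_0,H_0)=1$ for simply connected semisimple groups over $p$-adic fields, and observe that pure inner forms are classified by this (trivial) set. You simply spell out more explicitly the unwinding of the definition of pure inner form and the standard fact that cohomologous cocycles yield isomorphic twists.
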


\begin{proof}
  Indeed, by Kneser's theorem we have $H^1(k_0,H_0)=1$; see Platonov
  and Rapinchuk \cite[Theorem 6.4]{PR}.
\end{proof}

\begin{corollary}
  In Theorem \ref{t:H-times-H}, if $k_0$ is a $p$-adic field and $H$
  is a simply connected semisimple group over $k$, then $Y$ admits an
  $H^{\ste{1}}_0\times _{k_0}\dots \times_{k_0}
  H^{\ste{n}}_0$-equivariant $k_0$-model if and only if all
  $\smash{H^{\ste{i}}_0}$ are pairwise isomorphic.
\end{corollary}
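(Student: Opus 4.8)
The plan is to combine Theorem~\ref{t:H-times-H} with Lemma~\ref{l:Kneser}. By Theorem~\ref{t:H-times-H}, the homogeneous space $Y = H^n/\Delta$ admits an $H^{\ste{1}}_0\times_{k_0}\dots\times_{k_0} H^{\ste{n}}_0$-equivariant $k_0$-model if and only if each $H^{\ste{i}}_0$ for $i = 2,\dots,n$ is a pure inner form of $H^{\ste{1}}_0$. So it suffices to show that, when $k_0$ is a $p$-adic field and $H$ is simply connected semisimple, the condition ``$H^{\ste{i}}_0$ is a pure inner form of $H^{\ste{1}}_0$'' is equivalent to ``$H^{\ste{i}}_0 \cong H^{\ste{1}}_0$ as $k_0$-groups''. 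Once this is established, the criterion of Theorem~\ref{t:H-times-H} reads $H^{\ste{i}}_0 \cong H^{\ste{1}}_0$ for all $i = 2,\dots,n$, which by transitivity of isomorphism is the same as saying that all the $H^{\ste{i}}_0$ are pairwise isomorphic, as claimed.

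For the equivalence, one implication is formal and uses nothing $p$-adic: the trivial $1$-cocycle $1 \in Z^1(k_0, H^{\ste{1}}_0)$ gives ${}_{1}(H^{\ste{1}}_0) = H^{\ste{1}}_0$, so any $k_0$-group isomorphic to $H^{\ste{1}}_0$ is, up to isomorphism, a pure inner form of $H^{\ste{1}}_0$. For the converse, suppose $H^{\ste{i}}_0 = {}_{\ctil_i}(H^{\ste{1}}_0)$ for some $\ctil_i \in Z^1(k_0, H^{\ste{1}}_0)$. Since $H \cong (H^{\ste{1}}_0)_k$ is simply connected semisimple and $k_0$ is $p$-adic, Lemma~\ref{l:Kneser} (Kneser's theorem) gives $H^1(k_0, H^{\ste{1}}_0) = 1$, so the class $[\ctil_i]$ is trivial; hence the pure inner form ${}_{\ctil_i}(H^{\ste{1}}_0)$ is $k_0$-isomorphic to $H^{\ste{1}}_0$, i.e.\ $H^{\ste{i}}_0 \cong H^{\ste{1}}_0$.

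I do not anticipate a genuine obstacle here, since the statement is a direct specialization of Theorem~\ref{t:H-times-H} together with the vanishing of $H^1$ for simply connected groups over $p$-adic fields. The only point requiring mild care is that ``pure inner form'' in Theorem~\ref{t:H-times-H} must be read up to $k_0$-isomorphism, which is exactly the sense in which it is used there; under that reading the two implications above fit together immediately.
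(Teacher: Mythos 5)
Your argument is correct and is essentially the paper's own proof: combine Theorem~\ref{t:H-times-H} with Lemma~\ref{l:Kneser} (Kneser's vanishing of $H^1$ over $p$-adic fields for simply connected groups). The only difference is that you spell out both implications of the equivalence ``pure inner form $\Leftrightarrow$ isomorphic,'' which the paper leaves implicit.
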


\begin{proof}
  Indeed, by Theorem \ref{t:H-times-H}, the variety $Y$ admits an
  $H^{\ste{1}}_0\times _{k_0}\dots\times _{k_0}
  H^{\ste{n}}_0$-equivariant $k_0$-model if and only if
  $\smash{H^{\ste{i}}_0}$ is a pure inner form of $H^{\ste{1}}_0$ for
  all $i=2,\dots,n$, and by Lemma \ref{l:Kneser} any pure inner form
  of $H^{\ste{1}}_0$ is isomorphic to $H^{\ste{1}}_0$.
\end{proof}

\begin{remark}
The similar assertion in the case when $k_0=\R$ is false; see Example \ref{e:transporter}.
\end{remark}

\section{Examples}
\label{s:examples}

In each of the examples below we consider a spherical variety
$Y$ and a $k_0$-model $G_0$ of $G$, and we answer the question
 whether $Y$ admits a $G_0$-equivariant $k_0$-model.

\begin{example}\label{e:transporter}
  Let $k_0=\R$, $k=\C$; then $\sG=\{1,\gamma\}$, where $\gamma$ is the
  complex conjugation. Let $H=\SL_{4,\C}$. Consider the diagonal
  matrices
  \[ I_4=\diag(1,1,1,1)\quad\text{and}\quad
    I_{2,2}=\diag(1,1,-1,-1).\] Consider the real models $\SU_{2,2}$
  and $\SU_4$ of $H$:
\begin{align*}
H^{\ste{1}}_0&=\SU_{2,2},\text{ where }\SU_{2,2}(\R)=\{h\in \SL(4,\C)\,
   \mid\, h\cdot I_{2,2}\cdot\upgam h^\tr=I_{2,2}\},\\
H^{\ste{2}}_0&=\SU_4, \text{ where }\SU_4(\R)=\{h\in \SL(4,\C)\,\mid\,
    h\cdot I_4\cdot\upgam h^\tr=I_4\},
\end{align*}
where $h^\tr$ denotes the transpose of $h$.
Consider the 1-cocycle
\[c\colon\sG\to \SU_{2,2}(\R),\quad 1\mapsto I_4,\ \gamma\mapsto I_{2,2}\hs .\]
A calculation shows that $_c \SU_{2,2}\simeq \SU_4$.
Thus $\SU_4$ is a pure inner form of $\SU_{2,2}$.
By Theorem \ref{t:H-times-H}, there exists an $\SU_{2,2}\times _\R \SU_4$-equivariant
real model $Y_0$ of $Y=(H\times  H)/\Delta.$
We describe this model explicitly.
We may  take for $Y_0$ the \emph{transporter}
\[Y_0=\big\{y\in \SL(4,\C)\, \mid\, y\cdot I_{4}\cdot \upgam y^\tr=I_{2,2}\big\}.\]
Clearly $Y_0$ is defined over $\R$.
It is well known that $Y_0(\C)$ is nonempty but $Y_0$ has no $\R$-points.
The group $G_0\coloneqq H^{\ste{1}}_0\times _\R H^{\ste{2}}_0$ acts on $Y_0$ by
\[(h_1,h_2)*y=h_1\hs y h_2^{-1}.\] It is clear that $Y_0$ is a
principal homogeneous space of both $H^{\ste{1}}_0$ and
$H^{\ste{2}}_0$. Thus $Y_0$ is a $G_0$-equivariant $k_0$-model of $Y$.
\end{example}

\begin{example}\label{e:Sp(n)-Sp(n)}
  Let $k_0=\R$, $k=\C$, $n\ge 2$, $\smash{H=\Sp_{2n,\C}}$ (the
  symplectic group in $2n$ variables). Let $G=H\times H$,
  $Y=(H\times H)/\Delta$. Let $0\le m_1\le n$,
  $H_0^{\ste{1}}=\Sp(m_1, n-m_1)$ (the group of the diagonal
  quaternionic hermitian form in $n$ variables with $m_1$ times $+1$
  and $n-m_1$ times $-1$ on the diagonal).
  Here by abuse of notation we write $\Sp(m_1, n-m_1)$
  for the algebraic $\R$-group {\em with the group of $\R$-points} $\Sp(m_1, n-m_1)$.
  Let $H_0^{\ste{2}}=\Sp(m_2, n-m_2)$. Then $H_0^{\ste{2}}$ is a pure
  inner form of $H_0^{\ste{1}}$, and by Theorem~\ref{t:H-times-H} $Y$
  admits an $H^{\ste{1}}_0\times _{k_0} H^{\ste{2}}_0$-equivariant
  $k_0$-model $Y_0$. One can construct $Y_0$ explicitly as a
  transporter similar to the transporter in Example
  \ref{e:transporter}.
\end{example}

\begin{example}\label{e:Sp(2n-R)}
Let $k_0$, $k$, $H$, $G$, $Y$ be as in Example \ref{e:Sp(n)-Sp(n)}.
Let $H_0^{\ste{2}}=\Sp(m_2, n-m_2)$ be as in Example \ref{e:Sp(n)-Sp(n)},
but $H_0^{\ste{1}}=\Sp_{2n,\R}$.
Then
\[\smash{H^1(\R, H_0^{\ste{1}})=H^1(\R,\Sp_{2n,\R})=1}\text{;}\]
see, for instance, Serre \cite[III.1.2, Proposition 3]{ser97}. It follows that
any pure inner form of $H_0^{\ste{1}}$ is isomorphic to
$H_0^{\ste{1}}$. Since $H_0^{\ste{2}}$ is not isomorphic to
$H_0^{\ste{1}}$, we see that $H_0^{\ste{2}}$ is not a pure inner form
of $H_0^{\ste{1}}$. By Theorem \ref{t:H-times-H}, we conclude that
$(H\times _k H)/\Delta$ does not admit a
$H^{\ste{1}}_0\times _{k_0} H^{\ste{2}}_0$-equivariant $k_0$-model.
This example generalizes \cite[Example~9.15]{BG}, where the case when
$H_0^{\ste{2}}$ is compact was considered.
\end{example}

\begin{example}
Let
 \[k_0=\R,\quad k=\C,\quad  H=\SL_{2,\C}\hs,\quad
             G=H\times_\C H\times_\C H,\quad Y=G/\Delta,\]
 where $\Delta$ is $H$ embedded into $G$ diagonally.
Set
\[G_0=H^{\ste{1}}_0\times_\R H^{\ste{2}}_0 \times_\R H^{\ste{3}}_0\hs,\]
where $H^{\ste{i}}_0$ are $\R$-models of $H=\SL_{2,\C}$.
We show that $Y$ has a $G_0$-equivariant model if and only if
$H^{\ste{1}}_0\simeq H^{\ste{2}}_0\simeq H^{\ste{3}}_0$.

First, assume that there exists a $G_0$-equivariant $\R$-model $Y_0$ of $Y$.
Then, by Theorem~\ref{t:H-times-H}, the $\R$-groups  $H^{\ste{2}}_0$ and $H^{\ste{3}}_0$
are pure inner forms of $H^{\ste{1}}_0$.
There are exactly two non-isomorphic $\R$-models of $\SL_{2,\C}$,
namely, $\SL_{2,\R}$ and $\SU_2$,
and the  $\R$-group $\SU_2$ is not a pure inner form of  $\SL_{2,\R}$
because $H^1(\R,\SL_{2,\R})=1$.
We obtain that $H^{\ste{1}}_0\simeq H^{\ste{2}}_0\simeq H^{\ste{3}}_0$,
and either $H^{\ste{1}}_0\simeq\SL_{2,\R}$ or $H^{\ste{1}}_0\simeq\SU_2$.

Conversely, if
$G_0=H^{\ste{1}}_0\times_\R H^{\ste{1}}_0 \times_\R H^{\ste{1}}_0$,
where $H^{\ste{1}}_0$ is either $\SL_{2,\R}$ or $\SU_2$, we may take
$\Delta_0=H^{\ste{1}}_0$ embedded diagonally in $G_0$, and then
$Y_0\coloneqq G_0/\Delta_0$ is a $G_0$-equivariant $\R$-model of $Y$
(having an $\R$-point $1\cdot \Delta_0$).
\end{example}

\begin{example}\label{SO(10)}
   Let $k_0$, $k$, and $\sG$ be as in Subsection \ref{ss:mod-Y}.
   Let $G=\SO_{10,k}$. Let $\X^*(T)=\Zd^5$ be the standard
  presentation of its character lattice with the simple roots
  \[
    \text{$\alpha_1 = \ve_1-\ve_2$,\quad $\alpha_2 = \ve_2-\ve_3$,
    \quad  $\alpha_3 = \ve_3-\ve_4$,\quad
      $\alpha_4 = \ve_4-\ve_5$,\quad $\alpha_5 = \ve_4+\ve_5$,}
  \]
  where $\ve_1,\dots,\ve_5$ is the standard basis of $\Z^5$;
  see Bourbaki  \cite[Plate IV]{bou02}.
  Consider the spherical subgroup $H=\SO_{9,k}\subset G$.
  Its combinatorial invariants are
  \[
    \sX = \Zd\ve_1\text{,} \quad
    \Sigma = \{2\ve_1\}\text{,} \quad
    \Omone = \{(\alpha_1^\vee|_\sX, \{\alpha_1\})\}\text{,}\quad
    \Omt = \emptyset\text{.}
  \]
  We have $Z(G)\cong\{\pm1\}$, \hs$\sN_G(H)=Z(G)\cdot H$, and the homomorphism
  $Z(G)\to \sN_G(H)/H$ is an isomorphism.

  Let $G_0$ be a $k_0$-model of $G$. By Tits \cite[pp.~56--57]{Tits},
  the algebraic $k_0$-group $G_0$ is isomorphic to either
  $\SO(k_0^{10}, \sQ)$, where $\sQ$ is a non-degenerate quadratic form
  in $10$ variables over $k_0$, or to $\SU(D^5, \sH)$, where $D$ is a
  central division algebra of degree~2 over $k_0$, and $\sH$ is a
  non-degenerate anti-hermitian form in 5~variables over $D$ with
  respect to the canonical involution of the first kind of $D$. In
  each case, the Galois group acts on $\X^*(T)=\Zd^5$ either trivially or by
  multiplying $\ve_5$ by $-1$, depending on the discriminant
  of the corresponding quadratic or anti-hermitian form.
  We see that the Galois action always preserves the
  combinatorial invariants of $G/H$.

For a natural number $n\ge 2$, consider the  Kummer exact sequence
\[ 1\to \mu_n\to\G_{m,k_0}\hs\labelto{x\mapsto x^n}\hs\G_{m,k_0}\to 1,\]
where $\mu_n$ denotes the group of roots of unity of order dividing $n$.
  We consider the Brauer group $\Br k_0\coloneqq H^2(k_0,\G_m)$ and
  denote by $(\Br k_0)_n$ the subgroup of elements of $\Br k_0$ of
  order dividing $n$. Then from the Kummer exact sequence we obtain that
  $H^2(k_0,\mu_n)\cong(\Br k_0)_n$.
  We have
  \[H^2(k_0,\hs\sN_G(H)/H)\hs=\hs H^2(k_0,Z(G))=H^2(k_0,\mu_2)=(\Br k_0)_2\hs.\]

  In the case  $G_0=\SO(k_0^{10}, \sQ)$, the image of $t(G_0)$ in $(\Br k_0)_2$ is $0$;
  see ``The Book of Involutions'' \cite[Example (31.11)]{KMRT}. By Theorem~\ref{t:sphfull'}, the
  homogeneous space $G/H$ admits a $G_0$-equivariant $k_0$-model. We
  construct such a model explicitly. We choose $b\in k_0$, $b\neq 0$;
  then the $G_0$-variety given by the equation $\sQ(x)=b$ is a
  $G_0$-equivariant $k_0$-model of $G/H$
  (this $k_0$-model might have no $k_0$-points).

  In the case $G_0=\SU(D^5, \sH)$, the image of $t(G_0)$ in $(\Br k_0)_2$ is $[D]$,
  and hence is nontrivial; see \cite[Example (31.11)]{KMRT}. By
  Theorem~\ref{t:sphfull'}, the spherical homogeneous space $G/H$
  does not admit a $G_0$-equivariant $k_0$-model.
\end{example}

\begin{example}\label{e:SL3}
Let $k_0$, $k$, and $\sG$ be as in Subsection \ref{ss:mod-Y}.
   Let $G=\SL_{3,k}$. Let $\alpha_1$, $\alpha_2$ be
  its simple roots, and $\omega_1$, $\omega_2$ be the corresponding
  fundamental dominant weights. Consider the spherical subgroup
  $H=\SL_{2,k}\subset G$. Its combinatorial invariants are
  \begin{align*}
    &\sX=\X^*(T) = \Zd\omega_1+\Zd\omega_2\text{,} \qquad
    \Sigma = \{\alpha_1+\alpha_2\}\text{,} \\
    &\Omone = \{\, (\alpha_1^\vee|_\sX, \{\alpha_1\}),\,
    (\alpha_2^\vee|_\sX, \{\alpha_2\})\, \}\text{,}\qquad
    \Omt = \emptyset\text{.}
  \end{align*}
  Let $G_0$ be a $k_0$-model of $G$. The Galois group either
  acts trivially on $X=\X^*(T)$  or swaps $\omega_1$ and $\omega_2$ (and hence
  $\alpha_1$ and $\alpha_2$). We see that the Galois action  always preserves the
  combinatorial invariants of $G/H$. According to Tits
  \cite[p.~55]{Tits}, up to isomorphism there are four cases:
  \begin{enumerate}
  \item $G_0=\SL(3,k_0)$; then we take $H_0=\SL(2,k_0)$ and set $Y_0=G_0/H_0$\hs.
  \item $G_0=\SU(l^3,\sH)$ where $\sH$ is a nondegenerate Hermitian form
    in $3$ variables over a quadratic extension $l/k_0$. Then we choose
    $b\in k_0$, $b\neq 0$, and take for $Y_0$
    the subvariety in $l^3$ given by the equation $\sH(y)=b$.
  \item $G_0=\SL(1,D)$, where $D$ is a central division algebra of
    degree $3$ over $k_0$.
    Then the map
    $\smash{\Ztil}=Z(G)\to\sN_G(H)/H$ is the canonical embedding
    $\mu_3\into \G_m$\hs, and it induces an injective homomorphism
    $H^2(k_0,\mu_3)=(\Br k_0)_3\into\Br k_0= H^2(k_0,\G_m)$. The image
    of $t(G_0)$ in $H^2(k_0, \sN_G(H)/H)=H^2(k_0,\G_m)=\Br k_0$ is, of course,
    $[D]\neq 0$, and by Theorem~\ref{t:sphfull'} there is no
    $G_0$-equivariant $k_0$-model of $G/H$.
  \item $G_0=\SU(1, D, \sigma)$, where $(D,\sigma)$ is a central
    division algebra with an involution of second kind over a quadratic
    extension $l/k_0$. Then, after the base change from $k_0$ to $l$, we
    come to the case (3). Since there is no desired model over $l$,
    there is no model over $k_0$ either.
  \end{enumerate}
\end{example}

\begin{proposition}\label{t:G/U}
Let $k_0$, $k$, and $\sG$ be as in Subsection \ref{ss:mod-Y}.
  Let $G$ be a simply connected semisimple group over $k$.
  Let $B\subset G$ be a Borel
  subgroup, and let $U=R_u(B)$ denote the unipotent radical of $B$.
  Let $G_0$ be a $k_0$-model of $G$. Then the
  homogeneous space $G/U$ admits a $G_0$-equivariant $k_0$-model if and
  only if $t(G_0)=1\in H^2(k_0, Z(G_0))$.
\end{proposition}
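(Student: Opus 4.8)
The plan is to apply the horospherical criterion, Theorem~\ref{t:tits-horo}. First I would note that $U = R_u(B)$ is a horospherical subgroup of $G$ with horospherical invariants $(I,M) = (\varnothing, \X^*(T))$: the standard parabolic $P_\varnothing$ is $B$ itself, and since $B = T \ltimes U$ and $\X^*(B) = \X^*(T)$, one has $\bigcap_{\chi \in \X^*(T)} \ker[\chi \colon B \to \G_m] = U = H_{\varnothing, \X^*(T)}$. Any automorphism of $\BRD(G)$ fixes $\varnothing$ and the full lattice $\X^*(T)$, so the condition $(\upgam I, \upgam M) = (I,M)$ for all $\gamma \in \sG$ holds automatically for the $*$-action defined by $G_0$. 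Hence by Theorem~\ref{t:tits-horo}, $G/U$ admits a $G_0$-equivariant $k_0$-model if and only if $\vktil_*(t(G_0)) = 1 \in H^2(k_0, A_\qs)$.

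Next I would unwind the objects entering this condition. Since $\sN_G(U) = B$, we get $A = \sN_G(U)/U = B/U \cong T$; choosing a Borel subgroup $B_\qs \subset G_\qs$ defined over $k_0$, with $U_\qs = R_u(B_\qs)$ and maximal torus $T_\qs \subset B_\qs$, the homogeneous space $G/U$ has the $G_\qs$-equivariant $k_0$-model $G_\qs/U_\qs$ and $A_\qs = B_\qs/U_\qs \cong T_\qs$. Because $G$ is simply connected, $[G,G] = G$ and $\Gtil_0 = G_0$, so $Z(\Gtil_0) = Z(G_0)$ and the homomorphism $\vktil \colon Z(\Gtil_0) \to A_\qs$ is just the canonical closed immersion $\vk \colon Z(G_\qs) \into T_\qs$. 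The implication ``$t(G_0) = 1 \Rightarrow$ a model exists'' is then immediate.

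For the converse I must prove that $\vk_* \colon H^2(k_0, Z(G_\qs)) \to H^2(k_0, T_\qs)$ is injective; this is the one place where simple connectedness is used essentially. I would use the short exact sequence of $k_0$-groups
\[ 1 \to Z(G_\qs) \labelto{\vk} T_\qs \to T_\qs/Z(G_\qs) \to 1, \]
in which $T_\qs/Z(G_\qs)$ is the maximal torus of the adjoint quasi-split group $G_\qs/Z(G_\qs)$, with character lattice the root lattice $\Qf = \bigoplus_{\alpha \in \Sm}\Z\alpha$. Since the $*$-action merely permutes the simple roots, $\Qf$ is a permutation $\sG$-module, so $T_\qs/Z(G_\qs)$ is a quasi-trivial $k_0$-torus, a product of Weil restrictions $R_{l/k_0}(\G_m)$; by Shapiro's lemma and Hilbert's Theorem~90, $H^1(k_0, T_\qs/Z(G_\qs)) = 0$. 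The cohomology exact sequence
\[ H^1(k_0, T_\qs/Z(G_\qs)) \to H^2(k_0, Z(G_\qs)) \labelto{\vk_*} H^2(k_0, T_\qs) \]
then shows $\vk_*$ is injective, so $\vktil_*(t(G_0)) = 1$ forces $t(G_0) = 1$. (Note $A_\qs \cong T_\qs$ is a torus, hence $H^2(k_0, A_\qs)$ is an abelian group and ``neutral'' simply means ``equal to~$1$''.)

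The bulk of the work is bookkeeping; the only substantive point — the ``main obstacle'' — is recognizing that the maximal torus of the \emph{adjoint} quasi-split form has vanishing $H^1$, because its character lattice is the root lattice carrying the permutation $*$-action. I would also take care to verify that $A_\qs$ is genuinely forced to be $T_\qs$ (so there is no freedom to replace it by a torus with worse cohomology) and that $\vktil$ is exactly the inclusion $Z(G_\qs) \into T_\qs$; these identifications are where the verification effort goes.
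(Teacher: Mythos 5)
Your proposal is correct and follows essentially the same route as the paper: the paper invokes Proposition~\ref{p:tits} directly rather than passing through the horospherical Theorem~\ref{t:tits-horo} (which is itself a specialization of Proposition~\ref{p:tits}), and the crucial injectivity of $\vk_*\colon H^2(k_0,Z(G_\qs))\to H^2(k_0,T_\qs)$ is proved as the paper's Lemma~\ref{l:injective} by exactly the argument you give — the quotient $T_\qs/Z(G_\qs)$ is the maximal torus of the adjoint quasi-split form, its character lattice is the root lattice on which $\sG$ acts by permutation, hence it is quasi-trivial and $H^1(k_0,T_\qs/Z(G_\qs))=1$. Your identification of $U$ as $H_{\varnothing,\X^*(T)}$ and of $\vktil$ with the inclusion $Z(G_\qs)\into T_\qs$ is accurate, and the observation that simple connectedness makes $\Gtil_0=G_0$ (so $\Ztil_\qs=Z_\qs$) is exactly the point the paper uses.
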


Proposition \ref{t:G/U} generalizes Moser-Jauslin and Terpereau \cite[Example~3.22]{MJTB},
where the case $k_0=\R$ was considered.

\begin{proof}
Let $G_q$ be a quasi-split inner form of $G_0$.
Let $B_q\subset G_q$ be a Borel subgroup.
Set $U_q=R_u(B_q)$; then $U_q$ is a $k_0$-subgroup of $G_q$, and $G_q/U_q$
is a $G_0$-equivariant  $k_0$-model of $G/U$.
Set
\[A_q=N_{G_q}(U_q)/U_q=B_q/U_q\cong T_q\hs\text{,}\]
where $T_q$ is a maximal torus in $B_q$.
Write $Z_\qs=Z(G_\qs)$ and $\Ztil_\qs=Z(\Gtil_\qs)$
with the notation of Subsection \ref{ss:Tits}.
Since $G$ is semisimple and simply connected, we have $\Ztil_\qs=Z_\qs$\hs,
and the homomorphism $\vktil \colon \Ztil_\qs\to A_\qs$ is
the inclusion monomorphism $\iota\colon Z_\qs\into T_\qs$.
By Proposition \ref{p:tits}, the homogeneous space $G/U$
admits a $G_0$-equivariant $k_0$-model if and only if
\[\iota_*(t(G_0))=1\in H^2(k_0,T_q)\text{.}\]
By Lemma \ref{l:injective} below, the
homomorphism $\iota_*$ is injective, and therefore, we have $\iota_*(t(G_0))=1$ if
and only if $t(G_0)=1$. Thus the homogeneous space $G/U$ admits a
$G_0$-equivariant $k_0$-model if and only if $t(G_0)=1$.
\end{proof}

\begin{lemma}\label{l:injective}
Let $k_0$, $k$, and $\sG$ be as in Subsection \ref{ss:mod-Y}.
Let $G_q$ be a {\emm quasi-split}    reductive group over  $k_0$,
let $B_q\subset G_q$ be a Borel subgroup defined over $k_0$,
and let $T_q\subset B_q$ be a maximal torus.
Then the canonical homomorphism
\[\iota_*\colon H^2(k_0,Z(G_q))\to H^2(k_0,T_q)\]
is injective.
\end{lemma}

\begin{proof}
  Write
  \[G=(G_q)_k\hs, \quad  B=(B_q)_k,\quad    T=(T_q)_k\hs.\]
  Moreover, write
  \[ Z_q=Z(G_q)\hs,\quad \Gbar_q=G_q/Z_q\hs, \quad\Bbar_q=B_q/Z_q\hs,\quad \Tbar_q=T_q/Z_q\hs.\]
  We write
   \[ Z=(Z_q)_k\hs,\quad \Gbar=(\Gbar_q)_k=G/Z,\quad
          \Bbar=(\Bbar_q)_k=B/Z,\quad  \Tbar=(\Gbar_q)_k=T/Z.\]
   The short exact sequence
\[ 1\to Z_q\labelto{\iota} T_q\lra \Tbar_q\to 1\]
induces a cohomology exact sequence
\begin{equation}\label{e:exact-Sansuc}
\dots\to H^1(k_0,\Tbar_q)\lra H^2(k_0,Z_q)\labelto{\iota_*}H^2(k_0,T_q)\to\dots \
\end{equation}
Since the algebraic subgroups  $\Tbar$ and $\Bbar$
of $\Gbar=(\Gbar_q)_k$ are defined over $k_0$,
the Galois group $\sG=\Gal(k/k_0)$ preserves $\Tbar$ and $\Bbar$ when acting on $\Gbar$.
It follows that the natural action of $\sG$ on $\X^*(\Tbar)$ given by
\[(\upgam\hs\chi)(t)=\upgam\hs(\chi(\hs^{\gamma^{-1}}\hmm t)\hs)
              \quad\text{for }\gamma\in\sG,\ \chi\in\X^*(\Tbar),\ t\in\Tbar(k),\]
 preserves the set of simple roots $\Sm=\Sm(G,T,B)=\Sm(\Gbar,\Tbar,\Bbar)\subset\X^*(\Tbar)$.
Since $\Gbar$ is a semisimple group of adjoint type,
the set of simple roots $\Sm$ is a basis of the character group $\X^*(\Tbar)$;
cf.~Springer \cite[Section 2.15]{Springer-RG}.
It follows that the Galois group permutes the elements of the basis $\Sm$ of $\X^*(\Tbar)$,
hence  $\Tbar_q$ is a quasi-trivial torus, and therefore, $H^1(k_0,\Tbar_q)=1$;
see Sansuc \cite[Lemma 1.9]{Sansuc}.
Now from the exact sequence \eqref{e:exact-Sansuc} we see that $\ker\iota_*=1$, as required.
\end{proof}

\begin{example}
  In Proposition \ref{t:G/U}, let $k_0=\R$, $k=\C$, $G=\SL_{2m,\C}$,
  and $G_0=\SU(s, 2m-s)$, where $0\le s\le 2m)$. Then the homogeneous
  space $G/U$ admits a $G_0$-equivariant $\R$-model if and only if
  $s\equiv m\pmod{2}$. Indeed, we have $H^2(\R,Z_0)=\{\pm 1\}$. By
  the tables in the appendix to \cite{MJTB}, we have
  $t(\SU(s, 2m-s)\hs)=(-1)^{m-s}$, and we apply  Proposition \ref{t:G/U}.
\end{example}

\begin{example}
  Let $G=\SL_3(\C)$, let $B$ be the subgroup of upper triangular
  matrices in $G$, and let $T$ be the subgroup of diagonal matrices.
  Let $\alpha_1,\alpha_2$ be the simple roots. The fundamental weights
  $\omega_1$, $\omega_2$ form a basis of the character lattice
  $\X^*(B)$.

 Let $W_3=\C^3$ with basis $e_0,e_1,e_2$ and coordinates $(x_0,x_1,x_2)$ in this basis.
 Let $W_4=W_3\oplus\C$ with coordinates $(x_0,x_1,x_2,x_3)$.
 The group $G=\SL_3(\C)=\SL(W_3)$ naturally acts on $W_3$, and thus it acts on $W_4$.
 We write
 \[\Pd^3_{(x_0:x_1:x_2:x_3)} =\Pd(W_4)=(W_4\smallsetminus\{0\})/\C^\times;\]
 then $G$ naturally acts on $\Pd^3_{(x_0:x_1:x_2:x_3)}$\,.

Let $W_3'$ denote the dual space to $W_3$ with coordinates $(y_0,y_1,y_2)$.
The group $G=\SL(W_3)$ naturally  acts on $W_3'$; namely, a matrix
$A\in \SL(W_3)=\SL_3(\C)$ acts on $W_3'$ as $(A^\tr)^{-1}$.
We write $\Pd^2_{(y_0:y_1:y_2)}=\Pd(W_3')$ and consider the spherical embedding
\[Y^e =\Pd^3_{(x_0:x_1:x_2:x_3)} \times \Pd^2_{(y_0:y_1:y_2)}\,.\]
Then $G$ naturally acts on $Y^e$.

One can check that $\sX = \langle \omega_1, \omega_2\rangle_\Zd = \X^*(B)$.
We write $(a, b) \in V$ for the point of $V$ with coordinates $a, b$
in  the  basis $\{\alpha^\vee_1,\alpha^\vee_2\}$ of $V$
(which is dual to the basis  $\{\omega_1,\omega_2\}$ of  $\sX$).
We have $B$-invariant divisors
\begin{align*}
  D_1 &= \Vd(y_0) \text{ with valuation $v_1 = (1, 0)$, not $G$-invariant,}\\
  D_2 &= \Vd(x_2) \text{ with valuation $v_2 = (0, 1)$, not $G$-invariant,}\\
  D_3 &= \Vd(x_3) \text{ with valuation $v_3 = (1, -1)$, $G$-invariant,}\\
  D_4 &= \Vd(x_0y_0+x_1y_1+x_2y_2) \text{ with valuation $v_4 = (-1, 0)$, $G$-invariant,}
\end{align*}
where for a bihomogeneous function $f$ on $Y^e$, $\Vd(f)$ denotes the
subvariety of zeros of $f$. We have
\[
\Dm=\{D_1,D_2\}\quad\text{with}\quad
\vs(D_1) = \{\alpha_1\},\  \vs(D_2) = \{\alpha_2\},\
\rho(D_1) = v_1\hs,\   \rho(D_2)=v_2\hs.
\]
We see that $\Omt=\emptyset$
and $\Omega=\Omone = \{(v_1, \{\alpha_1\}), (v_2, \{\alpha_2\})\}$.

The open $G$-orbit is $Y^e \smallsetminus (D_3 \cup D_4)$, and we may take
\begin{align*}
  H \coloneqq \Stab_{G}\rleft(\, (1\hm\hm:\hm\hm0\hm\hm:\hm\hm0\hm\hm:\hm\hm1),\hs
  (1\hm\hm:\hm\hm0\hm\hm:\hm\hm0)\,\rright)
  = \rleft\{\begin{pmatrix} 1&0&0\\0&*&*\\0&*&* \end{pmatrix}\rright\}\text{;}
\end{align*}
hence $G/H$ is the homogeneous space from Example~\ref{e:SL3},
  and we obtain $\Sigma = \{\alpha_1+\alpha_2\}$.

It can be shown that the maximal colored cones in $\CF(Y^e)$ are
\begin{align*}
  (\cone(v_3, v_4), \emptyset) \text{ and } (\cone(v_2, v_4), \{D_2\})\text{.}
\end{align*}

If we consider the  $\R$-model $G_0=\SU(2,1)$ of $\SL_3(\C)$,
then the corresponding $\sG$-action on $\sX=\X^*(B)$
swaps $\omega_1$ and $\omega_2$,
and the corresponding $\sG$-action on $V$
swaps $\alpha_1^\vee$ and $\alpha_2^\vee$.
We see that the $\sG$-action preserves $\sX$, $\Vm$, $\Omone$, and $\Omt$.
By Theorem \ref{t:sphqs}, the open $G$-orbit
admits a $G_0$-equivariant $\R$-model $Y_0$,
because $G_0=\SU(2,1)$ is quasi-split.

On the other hand, since $\Omt=\emptyset$, we see that
the canonical surjective map $\Dm\to\Omega$ is bijective,
and so the $\sG$-action on $\Omega$
lifts {\em uniquely} to a  $\sG$-action on $\Dm$.
Thus $\sG$ naturally acts on the set of possible colored cones in $V$.
Clearly, the set of two cones
\[\{\cone(v_3, v_4),\ \cone(v_2, v_4)\}\]
(where we forget about the colors) is not $\sG$-stable,
and hence, the colored  fan  $\CF(Y^e)$ is not $\sG$-stable.
Thus condition (i) of Theorem \ref{t:main2} is not satisfied.
By Corollary \ref{c:main2}, $Y^e$ does not admit a $G_0$-equivariant $\R$-model.
\end{example}

\begin{example}
Let $G=\SL_{6,\C}$  with root system  $\Af_5$.
Let $\alpha_1,\dots,\alpha_5$ denote the simple roots,
and $\omega_1,\dots,\omega_5$ denote the fundamental weights.
Consider the combinatorial invariants
\begin{align*}
\sX &= \langle \alpha_1,\hs \omega_3,\hs \alpha_5 \rangle\text{,}\\
\Vm &= \{v \in V \hs\mid\hs \langle v, \alpha_1 \rangle \le 0
    \text{ and } \langle v, \alpha_5 \rangle \le 0\}\text{,}\\
\Omone &= \{(\alpha_2^\vee, \{\alpha_2\}), (\alpha_3^\vee, \{\alpha_3\}),
    (\alpha_4^\vee, \{\alpha_4\})\}\text{,}\\
\Omt &= \{(\tfrac{1}{2}\alpha_1^\vee, \{\alpha_1\}),
   (\tfrac{1}{2}\alpha_5^\vee, \{\alpha_5\})\text{.}
\end{align*}
Let $H\subset G$ be a spherical subgroup corresponding to these invariants.

We describe the set of colors $\Dm$.
Let $(v_1, v_3, v_5)$ be the  basis of $V$ dual  to the basis
$(\alpha_1, \hs \omega_3,\hs \alpha_5)$ of $\sX$.
Then we have
\begin{align*}
   &\rho(D_1^+) = \rho(D_1^-) =\half\alpha_1^\vee|_\sX= v_1\hs,
   \quad \varsigma(D_1^+) = \varsigma(D_1^-) = \{\alpha_1\}\text{,}\\
   &\rho(D_5^+) = \rho(D_5^-) =\half\alpha_5^\vee|_\sX= v_5\hs,
   \quad \varsigma(D_5^+) = \varsigma(D_5^-) = \{\alpha_5\}\text{,}\\
   &\rho(D_3)=\alpha_3^\vee|_\sX= v_3\hs, \ \quad\qquad\qquad\ \ \;
   \varsigma(D_3) = \{\alpha_3\}\text{,}\\
   &\rho(D_2)=\alpha_2^\vee|_\sX= -v_1\hs, \ \quad\qquad\qquad \varsigma(D_2) = \{\alpha_2\}\text{,}\\
   &\rho(D_4)=\alpha_4^\vee|_\sX= -v_5\hs, \ \quad\qquad\qquad\varsigma(D_4) = \{\alpha_4\}\text{.}
\end{align*}

Let $\Dm^{\ste{1}}$ and $\Dm^{\ste{2}}$ denote the preimages
in $\Dm$ of $\Omone$ and $\Omt$, respectively.
We have $\Dm=\Dm^{\ste{1}}\cup\Dm^{\ste{2}}$.
We describe the sets $\Dm^{\ste{1}}$ and $\Dm^{\ste{2}}$.
Write
\[\Dbar_2=(\rho(D_2),\vs(D_2))\in \Omone,\
  \Dbar_3=(\rho(D_3),\vs(D_3))\in \Omone,\
  \Dbar_4=(\rho(D_4),\vs(D_4))\in \Omone;\]
 then
\[\Omone=\{\Dbar_2,\Dbar_3,\Dbar_4\}\cong\Dm^{\ste{1}}=\{D_2,D_3,D_4\}.\]
Write
\begin{align*}
\Dbar_1&=(\rho(D_1^+),\vs(D_1^+))=(\rho(D_1^-),\vs(D_1^-))\in \Omt,\\
\Dbar_5&=(\rho(D_5^+),\vs(D_5^+))=(\rho(D_5^-),\vs(D_5^-))\in \Omt;
\end{align*}
 then
\[\Omt=\{\Dbar_1,\Dbar_5\}\quad\text{and} \quad \Dm^{\ste{2}}=\{D_1^+,D_1^-,D_5^+,D_5^-\} .\]

We compute $A$ and $A^\kk$.
We use the notation of Construction \ref{con:Sigma} in Appendix \ref{s:A} below.
We have   $\Sigma = \{\alpha_1, \alpha_5\}$ and hence, $\Sigmat=\Sigma$.
It follows that
\begin{align*}
\Sigmac =  \Sigma = \{\alpha_1, \alpha_5\}\quad\text{ and }
\quad  \Sigma^N = \{2\alpha_1, 2\alpha_5\}\text{.}
\end{align*}
By Proposition \ref{p:Losev} and Theorem \ref{p:ker}, we have
$A(k) \cong \Hom(\sX/\langle\Sigma^N\rangle, k^\times)$ and
\[ A^{\kk}(k) \cong \Hom(\sX/\langle\Sigmac\rangle, k^\times)
     =\Hom(\sX/\langle\Sigma\rangle, k^\times)\cong
     \Hom(\langle\omega_3\rangle,k^\times)\cong k^\times\text{.}\]

Let $G/H\into Y^e$ be the spherical embedding corresponding to the colored fan
with unique maximal colored cone $(\Cm,\Fm)$, where
\begin{align*}
  \Cm=\cone(-v_1+v_3-v_5, v_1, v_5),\quad \Fm=\{D_1^+, D_5^-\}\text{.}
\end{align*}
Let $k_0=\R$, $k=\C$,  and let $G_j=\SU_{6-j,j}$ for $j=0,1,2,3$.
Then $G_j$ is an  $\R$-model of $G=\SL_{6,\C}$.
We ask whether $Y^e$ admits a $G_j$-equivariant $\R$-model.

For any $j=0,1,2,3$, the group $G_j$ is an inner form of the quasi-split group $G_3=\SU_{3,3}$.
We see that all groups $G_j$ define the same (nontrivial) Galois action
on the Dynkin diagram $\Dyn(G)=\Af_5$.
Namely, the complex conjugation $\gamma\in\sG=\Gal(\C/\R)$ acts on $\Af_5$
by the nontrivial automorphism of $\Af_5$.
When acting on $V$, $\gamma$ swaps $v_1$ and $v_5$ and fixes~$v_3$.
We see that $\gamma$ preserves the cone $\Cm$.
When acting on $\Omega$, the complex conjugation $\gamma$ swaps $\Dbar_2$ and $\Dbar_4$,
it swaps $\Dbar_1$ and $\Dbar_5$, and it fixes $\Dbar_3$.
Now we ask whether $\gamma$ preserves the colored cone $(\Cm,\Fm)$,
that is, whether $\gamma$ preserves $\Fm=\{D_1^+,D_5^-\}$.
This depends on a lift to $\Dm$ of the $\sG$-action on $\Omega$.

Clearly, the Galois action on $\Omega$ can be lifted to an action on $\Dm$
that does not preserve the subset $\Fm=\{D_1^+,D_5^-\}$;
namely, we can take the action of the complex conjugation $\gamma$ on $\Dm$
that swaps $D_1^+$ and $D_5^+$ and swaps $D_1^-$ and $D_5^-$.
However, we can also lift the action of $\gamma$ on $\Omega$ to the action on $\Dm$
that swaps $D_1^+$ and $D_5^-$ and swaps $D_1^-$ and $D_5^+$.
Then we obtain a $\sG$-action that does preserve the  subset $\Fm=\{D_1^+,D_5^-\}$,
and hence preserves our colored fan.
We see that condition (i) of Theorem~\ref{t:main2} is satisfied for any $j=0,1,2,3$.

We check condition (ii) of Theorem~\ref{t:main2} for $G_j$.
We have $\Gtil=G$, $Z(\Gtil)=Z(G)=\mu_6=\langle\zeta\rangle$,
where $\zeta$ is a primitive 6th root of unity.
Note that  $\sG$ acts trivially on $Z(\Gtil)$.
By Corollary~\ref{c:T-trivial} below, we have
\[H^2(\sG, Z(\Gtil))=\langle\zeta\rangle/\langle\zeta^2\rangle\cong\{\pm1\}.\]
We have $\X^*(A^\kk)\cong\langle \omega_3\rangle$.
Since the complex conjugation $\gamma$ fixes $\omega_3$,
we see that $A_q^\kk\simeq \G_{m,\R}$.
By Corollary \ref{c:T-Brauer} below, we have
\[H^2(\R,A_q^\kk)\cong H^2(\R,\G_{m,\R})=\R^\times/\R^\times_+\cong\{\pm 1\}.\]
Since $\omega_3$ is the highest weight of the representation
of $G=\SL_{6,\C}$ in $\Lambda^3(\C^6)$,
we see that the restriction of the homomorphism
\[ \omega_3\colon T\to \G_m\]
maps the generator $\zeta$ of $Z(G)=\mu_6$ to $\zeta^3=-1\in\R^\times$,
and hence, it maps the nontrivial class
$[\zeta]\in H^2(\sG, Z(\Gtil))$ to the nontrivial class $[-1]\in H^2(\R,A_q^\kk)$.
It follows that the map
\[\vktil_*\colon H^2(\sG, Z(\Gtil_q))\to H^2(\R,A_q^\kk)\cong\{\pm1\}\]
is an isomorphism.
Thus $\vktil_*(t(\Gtil_j))=1$ if and only if $t(\Gtil_j)=1$.
We see from Table 2 in the appendix to \cite{MJTB} that $t(\Gtil_j)=(-1)^{3-j}$.
Thus condition (ii) of Theorem~\ref{t:main2} for $G_j$
is satisfied if and only if  $j=3$ or $j=1$.
Since the colored fan of $Y^e$ has only one maximal colored cone, the variety $Y^e$
contains exactly one closed $G$-orbit, hence is quasi-projective
by Sumihiro \cite[Lemma~8]{sum74}.
By Theorem \ref{t:main2} and Corollary \ref{c:main2}, the spherical embedding  $Y^e$
admits a $G_j$-equivariant $\R$-model if and only if $j=3$ or $j=1$, that is,
if and only if our group $G_j$ is $\SU_{3,3}$ or $\SU_{5,1}$.

A similar (but easier) calculation shows that $Y^e$ admits an $\SL_{6,\R}$-equivariant $\R$-model,
but has no $\SL_{3,\Hd}$-equivariant $\R$-model,
where $\Hd$ denotes the division algebra of Hamilton's quaternions.
\end{example}

\begin{lemma} \label{l:Tate}
Let $\sG=\{1,\gamma\}$ be a group of order two, and let $A$
be an abstract abelian group written multiplicatively,
with an action of $\sG$. Write $A^\sG$ for the subgroup of $\gamma$-fixed points in $A$.
Then there is  a canonical and functorial in $A$ isomorphism
\[ H^2(\sG,A)\isoto A^\sG/\{a\cdot\upgam a \hs\mid\hs a\in A\}.\]
\end{lemma}

\begin{proof}
  See Atiyah and Wall \cite[Section~8, formulas before Theorem~5]{aw67}.
\end{proof}

\begin{corollary}[well-known]
\label{c:T-trivial}
If in Lemma \ref{l:Tate} the group $\sG$ acts on $A$ trivially, then $H^2(\sG,A)=A/A^2$,
where $A^2=\{a^2 \mid a\in A\}$.
\end{corollary}

\begin{corollary}[extremely well-known]
\label{c:T-Brauer}
If in Lemma \ref{l:Tate} \hs $\sG=\Gal(\C/\R)$, then
\[H^2(\R,\G_{m,\R})=\R^\times/\{z\cdot\upgam z\hs \mid\hs z\in \C^\times\}=
\R^\times/\R^\times_+\cong\{\pm1\}.\]
\end{corollary}

\appendix

\section{Rational points}
\label{s:G/U}

In this appendix we prove the following assertion:

\begin{proposition}\label{p:unip-dense}
Let $k_0$, $k$, and $\sG$ be as in Subsection \ref{ss:mod-Y}.
Let
\[f_0\colon X_0\to Y_0\]
be a  dominant  morphism of $k_0$-varieties.
Assume that a connected unipotent group $U_0$ defined over~$k_0$ acts on $X_0$ such that
for any $y\in f_0(X_0(k))\subset Y_0(k)$,
the fiber $f_0^{-1}(y)$ is an orbit of $U_0(k)$.
We write $X=X_0\times_{k_0} k$ and $Y=Y_0\times_{k_0} k$. If the set
of $k_0$-points $Y_0(k_0)$ is Zariski-dense in~$Y$, then the set of
$k_0$-points $X_0(k_0)$ is Zariski-dense in $X$.
\end{proposition}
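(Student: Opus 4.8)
The plan is to reduce the density statement to the case of a torsor under a split connected unipotent group, which is well understood. First I would fix a $k_0$-point $y_0\in Y_0(k_0)$; since $Y_0(k_0)$ is Zariski-dense in $Y$, there are enough such points, and it suffices to show that for a dense set of $y_0\in Y_0(k_0)$ the fiber $(f_0^{-1}(y_0))(k_0)$ is nonempty and dense in the fiber $f_0^{-1}(y_0)$ over $k$ (this last implication needs a small argument: a dominant morphism, a dense set of points in the target each carrying a dense set of rational points in its fiber, forces density in the source — one checks it on an affine open and uses that $\overline{X_0(k_0)}$ is a closed $k_0$-subvariety of $X$ whose image is dense in $Y$, hence by a dimension/irreducibility count it must be all of $X$).

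\medskip

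The heart of the matter is therefore: if $Z_0$ is a $k_0$-variety on which a connected unipotent $k_0$-group $U_0$ acts simply transitively on $Z=Z_0\times_{k_0}k$ (i.e.\ $Z_0$ is a $U_0$-torsor, which is what "the fiber is a single $U_0(k)$-orbit" gives once it is nonempty), and $Z_0(k_0)\neq\varnothing$, then $Z_0(k_0)$ is dense in $Z$. This is where I would invoke the structure theory: over a field of characteristic $0$, a connected unipotent group $U_0$ is split, hence $k_0$-isomorphic as a variety to affine space $\Ad^m_{k_0}$, and $H^1(k_0,U_0)=\{1\}$ (a standard consequence of $H^1(k_0,\Ga)=0$ and dévissage along a central filtration with $\Ga$-quotients). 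Thus every $U_0$-torsor over $k_0$ with a $k_0$-point is trivial, so $Z_0\cong U_0$ as $k_0$-varieties, and $Z_0(k_0)\cong U_0(k_0)$ is Zariski-dense in $U_0\times_{k_0}k$ because $U_0\cong\Ad^m$ and $k_0$ is infinite (characteristic $0$). It remains to see that a dense set of $y_0\in Y_0(k_0)$ actually lie in $f_0(X_0(k))$ so that the fiber is nonempty: the image $f_0(X_0)$ contains a dense open $V_0\subset Y_0$ by Chevalley, and $Y_0(k_0)\cap V_0$ is still dense since $Y_0(k_0)$ is dense and $V_0$ is open; for such $y_0$ the fiber $f_0^{-1}(y_0)$ is nonempty over $k$, and being a single $U_0(k)$-orbit it is a (nonempty) $U_0$-torsor over $k_0$. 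But a $U_0$-torsor over $k_0$ need not a priori have a $k_0$-point — except that $H^1(k_0,U_0)=\{1\}$ guarantees it does.

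\medskip

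The main obstacle, and the only place genuine input is needed, is precisely the vanishing $H^1(k_0,U_0)=\{1\}$ for a connected unipotent group in characteristic $0$, together with the fact that such a $U_0$ is a split group and hence isomorphic as a $k_0$-\emph{variety} to affine space; these ensure both that the fibers have rational points and that those rational points are dense. Everything else is bookkeeping: passing between the scheme-theoretic fiber and its reduced structure, reducing to an affine situation, and the standard fact that $\overline{X_0(k_0)}$, being a $k_0$-closed subvariety of $X$ dominating $Y$ and meeting every relevant fiber in a dense subset, must equal $X$ by a comparison of dimensions on each irreducible component. I would organize the write-up as: (1) reduce to showing the generic fiber over a $k_0$-point is a trivial torsor with dense rational points; (2) cite characteristic-$0$ structure theory for $H^1(k_0,U_0)=1$ and $U_0\cong_{\mathrm{var}}\Ad^m$; (3) conclude density in $X$ by the dimension argument.
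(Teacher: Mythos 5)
There is a genuine gap in the central reduction. You read the hypothesis ``the fiber $f_0^{-1}(y)$ is an orbit of $U_0(k)$'' as saying the fiber is a $U_0$-\emph{torsor}, but an orbit is only a \emph{homogeneous} space: the stabilizer $U'=\Stab_U(x)$ of a $k$-point $x$ in the fiber may be a nontrivial unipotent subgroup of $U=U_0\times_{k_0}k$, and this $U'$ need not even be defined over $k_0$ when the fiber has no $k_0$-point. Your argument for producing a $k_0$-point in the fiber relies on $H^1(k_0,U_0)=\{1\}$, which trivializes \emph{torsors}, but a homogeneous space is not a torsor; $H^1$ vanishing alone does not give a rational point on a form of $U/U'$. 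The missing ingredient is exactly the nonabelian second Galois cohomology: the fiber $F_0$ defines a $k_0$-kernel $\kappa$ and an obstruction class $\eta(F_0)\in H^2(k_0,U',\kappa)$, and one must first show this class is \emph{neutral} in order to obtain a $U_0$-torsor $P_0$ dominating $F_0$. Only then does $H^1(k_0,U_0)=\{1\}$ produce a $k_0$-point of $P_0$, whose image is a $k_0$-point of $F_0$. The neutrality of $\eta(F_0)$ for unipotent $U'$ is a nontrivial theorem (Douai); this is precisely what the paper invokes in its Lemma~\ref{l:unip-hom}, and it is the part your proof omits.

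Once a $k_0$-point $y_0$ on the fiber $F_0$ is secured, the rest of your approach is sound and essentially matches the paper: $U_0(k_0)$ is Zariski-dense in $U$ (characteristic $0$, $U_0$ split, hence $U_0\cong\Ad^m$ as a $k_0$-variety), so $U_0(k_0)\cdot y_0$ is dense in the fiber. The paper's final step is slightly cleaner than your proposed dimension comparison: given any nonempty open $\sU_X\subset X$, its image contains a nonempty open $\sU_Y\subset Y$; density of $Y_0(k_0)$ gives a $k_0$-point $y_0\in\sU_Y$; and density of $F_0(k_0)$ in the fiber $F$ over $y_0$ gives a $k_0$-point in $\sU_X\cap F$. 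This avoids any irreducibility or dimension bookkeeping on $X$. But the decisive gap to close is the homogeneous-space versus torsor distinction and the use of the nonabelian $H^2$ neutrality theorem.
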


To prove the proposition, we need a definition and a lemma.

\begin{definition}\label{d:torsor}
  Let $k_0$, $k$, and $\sG$ be as in Subsection \ref{ss:mod-Y}.
  Let $G_0$ be a linear algebraic group over $k_0$. A \emph{homogeneous
    space} of $G_0$ is a $G_0$-$k_0$-variety $Y_0$ such that $G_0(k)$
  acts transitively on $Y_0(k)$. A \emph{principal homogeneous space
    (torsor)} of $G_0$ is a homogeneous space $P_0$ of $G_0$ such that
  $G_0(k)$ acts simply transitively on $P_0(k)$, that is, the
  stabilizer of a point $p\in P_0(k)$ is trivial. By a torsor
  \emph{dominating} a homogeneous space $Y_0$ we mean a pair
  $(P_0,\alpha_0)$, where $P_0$ is a torsor of $G_0$ and
  $\alpha_0\colon P_0\to Y_0$ is a $G_0$-equivariant morphism.
\end{definition}

\begin{subsec}\label{ss:nonab-H2}
  For a given homogeneous space $Y_0$ of $G_0$, we ask whether there
  exists a torsor dominating it as in Definition~\ref{d:torsor}. Let
  $y\in Y_0(k)$ and set $H=\Stab_G(y)$, where $G=G_0\times_{k_0} k$.
  The homogeneous space $Y_0$ defines a $k_0$-kernel $\kappa$ and a
  cohomology class
\[\eta(Y_0)\in H^2(k_0,H,\kappa);\]
see Springer \cite[Section 1.20]{spr66} or Borovoi \cite[Sections~7.1 and
7.7]{Borovoi-Duke}. There exists a pair $(P_0,\alpha_0)$ dominating $Y_0$ as in
Definition \ref{d:torsor} if and only if the class $\eta(Y_0)$ is
neutral (there can be more than one neutral class).
\end{subsec}

\begin{lemma}\label{l:unip-hom}
Let $k_0$, $k$, and $\sG$ be as in Subsection \ref{ss:mod-Y}.
Let $U_0$ be a connected unipotent group over $k_0$, and let $Y_0$ be a homogeneous space of $U_0$.
Then $Y_0$ has a $k_0$-point, and $k_0$-points are dense in $(Y_0)_k$.
\end{lemma}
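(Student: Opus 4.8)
The statement to prove is Lemma~\ref{l:unip-hom}: a homogeneous space $Y_0$ of a connected unipotent $k_0$-group $U_0$ (over a field $k_0$ of characteristic~$0$) has a $k_0$-point, and $k_0$-points are dense in $(Y_0)_k$.

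The plan is to reduce to the cohomological obstruction described in Subsection~\ref{ss:nonab-H2}. Fix $y\in Y_0(k)$ and let $H=\Stab_G(y)$ with $G=(U_0)_k$. Since $U_0$ is connected unipotent and $\charr k_0=0$, the subgroup $H$ is also connected unipotent; moreover $U_0$ acts transitively on $(Y_0)_k$, so $Y_0$ is indeed a homogeneous space and defines a $k_0$-kernel $\kappa$ and a class $\eta(Y_0)\in H^2(k_0,H,\kappa)$. The key point is that for \emph{unipotent} $H$ over a field of characteristic~$0$, every such second nonabelian cohomology class is neutral. This can be seen by d\'evissage along a central series of $H$ defined over $k_0$ (using that $\charr k_0=0$, so one may take the derived/lower central series with successive quotients vector groups $\G_a^{m}$): the obstruction lives in a tower of abelian groups $H^2(k_0,\G_a^{m})$, and since $H^i(k_0,\G_a)=0$ for $i\ge 1$ (the additive group has trivial higher Galois cohomology over any field), the class $\eta(Y_0)$ must be neutral. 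Hence by Subsection~\ref{ss:nonab-H2} there is a torsor $(P_0,\alpha_0)$ of $U_0$ dominating $Y_0$.

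Next I would invoke the fact that a torsor $P_0$ under a connected unipotent $k_0$-group is trivial, i.e. $P_0\cong U_0$ as a $U_0$-variety and in particular $P_0(k_0)\ne\varnothing$. This again follows from $H^1(k_0,\G_a)=0$ by the same d\'evissage, or one can cite it directly (it is classical; e.g. Serre, or Sansuc). Then $\alpha_0\colon P_0\to Y_0$ is a $U_0$-equivariant morphism with $P_0(k_0)\ne\varnothing$, so $Y_0(k_0)\supseteq\alpha_0(P_0(k_0))\ne\varnothing$, which gives the first assertion.

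For density, once $Y_0(k_0)\ne\varnothing$ we may identify $Y_0$ with $U_0/H_0$ for a connected unipotent $k_0$-subgroup $H_0\subset U_0$ (the stabilizer of a $k_0$-point, which is defined over $k_0$). In characteristic~$0$ the quotient map $U_0\to U_0/H_0$ admits a section as $k_0$-varieties — indeed $U_0$ is $k_0$-isomorphic as a variety to affine space and $U_0\to U_0/H_0$ is a vector-bundle-like fibration over the affine space $U_0/H_0$, so it has a rational (in fact a regular) section; alternatively use that $U_0(k_0)\to (U_0/H_0)(k_0)$ is surjective because $H^1(k_0,H_0)=0$. Since $U_0(k_0)$ is Zariski-dense in $(U_0)_k$ (again, $U_0\cong_{k_0}\A^N$ and $k_0$ is infinite in characteristic~$0$), its image $Y_0(k_0)$ under the dominant map $U_0\to Y_0$ is Zariski-dense in $(Y_0)_k$, completing the proof. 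The main obstacle is the first step — verifying that the nonabelian obstruction class $\eta(Y_0)$ is neutral — but the d\'evissage to $H^*(k_0,\G_a)=0$ makes this routine; everything afterwards is standard unipotent-group geometry over a characteristic-zero field.
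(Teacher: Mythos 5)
Your proof follows essentially the same route as the paper's: reduce to the neutrality of the nonabelian obstruction class $\eta(Y_0)\in H^2(k_0,H,\kappa)$ from Subsection~\ref{ss:nonab-H2}, deduce the existence of a dominating torsor $(P_0,\alpha_0)$, show that $P_0$ is trivial because $H^1$ of a unipotent group vanishes in characteristic $0$, and conclude density from the Zariski density of $U_0(k_0)$ in $U$ together with surjectivity of the orbit map.

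The one genuine divergence is at the neutrality step. Where you propose a d\'evissage of the obstruction class along a $\kappa$-stable central series with vector-group quotients, the paper simply cites Douai's theorem (\cite[IV-1.3]{Douai}; see also \cite[Corollary 4.2]{Borovoi-Duke}) that every class in $H^2(k_0,H,\kappa)$ is neutral for unipotent $H$ in characteristic $0$. Your d\'evissage is morally correct, but be aware that it is not quite as routine as you suggest: the class $\eta(Y_0)$ lives in \emph{nonabelian} $H^2$ with a band $\kappa$, which does not automatically break up into a tower of abelian $H^2$'s along a central series --- one has to check that the band is compatible with the filtration, track how neutrality behaves under the extension of a subgroup and quotient, and handle the fact that neutral classes need not be unique. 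Doing this carefully is essentially the content of Douai's theorem, so what you have sketched is a proof-outline of the cited result rather than a shortcut around it. Your density argument is fine; the digression about constructing a section of $U_0\to U_0/H_0$ is unnecessary (and asserting a regular section is stronger than needed), since density of $U_0(k_0)$ in $U$ plus dominance of the orbit map already gives the conclusion, exactly as in the paper.
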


\begin{proof}
Write $U=U_0\times_{k_0} k$.
Let $y\in Y_0(k)$ be a $k$-point, and  set $U'=\Stab_U(y)$;
then $U'$ is a unipotent $k$-group.
Consider
\[\eta(Y_0)\in H^2(k_0,U',\kappa)\]
as in \ref{ss:nonab-H2}.
Since $U'$ is unipotent, by Douai's theorem \cite[Theorem IV.1.3]{Douai},
see also \cite[Corollary 4.2]{Borovoi-Duke},
all elements of $H^2(k_0,U',\kappa)$ are neutral.
Since $\eta(Y_0)$ is neutral, there exists a pair $(P_0,\alpha_0)$
as in Definition \ref{d:torsor}.

The torsor $P_0$ of $U_0$ defines a cohomology class
\[ \xi(P_0)\in H^1(k_0, U_0).\]
Since $U_0$ is unipotent and ${\rm char}(k_0)=0$,
we have $H^1(k_0,U_0)=\{1\}$; see Serre  \cite[III.2.1, Proposition 6]{ser97}.
Thus the class $\xi(P_0)$ is neutral, and hence $P_0$ has a $k_0$-point $p_0$.
Then the point $y_0=\alpha_0(p_0)$ is a $k_0$-point of $Y_0$.

Since our field $k_0$ is of characteristic 0, it is perfect and infinite,
and by Borel \cite[Corollary 18.3]{bor91}
the set $U_0(k_0)$ is dense in $U$.
It follows that
\[ \{u\cdot y_0\hs \mid\hs u\in U_0(k_0)\}\subset Y_0(k_0)\]
is a dense set of $k_0$-points in $(Y_0)_k$, as required.
\end{proof}

\begin{remark}
For any unipotent group $U_0$ (which we do not assume to be connected)
over our field $k_0$ of characteristic 0,
there exists the exponential map
\[ {\rm \exp}\colon \Lie(U_0)\to U_0\hs,\]
which is an isomorphism of $k_0$-varieties. It follows
that $U_0$ is isomorphic to the affine space $\Lie(U_0)$ as a $k_0$-variety.
In particular, $U_0$ is connected and the $k_0$-points are dense in $(U_0)_k$.
\end{remark}

\begin{subsec}{\em Proof of Proposition \ref{p:unip-dense}.}
Let $\sU_X\subset X$ be a non-empty  open subset.
We shall show that $\sU_X$ contains a $k_0$-point.
Since $f$ is dominant, $f(\sU_X)$ contains some open subset $\sU_Y$ in $Y$.
Since by assumption $Y_0(k_0)$ is dense in $Y$, there exists a $k_0$-point $y_0\in\sU_Y$.
Then $y_0\in f(X(k))$.

Set $F_0=f^{-1}(y_0)\subset X$; then the fiber $F_0$ is defined over $k_0$.
Write $F= F_0\times_{k_0} k$.
We know that the variety $F$ is non-empty because $y_0\in f(X(k))$.
Since by assumption $F_0$ is a homogeneous space of the unipotent group $U_0$,
by Lemma \ref{l:unip-hom} the set $F_0(k_0)$ is dense in $F$.

The set $\sU_F\coloneqq\sU_X\cap F$ is open in $F$,
and it is non-empty because $y_0\in\sU_Y\subset f(\sU_X)$.
Since $k_0$-points are dense in $F$,   there exists a $k_0$-point $x_0\in\sU_F\subset \sU_X$.
We conclude that the set  $X_0(k_0)$ is dense in $X$.
\qed \end{subsec}

 \section{The automorphism group of a spherical homogeneous space}
 \label{s:AqAq}

\begin{subsec}
  \label{ss:aut}
  In the setting of Subsection~\ref{ss:sph-qs},
  we recall the combinatorial description of the
  automorphism group \[\sA \coloneqq \Aut^G(G/H)\] due to Knop and
  Losev. For every $\alphA \in \sA$ and $\lambda \in \sX$, the
  automorphism $\alphA$ preserves the one-dimensional subspace
  $k(G/H)^{(B)}_\lambda$ of $B$-$\lambda$-semi-invariants in $k(G/H)$,
  that is, it acts on this space by multiplication by a scalar
  $d_{\alphA,\lambda}\in k^\times$. It is easy to see that in this way  we obtain a
  homomorphism
  \begin{align*}
    \iota\colon \sA \to \Hom(\sX, k^\times)\text{,}\quad
    \alphA \mapsto (\lambda \mapsto d_{\alphA,\lambda})\text{.}
  \end{align*}
  Let $T^\Xm$ denote the $k$-torus with character group $\Xm$; then
  the group $\Hom(\sX, k^\times)$ is naturally identified with the
  group $T^\Xm(k)$ of $k$-points of  $T^\sX$.
  Knop \cite[Theorem~5.5]{kno96}  showed that the homomorphism
  $\iota$ is injective and its image is closed in $T^\sX(k)$.
  It follows that this image corresponds to a sublattice
  $\Xi \subset \sX$ such that
  \begin{align*}
    \iota(\sA)=\big\{\phi \in \Hom(\sX, k^\times) \, \mid\, \phi(\lambda)=1
    \text{ for all }\lambda \in \Xi\subset\sX\big\}\text{.}
  \end{align*}
   We shall identify $\sA$ with its image
  $\iota(\sA)\subset T^\sX(k)$.
  Then $\sA$ is the group of $k$-points of a group of multiplicative type $A$
  over $k$ with character group $\X^*(A)=\sX/\Xi$.

  According to Losev \cite[Theorem~2]{los09a},
  there exist integers
  $(c_\gamma)_{\gamma\in\Sigma}$ equal to $1$ or $2$ such that the set
  \begin{align*}
    \Sigma^N =\{c_\gamma\cdot\gamma\}_{\gamma\in\Sigma}\subset \Xi
  \end{align*}
  is a basis of the lattice $\Xi$; see Appendix \ref{s:A} below. It
  follows that we have
  \begin{align*}
    \sA = \big\{\hs\psi \in \Hom(\sX, k^\times) \,\mid\, \psi(\Sigma^N) =
    \{1\}\hs\big\}\text{.}
  \end{align*}
  The coefficients $c_\gamma$ can be computed from the combinatorial invariants of~$G/H$;
  see Construction \ref{con:Sigma} and Proposition \ref{p:Losev} in Appendix \ref{s:A}.
  It follows that any automorphism of  the based root datum $\BRD(G)$
   preserving  $(\sX, \Vm, \Dm)$ also preserves~$\Sigma^N$.
  See Subsection \ref{ss:BRD} for the definition of $\BRD(G)$.
\end{subsec}
Consider  the canonical homomorphism
\[\pi\colon\hs\Gtil\onto [G,G]\into G,\]
where $\Gtil$ is the universal cover of the commutator subgroup $[G,G]$ of $G$.
By abuse of notation, we also denote by $\pi$ the induced homomorphism
of the centers  $Z(\Gtil)\to Z(G)$.
Every element $\ztil \in Z(\Gtil)(k)$ defines a $G$-equivariant
automorphism $\alphA_\ztil\colon G/H \to G/H$, $y \mapsto \pi(\ztil) \cdot y$.

\begin{prop}
  \label{p:111}
  $\iota(\alphA_\ztil)(\lambda)=\lambda(\pi(\ztil))$ \hs for all
  \hs$\ztil\in \Ztil(k),\, \lambda \in \sX\subset \X^*(T)$.
\end{prop}

\begin{proof}
    Let $f \in k(G/H)^{(B)}_\lambda$\hs.
    Write $z=\pi(\ztil)\in Z(G)(k)$.
    Since $z\in B(k)$, for every $y \in G/H$ we have
    \begin{align*}
          (\alphA_\ztil(f))(y) = f(z^{-1}\cdot y) = \lambda(z)f(x)\text{,}
    \end{align*}
    hence $\iota(\alphA_\ztil)(\lambda) = \lambda(z)=\lambda(\pi(\ztil))$, as required.
\end{proof}

\begin{corollary}\label{c:pi-Aq}
The homomorphism of abstract groups
\[Z(\Gtil)(k)\to Z(G)(k)\to\Am\into\Hom(\Xm,k^\times)\]
comes from a homomorphism of algebraic groups $Z(\Gtil)\to T^\Xm$,
where $T^\Xm$ denotes the algebraic $k$-torus with character group $\Xm$.
\end{corollary}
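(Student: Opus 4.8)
The plan is to write down the required homomorphism of algebraic groups explicitly and then to check, using Proposition~\ref{p:111}, that it induces the stated map on $k$-points. Recall that $Z(G)$ is a closed subgroup of the maximal torus~$T$, and that $\pi\colon\Gtil\onto[G,G]\into G$ induces a homomorphism on centers $\pi\colon Z(\Gtil)\to Z(G)$ (the image lies in $Z(G)$ because $Z([G,G])\subseteq Z(G)$ for the reductive group~$G$). Recall also that the inclusion of (finitely generated free) character lattices $\Xm\into\X^*(T)$ corresponds, under the standard anti-equivalence between $k$-tori and lattices, to a surjective homomorphism of $k$-tori $T\onto T^\Xm$ whose effect on $k$-points, under the canonical identification $T^\Xm(k)=\Hom(\Xm,k^\times)$, is restriction of characters: $t\mapsto(\lambda\mapsto\lambda(t))$. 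Composing, I obtain a homomorphism of algebraic $k$-groups
\[\vphi\colon\ Z(\Gtil)\labelto{\pi} Z(G)\into T\onto T^\Xm.\]

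It remains to identify the map on $k$-points induced by $\vphi$ with the abstract homomorphism of the statement. By the description above, $\vphi$ sends $\ztil\in Z(\Gtil)(k)$ to the character $\lambda\mapsto\lambda(\pi(\ztil))$ in $\Hom(\Xm,k^\times)$. On the other hand, the abstract composite $Z(\Gtil)(k)\to Z(G)(k)\to\Am\into\Hom(\Xm,k^\times)$ sends $\ztil$ to $\iota(\alphA_\ztil)$, where $\alphA_\ztil\in\sA$ is the $G$-equivariant automorphism $y\mapsto\pi(\ztil)\cdot y$ of $G/H$. Proposition~\ref{p:111} gives $\iota(\alphA_\ztil)(\lambda)=\lambda(\pi(\ztil))$ for every $\lambda\in\Xm$, so the two maps $Z(\Gtil)(k)\to\Hom(\Xm,k^\times)$ coincide. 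Since $k$ is algebraically closed of characteristic~$0$, a homomorphism of algebraic $k$-groups is determined by its induced map on $k$-points; hence the abstract homomorphism of the statement is the map on $k$-points of~$\vphi$, as required.

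The argument is purely formal once Proposition~\ref{p:111} is in hand, so I do not expect any genuine obstacle: the only non-generic input is that one computation of $\iota(\alphA_\ztil)$. As a side remark, the same comparison shows that $\vphi$ factors through the closed subgroup $A\subseteq T^\Xm$ of multiplicative type with character group $\sX/\Xi$, whose $k$-points are $\iota(\sA)=\Am$ (in characteristic $0$ both the scheme-theoretic image of $\vphi$ and $A$ are reduced closed subgroups of $T^\Xm$, so the inclusion of $k$-points upgrades to an inclusion of subgroups); this is the form in which the homomorphism will be used later.
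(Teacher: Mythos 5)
Your argument is correct and is essentially the proof the paper has in mind (the paper leaves it implicit, stating the corollary immediately after Proposition~\ref{p:111}): build the algebraic morphism $Z(\Gtil)\labelto{\pi}Z(G)\into T\onto T^\Xm$, where $T\onto T^\Xm$ is dual to the lattice inclusion $\Xm\into\X^*(T)$, and use Proposition~\ref{p:111} to identify its effect on $k$-points with the abstract composite; since in characteristic $0$ a $k$-group homomorphism is determined by its $k$-points, this suffices.
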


\begin{subsec}
From now on till the end of this appendix, we assume that the $\sG$-action
on $\X^*(B)$ and $\Sm$ preserves the combinatorial invariants $(\Xm,\Vm,\Dm)$ of $G/H$.

  Since $\sG$ acts continuously on $\sX$ and $k^\times$, we obtain a natural algebraic
  $\sG$-action on $T^\sX(k)=\Hom(\sX, k^\times)$ given by
  \[({}^\gamma\phi)(\lambda) =
    {}^\gamma(\phi(\hs^{\gamma^{-1}}\!\lambda))\quad \text{for every \ }
     \gamma \in \gal,\ \phi \in \Hom(\sX, k^\times),\ \lambda \in \sX.\]
   This action defines a $k_0$-model of the torus $T^\sX$.
   According to Subsection~\ref{ss:aut}, since the
  $\sG$-action preserves the combinatorial invariants
  $(\sX, \Vm, \Dm)$, it also preserves the set $\Sigma^N$.
  It follows that the subgroup
  \[A(k)=\iota(\sA)\subset \Hom(\sX, k^\times)=T^\sX(k)\]
  is $\sG$-invariant. We see that the subgroup $A$ of $T^\sX$
  is defined over $k_0$, and hence the
  group $A(k)$ with the induced $\sG$-action is the group of
  $k$-points of an algebraic $k_0$-group, which we denote by $A_q$.

  \begin{prop}
    \label{prop:qsaut}
    Let $G_\qs$ be a quasi-split inner form of $G_0$, and let $Y_\qs$  be a
    $G_\qs$-equivariant $k_0$-model of $G/H$ (it exists by Theorem \ref{t:sphqs}).
    The model $Y_\qs$ induces a
    $\sG$-action on $\sA$.
    Let $\Am_\qs$ denote $\Am$ with the $\sG$-action induced by $Y_\qs$.
    Then the embedding
    $\iota\colon\Am_\qs\into\Hom(\sX, k^\times)$ is $\sG$-equivariant.
  \end{prop}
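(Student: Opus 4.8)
The plan is to reduce the statement to a comparison of two $\sG$-actions on the group $\sA = \Aut^G(G/H)$: the ``geometric'' one coming from the model $Y_\qs$ via the formula \eqref{e:ggg-action}, and the ``combinatorial'' one on $\iota(\sA) \subset \Hom(\sX, k^\times)$ coming from the $*$-action of $\sG$ on $\sX$ together with the tautological action on $k^\times$. Since $\iota$ is an injective group homomorphism (Knop, \cite[Theorem~5.5]{kno96}), what must be shown is that $\iota$ intertwines these two actions, i.e.\ $\iota({}^\gamma a) = {}^\gamma(\iota(a))$ for every $\gamma \in \sG$ and $a \in \sA$, where the left superscript on the left is the action \eqref{e:ggg-action} induced by $Y_\qs$ and on the right is the action on $\Hom(\sX, k^\times)$ recalled just above Proposition~\ref{prop:qsaut}.

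First I would unwind both sides on a fixed $B$-semi-invariant rational function. Let $\mu\colon \sG \to \SAut(G/H)$ be the semilinear action attached to $Y_\qs$, and let $\sigma\colon \sG \to \SAut(G)$ be the one attached to $G_\qs$; these satisfy the equivariance $\mu_\gamma(g\cdot y) = \sigma_\gamma(g)\cdot\mu_\gamma(y)$. Fix $\lambda \in \sX$ and a nonzero $f \in k(G/H)^{(B)}_\lambda$. The key point is to track how $\mu_\gamma$ transforms $f$: pulling back $f$ along $\mu_\gamma^{-1}$ gives a rational function which is again a $B$-semi-invariant, but for the \emph{$*$-twisted} weight. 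Concretely, because $\sigma_\gamma$ can be taken (after the Construction~\ref{con:*-action} adjustment built into the $*$-action) to preserve $B$ and $T$ and to act on $\X^*(B)$ by $\veg = {}^\gamma(-)$, one checks that $\mu_\gamma^*(f)$ lies in $k(G/H)^{(B)}_{\hs^{\gamma^{-1}}\!\lambda}$ up to a scalar in $k^\times$. Then, for $a \in \sA$, the automorphism ${}^\gamma a = \mu_\gamma \circ a \circ \mu_\gamma^{-1}$ acts on $k(G/H)^{(B)}_\lambda$ by a scalar which one computes, using that $a$ acts on $k(G/H)^{(B)}_{\hs^{\gamma^{-1}}\!\lambda}$ by $d_{a,\,\hs^{\gamma^{-1}}\!\lambda} = \iota(a)(\hs^{\gamma^{-1}}\!\lambda)$, to equal $\gamma\bigl(\iota(a)(\hs^{\gamma^{-1}}\!\lambda)\bigr)$. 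That is exactly the value $\bigl({}^\gamma\iota(a)\bigr)(\lambda)$ by the displayed formula for the $\sG$-action on $\Hom(\sX, k^\times)$. Since $\iota$ is injective, this identifies the two actions on all of $\sA$, proving that $\iota\colon \Am_\qs \into \Hom(\sX, k^\times)$ is $\sG$-equivariant.

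I expect the main obstacle to be the careful bookkeeping in the previous paragraph: one must pin down precisely that the semilinear action $\mu_\gamma$, composed with the inverse action, sends the weight-$\lambda$ semi-invariant line to the weight-$({}^{\gamma^{-1}}\!\lambda)$ semi-invariant line and not to something shifted by the stray element $g_\gamma \in G(k)$ appearing in Construction~\ref{con:*-action}. The resolution is that such an inner twist $\inn(g_\gamma)$ fixes every line $k(G/H)^{(B)}_\mu$ setwise up to a scalar (indeed $g_\gamma$ normalizes $B$ only up to modifying by an element of $B(k)$, on which the semi-invariant scales by a nonzero constant), so it does not affect the weight; and the scalars in $k^\times$ that it introduces are irrelevant because we only care about the resulting character of $\sX$. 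A secondary subtlety is the potential ambiguity in the choice of $Y_\qs$ and of the lift $\alpha_\Dm$: but the $\sG$-action \eqref{e:ggg-action} on $\sA$ that $Y_\qs$ induces depends only on the $\sG$-action on $\sX$ (via the above computation), so it is canonical, and the argument goes through for any such $Y_\qs$.
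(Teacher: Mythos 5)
Your proof follows essentially the same skeleton as the paper's: unwind both $\sG$-actions on a fixed $B$-semi-invariant rational function, observe that the action coming from $Y_\qs$ shifts the weight line $k(G/H)^{(B)}_\lambda$ to $k(G/H)^{(B)}_{\hs^\gamma\lambda}$, and then chase scalars to get $\iota({}^\gamma a)(\lambda) = {}^\gamma(\iota(a)(\hs^{\gamma^{-1}}\lambda))$. That much is correct and matches the paper.

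However, your handling of the ``stray element $g_\gamma$'' from Construction~\ref{con:*-action} is a genuine gap. You claim that the inner twist $\inn(g_\gamma)$ ``fixes every line $k(G/H)^{(B)}_\mu$ setwise up to a scalar (indeed $g_\gamma$ normalizes $B$ only up to modifying by an element of $B(k)$)''. This is not true: if $g_\gamma \notin \sN_G(B)(k)$, then translating a $B$-semi-invariant by $g_\gamma$ produces a $g_\gamma B g_\gamma^{-1}$-semi-invariant — a function living on a \emph{different} family of weight lines, not the $B$-weight lines. So the claimed resolution does not close the loop, and without it the key step ``$\mu_\gamma^*(f) \in k(G/H)^{(B)}_{\hs^{\gamma^{-1}}\lambda}$'' is not established.

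The correct and much simpler resolution — and the one the paper uses — is to exploit the hypothesis that $G_\qs$ is \emph{quasi-split} directly: one may assume $G_0 = G_\qs$ and then choose $B$ to be the base change of a Borel subgroup $B_\qs \subset G_\qs$ defined over $k_0$. Then $\upgam B = B$ for all $\gamma \in \sG$, so $g_\gamma$ can be taken to be $1$ and there is no twist to worry about at all. With this choice of $B$, the identity $({}^\gamma f)(b^{-1}x) = (\hs^\gamma\lambda)(b)\cdot({}^\gamma f)(x)$ for $b \in B(k)$ follows immediately from the $B$-semi-invariance of $f$, and the rest of your computation goes through cleanly. You should replace the incorrect claim about $\inn(g_\gamma)$ with this explicit invocation of quasi-splitness; as written, the argument has a hole precisely at the point you flagged as ``the main obstacle''.
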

  \begin{proof}
  We may and shall assume that $G_0=G_\qs$.
  Let $B_\qs\subset G_\qs$ be a Borel subgroup defined over $k_0$.
  We set $B=(B_\qs)_k\subset G$; then $\upgam  B=B$ for all $\gamma\in\sG$.

    Let $f \in k(G/H)$ be $B$-semi-invariant rational function of weight $\lambda \in \sX$.
    The calculation
    \begin{align*}
      ({}^\gamma f)(b^{-1}x) &= {}^\gamma (f(\hs^{\gamma^{-1}}\hmm(b^{-1}x)))
      = {}^\gamma (f((\hs^{\gamma^{-1}}\hmm b^{-1})\cdot(\hs^{\gamma^{-1}}\hmm\hm x)))\\
      &= \hs^\gamma \lambda({}^{\gamma^{-1}}\hmm b)\cdot \hs^\gamma (f(\hs^{\gamma^{-1}}\hmm\hm x))
      = (\hs^\gamma\hm\lambda)(b) \cdot (\hs^\gamma\hmm f)(x)
    \end{align*}
    shows that for every $\gamma \in \sG$, the rational function ${}^\gamma\hmm f$
    is $B$-semi-invariant of weight ${}^\gamma\hm \lambda$.

  Let $\alphA \in \sA$ and let $d_{\alphA,\lambda} \in k^\times$ be as
    in Subsection~\ref{ss:aut}. The $\sG$-action on $\sA$ induced by the
    $k_0$-model $Y_\qs=G_\qs/H_\qs$ of $G/H$ satisfies
    $({}^\gamma\alphA)(f) = {}^\gamma(\alphA(\hs^{\gamma^{-1}}\hmm\hm f))$. It
    follows that if $f\in k(Y)^{(B)}_\lambda$, then
    \[ (\hs^\gamma\alphA)(f)={}^\gamma(\alphA(\hs^{\gamma^{-1}}\hmm\hm f))
    =\hs^\gamma (  d_{\alphA,{}^{\gamma^{-1}}\hmm\lambda}\cdot\hs^{\gamma^{-1}}\hmm\hm f)
    =\hs^\gamma   d_{\alphA,{}^{\gamma^{-1}}\hmm\lambda}\cdot f, \]
    and thus
    \[{}^\gamma\alphA\hs|_{ k(Y)^{(B)}_\lambda} =
    {}^\gamma  d_{\alphA,{}^{\gamma^{-1}}\hmm\lambda}\hs \cdot \hs \mathrm{id}\text{.}\]
    We see that  if we consider  $\iota(\alphA)\in\Hom(\sX, k^\times)$, then
    \begin{align*}
      \iota(\hs^\gamma\hm\alphA)(\lambda) = {}^\gamma d_{\alphA,{}^{\gamma^{-1}}\hmm\lambda}
      = \hs^\gamma (\hs\iota(\alphA)(\hs^{\gamma^{-1}}\hmm\lambda)\hs)
      =(\hs^\gamma(\iota(\alphA)\hs)(\lambda)\text{,}
    \end{align*}
    and thus \[\iota(\hs^\gamma\alphA)=\hs^\gamma(\iota(\alphA))\text{,}\]
    which shows that the embedding $\iota$ is $\sG$-equivariant, as required.
  \end{proof}

By Corollary \ref{c:pi-Aq}, the  homomorphism of abstract groups
\[Z(\Gtil)(k)\to Z(G)(k)\to\Am =A_q(k)\]
comes from a homomorphism of algebraic $k$-groups $Z(\Gtil)\to A$.
By Proposition \ref{prop:qsaut},
the above homomorphism of abstract groups is $\sG$-equivariant,
and hence, it comes from a $k_0$-homomorphism
\[\vktil\colon\, Z(\Gtil_0)=Z(\Gtil_\qs)\to Z(G_\qs)\to A_\qs\hs.\]
We may and shall identify $\Am_\qs$ with $A_q(k)$.
\end{subsec}

Propositions \ref{p:111} and  \ref{prop:qsaut}
show that one can compute the
 $\sG$-group~$\Am_\qs=A_q(k)$ and the homomorphism of $k_0$-groups
 \[\vktil\colon Z(\Gtil_0) \to A_q\]
 from the combinatorial invariants $(\sX, \Vm, \Omone,\Omt)$ of $G/H$
 and the $\sG$-action on $\X^*(B)$ and $\Sm$.

\section{The character group of $\overline{H}/H$}
\label{s:A}
In this appendix we give an alternative proof of Proposition~\ref{l:inj},
not using Theorem~\ref{t:sphqs}.
This proof is based on a calculation of the character group $\X^*(A^\kk)$;
see Theorem \ref{p:ker}.
Here $A^\kk=\overline{H}/H$, where $\overline{H}$
is the spherical closure of $H$.

\begin{subsec}
We consider a certain subset $\Sigmat$ of the set $\Sigma=\Sigma(G/H)$
of the spherical roots of $G/H$.
For a simple root $\alpha\in\Sm$, let $\Dm(\alpha)$
denote the set of colors $D\in\Dm$ such that
the parabolic subgroup $P_\alpha$ moves $D$, that is, $\alpha \in\vs(D)$.
Then it is known that $|\Dm(\alpha)|\le2$, and, moreover, $|\Dm(\alpha)|=2$
if and only if $\alpha\in \Sigma\cap \Sm$.
For $\alpha\in \Sigma\cap \Sm$, write $\Dm(\alpha)=\{D_\alpha^+,D_\alpha^-\}$.
Let $\Sigmat\subset\Sigma\cap \Sm\subset\Sigma$
denote the set of those $\alpha\in \Sigma\cap \Sm$ for which $\rho(D_\alpha^+)=\rho(D_\alpha^-)$.
Then for $\alpha\in \Sigmat$ we have
 \[\rho(D_\alpha^+)=\rho(D_\alpha^-)=\half\alpha^\vee |_\Xm\hs,\quad
 \vs(D_\alpha^+)=\vs(D_\alpha^-)=\{\alpha\}.\]
The map
\[\Sigmat\to\Omt,\quad\  \alpha\,\longmapsto\,\left (\rho(D_\alpha^\pm) ,\vs(D_\alpha^\pm)\right)=
(\half\alpha^\vee |_\Xm,\{\alpha\})\]
is a bijection.
See \cite[Proposition B.3]{BG}
for details and references.
\end{subsec}

\begin{construction}
\label{con:Sigma}
Let  $\Sigmattf$ denote the set of spherical roots $\gamma\in\Sigma$ satisfying
  one of the following conditions (a), (b), (c)
  (these are conditions (2), (3), (4) of Pezzini and Van Steirteghem
  \cite[Proposition 2.7]{PVS}):
  \begin{enumerate}
  \item[\rm(a)] $\gamma = \alpha_1 + \dots + \alpha_n$,
    where $\{\alpha_1, \dots, \alpha_n\} \subset \Sm$
    has type $B_n$ and $\Dm(\alpha_i) = \emptyset$ for every $2 \le i \le n$,
  \item[\rm (b)] $\gamma = 2\alpha_1 + \alpha_2$, where
    $\{\alpha_1, \alpha_2\} \subset \Sm$ has type $G_2$,
    \item[\rm (c)] $\gamma$ is not in the root lattice of $G$.
  \end{enumerate}

Let $\Sigmac\subset \Xm$ denote the set obtained from $\Sigma$
by replacing $\gamma$ by $2\gamma$ for all $\gamma\in\Sigmattf$.

Let $\Sigma^N\subset \Xm$   denote the set obtained from $\Sigma$
by replacing $\gamma$ by $2\gamma$ for all $\gamma\in\Sigmattf\cup\Sigmat$.

Note that $\Sigmat\cap\Sigmattf=\varnothing$.
Note also that the subsets $\Sigmat$ and $\Sigmattf$ of $\Sigma$ are $\Gm$-stable.
It follows that the subsets $\Sigma^N$ and $\Sigmac$ of $\Xm$ are $\Gm$-stable.
We denote by $\langle\Sigma^N\rangle$ and $\langle\Sigmac\rangle$
the ($\Gm$-stable) subgroups of $\Xm$
generated by $\Sigma^N$ and $\Sigmac$, respectively.
Then $\langle\Sigma^N\rangle\subset \langle\Sigmac\rangle$,
and $\langle\Sigmac\rangle/\langle\Sigma^N\rangle$
is a vector space over $\Z/2\Z$ with basis $\Sigmat$.
\end{construction}

\begin{subsec}
Let $\Am=\Aut^G(G/H)$ regarded as an abstract group.
We may write $\Am=A(k)$ for some algebraic subgroup $A\subset T^\Xm$;
see Subsection  \ref{ss:aut} for details.

Let $\Xm'\subset \Xm$ be a subgroup.
We write
\[(\Xm')^\perp=\big\{t\in T^\Xm(k)\, \mid\, \lambda(t)=1
    \text{ for all }\lambda\in\Xm'\big\}.\]
 We regard $(\Xm')^\perp$ as an algebraic subgroup of $T^\Xm$.
\end{subsec}

\begin{proposition}[Losev \cite{los09a}, Theorem 2 and Definition 4.1.1(1)]
\label{p:Losev}
We have $A=\langle\Sigma^N\rangle^\perp$, and hence $\X^*(A)=\Xm/ \langle\Sigma^N\rangle$.\qed
\end{proposition}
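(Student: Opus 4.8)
The statement is in essence a restatement of results of Losev, and the plan is to assemble it from material already recalled in Subsection~\ref{ss:aut} together with the explicit description of the normalized spherical roots in Construction~\ref{con:Sigma}. First I would recall from Subsection~\ref{ss:aut} that, after identifying $\Am$ with its image under the injective homomorphism $\iota\colon\Am\hookrightarrow\Hom(\Xm,k^\times)=T^\Xm(k)$, one has $\Am=A(k)$ where $A\subset T^\Xm$ is the algebraic subgroup cut out by a uniquely determined sublattice $\Xi\subset\Xm$; in the notation introduced just before the proposition this reads $A=\Xi^\perp$. Hence it suffices to prove $\Xi=\langle\Sigma^N\rangle$ with $\Sigma^N$ as in Construction~\ref{con:Sigma}. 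The identity $\X^*(A)=\Xm/\langle\Sigma^N\rangle$ then follows formally from the anti-equivalence between $k$-groups of multiplicative type and finitely generated abelian groups: the inclusion $\langle\Sigma^N\rangle\hookrightarrow\Xm$ dualizes to a surjection $T^\Xm\twoheadrightarrow T^{\langle\Sigma^N\rangle}$ whose kernel is $\langle\Sigma^N\rangle^\perp=A$ and has character group $\Xm/\langle\Sigma^N\rangle$.

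To identify $\Xi$ with $\langle\Sigma^N\rangle$ I would invoke Losev \cite[Theorem~2]{los09a}: there exist integers $(c_\gamma)_{\gamma\in\Sigma}$ with $c_\gamma\in\{1,2\}$ such that $\{c_\gamma\cdot\gamma\}_{\gamma\in\Sigma}$ is a $\Z$-basis of $\Xi$, so that in particular $\Xi=\langle\{c_\gamma\gamma\}_{\gamma\in\Sigma}\rangle$. It then remains to check that $c_\gamma=2$ exactly when $\gamma\in\Sigmattf\cup\Sigmat$ and $c_\gamma=1$ otherwise, so that $\{c_\gamma\gamma\}_{\gamma\in\Sigma}$ is literally the set $\Sigma^N$ of Construction~\ref{con:Sigma}. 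This is the content of Losev's Definition~4.1.1(1): one compares his list of spherical roots carrying the coefficient $2$ with, on one hand, the conditions (2),(3),(4) of \cite[Proposition~2.7]{PVS} defining $\Sigmattf$, and, on the other hand, the description of $\Sigmat$ as the set of $\gamma\in\Sigma\cap\Sm$ with $\rho(D_\gamma^+)=\rho(D_\gamma^-)$ recalled at the start of this appendix (see also \cite[Proposition~B.3]{BG}). Granting this matching, $\langle\Sigma^N\rangle=\Xi$, hence $A=\langle\Sigma^N\rangle^\perp$, and the character-group formula follows as above.

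The main obstacle is entirely bookkeeping rather than anything geometric: one must make sure that the normalizations of spherical roots, of the distinguished pair of colors $D_\gamma^{\pm}$, and of the coefficients attached to spherical roots in \cite{los09a} agree with the conventions of the present paper and of \cite{PVS}, so that Losev's Definition~4.1.1(1) reproduces $\Sigmattf\cup\Sigmat$ on the nose. No new input is needed beyond Losev's theorem and the combinatorial facts about colors already collected above.
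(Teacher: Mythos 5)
Your proposal is correct and takes the same route as the paper: the paper itself offers no proof beyond the citation (the statement is immediately followed by \verb|\qed|), relying on Losev's Theorem~2 for the existence of the basis $\{c_\gamma\gamma\}_\gamma$ of $\Xi$ and on his Definition~4.1.1(1) to identify the coefficients with those in Construction~\ref{con:Sigma}. You have simply made explicit the bookkeeping that the citation leaves implicit; there is no genuinely different argument here.
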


By Proposition \ref{p:Losev}, we have
\[A(k)=\Hom(\Xm/\langle\Sigma^N\rangle,k^\times).\]
Let $\gamma\in \Sigmat\subset\Xm$.
Then  $\gamma\notin \langle\Sigma^N\rangle$, but
$2\gamma\in\Sigma^N\subset  \langle\Sigma^N\rangle$,
so we obtain an element  $[\gamma]\in \Xm/\langle\Sigma^N\rangle$ of order 2.
It defines a homomorphism
\[ A(k)=\Hom(\Xm/\langle\Sigma^N\rangle,k^\times)\ \lra\
\Hom(\langle\gamma\rangle/\langle 2\gamma\rangle, k^\times)=\{\pm1\},
   \quad a\mapsto a(\gamma)\coloneqq d_{a,\gamma}.\]

\begin{proposition}\label{p:swaps}
For $\gamma\in\Sigmat$,
an automorphism $a\in A(k)=\Aut^G(G/H)$ swaps $D_\gamma^+$ and $D_\gamma^-$
if and only if $a(\gamma)=-1$.
\end{proposition}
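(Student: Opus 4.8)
\noindent The plan is to reduce, via the Local Structure Theorem, to a spherical homogeneous space for a group of semisimple rank one, and then to compute directly in the prototype $\SL_2/T$.

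First I would set up the local picture. Since $\gamma\in\Sigmat$ is a simple root, let $P=P_\gamma\supseteq B$ be the associated minimal parabolic, with Levi decomposition $P=L\ltimes R_u(P)$; note $R_u(P)\subseteq R_u(B)$ and that $[L,L]$ has semisimple rank one. Write $D^{+},D^{-}$ for the two colors in $\Dm(\gamma)$; recall $\rho(D^{+})=\rho(D^{-})=\tfrac12\gamma^\vee|_{\Xm}$, $\vs(D^{+})=\vs(D^{-})=\{\gamma\}$, and that $D^{+},D^{-}$ are the only colors $D$ with $\gamma\in\vs(D)$. Let $\mathcal{O}\subseteq G/H$ be the open $B$-orbit and put $\mathcal{U}:=P\cdot\mathcal{O}$; this is open and $P$-stable, and its only $B$-stable prime divisors are $D^{+}\cap\mathcal{U}$ and $D^{-}\cap\mathcal{U}$. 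Since $a\in\sA=\Aut^G(G/H)$ commutes with the $B$-action, it maps $B$-orbits to $B$-orbits and hence fixes the unique open one, so $a(\mathcal{O})=\mathcal{O}$ and therefore $a(\mathcal{U})=\mathcal{U}$. As $a$ commutes with $R_u(P)$, it descends along the $R_u(P)$-quotient $q\colon\mathcal{U}\to\overline{Z}:=\mathcal{U}/R_u(P)$ to an automorphism $\overline{a}$ of the $L$-variety $\overline{Z}$ commuting with $L$; by the Local Structure Theorem (see, e.g., Timashev \cite{tim11}), $q$ is a trivial $R_u(P)$-bundle, $\overline{Z}$ is spherical, and its open $L$-orbit $Z_0$ carries the two colors $q(D^{+}),q(D^{-})$, again with equal $\rho$ and attached to $\gamma$. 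Moreover $a$ swaps $D^{+}$ and $D^{-}$ if and only if $\overline{a}$ swaps $q(D^{+})$ and $q(D^{-})$; and, since the weight-$\gamma$ $B$-eigenfunction $f_\gamma\in k(G/H)^{(B)}$ is $R_u(B)$-invariant, hence $R_u(P)$-invariant, it descends to the weight-$\gamma$ eigenfunction on $\overline{Z}$, so that $\overline{a}(\gamma)=a(\gamma)$. Passing to $Z_0$, and using the classification of rank-one spherical homogeneous spaces with two colors, we reduce to the case $G=\SL_2$, $H=T$ (the diagonal torus), $B$ upper triangular, and $\gamma=\alpha$ the simple root: a torus factor and a central isogeny of $[L,L]$ affect neither the two colors nor the scalar $a(\gamma)$.

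Now I would finish by explicit computation in $\SL_2/T$. Represent a point by a matrix with entries $x_{11},x_{12},x_{21},x_{22}$ (with $x_{11}x_{22}-x_{12}x_{21}=1$) modulo the right $T$-action; then $f_\gamma:=x_{21}x_{22}$ (weight $\gamma$), $h:=x_{11}x_{22}+x_{12}x_{21}$ (weight $0$), and $p:=x_{11}x_{12}$ (weight $-\gamma$) are regular on $\SL_2/T$, satisfy $h^2-4f_\gamma p=(x_{11}x_{22}-x_{12}x_{21})^2=1$, and span the irreducible three-dimensional $\SL_2$-submodule $V_\gamma=\langle\SL_2\cdot f_\gamma\rangle$ (the lowering operator sends $f_\gamma\mapsto -h\mapsto 2p$). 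The two colors are $D^{+}=\{x_{21}=0\}$ and $D^{-}=\{x_{22}=0\}$; on both, $f_\gamma=0$, while $h=x_{11}x_{22}=1$ on $D^{+}$ and $h=x_{12}x_{21}=-1$ on $D^{-}$, so $D^{\pm}=\{f_\gamma=0,\ h=\pm1\}$. Any $a\in\Aut^{\SL_2}(\SL_2/T)=\sN_{\SL_2}(T)/T$ commutes with $\SL_2$; as $f_\gamma$ spans the one-dimensional space of weight-$\gamma$ $B$-eigenfunctions, $a$ multiplies $f_\gamma$ by $a(\gamma)$, hence preserves $V_\gamma$, and, being a scalar there by Schur's lemma, acts on all of $V_\gamma$ by multiplication by $a(\gamma)$; in particular $a$ multiplies $h$ by $a(\gamma)$ and $a(\gamma)^2=1$. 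Therefore, for $x\in D^{+}$ we get $f_\gamma(a(x))=a(\gamma)f_\gamma(x)=0$ and $h(a(x))=a(\gamma)h(x)=a(\gamma)$, so $a(x)\in D^{+}$ if $a(\gamma)=1$ and $a(x)\in D^{-}$ if $a(\gamma)=-1$; by irreducibility of $D^{+}$ and $D^{-}$ this says that $a$ fixes $D^{+}$ and $D^{-}$ when $a(\gamma)=1$ and swaps them when $a(\gamma)=-1$, which is the assertion.

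The rank-one computation is elementary, so the main obstacle is the first step. There one must verify carefully that the Local Structure Theorem delivers the descended automorphism $\overline{a}$ together with the slice $Z_0$ carrying the stated combinatorial data; that "swapping the two colors" and the scalar $a(\gamma)$ are preserved both under the $R_u(P)$-descent and under the reduction to $\SL_2/T$ (passage to the torus factor and the central isogeny); and that $D^{+}$ and $D^{-}$ are indeed the only $B$-stable prime divisors meeting $\mathcal{U}=P\cdot\mathcal{O}$. The inclusion $R_u(P)\subseteq R_u(B)$, which makes $f_\gamma$ descend, and the irreducibility of $V_\gamma$, which licenses Schur's lemma, are the points I would be most careful to justify.
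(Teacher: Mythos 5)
Your proposal takes a genuinely different route from the paper's. The paper's proof (following Theorem~B.5 of the appendix to \cite{BG}) first reduces by density to an element $a$ of finite order, then considers the intermediate subgroup $\stH$ with $\stH/H=\langle a\rangle$ and the induced surjection $Y=G/H\to\stY=G/\stH$. It determines whether $\gamma$ lies in $\stsX$ by checking whether $f_\gamma$ is $a$-fixed, and then reads off $|\stsD(\gamma)|$ from Luna's purely combinatorial criterion ($|\Dm(\alpha)|=2$ iff $\alpha\in\Sigma\cap\Sm$); since the fibres of $\Dm(\gamma)\to\stsD(\gamma)$ are exactly $\langle a\rangle$-orbits, this decides whether $a$ swaps the two colors. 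Your proof instead localizes at the simple root $\gamma$ and pushes the whole statement down to an explicit computation in the prototype $\SL_2/T$. That rank-one computation is correct and quite pleasant; the identities $h^2-4f_\gamma p=1$, the Schur-lemma scalar on $V_\gamma$, and the identification $D^\pm=\{f_\gamma=0,\ h=\pm1\}$ all check out, and you avoid the density reduction entirely since the argument is uniform in $a$.

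You correctly flag the reduction as the sticking point, and I would press on one part of it. The Local Structure Theorem in the form quoted in Timashev applies with the \emph{adapted} parabolic $P(Y)$, which by definition stabilizes every color; since $\gamma\in\Sigmat$ moves $D_\gamma^\pm$, one has $P_\gamma\not\subseteq P(Y)$, so the standard LST does not by itself give a trivialization $\mathcal{U}\cong R_u(P_\gamma)\times\overline{Z}$, nor does it assert that $\mathcal{U}/R_u(P_\gamma)$ exists as a reasonable $L$-spherical variety carrying $q(D^+),q(D^-)$ with the stated $\rho$ and $\vs$. What you really want is the ``localization at a simple root'' technique (used, for example, in Luna's work on wonderful varieties and in the computation of the data $\Dm(\alpha)$), which does produce an $L_\gamma$-spherical object of rank $\le 1$ controlling $\Dm(\gamma)$; but it is not literally the Brion--Luna--Vust LST, and setting it up carefully (including the passage to the open $L$-orbit, the classification of the possible rank-one slices, and the verification that torus factors and central isogenies affect neither the two colors nor the scalar $a(\gamma)$) is the bulk of the work, arguably heavier than the paper's one-page argument. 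By contrast, the paper's route sidesteps all of this by trading the geometric localization for Luna's combinatorial dichotomy on $|\Dm(\alpha)|$, at the modest cost of the density reduction to finite-order $a$.
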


\begin{proof}
Let $\gamma\in \Sigmat$.
Consider the subsets
\[ A(k)_\gamma^+=\{a\in A(k)\, \mid\, a(\gamma)=+1\}\quad\text{and}\quad
       A(k)_\gamma^-=\{a\in A(k)\, \mid\, a(\gamma)=-1\}.\]
Clearly, elements of finite order are dense in $A(k)$,
and hence, elements of finite order are dense
in the open subsets $A(k)_\gamma^+$ and in $ A(k)_\gamma^-$.
Therefore, it suffices to prove the proposition
in the case when $a$ is an element of finite order.

We follow the proof of Theorem B.5 in Gagliardi's appendix to \cite{BG}.
We have
  \begin{align}\label{a:phi}
    a(f_\gamma) =a(\gamma)\cdot f_\gamma\hs,
    \end{align}
 where $f_\gamma\in k(G/H)^{(B)}_\gamma$, $f_\gamma\neq 0$.
 We assume that $a$ is an element of finite order.
  Let $\stH \subset \NGH$ denote the subgroup containing $H$ such that
  \[
  \stH / H =\langle a\rangle\subset\NGH/H=\Aut^G(Y),
  \]
  where $Y = G/H$ and $\langle a\rangle$ denotes the finite subgroup
  generated by the element $a\in A(k)=\NGH/H$. We set $\stY = G/\stH$. We use the
  same notation for the combinatorial objects associated to the
  spherical homogeneous space $\stY$ as for $Y$, but with a tilde
  above the respective symbol. The morphism of $G$-varieties $Y\to
  \stY$ induces an embedding $k(\stY)\into k(Y)$, and $k(\stY)$ is
  the fixed subfield of $a$. Since $k(\stY)$ is a
  $G$-invariant subfield of $k(Y)$, we have $\stsX \subset \sX$.

  If $a(\gamma)=-1$, then by \eqref{a:phi} we have
  $a(f_\gamma) =a(\gamma)\cdot f_\gamma=-f_\gamma\neq f_\gamma$\hs.
  We see that $f_\gamma \notin k(\widetilde Y)$, hence
  $\gamma \in \sX \smallsetminus \stsX$; in particular
  $\gamma \notin \stSigma$. However, $\gamma\in\Sm$ and we may
  consider $\stsD(\gamma)\subset \stsD$. Note that the image of
  $\Dm(\gamma)$ under the map $Y\to\stY$ is $\stsD(\gamma)$. Since
  $\gamma\notin\stSigma\cap\Sm$, by Luna \cite[Sections 2.6 and
  2.7]{lun01}, see also Timashev \cite[Section 30.10]{tim11}, we have
  $|\stsD(\gamma)|\le 1$, hence the two colors in $\sD(\gamma)$ are
  mapped to one color by the map $Y \to \stY$, that is, $a$~swaps
  $D_\gamma^+$ and $D_\gamma^-$.

  On the other hand,  if $a(\gamma)=+1$, then by \eqref{a:phi} we have
  $a(f_\gamma) =a(\gamma)\cdot f_\gamma=f_\gamma$ and hence,
  $f_\gamma\in k(\stY)$ and $\gamma\in\stsX$.
  Since $\gamma$ is a primitive element of $\sX$, it is a primitive
  element of $\stsX\subset\sX$. The natural map $V \to
  \widetilde V$ induced by $Y \mapsto \stY$ is bijective and
  identifies $\sV$ and $\stsV$ (see Knop
  \cite[Section~4]{kno91}). Since $\gamma \in \Sigma$ is dual to a
  wall of $-\sV$, it is dual to a wall of $-\stsV=-\sV$.
   It follows that $\gamma \in \Sm \cap \stSigma$; hence $|\stsD(\gamma)|
  = 2$, and the two colors in $\sD(\gamma)$ are mapped to distinct
  colors under $Y \to \stY$, that is, $a$ fixes
  $D_\gamma^+$ and $D_\gamma^-$.
\end{proof}

\begin{theorem}\label{p:ker}
We have $A^\kk=\langle\Sigmac\rangle^\perp$, and hence
$\X^*(A^\kk)=\Xm/\langle\Sigmac\rangle$.
\end{theorem}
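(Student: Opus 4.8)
The plan is to pin down $A^\kk$ as an explicit subgroup of the torus $T^\Xm$ and then to recognize it as $\langle\Sigmac\rangle^\perp$. Recall from Proposition~\ref{p:Losev} that, under the identification $\iota$, we have $A=\langle\SigmaN\rangle^\perp$, so that $\iota$ identifies $\Am$ with the group of $\phi\in\Hom(\Xm,k^\times)$ vanishing on $\langle\SigmaN\rangle$. The first step is to describe the action of $\Am=\Aut^G(G/H)$ on the set of colors $\Dm$. Any $a\in\Am$ is $G$-equivariant, hence permutes $\Dm$; since $a$ is $P_\alpha$-equivariant for every simple root $\alpha$ it preserves the map $\vs$, and since it maps the divisorial valuation of a color $D$ to that of $aD$ while multiplying every function in $K(G/H)^{(B)}_\lambda$ by the scalar $\iota(a)(\lambda)\in k^\times$, it also preserves the map $\rho$. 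Thus the $\Am$-action on $\Dm$ factors through the group $\Aut_\Omega(\Dm)$ of permutations of $\Dm$ preserving $\rho\times\vs$. A permutation in $\Aut_\Omega(\Dm)$ can only move colors within the fibres of $\rho\times\vs$ of cardinality two, and by \cite[Proposition~B.3]{BG} (recalled above) these fibres are exactly the pairs $\{D_\gamma^+,D_\gamma^-\}$ with $\gamma$ running over $\Sigmat$, via the bijection $\Sigmat\isoto\Omt$. Hence $A^\kk=\ker[\Am\to\Aut(\Dm)]$ consists precisely of those $a\in A$ which fix $D_\gamma^+$ (equivalently $D_\gamma^-$, since $a$ permutes the two-element set $\{D_\gamma^+,D_\gamma^-\}$) for every $\gamma\in\Sigmat$.

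The second step is Proposition~\ref{p:swaps}: for $\gamma\in\Sigmat$, an automorphism $a\in A(k)$ fixes $D_\gamma^+$ and $D_\gamma^-$ if and only if $\iota(a)(\gamma)=a(\gamma)=1$. Combined with the first step, $a\in A^\kk$ if and only if $\iota(a)$ vanishes on $\langle\SigmaN\rangle$ and on $\gamma$ for every $\gamma\in\Sigmat$, that is, if and only if $\iota(a)$ vanishes on the subgroup $\langle\SigmaN\rangle+\sum_{\gamma\in\Sigmat}\Z\gamma$ of $\Xm$. It remains to identify this subgroup with $\langle\Sigmac\rangle$. By Construction~\ref{con:Sigma}, $\Sigmac$ is obtained from $\Sigma$ by replacing $\gamma$ by $2\gamma$ for every $\gamma\in\Sigmattf$, whereas $\SigmaN$ is obtained by replacing $\gamma$ by $2\gamma$ for every $\gamma\in\Sigmattf\cup\Sigmat$; since $\Sigmat\cap\Sigmattf=\varnothing$, this gives $\langle\Sigmac\rangle=\langle\SigmaN\rangle+\sum_{\gamma\in\Sigmat}\Z\gamma$ (equivalently, the classes of the elements of $\Sigmat$ span $\langle\Sigmac\rangle/\langle\SigmaN\rangle$, as already noted in Construction~\ref{con:Sigma}). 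Therefore $a\in A^\kk$ if and only if $\iota(a)$ vanishes on $\langle\Sigmac\rangle$, i.e.\ $A^\kk(k)=\langle\Sigmac\rangle^\perp(k)$.

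Finally, $A^\kk$ and $\langle\Sigmac\rangle^\perp$ are both diagonalizable algebraic subgroups of the torus $T^\Xm$ with the same group of $k$-points, hence they coincide as algebraic $k$-subgroups of $T^\Xm$; and then $\X^*(A^\kk)=\X^*(\langle\Sigmac\rangle^\perp)=\Xm/\langle\Sigmac\rangle$ by Cartier duality, which is the desired statement. The main thing to get right is the combinatorial input of the first paragraph — namely that the $\Am$-action on $\Dm$ lands in $\Aut_\Omega(\Dm)$ and that the two-element fibres of $\rho\times\vs$ are indexed precisely by $\Sigmat$ — together with the precise statement of Proposition~\ref{p:swaps}; granting these, the remaining steps are routine manipulations with lattices and groups of multiplicative type.
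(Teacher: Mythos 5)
Your proof is correct and follows the same route as the paper's: identify $A^\kk(k)$ with those $a\in A(k)$ that fix $D_\gamma^+$ and $D_\gamma^-$ for all $\gamma\in\Sigmat$, invoke Proposition~\ref{p:swaps} to translate that into $a(\gamma)=1$, and observe $\langle\Sigma^N\rangle+\langle\Sigmat\rangle=\langle\Sigmac\rangle$. You go a bit further than the paper in that you explicitly justify that the $\Am$-action on $\Dm$ preserves $\rho\times\vs$ (so lands in $\Aut_\Omega(\Dm)$) and that the nontrivial fibres of $\rho\times\vs$ are precisely the pairs $\{D_\gamma^+,D_\gamma^-\}$ for $\gamma\in\Sigmat$; the paper takes this as already established from Construction~\ref{con:Y^e} and the cited \cite[Proposition~B.3]{BG}, and labels the resulting characterization of $A^\kk$ as being ``by definition.''
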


\begin{proof}
By Proposition \ref{p:Losev},
\[A(k)=\big\{a\in T^\Xm(k)\, \mid\, a(\lambda)=1
     \text{ for all }\lambda\in\langle\Sigma^N\rangle\big\}.\]
By definition, an automorphism $a\in A(k)=\Aut^G(G/H)$
is contained in $A^\kk(k)$ if and only if
$a$ fixes $D_\gamma^+$ and $D_\gamma^-$ for all $\gamma\in\Sigmat$.
By Proposition \ref{p:swaps}, this holds if and only if
\[ a(\gamma)=1\text{ for all }\gamma\in\Sigmat.\]
Hence
\[ A^\kk(k)=\big\{a\in T^\Xm(k)\, \mid\, a(\lambda)=1
   \text { for all }\lambda\in\langle\Sigma^N\rangle
    \text{ and for all }\lambda\in\Sigmat\big\}.\]
Since $\langle\Sigma^N\rangle + \langle\Sigmat\rangle=\langle\Sigmac\rangle$,
we conclude that
\[ A^\kk(k)=\big\{a\in T^\Xm(k)\, \mid\, a(\lambda)=1
    \text { for all }\lambda\in\langle\Sigmac\rangle\big\},\]
that is, $A^\kk=\langle\Sigmac\rangle^\perp$, as desired.
\end{proof}

\begin{proposition}[Proposition \ref{l:inj}]
The homomorphism
\[i_*\colon H^2(k_0, A_q^{\kk})\to  H^2(k_0, A_q)\]
induced by the inclusion homomorphism
$i\colon A_q^{\kk}\into A_q$
is injective.
\end{proposition}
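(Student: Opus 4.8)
The statement to be proved is that $i_*\colon H^2(k_0,A_q^\kk)\to H^2(k_0,A_q)$ is injective, where $i\colon A_q^\kk\into A_q$ is the inclusion of the $k_0$-subgroup $A_q^\kk=\overline H_q/H_q$ into $A_q=\sN_{G_q}(H_q)/H_q$. The plan is to deduce this from the long exact cohomology sequence attached to the short exact sequence of commutative algebraic $k_0$-groups
\begin{equation*}
1\to A_q^\kk\labelto{i} A_q\labelto{\pi}\Aut_\Omega(\Dm)\to 1,
\end{equation*}
which is the $k_0$-form of the exact sequence \eqref{e:Ac} obtained from any fixed $G_\qs$-equivariant $k_0$-model $Y_\qs$ of $G/H$ (such a model exists by Theorem~\ref{t:sphqs}, since the $\sG$-action defined by $G_\qs$ preserves $(\sX,\Vm,\Omone,\Omt)$). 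The associated cohomology exact sequence is \eqref{e:Ac-qs-coho}:
\begin{equation*}
H^1(k_0,A_q)\labelto{\pi_*}H^1\big(k_0,\Aut_\Omega(\Dm)\big)\labelto{\delta}H^2(k_0,A_q^\kk)\labelto{i_*}H^2(k_0,A_q).
\end{equation*}
Thus $\ker i_*=\im\delta$, and it suffices to show that $\pi_*\colon H^1(k_0,A_q)\to H^1(k_0,\Aut_\Omega(\Dm))$ is surjective, for then $\delta$ is the trivial map and $i_*$ is injective.

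To prove surjectivity of $\pi_*$, I would argue exactly as in the proof of Proposition~\ref{l:inj}. The group $H^1(k_0,\Aut_\Omega(\Dm))$ classifies the continuous $\sG$-actions $\beta^\Dm$ on the finite set $\Dm$ that are compatible with the given $\sG$-action on $\Omega$, with the neutral element corresponding to the action $\alpha_\qs^\Dm$ induced by $Y_\qs$. The group $H^1(k_0,A_q)$ classifies the $G_\qs$-equivariant $k_0$-models of $Y=G/H$, with the neutral element corresponding to $Y_\qs$, because $\Aut^{G_\qs}(Y_\qs)=A_q$ (Serre \cite[III.1.3, Proposition~5]{ser97}). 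Given any such $\beta^\Dm$, Theorem~\ref{t:sphqs} produces a $G_\qs$-equivariant $k_0$-model $Y_\beta$ of $Y$ inducing precisely the action $\beta^\Dm$ on $\Dm$; the corresponding class in $H^1(k_0,A_q)$ maps under $\pi_*$ to the class of $\beta^\Dm$. Hence $\pi_*$ is surjective, $\delta$ is trivial, and $i_*$ is injective.

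The only genuine input here is Theorem~\ref{t:sphqs}, which guarantees the realizability of every compatible $\sG$-action $\beta^\Dm$ by an equivariant model; everything else is a formal diagram chase in nonabelian (here in fact abelian) Galois cohomology. So the ``hard part'' is not in this proposition itself but is packaged in the already-established Theorem~\ref{t:sphqs}; the present argument merely organizes the bookkeeping of the cohomology exact sequence. (An alternative route, independent of Theorem~\ref{t:sphqs}, is given in Appendix~\ref{s:A} via the explicit computation $\X^*(A^\kk)=\Xm/\langle\Sigmac\rangle$ of Theorem~\ref{p:ker} together with the inclusion $\langle\Sigma^N\rangle\subset\langle\Sigmac\rangle$, which shows directly that the dual map $\X^*(A_q)\onto\X^*(A_q^\kk)$ is surjective with $\langle\Sigmac\rangle/\langle\Sigma^N\rangle$ an elementary abelian $2$-group, so that $H^1$ of the quotient torus/group vanishes after the relevant identifications; but for the body of the paper the exact-sequence argument above is the cleanest.)
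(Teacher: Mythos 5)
Your main argument is correct, but it simply reproduces the body proof of Proposition~\ref{l:inj} from Section~\ref{s:embeddings}, which relies essentially on Theorem~\ref{t:sphqs} (via the surjectivity of $\pi_*\colon H^1(k_0,A_q)\to H^1(k_0,\Aut_\Omega(\Dm))$). The statement you were asked to prove is the \emph{appendix} restatement, and the purpose of that appendix is precisely to give an alternative proof that avoids Theorem~\ref{t:sphqs}. The paper's appendix proof uses Theorem~\ref{p:ker} (the identity $\X^*(A^\kk)=\Xm/\langle\Sigmac\rangle$) to set up two short exact sequences
\[
1\to A_q^\kk\to T_q^\Xm\to T_q^\scc\to 1,\qquad
1\to A_q\to T_q^\Xm\to T_q^N\to 1,
\]
where $T_q^\scc$ and $T_q^N$ are the $k_0$-tori with character groups $\langle\Sigmac\rangle$ and $\langle\Sigma^N\rangle$. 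Both of these tori are quasi-trivial because $\Sigmac$ and $\Sigma^N$ are $\sG$-stable $\Z$-bases, so $H^1(k_0,T_q^\scc)=H^1(k_0,T_q^N)=1$ by Sansuc's lemma; comparing the two long exact sequences shows that the inclusion $H^2(k_0,A_q^\kk)\into H^2(k_0,T_q^\Xm)$ factors through $i_*$, which forces $i_*$ to be injective. What the appendix route buys is logical independence from the deep Theorem~\ref{t:sphqs}, at the cost of the combinatorial input of Theorem~\ref{p:ker}; your route is shorter but cannot serve as a replacement for the appendix argument.

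A small correction to your closing parenthetical sketch of the alternative: the decisive point there is \emph{not} that $\langle\Sigmac\rangle/\langle\Sigma^N\rangle$ is an elementary abelian $2$-group, and it is not the case that $H^1$ of the quotient $\Aut_\Omega(\Dm)$ vanishes in general (the $\sG$-action on $\Omt$ is typically nontrivial). What vanishes is $H^1$ of the quasi-trivial tori $T_q^\scc$ and $T_q^N$; the elementary-abelian-$2$ nature of $\langle\Sigmac\rangle/\langle\Sigma^N\rangle$ plays no role in the injectivity argument.
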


\begin{proof}
We write  $T_q^\Xm$ (resp.~$T_q^N$, resp.~$T_q^\scc$)
for the $k_0$-torus with character group $\Xm$
(resp.~$\langle\Sigma^N\rangle$, resp.~$\langle\Sigmac\rangle$),
where the $\Gm$-action on $\Xm$ preserving  $\Sigma^N$ and $\Sigmac$
corresponds to the quasi-split $k_0$-model $G_\qs$ of $G$.
By Proposition \ref{p:Losev} and Theorem \ref{p:ker},
we have commutative diagrams with exact rows
\[
\xymatrix{
0\ar[r] &\langle\Sigma^N\rangle \ar[r]\ar[d]
       &\Xm \ar[r]\ar[d]^{\id}  &\X^*(A_q)\ar[r]\ar[d] &0\\
0\ar[r] &\langle\Sigmac\rangle \ar[r]
       &\Xm \ar[r]                  &\X^*(A_q^\kk)\ar[r]   &0
}
\]
and
\begin{equation}\label{e:second-diag}
\begin{aligned}
\xymatrix{
1\ar[r] &A_q^\kk\ar[r]\ar[d]^i &T_q^\Xm\ar[r]\ar[d]^{\id}  &T_q^\scc\ar[r]\ar[d] &1\\
1\ar[r] &A_q\ar[r]           &T_q^\Xm\ar[r]              &T_q^N\ar[r]          &1
}
\end{aligned}
\end{equation}
in which the arrows are $\Gm$-equivariant
(see Proposition \ref{prop:qsaut}).
Note that the $\sG$-module $\langle\Sigma^N\rangle$
has a $\sG$-stable $\Z$-basis $\Sigma^N$,
and hence,  $T_q^N$ is a quasi-trivial torus.
It follows that  $H^1(k_0, T_q^N)=1$; see Sansuc \cite[Lemma 1.9]{Sansuc}.
Similarly, $T_q^\scc$ is a quasi-trivial torus, and therefore,
$H^1(k_0, T_q^\scc)=1$.
Now from the diagram \eqref{e:second-diag}
we obtain a commutative diagram with exact rows
\[
\xymatrix{
1=H^1(k_0,T_q^\scc)\ar[r]\ar[d] &H^2(k_0,A_q^\kk)\ar[r]\ar[d]^{i_*}
      &H^2(k_0,T_q^\Xm)\ar[d]^{\id} \\
1=H^1(k_0,T_q^N)   \ar[r]       &H^2(k_0,A_q)   \ar[r]
      &H^2(k_0,T_q^\Xm)
}
\]
which shows that the homomorphism
\[i_*\colon H^2(k_0,A_q^\kk)\to H^2(k_0,A_q)\]
is injective, as required.
\end{proof}

\section{Tate duality}
\label{s:A-TN}

In this appendix we consider  cup product pairings and Tate duality.

\begin{subsec}\label{ss:A0-A}
Let $k_0$, $k$, and $\sG$ be as in Subsection \ref{ss:mod-Y}.
Let $A_0$ be a $k_0$-group of multiplicative type
(that is, a $k_0$-group isomorphic to a $k_0$-subgroup of a $k_0$-torus).
Write
\[A=A_{0,k}\coloneqq A_0\times_{k_0} k,\quad A^X=\X^*(A)\coloneqq \Hom(A,\G_{m,k}).\]
Then $A^X$ is a finitely generated abelian group.
The Galois group $\Gm=\Gal(k/k_0)$ acts continuously on the discrete group $A^X$
by
\[(\hs\upgam\chi)(a)=\upgam(\chi(\hs\hs^{\gamma^{-1}}\hm\hm a))
         \quad \text{for }\chi\in A^X,\ a\in A(k).\]
We consider $H^0(\Gm,A^X)=(A^X)^\Gm$ (the group of $\Gm$-fixed points in $A^X$).
We have a $\Gm$-equivariant $\Z$-bilinear pairing
\[  A^X\times A\to k^\times, \quad (\chi,a)\mapsto \chi(a),   \]
which induces a pairing on cocycles
\begin{equation}\label{e:AX-A-pairing}
(A^X)^\Gm \times Z^2(\Gm,A)\hs\to\hs Z^2(\Gm,k^\times),
\quad (\chi, a_{\gamma,\delta})\mapsto \chi(a_{\gamma,\delta})
\end{equation}
and a pairing (the cup product paring)   on cohomology:
\begin{equation}\label{e:cup-product-Br}
H^0(\Gm,A^X)\times H^2(\Gm,A)\labelto\cup H^2(\Gm,k^\times)=\Br k_0\hs,
    \quad (\chi,\alpha)\mapsto \chi\cupdot\alpha,
\end{equation}
where $\Br k_0$ is the Brauer group of $k_0$.
See Atiyah and Wall \cite[Section 7]{aw67} or Harari \cite[Section 2.5]{Harari}.

Let $s\in H^2(\Gm,A)$. By cup product, we obtain a {\em $\Br$-character}
(a homomorphism to the Brauer group)
\begin{equation}\label{e:Br-char}
s^0\colon H^0(\Gm,A^X)\to \Br k_0\hs,\quad \chi\mapsto \chi\cupdot s
     \quad \text{for }\chi\in H^0(\Gm,A^X)=(A^X)^\Gm.
\end{equation}

Now let $\phi\colon C_0\to A_0$ be a homomorphism
of $k_0$-groups of multiplicative type.
Write $C=C_0\times_{k_0} k$, $A=A_0\times_{k_0} k$.
Consider the dual to $\phi$ homomorphism
\[\phi^*\colon A^X\to C^X,\quad \chi_A\mapsto \chi_A\circ\phi\colon\,
    C\labelto{\phi} A\labelto{\chi_A}\G_{m,k}\quad \text{for }\chi_A\in A^X.\]

\begin{lemma}\label{l:c-chi-A}
The following diagram commutes:
\begin{equation*}
 \begin{gathered}
  \xymatrix@C=20pt{
H^0(\Gm,C^X)\!\!\!\ar@{}[r]|-{\times}             &\!\!\! H^2(\Gm,C)\,\ar[r]^\cup\ar[d]^{\phi_*}
                                                       &{\, \Br k_0}\ar@{=}[d]\\
H^0(\Gm,A^X)\!\!\!\ar@{}[r]|-{\times} \ar[u]^{\phi^*} &\!\!\! H^2(\Gm,A)\,\ar[r]^\cup
                                                      &{\, \Br k_0}
 }
 \end{gathered}
\end{equation*}
In other words,
let $(c_{\gamma,\delta})\in Z^2(\Gm,C)$ be a 2-cocycle,
and let $[c_{\gamma,\delta}]\in H^2(\Gm,C)$ denote its cohomology class.
Let $\phi_*[c_{\gamma,\delta}]\in H^2(\Gm,A)$ denote the image
of the class $[c_{\gamma,\delta}]$ under $\phi_*$\hs.
Let $\chi_A\in H^0(\Gm,A^X)$, and
let $\phi^*(\chi_A)\coloneqq \chi_A\circ\phi\in H^0(\Gm,C^X)$
denote the image of the character $\chi_A$ under $\phi^*$.
Then
\begin{equation*}\label{e:chi-c-phi}
\chi_A\cupdot\phi_*[c_{\gamma,\delta}]=\phi^*(\chi_A)
             \cupdot [c_{\gamma,\delta}]\,\in\, \Br k_0\hs.
\end{equation*}
\end{lemma}

\begin{proof}
By \eqref{e:AX-A-pairing}, we have
\begin{equation*}\label{e:chi-c-1}
\chi_A\cupdot\phi_*[c_{\gamma,\delta}]=[\chi_A(\phi(c_{\gamma,\delta}))]
=(\chi_A\circ\phi)\cupdot [c_{\gamma,\delta}]=\phi^*(\chi_A)
    \cupdot [c_{\gamma,\delta}].\qedhere
\end{equation*}
\end{proof}

\begin{corollary}\label{c:t-t0}
Let $\phi\colon C_0\to A_0$ be a homomorphism
of $k_0$-groups of multiplicative type.
Let $t\in H^2(\Gm,C)$ and let
\[t^0\colon H^0(\Gm,C^X)\to \Br k_0\hs,\quad\chi_C\mapsto\chi_C\cupdot
     t\ \text{ for }\chi_C\in H^0(\Gm, C^X),\]
denote the corresponding $\Br$-character.
Consider $\phi_*(t)\in H^2(\Gm,A)$ and the corresponding $\Br$-character
\[\phi_*(t)^0\colon H^0(\Gm,A^X)\to\Br k_0\hs,\quad
          \chi_A\mapsto\chi_A\cupdot \phi_*(t) \text\
{ for }\,\chi_A\in H^0(\Gm,A^X).\]
Then
\[\phi_*(t)^0=t^0\circ\phi^*\colon\, H^0(\Gm,A^X)\labelto{\phi^*}
 H^0(\Gm,C^X)\labelto{t^0}\Br k_0\hs.\]
\end{corollary}
\begin{proof}
Indeed, by Lemma \ref{l:c-chi-A}  the $\Br$-character $\phi_*(t)^0$ is given by
\[\chi_A\mapsto \phi^*(\chi_A)\cupdot t\quad
    \text{for }\chi_A\in H^0(\Gm,A^X).\qedhere\]
\end{proof}
\end{subsec}

\begin{proposition}\label{p:t0}
With the assumptions and notation of Subsection \ref{ss:A0-A},
assume that either $k_0=\R$ or $k_0$ is a $p$-adic field.
Let $\phi\colon C_0\to A_0$ be a homomorphism
of $k_0$-groups of multiplicative type.
Then for $t\in H^2(\Gm,C)$, we have  $\phi_*(t)=0\in H^2(\Gm,A)$
if and only if  $t^0\circ\phi^*=0$.
\end{proposition}

\begin{proof}
If $k_0=\R$, then by Lemma \ref{l:dual-R} below
we have $\phi_*(t)=0$  if and only if $\phi_*(t)^0=0$.
Similarly, if $k_0$ is a $p$-adic field,
then by Lemma \ref{l:dual-p} below we again have
 $\phi_*(t)=0$  if and only if $\phi_*(t)^0=0$.
By Corollary \ref{c:t-t0} we have  $\phi_*(t)^0=t^0\circ\phi^*$,
and the proposition follows.
\end{proof}

In the rest of this appendix we prove Lemmas \ref{l:dual-R} and \ref{l:dual-p},
which were used in the proof of Proposition \ref{p:t0}.

\begin{subsec}\label{ss:case-R}
Let $k_0=\R$ and $k=\C$.
Then $\Gm=\{1,\gamma\}$, there $\gamma$ is the complex conjugation.
We have $\Br \R\cong\half\Z/\Z$.

Let  $A_0$, $A$, and $A^X$ be as in Subsection \ref{ss:A0-A}.
 We consider the Tate modified zeroth cohomology group
\[\Hnul(\Gm,A^X)=H^0(\Gm,A^X)/\{\chi+\upgam\hm\chi\hs\mid\hs\chi\in A^X\}.\]
The cup product pairing \eqref{e:cup-product-Br} induces a pairing
\begin{equation}\label{e:Tate-R}
\Hnul(\Gm,A^X)\times H^2(\Gm,A)\to \Br \R =\half\Z/\Z;
\end{equation}
see Atiyah and Wall \cite[Section 7]{aw67} or Harari \cite[Section 2.6]{Harari}.
This pairing is perfect (Tate duality), that is, the induced
homomorphism of finite  abelian groups
\[ H^2(\Gm,A)\to \Hom(\Hnul(\Gm,A^X),\hs\Br \R)\]
is an isomorphism;
see Milne \cite[Theorem I.2.13(b)]{Milne-ADT}.
\end{subsec}

Let $s\in H^2(\Gm,A)$.
The pairings \eqref{e:cup-product-Br} and \eqref{e:Tate-R} define  $\Br$-characters
\[ s^0\colon H^0(\Gm,A^X)\to \Br \R\quad\text{and}
     \quad  \hat s\hs^0\colon \Hnul(\Gm,A^X)\to \Br \R\]
fitting into the commutative diagram
\begin{equation}\label{e:vert-R}
\xymatrix{
H^0(\Gm,A^X)\ar[r]^-{s^0}\ar@{->>}[d]  &\Br \R\ar@{=}[d]\\
\Hnul(\Gm,A^X)\ar[r]^-{\hat s\hs^0} &\Br \R
}
\end{equation}

\begin{lemma}\label{l:dual-R}
Let $k_0=\R$. Let  $A_0$, $A$, and $A^X$ be as in Subsection \ref{ss:A0-A},
and let $s\in H^2(\Gm,A)$.
We have $s=0$ if and only if\, $\hat s\hs^0=0$ if and only if $s^0=0$.
\end{lemma}

\begin{proof}
The first equivalence follows from the fact
that the pairing \eqref{e:Tate-R} is perfect.
The second equivalence follows from the fact that the left-hand vertical arrow
in the diagram \eqref{e:vert-R} is surjective.
\end{proof}

\begin{subsec}\label{ss:case-p}
Now assume that $k_0$ is a $p$-adic field.
Let $k$ be a fixed algebraic closure of $k_0$, and let $\Gm=\Gal(k/k_0)$.
Then $\Br k_0\coloneqq H^2(\Gm,k^\times)$ is canonically isomorphic to $\Q/\Z$;
see Serre \cite[Section 1.1]{Serre-ANT}
or Harari \cite[Theorem 8.9]{Harari}.

Let  $A_0$, $A$, and $A^X$ be as in Subsection \ref{ss:A0-A}.
The cup product pairing \eqref{e:cup-product-Br}
induces a continuous pairing
\begin{equation}\label{e:Tate-p}
H^0(\Gm,A^X)^\wedge \times H^2(\Gm,A)\to \Br k_0=\Q/\Z,
\end{equation}
where the compact group $H^0(\Gm,A^X)^\wedge$ is the profinite completion
of the finitely generated abelian group $H^0(\Gm,A^X)=(A^X)^\Gm$.
This pairing is perfect (Tate duality), that is,
the induced homomorphism of discrete abelian groups
\[  H^2(\Gm,A)\to\Hom_{\rm cont.\!}(H^0(\Gm,A^X)^\wedge, \Q/\Z)\]
is an isomorphism, where $\Hom_{\rm cont.}$
denotes the group of continuous homomorphisms.
See Serre \cite[Section II.5.8, Theorem 6(a)]{ser97}
or Milne \cite[Corollary I.2.4]{Milne-ADT}.

Let $s\in H^2(\Gm, A)$, and let
\[s^0\colon H^0(\Gm, A^X)\to \Br k_0\]
be as in \eqref{e:Br-char}.
We consider the induced continuous homomorphism
\[ s^{0\wedge}\colon H^0(\Gm, A^X)^\wedge\to \Br k_0=\Q/\Z \]
fitting into the commutative diagram
\begin{equation}\label{e:vert-p}
\xymatrix{
H^0(\Gm,A^X)\ar[r]^-{s^0}\ar[d]  &\Br k_0\ar@{=}[d]\\
H^0(\Gm,A^X)^\wedge\ar[r]^-{ s^{0\wedge}} &\Br k_0
}
\end{equation}
\end{subsec}

\begin{lemma}\label{l:dual-p}
Let $k_0$ be a $p$-adic field. Let  $A_0$, $A$,
and $A^X$ be as in Subsection \ref{ss:A0-A},
and let $s\in H^2(\Gm,A)$.
We have $s=0$ if and only if\, $s^{0\wedge}=0$ if and only if $s^0=0$.
\end{lemma}

\begin{proof}
The first equivalence follows from the fact
that the pairing \eqref{e:Tate-p} is perfect.
The second equivalence follows from the facts
that the left-hand vertical arrow
in the diagram \eqref{e:vert-p} has dense image
and the homomorphism  $s^{0\wedge}$ is continuous.
\end{proof}

\begin{remark}
By Lemmas \ref{l:dual-R} and \ref{l:dual-p},
if $k_0=\R$ or $k_0$ is a $p$-adic field,
then for $s\in H^2(\Gm,A)$ we have $s=0$ if and only if $s^0=0$.
However, the equivalence ``$s=0$ if and only if $s^0=0$\hs''
does not hold for an arbitrary field $k_0$ of characteristic 0.
Indeed, let us take $k_0=\Q$ and $A=\Z/3\Z$
(with the trivial action of $\Gm$ on $\Z/3\Z$).
Then $A^X\simeq\mu_3$ (with a nontrivial action of $\Gm$),
and hence $H^0(\Gm,A^X)=(A^X)^\Gm=0$.
On the other hand, $H^2(\Gm,A)\neq 0$, because $\Sha^2(\Q,\Z/3\Z)$ is finite
and $H^2(\Q_p,\Z/3\Z)\neq 0$ for infinitely many primes $p$.
It follows that there exists an element $s\neq 0$ in $H^2(\Gm,A)$,
and of course we have
$s^0=0\in \Hom(H^0(\Gm,A^X),\hs \Br k_0)$, because $H^0(\Gm,A^X)=0$.
(We thank Ofer Gabber for this counter-example.)
\end{remark}

\textsc{Acknowledgements.}
The authors are very grateful to Ronan Terpereau for a stimulating question;
this article was written in response to his question.
The authors are grateful to Ofer Gabber, David Harari,  Boris Kunyavski\u\i,
Lucy Moser-Jauslin, and Stephan Snegirov for helpful discussions.
The authors thank Victor Petrov and Remy van Dobben de Bruyn
for answering Borovoi's MathOverflow questions.
We thank the referee for thorough refereeing the paper and numerous useful comments,
which helped to improve the exposition.
This article was revised during the visit of the first-named author to
the Institut des Hautes \'Etudes Scientifiques (IHES) in the fall of  2019,
and he is grateful to this institute for support and excellent working conditions.

\end{document}